\documentclass[10pt]{amsart}
\usepackage{mystyle}   
\usepackage[utf8]{inputenc}
\usepackage{amsfonts}
\usepackage{graphics}
\usepackage[all, cmtip]{xy}
\usepackage{amsthm,amsfonts,amssymb,amsmath,amsxtra}
\usepackage[all]{xy}
\usepackage{flexisym}
\usepackage{mathrsfs}
\usepackage{enumitem}
\usepackage[dvipsnames]{xcolor}
\usepackage[colorlinks]{hyperref}
\hypersetup{
  citecolor   = blue 
 }

\usepackage{blkarray}
\newcommand{\matindex}[1]{\mbox{\scriptsize#1}}
\newcommand{\xdownarrow}[1]{%
  {\left\downarrow\vbox to #1{}\right.\kern-\nulldelimiterspace}
}
 \newcommand{\quash}[1]{}  

\newcommand{\Mpm}{{\rm M}^{\pm}}
\newcommand{\Mnaive}{{\rm M}^{\rm naive}}
\newcommand{\Mbl}{{\rm M}^{\rm bl}}
\newcommand{\ad}{{\rm ad}}
\newcommand{\sgn}{{\rm sgn}}

\newcommand{\diag}{{\rm diag}}

\newcommand{\End}{{\rm End}}
\newcommand{\Hom}{{\rm Hom}}
\newcommand{\Aut}{{\rm Aut}}
\newcommand{\Res}{{\rm Res}}
\newcommand{\Gal}{{\rm Gal}}

\newcommand{\etale}{\' etale }

\newcommand{\Tr}{{\rm Tr}}

\newcommand{\Gr}{{\rm Gr}}

\newcommand{\Sets}{{\rm Sets}}

\newcommand{\GL}{{\rm GL}}

\newcommand{\GO}{{\rm GO}}
\newcommand{\OGr}{{\rm OGr}}

\newcommand{\SO}{{\rm SO}}

\newcommand{\calF}{\mathcal{F}}

\newcommand{\calB}{\mathcal{B}}

\newcommand{\calS}{\mathcal{S}}
\newcommand{\calI}{\mathcal{I}}

\newcommand{\calU}{\mathcal{U}}

\newcommand{\bb }{\langle}
\newcommand{\pp}{\rangle}

\newcommand{\ov}{\overline}

\usepackage{multicol}
\numberwithin{equation}{subsection}

\newtheorem{Main Theorem}[Theorem]{Main Theorem}

\usepackage[colorlinks]{hyperref}
\makeatletter
\renewcommand*\env@matrix[1][*\c@MaxMatrixCols c]{%
  \hskip -\arraycolsep
  \let\@ifnextchar\new@ifnextchar
  \array{#1}}
\makeatother

\newif\ifgrading

\gradingfalse

\usepackage[colorlinks]{hyperref}
\hypersetup{linkcolor=black}
\usepackage{xcolor}
\usepackage{ wasysym }

\usepackage{tikz-cd}
\newcommand{\U}{{\mathcal U}}

\newcommand{\Spec}{{\rm Spec \, } }

\usepackage{xcolor}

\newcommand{\Addresses}{{
		\bigskip
		\footnotesize

        \textsc{Yau Mathematical Sciences Center, Tsinghua University,
			Beijing, 100084, China}\par\nopagebreak
		\textit{E-mail address:} 
        \texttt{jie-yang@mail.tsinghua.edu.cn}\\
        
		\textsc{Department of Mathematics, Universität Münster, Münster, 48149, Germany}\par\nopagebreak
		\textit{E-mail address:} \texttt{io.zachos@uni-muenster.de}\\
		
		\textsc{Department of Mathematics, University of Science and Technology Beijing,
			Beijing, 100083, China}\par\nopagebreak
		\textit{E-mail address:} \texttt{zhihaozhao@ustb.edu.cn}
}}	

\begin{document}

\title{
On $p$-adic integral moduli schemes and local models for PEL type D
}
	\date{}
	\author{Jie Yang, Ioannis Zachos and Zhihao Zhao}

\begin{abstract}
We construct flat integral moduli schemes of PEL type D and the corresponding flat orthogonal Rapoport--Zink spaces with parahoric level structure over a $p$-adic integer ring. The construction relies on proving a conjecture of Pappas--Rapoport: for an even orthogonal similitude group over a complete discretely valued field of residue characteristic $p>2$, and for arbitrary parahoric level, the associated spin local model is flat, normal, Cohen--Macaulay, with reduced special fiber. In the course of the proof, we also show that in the quasi-split but non-split case, the Rapoport--Zink (naive) local model is topologically flat, verifying a conjecture of Pappas--Rapoport--Smithling \cite[Conjecture~2.7.1]{PRS}. In the maximal parahoric case, we also describe the Schubert varieties in the special fiber in moduli-theoretic terms. Finally, for a maximal parahoric case we construct an explicit regular semi-stable model by blowing up the spin local model along the unique closed Schubert cell in its special fiber.
\end{abstract}
\subjclass[2020]{Primary 14G35; Secondary 11G18}
\keywords{Local models, Shimura varieties, Rapoport--Zink spaces, Schubert varieties, orthogonal groups}

	\maketitle
	\tableofcontents

\section{Introduction}\label{Intro}
Local models are projective schemes over the ring of integers of a $p$-adic field which are designed to control the local geometry of integral models of Shimura varieties with parahoric level structure.  Starting from the pioneering works of Chai--Norman \cite{CN90}, Deligne--Pappas \cite{DelignePappas94} and of de Jong \cite{deJ}, the theory was formalized to some extent in the book of Rapoport--Zink \cite{RZbook} for Shimura varieties of PEL type; in particular, local models are defined there in a linear-algebraic manner in terms of moduli of self-dual lattice chains in skew-Hermitian spaces. When the underlying group is of type A or C and splits over an unramified extension, G\"ortz proved that these local models are flat with reduced special fiber \cite{Go,Goertz03}. In general, the resulting schemes are not always flat: Pappas first observed this phenomenon for ramified unitary groups \cite{P}, and Genestier later noted analogous failures for split orthogonal groups; hence the Rapoport--Zink local models are nowadays called naive local models; see the survey \cite{PRS} for details and further references.

In \cite{PZ}, Pappas and Zhu gave a new construction of local models for tamely ramified groups which, unlike the earlier lattice-theoretic constructions, is purely group-theoretic (in particular, it is not tied to a particular representation of the group). The construction has been further refined in subsequent work, for instance by He--Pappas--Rapoport \cite{HPR}; see also Levin \cite{BL} and Louren\c{c}o \cite{Lour} for extensions and refinements. These local models are flat by construction, and in great generality they are known to be normal and Cohen--Macaulay with reduced special fiber (see \cite{haines2022normality}). 
Moreover, in the tamely ramified setting they are ``canonical'' in the sense of Scholze--Weinstein: this is proved in many cases in \cite[Corollary 2.17]{HPR} and, in the general case, in \cite{AGLR22}. On the other hand, this group-theoretic construction does not in general provide a moduli interpretation. It therefore remains an important problem to give a moduli-theoretic description of the canonical local model, which is crucial for many arithmetic applications. In the PEL setting, it is known that the canonical local model is a closed subscheme of the naive local model; in particular it can be characterized as the scheme-theoretic closure of the common generic fiber (see \cite[Lemma 4.1]{HPR}). The task is thus to modify the Rapoport--Zink moduli functor by additional linear-algebraic conditions so that it represents the canonical local model.

In the ramified unitary case this moduli-description problem is by now well understood in several important situations. As a first step, Pappas \cite{P} imposed the wedge condition on the naive moduli functor and in the self-dual maximal parahoric case for signature $(n-1,1)$, this condition already cuts out 
the local model. For general parahoric level the wedge condition is not sufficient, and Pappas--Rapoport \cite{PR} introduced the spin condition; subsequent refinements led to the strengthened spin condition defined by Smithling \cite{Sm3}. For signature $(n-1,1)$, Luo \cite{Luo} proved that the strengthened spin condition cuts out the canonical local model for arbitrary parahoric level.

For even orthogonal (type D) cases, the analogous moduli-description problem is more subtle. Pappas--Rapoport \cite{PR} introduced the \emph{spin} local model, obtained from the naive model by imposing a spin
condition, and conjectured that it is flat for arbitrary parahoric level \cite[Conjecture~8.1]{PR}. In the split case, the first author
\cite{Yang25,yang25TopFlat} proved the corresponding flatness result for the
spin local model for arbitrary parahoric level.


In this paper we treat the quasi-split, non-split even orthogonal similitude group.
This case has not previously been treated in the literature; moreover, our results apply to every parahoric level. We show that imposing the spin condition yields a flat local model with reduced special fiber; equivalently,
the spin local model agrees with the canonical local model. This proves \cite[Conjecture~8.1]{PR} in our quasi-split setting. Together
with the split case, this proves the Pappas--Rapoport flatness conjecture for the
even orthogonal groups arising in the PEL type \(D\) setting. A key geometric feature of the quasi-split non-split case is an irreducibility
phenomenon for the special fiber in the maximal parahoric situation
(Theorem~\ref{intro.str}), which is crucial for our reducedness argument; this contrasts with the split case, where the analogous special fiber has two irreducible components. In the maximal parahoric case, we also give an explicit moduli-theoretic description of the Schubert varieties appearing in the special fiber. 
In the course of the proof we also establish that the naive local model is topologically flat, thereby verifying \cite[Conjecture~2.7.1]{PRS}. As arithmetic applications, we show that the resulting spin condition also yields a normal, Cohen–Macaulay, flat integral moduli space of PEL type D with reduced special fiber, and that the associated Shimura variety is an open and closed subscheme of its generic fiber; moreover, we obtain an explicit moduli interpretation of the corresponding orthogonal Rapoport–Zink space. Finally, for a specific maximal parahoric level, using the resulting moduli description of the spin local model we show that the blow-up along the unique closed Schubert stratum in the special fiber yields an explicit regular semi-stable model.
We expect that such explicit semi-stable models will find applications to arithmetic intersections problems (for instance, in the context of Kudla’s program).

\subsection{Main Results}\label{MainResults} 
Let $F_0$ be a complete discretely valued field with ring of integers $\CO_{F_0}$ and residue field $k$ of characteristic $p> 2$ and uniformizer $\pi_0$. 
Let $V$ be a $2n+2$-dimensional $F_0$-vector space equipped with a non-degenerate symmetric paring $\phi$. Let $G=\GO(V,\phi)$ denote the orthogonal similitude group over $F_0$. Since $p>2$, after replacing $F_0$ by a sufficiently large unramified extension, we may assume (cf. \cite[\S 2.7]{PRS} that there exists an ordered $F_0$-basis $e_1,\ldots,e_{2n},f_1,f_2$ of $V$ such that the corresponding matrix form of $\phi$ is one of the following two forms:
\begin{flalign*}
    \begin{pmatrix}
        H_{2n} \\ &{\begin{pmatrix}
             &1 \\ 1
        \end{pmatrix}}  
    \end{pmatrix}  \text{\ or\ } \begin{pmatrix}
        H_{2n} \\ &{\begin{pmatrix}
            \pi_0\\ &1
        \end{pmatrix}} 
    \end{pmatrix},
\end{flalign*}
where $H_{2n}$ is the unit anti-diagonal matrix of size $2n$.
The first case corresponds to the split orthogonal similitude group, which is treated in \cite{Yang25}. From now on, we focus on the second case corresponding to the quasi-split but non split $G$. We further choose $F/F_0$ to be a ramified quadratic extension and $\pi \in F$ a uniformizer with $\pi^2 = -\pi_0$.

Denote by $G^\circ$ the connected component of $G$ containing the identity element. Let $\CL$ be a
self-dual periodic lattice chain of $V$ in the sense of \cite[Corollary 2.17]{RZbook}. Denote by
$\sG_{\CL}$ the (affine smooth) group scheme of similitude automorphisms of $\CL$. By Bruhat--Tits
theory, the neutral component $\sG^\circ_{\CL}$ is a parahoric group scheme for $G^\circ$. In the
lattice-theoretic framework of Rapoport--Zink \cite{RZbook} one associates to $\CL$ a projective
$\CO_{F_0}$-scheme $\RM^\naive_\CL$, the \emph{naive local model}, representing a linear-algebraic
moduli problem reviewed in \S\ref{sec-naivespinmod}.  Its generic
fiber is the orthogonal Grassmannian $\OGr(n+1,V)$ of maximal totally isotropic $(n+1)$-dimensional subspaces in $(V,\phi)$. The base change $\RM^\naive_{\CL,F_0}\otimes_{F_0}F$ consists of two connected components $\OGr(n+1,V)_\pm$, which are isomorphic to each other, and can be identified with the flag variety $G^\circ_F/P_{\mu_\pm}$, where $P_{\mu_\pm}$ denotes the parabolic subgroup (over $F$) associated to a minuscule cocharacter $\mu_\pm$ given by
\[
    \mu_+ := (1^{(n+1)},0^{(n+1)}) \text{\ and\ } \mu_- := (1^{(n)},0,1,0^{(n)}).
\]
After base change to $\CO_{F}$, one can refine the moduli problem by imposing the
\emph{spin condition} of Pappas--Rapoport, which cuts out a closed subscheme
\[
\RM^\pm_\CL \;\subset\; \RM^\naive_\CL\otimes_{\CO_{F_0}}\CO_{F},
\]
called the \emph{spin local model}, see Definition \ref{defn-spin}. Note that the generic fiber of $\RM^\pm_\CL$ is one of the two components $\OGr(n+1,V)_\pm$. Denote by $\RM^{\pm\rm loc}_\CL$ the schematic closure of $\RM^\pm_\CL\otimes F$ in $\RM^\pm_\CL$. Pappas--Rapoport conjectured that the spin condition already cuts out this closure.

\begin{conj}[{\cite[Conjecture 8.1]{PR}}] \label{introconj-pr}
    The spin local model $\RM^\pm_\CL$ is flat over $\CO_{F}$. 
\end{conj}
In this context, Pappas--Rapoport--Smithling formulated a second conjecture concerning
topological flatness of the naive local model.

\begin{conj}[{\cite[Conjecture 2.7.1]{PRS}}] \label{introconj-prs}
The naive local model $\RM^\naive_\CL$ is topologically flat, i.e., the generic fiber $\RM^\naive_{\CL,F_0}$ is Zariski dense in $\RM^\naive_\CL$.
\end{conj}
The main result of this article proves both conjectures.

\begin{thm}\label{mainthmintro}
{\renewcommand{\labelenumi}{(\arabic{enumi})}%
\begin{enumerate}
  \item (Theorem~\ref{thmflatspin}) The scheme $\RM^\pm_\CL$ is $\CO_{F}$-flat of relative dimension $n(n + 1)/2$, normal, and Cohen--Macaulay with reduced special fiber.
  \item (Theorem~\ref{coro-mainresults}) The scheme $\RM^\naive_\CL$ is topologically flat.
\end{enumerate}}
\end{thm}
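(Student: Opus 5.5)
The plan follows the now-standard strategy of Görtz and of the split case \cite{Yang25,yang25TopFlat}. We embed the special fiber of $\RM^\pm_\CL$ into the affine flag variety of $G^\circ$ over $k$ and compare it with the $\mu_\pm$-admissible locus. Once the underlying topological space of the special fiber is shown to be the union of Schubert cells indexed by the $\mu_\pm$-admissible set $\mathrm{Adm}(\mu_\pm)$, everything follows from known facts about the canonical local model $\RM^{\pm\rm loc}_\CL$ of Pappas--Zhu/He--Pappas--Rapoport. That model is flat, normal and Cohen--Macaulay with reduced special fiber equal to $\bigcup_{w\in \mathrm{Adm}(\mu_\pm)}$ of Schubert varieties (\cite{PZ,haines2022normality}).

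The steps run as follows.

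\textbf{Step 1 (reduction to the maximal parahoric case).} The naive and spin local models for a general chain $\CL$ embed into the fiber product over the maximal chains contained in $\CL$. The admissible set for a parahoric is the intersection of the preimages of the admissible sets for the maximal parahorics containing it (He's "vertexwise admissibility" for these tamely ramified groups). Using this, it suffices to prove the topological statement $|\RM^\pm_{\CL,k}| = \bigcup_{\mathrm{Adm}(\mu_\pm)}$ for each self-dual maximal chain.

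\textbf{Step 2 (maximal case).} In each maximal case we analyze the special fiber directly. We write the $k$-points as lattices $\CF\subset \Lambda\otimes k$, invariant under the induced nilpotent operator, and translate the spin condition into a condition on the relative position. We then show that every such point lies in an admissible Schubert cell. The key input is the irreducibility phenomenon announced in Theorem~\ref{intro.str}: the special fiber has a unique maximal admissible cell whose closure is everything. So we must show that the spin special fiber is irreducible of dimension $n(n+1)/2$, and that the open cell is dense.

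\textbf{Step 3 (the closure and reducedness).} Once topological flatness of $\RM^\pm_\CL$ is known, we need to upgrade it to flatness. For this we show that $\RM^\pm_{\CL,k}$ is generically reduced along each admissible extreme cell. The approach is to write explicit affine charts around the worst point, i.e.\ the unique closed Schubert cell, and compute the tangent space or equations there; in the maximal case these computations yield reducedness of the open cell. Then $\RM^{\pm\rm loc}_{\CL,k}\subset \RM^\pm_{\CL,k}$ is a closed immersion between schemes with the same underlying space. The source is reduced, and the target is generically reduced, hence reduced, since the canonical special fiber is Cohen--Macaulay and of the same dimension. An $S_1$/Cohen--Macaulay argument of the type in Görtz then gives equality. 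Normality and the Cohen--Macaulay property are inherited from $\RM^{\pm\rm loc}_\CL$.

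\textbf{Step 4 (part (2)).} Topological flatness of $\RM^\naive_\CL$ follows by showing that $|\RM^\naive_{\CL,k}|$ is the union of the two spin special fibers after base change to $\CO_F$. Concretely, every point $\CF$ of the naive special fiber satisfies either the $+$ or the $-$ spin condition, up to a point-set check; both spin models are flat, so each point lies in the closure of the generic fiber.

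The main obstacle is Step 2/3 in the maximal case: proving that the spin condition, not merely the naive one, forces points into admissible cells, and verifying generic reducedness. The spin condition is an open-ended linear-algebra condition in $\wedge^{n+1}V$. My first attempt would be to reduce it, over $k$, to a rank/wedge condition on the operator $\pi$ restricted to $\CF$, using the explicit basis $e_i,f_1,f_2$ with $\phi(f_1,f_1)=\pi_0$, and then to match strata with $\mathrm{Adm}(\mu_\pm)$ by counting dimensions.
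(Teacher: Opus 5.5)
\textbf{The gap is in Step~3, the upgrade from topological flatness to flatness.} You argue that $\RM^\pm_{\CL,k}$ is generically reduced, and hence reduced ``since the canonical special fiber is Cohen--Macaulay of the same dimension''. This does not follow. A generically reduced scheme is reduced only if it is itself $S_1$, and the Cohen--Macaulayness of the reduced closed subscheme $\RM^{\pm\rm loc}_{\CL,k}$ gives no control over embedded components of $\RM^\pm_{\CL,k}$. For example, $\Spec k[x,y]/(x^2,xy)$ is generically reduced with regular reduction $\mathbb{A}^1_k$, but it is not reduced. Deriving $S_1$ for $\RM^\pm_{\CL,k}$ from flatness of $\RM^\pm_\CL$ would be circular, and ``an $S_1$/Cohen--Macaulay argument of the type in G\"ortz'' does not supply it.

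The paper instead uses an auxiliary flat object in the maximal case $I=\{i\}$. In the chart around the worst point one has $\CU^\pm_{i,k}\subset \mathbb{A}^N_k\times\Spec\CR_{i,k}\subset\CU^{\rm naive}_{i,k}$, all with the same underlying space. Here $\CR_i=\CO_F[X]/(XHX^t,X^tHX,\wedge^{i+1}X)$ is known from \cite{Yang25} to be $\CO_F$-flat and reduced. The argument then runs as follows:
\begin{enumerate}
\item $\Spec\CR_{i,k}$ is irreducible; this is where irreducibility of the naive special fiber enters.
\item It is generically smooth, namely on the locus where the upper-left $i\times i$ block of $X$ is invertible.
\item Its reduction is normal, because it is an open piece of the Pappas--Zhu special fiber, which is normal precisely because it is irreducible.
\item Hironaka's lemma (Proposition~\ref{prop-Hiron}), applied to the flat scheme $\Spec\CR_i$, shows that $\CR_{i,k}$ is reduced, and hence so is $\CU^\pm_{i,k}$.
\end{enumerate}
Flatness of $\CR_i$ is exactly what replaces the $S_1$ property you cannot access.

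\textbf{Second, nothing in your plan gives reducedness for non-maximal $\CL$.} Step~1 only reduces the topological statement. Charts around extreme cells for general $I$, combined with the argument above, would fail for the same reason. The paper writes $\RM^\pm_{I,k}=\bigcap_{i\in I}\rho_i^{-1}(\RM^\pm_{\{i\},k})$ schematically. Once the maximal case is known, each preimage is a reduced union of Schubert varieties. Schubert varieties in the partial affine flag variety are compatibly Frobenius split (available since $p\nmid|\pi_1(G_{\rm der})|=2$), so the intersection is reduced.

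\textbf{A smaller point concerns Step~4.} In this quasi-split non-split case the spin condition plays no role for the topology. The paper proves (2) directly: it classifies the $\min\{i,n-i\}+1$ Schubert cells of $\RM^{\rm naive}_{\{i\},k}$, lifts explicit representatives to $\CO_F$-points, and applies the vertexwise criterion. It then deduces topological flatness of $\RM^\pm$ from this. Your Step~4 runs the other way and rests on the unproved assertion that every naive point satisfies one of the spin conditions.
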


\begin{remark}\label{rem:PZ}
In the present PEL setting, $\RM^{\pm\rm loc}_\CL$ coincides with the corresponding canonical schematic local model as well as the Pappas--Zhu local model;
see Proposition~\ref{propiso}. Hence Theorem~\ref{mainthmintro}(1) yields a moduli-theoretic description of the
Pappas--Zhu local model.
\end{remark}

\subsection{Outline of the proof} We briefly outline the proof of Theorem~\ref{mainthmintro}.
By Remark~\ref{rem:PZ}, once flatness is proved, $\RM^\pm_\CL$ identifies with the corresponding
Pappas--Zhu local model. The additional properties in Theorem~\ref{mainthmintro}(1) then follow from the
general results on Pappas--Zhu local models recalled in \S\ref{sec-naivespinmod}. Hence the main step is to prove that
$\RM^\pm_\CL$ is $\CO_{F}$-flat; along the way, we also deduce the topological flatness of the naive local model.

A key input is a classification of the parahoric subgroups of $G$, which we establish next and use to reduce to the standard lattice-chain cases.

\begin{thm}[{Theorem~\ref{prop-paraIndex}}]\label{intro.prop-paraIndex}
Denote by $\Lambda_i \subset V$ the standard lattices where 
\[
\Lambda_i :={\CO_{F_0}} \bb \pi_0\inverse e_1,\ldots,\pi_0\inverse e_i,e_{i+1},\ldots,e_{2n},\pi_0\inverse f_1,f_2 \pp, \text{ for } 0\leq i \leq n.
\]
Consider the stabilizer subgroup
\[
P_I\coloneqq \{g\in G\ |\ g\Lambda_i=\Lambda_i, i\in I\}
\]
where $I$ is a non-empty subset of $[0,n] := \{0,\dots,n\}$, and set $P^{\circ}_I \coloneqq P_I\cap G^\circ(F_0)$.

\begin{enumerate}
\item The subgroup $P^{\circ}_I$ is a parahoric subgroup of $G(F_0)$.
Moreover, any parahoric subgroup of $G(F_0)$ is $G^\circ(F_0)$-conjugate to
$P^{\circ}_I$ for a unique non-empty subset $I\subset [0,n]$ with the property
that if $i\in I$ with $i\geq \lfloor n/2\rfloor$, then $n-i\in I$.

\item The $G^\circ(F_0)$-conjugacy classes of maximal parahoric subgroups of
$G^\circ(F_0)$ are in bijection with the set
\[
\{P^{\circ}_{\{i\}}\mid 0\leq i\leq \lfloor n/2\rfloor \},
\]
and $P^{\circ}_{\{0\}}$ corresponds to a special maximal parahoric. For
$i>\lfloor n/2\rfloor$, the subgroup $P_{\{i\}}^\circ$ is $G^\circ(F_0)$-conjugate
to $P_{\{n-i\}}^\circ$.
\end{enumerate}
\end{thm}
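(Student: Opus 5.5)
The plan is to deduce the statement from Bruhat--Tits theory for the quasi-split, non-split group $G^\circ$. Here the discriminant of $\phi$ involves $\pi_0$, so $G^\circ$ splits over the ramified extension $F$. Its relative local Dynkin diagram is that of a ramified quasi-split even orthogonal group: $n+1$ vertices (type $C$-$B_n$ in Tits' table), with a nontrivial diagram automorphism.

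\textbf{Step 1: apartment and alcove.} I will take the maximal $F_0$-split torus $S$ of $G^\circ$ that acts diagonally on $e_1,\dots,e_{2n}$ and by similitude scalars on $\langle f_1,f_2\rangle$. Using the lattice-theoretic (norm) description of the building of $\GO(V,\phi)$ as in \cite{RZbook} and \cite[\S 2]{PRS}, I will show that the self-dual periodic chain generated by $\Lambda_0,\dots,\Lambda_n$ (and their duals and $\pi_0$-multiples) corresponds to a base alcove $\mathbf a$ of the apartment of $S$. Its vertices correspond exactly to the $\Lambda_i$, $0\le i\le n$. The check is to compute the affine roots that vanish on the point attached to $\Lambda_i$, and to verify that these give the $n+1$ affine simple roots.

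\textbf{Step 2: $P^\circ_I$ is parahoric.} For a facet $\mathbf f_I\subset\overline{\mathbf a}$, the stabilizer of $\{\Lambda_i\}_{i\in I}$ in $G(F_0)$ is the full stabilizer of $\mathbf f_I$. The parahoric subgroup is the subgroup of this stabilizer on which the Kottwitz homomorphism $\kappa\colon G^\circ(F_0)\to \pi_1(G^\circ)_I^{\Gal}$ vanishes. I will compute $\kappa$ explicitly in terms of the similitude factor and the spinor norm. I will then show that intersecting with $G^\circ(F_0)$ already removes the non-identity component of $\mathcal G_{\CL}$ (the determinant $\pm$ part). For the remaining torsion part of $\kappa$ on the stabilizer, I will show it is trivial, by checking that elements of $P^\circ_I$ act trivially on the residual Dynkin data. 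The route is to compare with the neutral component $\mathcal G^\circ_\CL$, whose $\CO_{F_0}$-points form the parahoric, as already stated in the introduction. This comparison, together with the identification of $\ker\kappa$, is where I expect the main difficulty.

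\textbf{Step 3: conjugacy classes.} Standard Bruhat--Tits theory gives the following: every parahoric is $G^\circ(F_0)$-conjugate to one containing the Iwahori subgroup of $\mathbf a$, and two standard ones $P^\circ_I,P^\circ_J$ are conjugate if and only if $J=\tau(I)$ for some $\tau$ in the image of $\Omega=\ker$-complement, i.e. the stabilizer of $\mathbf a$ modulo Iwahori. I will exhibit an explicit element $\tau\in G^\circ(F_0)$ with similitude factor $\pi_0$. It sends $e_j\mapsto e_{n+j}$ and $e_{n+j}\mapsto \pi_0 e_j$ (suitably ordered and signed so that $\phi$ is multiplied by $\pi_0$), and it sends $f_1\mapsto \pi_0 f_2$ and $f_2\mapsto f_1$. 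I will check that $\tau(\Lambda_i)$ equals $\Lambda_{n-i}$ up to scaling. Since $\Omega\cong\mathbb Z/2$ here, the orbits of $\Omega$ on nonempty subsets of $[0,n]$ are $\{I,n-I\}$. The condition in (1) then selects a unique representative of each orbit, which gives the uniqueness claim. For (2), the maximal parahorics correspond to singletons; the orbits are $\{i,n-i\}$, represented by $0\le i\le\lfloor n/2\rfloor$. Vertex $0$ is special, since the root system obtained by deleting it is the full finite relative root system (type $B_n$). Finally, $P^\circ_{\{i\}}$ is conjugate to $P^\circ_{\{n-i\}}$ via $\tau$.
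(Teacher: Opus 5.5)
Your route is the paper's: the lattice model of the building identifies the pointwise stabilizer of the facet $\mathfrak f_I$ with $P_I$, Haines--Rapoport says the parahoric is that stabilizer intersected with $\ker\kappa$, and conjugacy is read off from Tits' action of $G^\circ(F_0)$ on the local Dynkin diagram via an explicit similitude of factor $\pi_0$. Step~2, however, does not close as written.

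Because you never compute $\pi_1(G^\circ)_\Gamma$, you posit a ``remaining torsion part of $\kappa$'' and propose to kill it by checking that $P^\circ_I$ acts trivially on residual Dynkin data. That is not a criterion for lying in $\ker\kappa$: for a ramified norm-one torus the building is a point, there is no Dynkin data, and the parahoric still has index $2$. Appealing to $\sG^\circ_\CL$ from the introduction does not help either. You would still need $\sG_\CL(\CO_{F_0})\cap G^\circ(F_0)=\sG^\circ_\CL(\CO_{F_0})$, which is the point at issue.

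The missing observation is that here $\pi_1(G^\circ)_\Gamma=X_*(T)_\Gamma/(Q^\vee)_\Gamma\cong\BZ$ is torsion-free. On $T(F_0)$ the Kottwitz map only sees the factor $\Res_{F/F_0}\mathbb G_m$, where it is $y\mapsto\mathrm{val}_F(y)=\mathrm{val}(N_{F/F_0}y)=\mathrm{val}\,\eta$. Hence $\kappa=\mathrm{val}\circ\eta$, with no spinor-norm term; that $\BZ/2$ belongs to the split case. Since $g\Lambda_i=\Lambda_i$ forces $\det g=\eta(g)^{n+1}\in\CO_{F_0}^\times$, you get $P^\circ_I\subset\ker\kappa$. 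So $P^\circ_I$ is the pointwise stabilizer of $\mathfrak f_I$ intersected with $\ker\kappa$, which is parahoric by \cite[Proposition 3]{haines2008parahoric}. This is exactly the paper's argument, and the same torsion-freeness gives the equality of $\CO_{F_0}$-points above (Lemma~\ref{lem-Itype}).

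Step~3 has two slips and inherits a flaw of the statement. First, your $\tau$ puts $\pi_0$ on the wrong side. For $n=1$ it sends $\Lambda_0$ to $\langle\pi_0e_1,e_2,f_1,f_2\rangle$, which lies in no standard chain, so it does not stabilize the base alcove. The right element is $e_j\mapsto\pi_0e_{n+j}$, $e_{n+j}\mapsto e_j$ ($1\le j\le n$), $f_1\mapsto-\pi_0f_2$, $f_2\mapsto f_1$, composed with $\diag(1^{(2n)},-1,1)$ when $n$ is odd so that $\det=\eta^{n+1}$; this is the paper's $\tau_1$. It maps $\Lambda_i$ to $\Lambda_{n-i}^\vee$, not to a homothetic of $\Lambda_{n-i}$. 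That is harmless, since $\Lambda$ and $\Lambda^\vee$ have the same stabilizer. Second, $\Omega$ for $G^\circ$ is $\pi_1(G^\circ)_\Gamma\cong\BZ$, not $\BZ/2$; only its image in $\mathrm{Aut}(\Delta)$ is $\BZ/2$.

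Finally, your claim that the condition in (1) selects exactly one member of each orbit $\{I,n-I\}$ fails when checked, and the paper's proof makes the same claim. Uniqueness holds, since both $I$ and $n-I$ satisfy the condition only if $I=n-I$. Existence does not: for $n=1$ neither $\{0\}$ nor $\{1\}$ qualifies, and for $n=3$ neither $\{1\}$ nor $\{2\}$ does, although (2) lists $P^\circ_{\{0\}}$, resp.\ $P^\circ_{\{1\}}$. What the argument actually establishes is that conjugacy classes correspond to the orbits $\{I,n-I\}$. Part (2) is correct as stated.
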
 
Using the above proposition, we can reduce our study of the local models to $\RM^\pm_{\Lambda_I}$ ($\RM^\naive_{\Lambda_I}$ resp.) for the standard lattice chains $ \Lambda_I := \{\Lambda_{\ell}\}_{\ell\in 2n\BZ\pm I} $; see \S \ref{ParahoricSubsection} for details. To simplify the notation, we write $\RM^\pm_{I}$ ($\RM^\naive_{I}$ resp.) for $\RM^\pm_{\Lambda_I}$ ($\RM^\naive_{\Lambda_I}$ resp.). To show flatness for $\RM^\pm_{I}$, it is enough to prove:

(a) $\RM^\pm_{I}$ is topologically flat (see \S \ref{sec-topoflat}), and 

(b) the special fiber $\RM^\pm_{I,k}$ is reduced (see \S \ref{AffineCharts}, \ref{FlatSec}). 

In the course of proving (a) we obtain the following stronger statement.
\begin{thm} [{Theorem \ref{coro-mainresults}}] \label{intro-thmtopo}
The naive local model $\RM^\naive_{I}$, and hence $\RM^\pm_{I}$, is topologically flat for any non-empty $I\sset [0,n]$. 
\end{thm}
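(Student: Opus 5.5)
We will prove topological flatness of $\RM^\naive_I$ by embedding its special fiber into the affine flag variety of $G$ and then comparing it with the admissible set. The relevant admissible set is $\mathrm{Adm}(\mu)$, the union of the admissible sets of $\mu_+$ and $\mu_-$.

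First, we realize $\RM^\naive_{I,k}$ inside the partial affine flag variety $\CF_I$ attached to the parahoric $P^\circ_I$ from Theorem~\ref{intro.prop-paraIndex}. We use the standard lattice-chain description: a $k$-point of $\RM^\naive_I$ is a family $\CF_i\subset\Lambda_{i,k}$ satisfying the Rapoport--Zink conditions. Applying the usual $k[[t]]$-lattice construction $\CF_i\mapsto \widetilde\Lambda_i\subset\Lambda_i\otimes k((t))$, this becomes a periodic self-dual lattice chain squeezed between $t\Lambda_i$ and $\Lambda_i$. This identifies $\RM^\naive_{I,k}$ with a closed union of $P^\circ_I$-orbits (Schubert cells) in $\CF_I$. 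Hence topological flatness reduces to two facts:
\begin{itemize}
\item[(i)] every Schubert cell $C_w\subset \RM^\naive_{I,k}$ has $w\in \mathrm{Adm}_I(\mu_+)\cup\mathrm{Adm}_I(\mu_-)$;
\item[(ii)] each such admissible cell lies in the closure of the generic fiber.
\end{itemize}

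Statement (ii) is standard. Each extreme element $t^{\lambda}$, with $\lambda$ in the $W_0$-orbit of $\mu_\pm$, has a fixed-point lift over $\CO_F$: take the $T$-fixed isotropic subspace spanned by appropriate basis vectors, twisted by $\pi$ for the nonsplit part $f_1,f_2$. This specializes to the corresponding $T$-fixed point in the special fiber. The closure of the generic fiber is $P^\circ_I$-stable and closed, so it then contains all of $\mathrm{Adm}_I(\mu_\pm)$. Alternatively, we can use Proposition~\ref{propiso}: the Pappas--Zhu local model is flat, and its special fiber is exactly $\bigcup_{w\in \mathrm{Adm}_I}C_w$. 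Since $\RM^{\pm\rm loc}_I\subset \RM^\pm_I\subset \RM^\naive_I\otimes\CO_F$, it suffices to show that the underlying set of $\RM^\naive_{I,k}$ equals this admissible locus.

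The main obstacle is (i): showing that the $T$-fixed points of $\RM^\naive_{I,k}$ are all admissible. Because $G$ is quasi-split nonsplit, the Iwahori--Weyl group is of type $\tilde B_n$-like (twisted $D_{n+1}$), with the echelon translation elements indexed by vectors in $\BZ^{n}$ together with the anisotropic part. Our plan has three steps.

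\begin{enumerate}
\item Reduce to the Iwahori case $I=[0,n]$. Admissible sets are compatible with the projections $\CF_{[0,n]}\to\CF_I$, and the naive models surject correspondingly after forgetting lattices, using that the forgetful map on naive models is surjective on fixed points.
\item Describe $T$-fixed points combinatorially. They are chains of coordinate $\CO$-lattices determined by tuples $(\mu_i)_{i}$ of vectors with entries in $\{0,1\}$ relative to each $\Lambda_i$. These must satisfy periodicity, duality, and a condition on the two-dimensional block $\langle f_1,f_2\rangle$, which behaves like a ramified line and forces a specific value there.
\item Verify that such tuples are exactly the $\mu$-permissible alcoves, and invoke the equality $\mathrm{Perm}=\mathrm{Adm}$. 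Here we would follow Haines--Ngô's vector-by-vector criterion for the minuscule coweight, adapted to the twisted root system, or Smithling's argument for ramified unitary and symplectic groups, whose combinatorics is closely related.
\end{enumerate}

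We expect the delicate point to be the anisotropic block. There, both $\mu_+$ and $\mu_-$ collapse to the same coinvariant class, so the naive model cannot distinguish them. Permissibility must therefore be checked against the union of the two admissible sets. We anticipate doing this by an explicit induction on $n$, splitting off a hyperbolic pair $e_1,e_{2n}$.
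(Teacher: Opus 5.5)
\textbf{Verdict: there is a genuine gap.} Your part (ii) is fine; it is how the paper uses Proposition~\ref{propiso}. Part (i), however, is the whole content of the theorem, and two of its steps are asserted rather than proved.

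\emph{The reduction to the Iwahori case.} It rests on the surjectivity of the forgetful map $\RM^\naive_{[0,n],k}\to\RM^\naive_{I,k}$: every naively-permissible double coset for $I$ would have to be the image of a naively-permissible element for the full chain. Nothing in the naive moduli problem gives this a priori. Extending a partial chain $(\CF_\Lambda)_{\Lambda\in\Lambda_I}$ to a full chain satisfying LM1--LM3 is a genuine lifting problem, made harder here by the anisotropic block. In this setting it is only available a posteriori, from $W_I\backslash\Perm_I/W_I=W_I\backslash\Adm(\mu_\pm)/W_I$, which is what you are trying to prove.

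\emph{The appeal to $\Perm=\Adm$.} Step~3 says to ``invoke the equality $\Perm=\Adm$'', but no such equality exists to invoke for this twisted $D_{n+1}$ group. Haines--Ng\^o's theorem is specific to $\GL_n$ and $\mathrm{GSp}_{2n}$, and the two sets are known to differ in general. Identifying naive fixed points with Kottwitz--Rapoport $\mu$-permissible alcoves is a further unproved claim. Showing that every naively-permissible Iwahori element is $\mu_\pm$-admissible is exactly the hard combinatorial core; compare Smithling's lengthy treatment of the split even case. ``An induction on $n$ splitting off a hyperbolic pair'' is a hope, not yet an argument.

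\emph{What the paper does instead.} The missing idea is to reduce toward vertices rather than toward the alcove. By Lemma~\ref{lem-intersection}, $\RM^\naive_I=\bigcap_{i\in I}q_i^{-1}(\RM^\naive_{\{i\}})$, so
\[
W_I\backslash\Perm_I/W_I=\bigcap_{i\in I}\rho_i^{-1}(\Perm_i).
\]
The Haines--He vertexwise criterion (Theorem~\ref{thm-vertexcri}) then reduces everything to $\Perm_i=\Adm_i(\mu_\pm)$ at a single vertex. No surjectivity of forgetful maps is needed.

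At a vertex the problem is finite and explicit:
\begin{itemize}
\item Naively-permissible cells correspond to $W_i$-orbits of naively-permissible subsets $E\subset[1,2n]$, and there are exactly $\min\{i,n-i\}+1$ of them (Corollary~\ref{coro-bijection}).
\item Each has an explicit representative $(\CF^\ell_i,\CF^\ell_{-i})$, lifted by hand to an $\CO_F$-point of the naive model (Proposition~\ref{prop-liftm4}). The generators of the lift combine basis vectors of $\Lambda_i$ with coefficient $\pi$, where $\pi^2=-\pi_0$.
\item This gives topological flatness at the vertex. Combined with Pappas--Zhu, it gives $\Perm_i=\Adm_i(\mu_\pm)$.
\end{itemize}
This route avoids all Iwahori-level permissibility combinatorics.
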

 
In particular, this implies Theorem~\ref{mainthmintro}(2). Using the theory of partial affine flag varieties and their relation to local models, we can reduce the proof of the above theorem to the maximal cases, i.e. to $I=\{i\}$; a key ingredient that we use for this reduction is the vertexwise criteria \cite{haines2017vertexwise} for the admissible sets. Thus it remains to treat the maximal case $ \Lambda_{\{i\}}$. In this case we obtain an explicit Schubert-cell stratification of the reduced special fiber:

\begin{thm}\label{intro.str}
Let $I = \{i\}$. There exists a stratification of the reduced special fiber \begin{flalign}
        (\RM^\naive_{I,k})_\red=\coprod_{\ell=\max\cbra{0,2i-n}}^i \RM^\naive_I(\ell), \label{intro.stratification}
    \end{flalign}
    where each stratum $\RM^\naive_I(\ell)$  is a single Schubert cell, described
explicitly in Definition~\ref{defn-Mell}. If $S_\ell:=\ol{\RM^\naive_I(\ell)}
        \subset (\RM^\naive_{I,k})_\red$,
then $S_\ell$ is the corresponding Schubert variety and
\[
        S_\ell
        =
        \coprod_{\ell'=\max\{0,2i-n\}}^\ell
        \RM^\naive_I(\ell').
\]
Equivalently, $\RM^\naive_I(\ell')$ is contained in
$\ol{\RM^\naive_I(\ell)}$ if and only if $\ell'\leq \ell$.
Consequently, $\RM^\naive_{I,k}$ is irreducible and contains precisely $\min{\cbra{i,n-i}}+1$ Schubert cells.    
\end{thm}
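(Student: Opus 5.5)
We plan to embed $(\RM^\naive_{I,k})_\red$ into the partial affine flag variety $\mathcal{F}\ell_I$ of $G^\circ$ over $k\((t)\)$ attached to the parahoric $P^\circ_{\{i\}}$. Then we describe the image as a union of Schubert cells indexed by an explicit finite set of double cosets. The fibers of this union over the generic point are controlled by the $\mu$-admissible set. Concretely, a $k$-point of $\RM^\naive_I$ is a pair of subspaces $\CF_i\subset\Lambda_i\otimes k$ and $\CF_{-i}\subset\Lambda_{-i}\otimes k$ (equivalently $\Lambda_{2n-i}$), each of dimension $n+1$. These subspaces are compatible with the transition maps and satisfy the isotropy condition $\CF_i^\perp=\CF_{-i}$ under the perfect pairing $\Lambda_i\times\Lambda_{-i}\to\CO_{F_0}$. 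Lifting $\CF_{\pm i}$ to lattices $\pi_0\Lambda_{\pm i}\subset L_{\pm i}\subset\Lambda_{\pm i}$ in $V\otimes k\((t)\)$ (with $\pi_0\mapsto t$) gives a self-dual lattice chain. This yields a closed $P^\circ_{\{i\}}$-equivariant immersion of the reduced special fiber into $\mathcal{F}\ell_I$, possibly after passing to the $G^\circ$ orbit, since $G$ has two components.

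Next, we attach to each point the discrete invariant
\[
\ell := \dim_k\bigl(\CF_i\cap \ker(\Lambda_i\otimes k\to\Lambda_{i+2n}\otimes k)\bigr),
\]
or an equivalent rank of the composite $\CF_{-i}\to\Lambda_{-i}\otimes k\to\Lambda_i\otimes k$; the correct normalization is to be read off from Definition~\ref{defn-Mell}. We then prove three things. First, $\ell$ ranges exactly over $[\max\{0,2i-n\},i]$. The lower bound comes from isotropy: an $(n+1)$-dimensional isotropic $\CF_i$ must meet the $i$-dimensional and $(2n+2-i)$-dimensional pieces of the filtration in a controlled way, and the discrepancy $n-i$ gives $\ell\ge 2i-n$. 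Second, the group $P^\circ_{\{i\}}\otimes k$, whose reductive quotient is $\GO$ of pieces of dimension $2i$ and $2(n-i)+2$, the latter quasi-split non-split, acts transitively on each locus $\RM^\naive_I(\ell)$. We show this by Witt's theorem on the two reductive pieces, using that in the non-split even orthogonal group of dimension $2(n-i)+2$ all maximal isotropic subspaces of dimension $n-i$ form a single orbit. This is exactly where the non-split form removes the two-component phenomenon of the split case. Hence each $\RM^\naive_I(\ell)$ is a single Schubert cell $\mathcal{F}\ell_w^\circ$. Third, $\ell$ is upper semicontinuous as a rank condition. Combined with a dimension count, $\dim\RM^\naive_I(\ell)$ is strictly increasing in $\ell$, so we get $\ov{\RM^\naive_I(\ell)}\subset\coprod_{\ell'\le\ell}\RM^\naive_I(\ell')$.

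For the reverse inclusion, that $\RM^\naive_I(\ell-1)\subset\ov{\RM^\naive_I(\ell)}$, we plan to write down an explicit one-parameter family over $k[[s]]$. This is a curve of isotropic subspaces whose generic member has invariant $\ell$ and whose special member has invariant $\ell-1$; we build it from a hyperbolic plane mixing one vector of the $\Lambda_i$-kernel with one vector of the complementary piece. Alternatively, we can identify the Weyl group elements $w_\ell$ and check that they form a chain in the Bruhat order of $W_I\backslash\widetilde W/W_I$. Transitivity of closure then gives $S_\ell=\coprod_{\ell'\le\ell}\RM^\naive_I(\ell')$.

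Finally, the top stratum $\ell=i$ is a single cell, so $S_i$ is the whole reduced special fiber, which is therefore irreducible. The number of cells is $i-\max\{0,2i-n\}+1=\min\{i,n-i\}+1$. The main obstacle is the transitivity step: we must check that the non-split anisotropic part really yields one orbit per $\ell$, rather than two orbits distinguished by a spinor-type invariant, and that no orbit is missed at the boundary values of $\ell$. We will attempt this first by a direct normal-form computation in the basis $e_1,\dots,e_{2n},f_1,f_2$.
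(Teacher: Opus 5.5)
The gap is in your transitivity step, which is exactly what makes each $\RM^\naive_I(\ell)$ a single Schubert cell. It rests on a false structural claim.

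The graded pieces of the chain $\pi_0\Lambda_i\subset\Lambda_{-i}\subset\Lambda_i$ are $W_1:=\Lambda_i/\Lambda_{-i}$, of dimension $2i+1$ with the form induced by $\pi_0\phi$, and $W_2:=\Lambda_{-i}/\pi_0\Lambda_i$, of dimension $2n-2i+1$ with the form induced by $\phi$. So the reductive quotient of the parahoric is, up to similitude, $\SO_{2i+1}\times\SO_{2n-2i+1}$. This matches $W_i\simeq S^*_{2i}\times S^*_{2(n-i)}$ in Lemma~\ref{lem-Wmperm}. It is not $\GO_{2i}\times\GO_{2(n-i)+2}$.

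Both forms are split: they consist of hyperbolic pairs coming from the $e_j$, plus the line spanned by $\pi_0^{-1}f_1$, resp.\ $f_2$. So there is no non-split even-dimensional piece over $k$ to which Witt's theorem could be applied as you propose. What removes the two-family phenomenon of the split case is that both pieces are odd-dimensional, not an anisotropic kernel.

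Moreover, even with the correct Levi, Witt's theorem alone cannot classify the orbits. A point $(\CF_i,\CF_{-i})$ is not determined by its images in $W_1$ and $W_2$, and your argument never uses the unipotent radical.

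You can repair this along your own lines. Note that $W_2=\ker(\iota\otimes k)\subset\Lambda_i\otimes k$ and $W_1=\operatorname{im}(\iota\otimes k)$. A point is the same as an $(n+1)$-dimensional $\CF_i\subset\Lambda_i\otimes k$ such that $A:=\iota(\CF_i)$ is isotropic in $W_1$ and $B:=\CF_i\cap W_2$ satisfies $B^\perp\subset B$ in $W_2$. These two conditions encode $\CF_{-i}=\CF_i^\perp$ together with the inclusions $\lambda(\CF_{-i})\subset\CF_i$ and $\iota(\CF_i)\subset\pi_0^{-1}\CF_{-i}$, because $\lambda(\CF_{-i})$ identifies with $B^\perp$ and $\CF_{-i}\cap\ker\lambda$ identifies with $A^\perp$.

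For fixed $(A,B)$, the possible $\CF_i$ form a torsor under $\Hom(A,W_2/B)$. The unipotent radical $\Hom(W_1,W_2)$ acts transitively on it through the surjection $\Hom(W_1,W_2)\to\Hom(A,W_2/B)$. Witt's theorem in $W_1$ and $W_2$, using elements of $\SO$ since both spaces are odd-dimensional, moves any such $(A,B)$ with $\dim A=\ell$ to any other. This gives one orbit per $\ell$. The existence of an isotropic $A$ of dimension $\ell$ and an isotropic $B^\perp$ of dimension $n-2i+\ell$ gives exactly $\max\{0,2i-n\}\le\ell\le i$: the upper bound comes from isotropy and the lower bound from a dimension count.

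Two smaller corrections on the invariant. Your first candidate is identically $n+1$, since $\Lambda_i\otimes k\to\pi_0^{-1}\Lambda_i\otimes k$ is the zero map. Your alternative, the rank of $\CF_{-i}\to\Lambda_i\otimes k$, equals $n-2i+\ell$ rather than $\ell$.

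For comparison, the paper avoids orbit classification entirely. Corollary~\ref{coro-bijection} injects the cells into the $W_i$-orbits of naively-permissible subsets, of which there are $\min\{i,n-i\}+1$ by Lemma~\ref{lem-orbits4}. The explicit points \eqref{CF1234} realize every value of the $L^+\CP$-invariant $\ell$, so each stratum is a single cell by counting.

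Your closure argument is fine. The rank can only drop under specialization because $\wedge^{\ell+1}=0$ is a closed condition, so the dimension count is unnecessary. Explicit degenerations, or a Bruhat-order chain, give the reverse inclusions.
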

A crucial feature of Theorem~\ref{intro.str} is the irreducibility of the special fiber in the maximal case,
which plays an important role in our proof of Theorem~\ref{mainthmintro}. This contrasts with the split even orthogonal case, where in the corresponding maximal-type situation
the special fiber has two irreducible components; see \cite[Theorem~1.5(2),(3)]{yang25TopFlat}. We further give an explicit moduli-theoretic description of the Schubert
varieties $S_\ell$ occurring in Theorem~\ref{intro.str}.
\begin{prop}[{Proposition \ref{moduli-sv}}]\label{defn-schubert}
Assume that $I=\{i\}$. The associated standard lattice chain is 
\[
\Lambda_{-i}\subset \Lambda_i\subset \pi_0^{-1}\Lambda_{-i},
\]
where $\Lambda_{-i}=\Lambda^\vee_i$ denotes the dual lattice taken with respect to $\phi$.

For $\max\cbra{0,2i-n}\leq\ell\leq i$, the Schubert variety $S_\ell$ represents the functor which sends a $k$-algebra $R$ to the set of pairs $(\CF_i,\CF_{-i})$ satisfying the following conditions:
\begin{enumerate}
		\item $\CF_{\pm i}\sset \Lambda_{\pm i} \otimes R$ is, Zariski locally on $\Spec R$, a direct summand of rank $n+1$;
		\item $\phi(\CF_i,\CF_{-i})=0$;
		\item If $\iota_1:\Lambda_{-i}\to \Lambda_i$, $\iota_2:\Lambda_i\to \pi_0^{-1}\Lambda_{-i}$ denote the natural inclusion maps (or their base changes), then $\iota_1(\mathcal{F}_{-i})\subset \mathcal{F}_i$, $\iota_2(\mathcal{F}_i)\subset \pi_0^{-1}\mathcal{F}_{-i}$;
		\item $\wedge^{\ell+1}(\iota_2:\CF_i\ra \pi_0^{-1}\CF_{-i})=0$.
	\end{enumerate}
\end{prop}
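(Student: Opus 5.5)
Let $Z_\ell$ denote the closed subscheme of $(\RM^\naive_{I,k})$ cut out by conditions (1)--(4). By (1)--(3), the $k$-points of $Z_\ell$ are the points of the special fiber of the naive local model on which the rank of $\iota_2|_{\CF_i}:\CF_i\to\pi_0^{-1}\CF_{-i}$ is at most $\ell$. I would prove that $Z_\ell$ is reduced and that its underlying space is $S_\ell$. Together these identify $Z_\ell$ with the reduced scheme $S_\ell$.

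\textbf{Step 1 (set-theoretic identification).} First I use Definition~\ref{defn-Mell} of the strata $\RM^\naive_I(\ell')$. I expect these strata to be defined, or easily shown to be characterized, by the condition $\mathrm{rk}(\iota_2|_{\CF_i})=\ell'$ on the relevant pairs $(\CF_i,\CF_{-i})$, with $\ell'$ ranging over $\max\{0,2i-n\}\le\ell'\le i$. The lower bound should come from a dimension count in $\Lambda_i/\Lambda_{-i}$ (which has dimension $2i+1$) together with isotropy; the upper bound comes from $\mathrm{rk}\,\iota_2\le i$ on $\Lambda_i\otimes k$. Then $Z_\ell(k)=\coprod_{\ell'\le\ell}\RM^\naive_I(\ell')(k)$. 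By the closure relation in Theorem~\ref{intro.str}, this union is exactly $S_\ell$, so $(Z_\ell)_\red=S_\ell$.

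\textbf{Step 2 (reducedness).} The key point is that $Z_\ell$ is reduced, which I would check on affine charts. By $\sG_{\CL}$-equivariance, every point of $Z_\ell$ specializes to the unique closed cell $\RM^\naive_I(\max\{0,2i-n\})$. It therefore suffices to check reducedness on an open affine neighbourhood $U$ of the worst point, namely the standard chart of \S\ref{AffineCharts}. There the pair $(\CF_i,\CF_{-i})$ is given by matrices $X,Y$. Condition (3) expresses one in terms of the other, up to a block structure, and condition (2) becomes a (skew-)symmetry relation. After eliminating variables, I expect $U\cap\RM^\naive_{I,k}$ to become a variety of pairs of matrices $(A,B)$ with $AB=0$, $BA=0$ and symmetry constraints. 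In the quasi-split non-split case this should be a single orthogonal/symmetric-type variety, consistent with the irreducibility in Theorem~\ref{intro.str}. Condition (4) then adds the vanishing of the $(\ell+1)$-minors of one block.

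This locus is a known determinantal-type variety: a rank-bounded piece of the variety of complexes, or of symmetric or skew-symmetric determinantal varieties. I would show it is reduced, indeed normal and Cohen--Macaulay, by citing the standard results of De Concini--Strickland and Kempf or Musili on varieties of complexes and symmetric determinantal ideals. Failing that, I would construct a Gr\"obner basis, or use a Frobenius-splitting argument compatible with the Schubert stratification, since Schubert varieties in affine flag varieties are compatibly split.

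\textbf{Step 3 (conclusion, and the main obstacle).} Since $Z_\ell$ is reduced, $Z_\ell=(Z_\ell)_\red=S_\ell$, and $S_\ell$ is therefore cut out by (1)--(4). The main obstacle is Step 2. First, the chart computation must be arranged so that the equations are recognizably those of a classical determinantal or complex variety with the rank condition. Second, one must verify that the equations obtained from (1)--(4) actually generate the ideal, rather than merely cutting out the same set; in particular, the symmetry constraints induced by $\phi$ with the $\pi_0$-twisted block on $f_1,f_2$ must be tracked carefully. If a direct identification fails, I would instead compare dimensions: show $Z_\ell\cap U$ is Cohen--Macaulay and generically reduced on the open cell $\RM^\naive_I(\ell)$, which is smooth, and conclude reducedness by Serre's criterion.
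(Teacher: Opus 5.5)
Your Steps~1 and~3, and the reduction to the chart $\CU^{\rm naive}_i$ around the worst point, are what the paper does. The gap is Step~2, which is the substantive part of the statement.

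By Proposition~\ref{prop-simplifynaive}, the special fiber of that chart is $\mathbb{A}^N_k\times\Spec k[X]/(X^tHX,\,XHX^t)$. Here $X$ is a single $(2i+1)\times(2i+1)$ matrix and $H$ is the anti-diagonal unit matrix. Condition~(4) becomes $\wedge^{\ell+1}X=0$. (Note that condition~(2), not~(3), eliminates $\CF_i$ in terms of $\CF_{-i}$; condition~(3) produces the quadratic equations.) So what must be proved is that $k[X]/(X^tHX,XHX^t,\wedge^{\ell+1}X)$ is reduced.

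This ring is not covered by the results you invoke. In the De Concini--Strickland variety of complexes, and in the variety of circular complexes $\{AB=BA=0\}$, the two matrices are independent. Here the second matrix is forced to be the adjoint $HX^tH$ of the first. So your scheme is the scheme-theoretic fixed locus of the involution $(A,B)\mapsto(HB^tH,HA^tH)$ on a rank-bounded circular-complex variety. It is not a symmetric or skew-symmetric determinantal variety either. Reducedness does not pass to such fixed loci: $\{xy=0\}$ with $x\leftrightarrow y$ has fixed locus $\Spec k[x]/(x^2)$. The paper closes this step by citing \cite[Theorem 1.11(1)]{Yang25}, which is precisely the reducedness of this ring. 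Some such input, or an explicit Gr\"obner-basis argument, is what your proposal lacks.

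Your fallbacks do not supply it. Compatible Frobenius splitting of Schubert varieties concerns the reduced schemes $S_\ell$. It says nothing about whether the ideal generated by (1)--(4) is radical unless you first exhibit $Z_\ell$ as a scheme-theoretic intersection of compatibly split subschemes. The obvious ambient $\RM^{\rm naive}_{\{i\},k}$ is not reduced. For $i=0$ the chart is $\mathbb{A}^N_k\times\Spec k[x]/(x^2)$. For $i=1$ the $2\times 2$ minors of $X$ lie in the radical of $(X^tHX,XHX^t)$ but not in the ideal, since the monomial $x_{11}x_{22}$ occurs in no generator.

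The Serre-criterion route needs $Z_\ell$ itself to be Cohen--Macaulay. The known Cohen--Macaulayness of Schubert varieties applies only to $(Z_\ell)_{\rm red}=S_\ell$, and proving it for $Z_\ell$ is no easier than proving reducedness.

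A small correction to Step~1: the bound ${\rm rk}(\iota_2|_{\CF_i})\le i$ does not come from $\iota_2$ on $\Lambda_i\otimes k$, which has rank $2i+1$. It comes from isotropy: in the chart, from $X^tHX=0$ with $X$ of size $2i+1$. In any case you can simply cite Proposition~\ref{prop-stratificationMpm} there, as the paper does.
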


We finish the proof of topological flatness by writing down explicit representatives for these cells and checking that each representative lifts to the generic fiber of $\RM^\naive_{\{i\}}$. It follows that the generic fiber is dense in $\RM^\naive_{\{i\}}$.

Next we address~(b), namely reducedness of the special fiber $\RM^\pm_{I,k}$. We first treat the case $I=\{i\}$ and then, using an argument similar to
\cite[\S 4.5]{Go}, we deduce reducedness for general $I$. In the maximal case $I=\cbra{i}$, we may assume that $0\leq 2i\leq n$ by Theorem \ref{intro.prop-paraIndex}(2). We compute an explicit affine chart $\CU^\pm_i\subset \RM^\pm_{\{i\}}$ 
containing a point $*$ in the minimal stratum of~\eqref{intro.stratification}; in particular, we obtain an explicit
description of its special fiber $\CU^\pm_{i,k}$. Set 
\[
\CR_i:=\frac{\CO_F[X]}{(X^tH_{2i+1}X,\; XH_{2i+1}X^t,\; \wedge^{i+1}X)},
\]
where $X$ is a $(2i+1)\times(2i+1)$ matrix and $H_{2i+1}$ is the unit anti-diagonal matrix of size $(2i+1)\times(2i+1)$. Let $\CR_{i,k}:=\CR_i\otimes_{\CO_F}k$.
\begin{thm}\label{intro:chart}
\begin{enumerate}
    \item (Lemma \ref{lem-Gtrans}) The $\sG^\circ_{\Lambda_i}$-translates of $ \CU^\pm_{i}$ cover $\RM^\pm_{\{i\}}$.
    \item (Corollary \ref{coro-Uspin}) The special fiber $\CU^\pm_{i,k}$ of $\CU^\pm_{i}$ is reduced and isomorphic to 
    \[
\mathbb{A}_k^{\frac{(n-2i)(n+2i+1)}{2}}\times \Spec \CR_{i,k}.
\] 
\end{enumerate}
\end{thm}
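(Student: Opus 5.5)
We prove the two parts of Theorem~\ref{intro:chart} separately. Part (1) is a group-action argument combined with the stratification. Part (2) is an explicit coordinate computation on the chart $\CU^\pm_i$.

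For (1), the union $\CW$ of the $\sG^\circ_{\Lambda_i}$-translates of $\CU^\pm_i$ is an open, $\sG^\circ_{\Lambda_i}$-stable subscheme of $\RM^\pm_{\{i\}}$. It suffices to show that $\CW$ contains every point of the special fiber. The generic fiber is then handled the same way: it is a single homogeneous space, so the point of $\CU^\pm_{i,F}$ already generates it. Equivalently, the closed complement $Z=\RM^\pm_{\{i\}}\setminus \CW$ is empty.

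The complement $Z$ is closed and $\sG^\circ_{\Lambda_i}$-stable, so $Z_k$ is a union of Schubert cells $\RM^\naive_I(\ell)$ from the stratification \eqref{intro.stratification}, intersected with the spin model. If $Z_k$ were nonempty, it would contain the closure of some cell. By Theorem~\ref{intro.str}, every such closure contains the minimal stratum $\RM^\naive_I(\max\{0,2i-n\})$, which is the stratum of $*$. But $*\in\CU^\pm_i$, a contradiction. Since $\RM^\pm_{\{i\}}$ is projective, any nonempty closed subset meets the special fiber, so $Z=\emptyset$.

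For (2), we choose $\CU^\pm_i$ as the standard open chart of the Grassmannians around $*$. Concretely, $\CF_{i}$ and $\CF_{-i}$ are graphs of matrices with respect to fixed complements of $*$, written in the basis $e_1,\dots,e_{2n},f_1,f_2$ adapted to $\Lambda_{\pm i}$. We then translate the naive conditions into matrix equations:
\begin{itemize}
\item the inclusions $\iota_1,\iota_2$ preserving the $\CF$'s;
\item the isotropy $\phi(\CF_i,\CF_{-i})=0$;
\item the spin condition of Definition~\ref{defn-spin}.
\end{itemize}
We solve for as many entries as possible. Over $k$, the blocks indexed by $e_{2i+1},\dots,e_{2n-2i}$ (the indices where $\Lambda_i$ and $\Lambda_{-i}$ agree) should become free. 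They are constrained only by an alternating-type isotropy relation, and counting gives $\frac{(n-2i)(n+2i+1)}{2}$ free variables. The remaining variables assemble into a $(2i+1)\times(2i+1)$ matrix $X$, coming from the $2i$ vectors $e_1,\dots,e_i,e_{2n-i+1},\dots,e_{2n}$ together with the anisotropic direction $f_1,f_2$. On $X$, the isotropy conditions become $X^tH_{2i+1}X=0$ and $XH_{2i+1}X^t=0$, while the spin condition (a rank bound on $\iota_2$) becomes $\wedge^{i+1}X=0$. This yields the isomorphism $\CU^\pm_{i,k}\cong \mathbb{A}_k^{\frac{(n-2i)(n+2i+1)}{2}}\times\Spec\CR_{i,k}$.

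Reducedness then reduces to proving that $\CR_{i,k}$ is reduced. I would try to identify $\Spec\CR_{i,k}$ with an orthogonal analogue of a determinantal variety: matrices $X$ with $X$ and $X^t$ self-orthogonal for the quadratic form $H_{2i+1}$ and rank at most $i$. Two routes seem available.
\begin{itemize}
\item \textbf{Gröbner basis.} Show that the generators form a Gröbner basis with squarefree initial ideal, in the style of the Pappas--Rapoport and Görtz-type computations.
\item \textbf{Geometric comparison.} Compare with the split case in \cite{Yang25}. Show that $\Spec\CR_{i,k}$ is irreducible, which Theorem~\ref{intro.str} suggests, and Cohen--Macaulay of the expected dimension. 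Then check generic reducedness at one point of the open stratum.
\end{itemize}

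The main obstacle is this reducedness of $\CR_{i,k}$, specifically the claim that the listed equations generate a radical ideal. I would first verify small cases ($i=0,1$) by hand. I would then attempt the second route, using that $X^tHX=XHX^t=0$ with $\operatorname{rank}X\le i$ forces $\operatorname{im}X$ and $\ker X$ to be related Lagrangian-type subspaces. This gives a resolution by a vector bundle over an isotropic Grassmannian, from which normality and reducedness can be extracted.
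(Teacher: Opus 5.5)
\textbf{Part (1).} Your argument is correct and is the one the paper intends. The complement of the union of translates is closed and $\sG^\circ$-stable. If it were nonempty, its special fiber would be a closed union of Schubert cells, hence would contain the unique closed cell, which is the orbit of $*$.

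\textbf{Part (2): the main gap.} The gap is exactly at the step you single out: nothing in the proposal proves that $\CR_{i,k}$ is reduced, i.e.\ that $(X^tHX,\,XHX^t,\,\wedge^{i+1}X)$ is radical in $k[X]$. The Gr\"obner route is only a hope, and the geometric route does not close. Cohen--Macaulayness of the a priori non-reduced scheme $\Spec\CR_{i,k}$ has no source. A vector-bundle resolution over isotropic Grassmannians only sees $(\Spec\CR_{i,k})_{\red}$. Indeed, $X^tHX=0$ already forces $\mathrm{rank}\,X\le i$ on points, since the image is isotropic for $H_{2i+1}$. So the generators $\wedge^{i+1}X$ matter only scheme-theoretically, which is precisely what a resolution cannot detect.

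You mention comparing with \cite{Yang25}, but the precise input to take from there is a \emph{different} flat lift of the same special fiber. The ring $\CR_i=\CO_F[X]/(XHX^t,X^tHX,\wedge^{i+1}X)$, with no $\pi_0$, has a generic fiber unrelated to that of the chart. It is nevertheless $\CO_F$-flat and reduced by \cite[Theorem~1.11(1)]{Yang25}, hence has no embedded primes. The paper then checks three properties of $\Spec\CR_{i,k}$:
\begin{enumerate}
\item it is irreducible, by topological flatness together with irreducibility of $\RM^\naive_{\{i\},k}$;
\item it is smooth on the dense open where the upper-left $i\times i$ block of $X$ is invertible, since there the equations eliminate to an open of $\mathbb{A}^{i(2i+1)}_k$;
\item its reduction is normal, because $\mathbb{A}^N\times(\Spec\CR_{i,k})_{\red}$, with $N=(n-2i)(n+2i+1)/2$, is an open of the special fiber of $\RM^{\pm\loc}_{\{i\}}$, which is irreducible and hence normal by Pappas--Zhu.
\end{enumerate}
Hironaka's lemma (Proposition~\ref{prop-Hiron}) applied to $\Spec\CR_i$ then gives reducedness of $\CR_{i,k}$. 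Your resolution could at most replace the Pappas--Zhu normality input; it cannot replace the flat-lift-plus-Hironaka step.

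\textbf{Part (2): the isomorphism.} The isomorphism is not obtained by directly translating the spin condition into $\wedge^{i+1}X=0$. Calling the spin condition ``a rank bound on $\iota_2$'' conflates it with the condition defining $S_i(i)$. The explicit computation (Theorem~\ref{SpinConditions}, via the basis $h_S$ of $(\wedge^{n+1}\Lambda_i^\vee)_\pm$ and Laplace expansion) extracts only the relations $[U^\bot:U'^\bot]=\pm(-1)^i\sgn(\sigma_U)\sgn(\sigma_{U'})\pi[U:U']$. Modulo $\pi$ these give all $(i+1)$-minors of $X$. This yields closed immersions $\CU^\pm_{i,k}\subset\mathbb{A}^N\times\Spec\CR_{i,k}\subset\CU^\naive_{i,k}$, which are homeomorphisms by topological flatness (Theorem~\ref{coro-topoflat}). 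The remaining spin relations are never shown directly to lie in $(X^tHX,XHX^t,\wedge^{i+1}X)$. Instead, the equality $\CU^\pm_{i,k}=\mathbb{A}^N\times\Spec\CR_{i,k}$ follows because a closed subscheme of a reduced scheme with the same underlying space is the whole scheme. So in the paper the isomorphism is a consequence of the reducedness you left open, not an independent computation preceding it. If you want it first, you must check that every spin relation of Types I--VI reduces into that ideal, which the proposal does not address.
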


\begin{remark}\label{rem:i=0-smooth}
For $I=\{0\}$, Theorem~\ref{intro:chart}(2) gives
$\CU^\pm_{0,k}\cong \mathbb{A}^{\frac{n(n+1)}{2}}_k$.
In fact, Remark~\ref{ExoticGoodReduction} shows that $\RM^\pm_{\{0\}}$ is smooth over $\CO_{F}$. This is a case of ``exotic" good reduction (see also \cite[Theorem 1.2]{HPR}).
\end{remark}

By Theorem~\ref{intro:chart}(1), it suffices to show that the affine chart
$\CU^\pm_{i,k}$ is reduced. In \S \ref{FlatSec}, using the irreducibility of the special fibers of the above local models (Theorem \ref{intro.str}), we prove that $\CR_{i,k}$ is irreducible and generically smooth (hence generically reduced). On the other hand,
\cite[Theorem~1.11]{Yang25} shows that $\CR_i$ is $\CO_F$-flat and reduced.
Applying Hironaka's lemma and using the irreducibility of $\CR_{i,k}$, we deduce that $\CR_{i,k}$ is reduced. It follows that $\CU^\pm_{i,k}\cong \mathbb{A}_k^{\frac{(n-2i)(n+2i+1)}{2}}\times \Spec \CR_{i,k}$ is reduced, hence $\RM^\pm_{\{i\},k}$ is reduced. This completes the outline of the proof of Theorem~\ref{mainthmintro}.


\subsection{Applications}
We now briefly indicate some applications of the above results.

Assume $I=\{1\}$ and consider the blow-up $\rho:\Mbl_{\{1\}}\to \Mpm_{\{1\}}$ along the minimal stratum of~\eqref{intro.stratification}.
Using the explicit description of the chart $\CU^\pm_{1}$, we show:

\begin{thm}\label{intro:semistable}
Assume $I=\{1\}$. 
The blow-up $\Mbl_{\{1\}}$ is regular and its special fiber is a reduced divisor with two smooth
irreducible components intersecting transversely.
\end{thm}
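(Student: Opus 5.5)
The plan is to work étale-locally on $\Mpm_{\{1\}}$, using the explicit affine chart $\CU^\pm_1$ of Theorem~\ref{intro:chart}. By Theorem~\ref{intro:chart}(1), the $\sG^\circ_{\Lambda_1}$-translates of $\CU^\pm_1$ cover $\Mpm_{\{1\}}$. The minimal stratum is $\sG^\circ_{\Lambda_1}$-stable, so the blow-up commutes with these translations. It therefore suffices to show that the blow-up of $\CU^\pm_1$ along the closed stratum is regular with the claimed special fiber. Since the blow-up is an isomorphism away from the center, and $\Mpm_{\{1\}}$ is smooth away from the minimal stratum, only the points over the center need attention.

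\textbf{Step 1: normal form of the chart.} I first describe the chart over $\CO_F$, not just its special fiber. For $i=1$ the matrix $X$ is $3\times 3$, and $\wedge^{2}X=0$ forces $X$ to have rank at most one. The two equations $X^tH_3X=XH_3X^t=0$ then say that $X=vw^t$ with $v$ and $w$ isotropic for $H_3$. I expect the full $\CO_F$-chart to be $\mathbb{A}^{N}\times \Spec \CR'$, where $N=\frac{(n-2)(n+3)}{2}$ and $\CR'$ deforms $\CR_{1,k}$ with $\pi$ appearing in an equation. The natural guess, to be confirmed from the explicit equations of \S\ref{AffineCharts}, is
\[
\CR'\cong \CO_F[v,w]/(q(v),\,q(w),\,v_iw_j \text{ relations}) \quad\text{with some } v_aw_b-\pi \text{ type term}.
\]
In that case the local model near $*$ is a cone over a quadric-type variety, with $\pi$ equal to a bilinear expression. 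Concretely, I would aim to show that near $*$ the chart is
\[
\Spec \CO_F[x_1,\dots,x_m,y_1,\dots,y_m]/\bigl(\textstyle\sum x_ay_a-\pi,\ \text{quadrics}\bigr)
\]
times an affine space, and that the center is the ideal $(x,y)$, i.e.\ the vanishing of all entries of $X$ together with $\pi$.

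\textbf{Step 2: the blow-up in charts.} I then compute the blow-up along the ideal generated by the entries of $X$ and $\pi$, which is the ideal of the minimal stratum. I cover it by the standard affine charts where one generator, say $x_1$ or $y_1$, becomes invertible. In the chart where $x_1$ generates, write $x_a=x_1x_a'$ and $y_a=x_1y_a'$, and use the rank-one structure to parametrize. The relation becomes $\pi=x_1\cdot u$ up to a unit, with $u$ a regular function whose vanishing is the strict transform. So the chart is smooth over $\CO_F[x_1,u]/(x_1u-\pi)$, which is regular with special fiber $x_1u=0$. This gives two components meeting transversely: the exceptional divisor $x_1=0$ and the strict transform $u=0$ of the special fiber of $\Mpm_{\{1\}}$, the latter being irreducible by Theorem~\ref{intro.str}. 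The chart where $\pi$ itself generates should be empty or contained in the others, since $\pi\in (x,y)^2$ should hold modulo the equations.

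\textbf{Step 3: global components.} Finally I check that both components are smooth. The exceptional divisor is a bundle over the center, which is a single Schubert cell $\RM^\naive_{\{1\}}(\max\{0,2-n\})$ and hence smooth, with fibers the projectivized normal cone, a smooth quadric. The strict transform is the blow-up of the irreducible special fiber along its singular locus, and it is smooth by the chart computation. Reducedness of the special fiber follows because it is locally given by $x_1u=0$.

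The main obstacle is Step 1. It requires extracting from the spin-condition equations of $\CU^\pm_1$ the precise way $\pi$ enters the equations in the lifted ring $\CR_1$, and getting it into a form in which the blow-up charts visibly reduce to $xu=\pi$. I would attempt this by diagonalizing $H_3$ over $\CO_F$, writing $v$ and $w$ in hyperbolic coordinates, and treating the isotropic cone as the affine cone over a conic $\cong\mathbb{P}^1$. Then $\CR_{1}$ should look like a Segre-type cone, and its blow-up at the vertex is a line bundle over a smooth base, which makes the required computation transparent.
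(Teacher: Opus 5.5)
There is a genuine gap in Step 1, which carries all the content. You leave it unproved, and the normal form you aim for contradicts the statement. If the chart were $\Spec\mathcal{O}_F[x,y]/(\sum x_ay_a-\pi,\ \text{quadrics})$ with center $(x,y)$, then $\pi$ would lie in the square of the ideal of the center. On the chart where $x_1$ generates you would get $\pi=x_1^2\sum x_a'y_a'$. So $\pi$ would vanish to order at least $2$ along the exceptional divisor, and the special fiber of the blow-up would be non-reduced. Your claim that ``the relation becomes $\pi=x_1\cdot u$'' therefore does not follow from your own Step 1.

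The actual chart is different. By Proposition~\ref{AffineSplitChart} and Corollary~\ref{coro-Uspin}, $\mathcal{U}^\pm_1\cong\mathbb{A}^{(n-2)(n+3)/2}_{\mathcal{O}_F}\times\Spec\mathcal{O}_F[X]/(\mathcal{I}^{\rm naive}+\mathcal{I}^\pm)$. Here $\mathcal{I}^{\rm naive}$ is generated by the entries of $X^tH_3X-\pi^2H_3$ and $XH_3X^t-\pi^2H_3$. The ideal $\mathcal{I}^\pm$ is generated by the spin relations, which set each $2\times2$ minor of $X$ equal to $\mp\pi$ times a suitable entry, e.g.\ $x_{11}x_{22}-x_{12}x_{21}=\mp\pi x_{11}$. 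All generators lie in $\mathfrak m^2$, where $\mathfrak m=(X,\pi)$. Hence $\pi$ and the $x_{ij}$ stay independent in $\mathfrak m/\mathfrak m^2$, so $\pi\notin\mathfrak m^2$; only $\pi^2\in(X)^2$ holds.

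Likewise, $\wedge^2X=0$ and $X=vw^t$ hold only modulo $\pi$. On the generic fiber $\pi^{-1}X\in SO(3)$ has full rank, so the $\mathcal{O}_F$-chart has no such parametrization. The paper's $\mathcal{R}_1$ involves no $\pi$ and has rank-one generic fiber; it only shares the special fiber with the chart.

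What is missing is the computation that feeds the spin relations into each blow-up chart. On $D_+(y_{11})$ put $X=x_{11}Y$ and $\pi=x_{11}\alpha$. The generator $m^{1,1}_{2,2}\pm\pi x_{11}=x_{11}^2(y_{22}-y_{12}y_{21}\pm\alpha)$ gives $\alpha=\mp(y_{22}-y_{12}y_{21})$ on the strict transform. The naive relations then eliminate $y_{13},y_{31},y_{23},y_{32},y_{33}$, leaving $\Spec\mathcal{O}_F[x_{11},y_{12},y_{21},y_{22}]/\bigl(x_{11}(y_{22}-y_{12}y_{21})\pm\pi\bigr)$. So it is the spin condition that makes $\pi$ vanish to order exactly one along $x_{11}=0$. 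The paper does the same on $D_+(y_{12})$, and on $D_+(y_{22})$ after a further cover. On $D_+(\alpha)$ the chart is $O(3)$, and the spin condition selects one smooth component.

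Two smaller corrections. First, the $\pi$-chart is indeed covered by the others. The reason is that $X^tH_3X=\pi^2H_3$ makes $Y=X/\pi$ orthogonal, hence pointwise invertible, on that chart; it is not that $\pi\in(x,y)^2$. Second, the fibers of the exceptional divisor over the center are $3$-dimensional: in the chart above, $x_{11}=0$ is $\mathbb{A}^3_k$ lying over a single point of $U^\pm_1$. They are not the quadric surface $\mathbb{P}^1\times\mathbb{P}^1$, which is only their intersection with the strict transform.
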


As arithmetic applications, via the local model diagrams recalled in \S\ref{ShimuraVarSec}, Theorem~\ref{mainthmintro}
yields corresponding flatness results and moduli-theoretic descriptions for integral PEL moduli spaces of type~D and for the associated orthogonal Rapoport--Zink spaces. For the case $I= \{1\}$, 
we construct semi-stable models via the
above blow-up; see \S\ref{ShimuraVarSec}.

\subsection{Notation} \label{subsec-notation-intro}
Throughout the paper, $(F_0,\CO_{F_0},\pi_0, k)$ denotes a complete discretely valued field with ring of integers $\CO_{F_0}$ , uniformizer $\pi_0$, and residue field $k$ of characteristic $p>2$. Denote by $\ol{k}$ an algebraic closure of $k$. We also employ an auxiliary complete discretely valued field $K$ with ring of integers $\CO_K$, uniformizer $t$, and the same residue field $k$; eventually $K$ will be the field $k((t))$ of Laurent series over $k$.

For $1\leq i\leq 2n$, define $i^*\coloneqq 2n+1-i$. Given $v\in\BZ^{2n+1}$ and $1\leq j\leq 2n+1$, we write $v(j)$ for the $j$-th entry of $v$, and $\Sigma v$ for the sum of entries of $v$. Let $v^*\in\BZ^{2n+1}$ denote the vector defined by $v^*(i)\coloneqq v(i^*)$ for $1\leq i\leq 2n$ and $v^*(2n+1)\coloneqq -v(2n+1)-1$. For $v,w\in\BZ^{2n+1}$, we write $v\geq w$ if $v(j)\geq w(j)$ for all $1\leq j\leq 2n+1$. The expression $(a^{(r)},b^{(s)},\ldots)$ denotes the vector with $a$ repeated $r$ times, followed by $b$ repeated $s$ times, and so on. For $d\in \BZ$, set $\mathbf d_0\coloneqq (d^{(2n)},0) \in \BZ^{2n+1}$ and $\mathbf d\coloneqq (d^{(2n+1)})\in \BZ^{2n+1}$. 

For any real number $x$, denote by $\lfloor x\rfloor$ (resp. $\lceil x\rceil$) the greatest (resp. smallest) integer $\leq x$ (resp. $\geq x$).

For integers $n_2\geq n_1$, denote $[n_1,n_2]\coloneqq \cbra{n_1,\ldots,n_2}$.  For a subset $E\sset [1,2n]$ of cardinality $n$, set $E^*\coloneqq \cbra{i^*\ |\ i\in E}$ and $E^\perp\coloneqq (E^*)^c$ (the complement of $E^*$ in $[1,2n]$). Denote by $\Sigma E$ the sum of the entries in $E$.  

For a scheme $\CX$ over $\CO_{F_0}$, we let $\CX_k$ or $\CX\otimes k$ denote the special fiber $\CX\otimes_{\CO_{F_0}} k$, and $\CX_{F_0}$ or $\CX\otimes F_0$ denote the generic fiber $\CX\otimes_{\CO_{F_0}} F_0$.  

\smallskip

{\bf Acknowledgements:}  We thank B. Howard, Y. Luo and G. Pappas for helpful comments. J. Y. was supported by the Shuimu Tsinghua Scholar Program of Tsinghua
University. I. Z. was supported by Germany's Excellence Strategy EXC~2044--390685587 ``Mathematics M\"unster: Dynamics--Geometry--Structure'' and by the CRC~1442 ``Geometry: Deformations and Rigidity'' of the DFG. Z. Z. was supported by the Tianyuan Fund for Mathematics of NSFC, No. 12526543.

\section{Conjugacy classes of parahoric subgroups} \label{sec-para}
We use the notation as in \S \ref{subsec-notation-intro}. In particular, $K$ is a complete discretely valued field. Denote by $\val: K\cross\ra \BZ$ the normalized valuation on $K$ with $\val(t)=1$. Let $L=K(u)$ be a quadratic extension of $K$ with uniformizer $u$ satisfying $u^2=-t$. Denote by $\val_L:L\cross\ra \BZ$ the normalized valuation on $L$ with $\val(u)=1$. Let $\Gamma\coloneqq \Gal(L/K)$ be the Galois group with the unique nontrivial element $\gamma$ sending $u$ to $-u$. 

\subsection{Orthogonal similitude group}\label{OSG}
Let $n\geq 1$ be an integer. Let $V$ be a $K$-vector space of dimension $2(n+1)$ equipped with a quasi-split but non-split non-degenerate symmetric $K$-bilinear form $\phi$. By the classification
of such forms, after replacing $K$ by a sufficiently big unramified extension (still denoted by $K$), we may (and
do) choose an ordered $K$-basis $e_1,\ldots,e_{2n},f_1,f_2$ of $V$ such that the matrix of $\phi$
with respect to this basis is
\begin{flalign*}
\begin{pmatrix}
H_{2n} & \\ & \begin{pmatrix} t & \\ & 1 \end{pmatrix}
\end{pmatrix},
\end{flalign*}
where $H_{2n}$ denotes the $2n\times 2n$ anti-diagonal unit matrix; see, e.g. 
\cite[\S2.2]{I.Z} and \cite[\S2.7]{PRS}. Denote by $G=\GO(V,\phi)$ the associated orthogonal similitude group with similitude character \begin{flalign*}
	   \eta: G\ra \BG_m.
\end{flalign*}  Let $G^{\circ}$ be the connected component of $G$ containing the identity element. Then \begin{flalign*}
	  G^\circ=\cbra{g\in G\ |\ \eta(g)^{n+1}=\det(g) }.
\end{flalign*} Denote by $\tau\coloneqq \diag(1^{(2n)},-1,1)\in G\sset \GL_{2n+2}$. We have \begin{flalign}
	G = G^{\circ}\sqcup \tau G^{\circ}.  \label{Gtwocompo}
\end{flalign}   

Let $S$ be the maximal $K$-split torus of $G$. For a $K$-algebra $R$, we have \begin{flalign*}
	 S(R)=\cbra{\diag(x_1,\ldots,x_{2n}, y, y)\in\GL_{2n+2}(R)\ |\ x_1x_{2n}=x_2x_{2n-1}=\cdots=x_nx_{n+1}=y^2 }.
\end{flalign*}
The centralizer $T$ of $S$ in $G$ is a maximal torus (over $K$) of $G$, whose $R$-points are given by \begin{flalign*}
	\cbra{\diag(x_1,\ldots,x_{2n},\begin{psmallmatrix}
		    y_1 &y_2\\ -t y_2 &y_1
		\end{psmallmatrix})
	\in \GL_{2n+2}(R) \ \vline \ \begin{array}{l}
		x_1x_{2n}=\cdots=x_nx_{n+1}=y_1^2+t y_2^2 \\ x_1,\ldots,x_{2n},y_1,y_2\in R
	\end{array} }.
\end{flalign*}
We have $T\simeq \BG_{m,K}^{n}\times \Res_{L/K}\BG_{m,L}$ and $T\otimes_KL\simeq \BG_{m,L}^{n+2}$.
Denote \begin{flalign*}
	\varphi: \BG_{m,L}\times \BG_{m,L} &\simto \cbra{A\in \GL_{2,L}\ |\ A^t\begin{psmallmatrix}
		t \\ &1
	\end{psmallmatrix}A=\det(A)\begin{psmallmatrix}
		t \\ &1
	\end{psmallmatrix} } \\ (z_1,z_2) &\longmapsto  \begin{pmatrix}
		\frac{z_1+z_2}{2} &\frac{z_1-z_2}{2u}\\ -t\frac{z_1-z_2}{2u} &\frac{z_1+z_2}{2}
	\end{pmatrix}.
\end{flalign*}
The inverse of $\varphi$ is given by sending $\begin{psmallmatrix}
	y_1 &y_2\\ -t y_2 &y_1
\end{psmallmatrix}$ to $(y_1+u y_2,y_1-u y_2)$.
We have \begin{flalign*}
	T(L) = \cbra{\diag(x_1,\ldots,x_{2n},\varphi(z_1,z_2))\in \GL_{2n+2}(L)\ \vline \ \begin{array}{l}
		x_1x_{2n}=\cdots=x_nx_{n+1}=z_1z_2 \\ x_1,\ldots,x_{2n},z_1,z_2\in L
	\end{array}  }.
\end{flalign*}
The group $X_*(T)=\Hom_L(\BG_{m,L},T_L)$ of (geometric) cocharacters of $T$ is isomorphic to $\BZ^{n+2}$. More explicitly, 
we have an isomorphism \begin{flalign}
    \begin{split}
    	T(L)/T(\CO_L) &\simto X_*(T)\\ \diag(x_1,\ldots,x_{2n},\varphi(z_1,z_2)) &\mapsto (\val_L(x_1),\ldots,\val_L(x_{n}),\val_L(z_1),\val_L(z_2) ).  \label{Tlattice}
    \end{split}
\end{flalign}
The element $\gamma\in\Gamma$ acts on $X_*(T)$ by sending $(a_1\ldots,a_n,b_1,b_2)\in\BZ^{n+2}\simeq X_*(T)$ to $(a_1,\ldots,a_n,b_2,b_1)$. It follows that the group of coinvariants \begin{flalign*}
	  X_*(T)_\Gamma\simeq \BZ^n\times \frac{\BZ^2}{\BZ(1,-1)}\simeq \BZ^{n+1}
\end{flalign*}
is torsion-free. The coroot lattice $Q^\vee\sset X_*(T)$ consists of \begin{flalign*}
    \cbra{(a_1,\ldots,a_n,b_1,b_2)\in \BZ^{n+2}\ |\ a_1+\cdots+a_n+b_1 \text{\ is even, and\ } b_1+b_2=0 },
\end{flalign*} 
cf. \cite[\S 3.7]{smithling2011topological}. Hence, the coinvariants $(Q^\vee)_\Gamma\sset X_*(T)_\Gamma$ can be identified with \[\cbra{(r_1,\ldots,r_n,0)\in \BZ^{n+1}}. \] 
 The Kottwitz homomorphism for $T$ is \begin{equation} \label{kappaT}
   \begin{split}
   	  \kappa_T: T(K)\simeq (K\cross)^n\times L^\times &\twoheadrightarrow X_*(T)_\Gamma\simeq\BZ^{n+1} \\ (x_1,\ldots,x_n,y) &\mapsto (\val(x_1),\ldots,\val(x_n),\val_L(y))
   \end{split}.
\end{equation}
Denote by $\pi_1(G)$ the algebraic fundamental group of $G^\circ$. We have \begin{flalign*}
	\pi_1(G)_\Gamma=X_*(T)_\Gamma/(Q^\vee)_\Gamma\simeq \BZ.
\end{flalign*}

\begin{remark}
	Denote by $\breve K$ the completion of the maximal unramified extension of $K$. The Kottwitz homomorphism in \cite[\S 7.6]{kottwitz1997isocrystals} is stated in the form \begin{flalign*}
		  T(K)\twoheadrightarrow X_*(T)_{\Gal(\ol{K}/K)}^{\Gal(\breve K/K)}.  
	\end{flalign*}
	In our situation, $T$ splits over the totally ramified extension $L/K$. The above map is the same as $\kappa_T$.
\end{remark}

Let $\sB=\sB(G,K)$ denote the (extended) Bruhat--Tits building of $G$. The standard apartment $\CA$ associated with $S$ can be identified with \begin{flalign*}
	  X_*(S)\otimes_\BZ\BR \simeq \cbra{(r_1,\ldots,r_{2n},s)\in\BR^{2n+1}\ |\ r_1+r_{2n}=\cdots=r_n+r_{n+1}=s}.
\end{flalign*}
For $1\leq i\leq n$, let $\chi_i: S\ra \BG_{m,K}$ denote the character sending $\diag(x_1,\ldots,x_{2n},y,y)$ to $x_iy\inverse$. Denote by $\chi_{n+1}$ the character sending $\diag(x_1,\ldots,x_{2n},y,y)$ to $y$.  Then $(\chi_i)_{1\leq i\leq n+1}$ form a basis of the group $X^*(S)$ of characters.
Let $\Phi\coloneqq \Phi(G,S)\sset X^*(S)$ be the set of roots of $G$ relative to $S$. Then \begin{flalign*}
	\Phi=\cbra{\pm \chi_i\pm \chi_j\ |\ i,j\in [1,n] \text{\ and\ } i\neq j}\cup \cbra{\pm \chi_i\ |\ i\in[1,n]}
\end{flalign*}
is a root system of type $B_n$. In particular, the associated Weyl group $W(\Phi)$ is isomorphic to $S_{2n}^*$, the subgroup of $S_{2n}$ consisting of permutations $\sigma$ satisfying \begin{flalign*}
	\sigma(2n+1-i)+\sigma(i)=2n+1, \text{\ for all $1\leq i\leq 2n$}.
\end{flalign*} The affine root system is \begin{flalign*}
	\Phi_\aff = \cbra{\pm \chi_i\pm\chi_j+\BZ\ |\ i,j\in [1,n] \text{\ and\ }i\neq j }\cup \tcbra{\pm\chi_i+\half\BZ\ |\ i\in[1,n]};
\end{flalign*}
see \cite[Example 1.16]{tits1979reductive}.

We fix the base alcove $\fa$ in the apartment $X_*(S)\otimes\BR\sset \BR^{2n+1}$ determined by $$-1/2<\chi_1<\chi_2<\cdots<\chi_n<0.$$ Then $\fa$ has  $n+1$ vertices (minimal facets) $a_i+\BR\cdot(1,\ldots,1)$ for
\begin{flalign}  \label{vertices}
	a_i\coloneqq \rbra{(-1/2)^{(i)}, 0^{(2n-2i)},(1/2)^{(i)},0 }\in \BR^{2n+1},\ 0\leq i\leq n.
\end{flalign} 
The vertices corresponding to $a_0$ and $a_{n}$ are special; the other vertices are nonspecial.

Set \begin{flalign*}
	T(K)_1\coloneqq \ker\kappa_T\sset T(K).
\end{flalign*}
Define the Iwahori--Weyl groups $$\wt{W}\coloneqq N_{G}(K)/T(K)_1,\quad \wt{W}^\circ\coloneqq N_{G^{\circ}}(K)/T(K)_1, $$ where $N_{G}$ (resp. $N_{G^{\circ}}$) denotes the normalizer of $T$ in $G$ (resp. $G^{\circ}$). Then \[\wt{W}=\wt{W}^\circ\sqcup \tau\wt{W}^\circ. \]
The group $\wt{W}$ admits two semi-direct product decompositions\begin{flalign*}
    X_*(T)_\Gamma\rtimes W \text{\ and\ } W_{\aff}\rtimes \Omega,
\end{flalign*} 
where $W=N_G(K)/T(K)$ denotes the (finite) Weyl group, $W_{\aff}$ denotes the affine Weyl group, and $\Omega$ denotes the stabilizer subgroup of $\fa$. From now on, we identify \begin{flalign*}
	X_*(T)_\Gamma \simeq \cbra{(b_1,\ldots,b_n,c-b_n,\ldots,c-b_1,c)\in \BZ^{2n+1} }\sset \BZ^{2n+1}.
\end{flalign*}
   We also have \begin{itemize}
	\item $\wt{W}^\circ\simeq X_*(T)_\Gamma\rtimes W^\circ$, where $W^\circ=N_{G^\circ}(K)/T(K)\simeq W(\Phi)\simeq S_{2n}^*$. Note that $W/W^0\simeq\BZ/2\BZ$ is generated by $\tau$.
	\item $W_\aff \simeq (Q^\vee)_\Gamma\rtimes W^0 \simeq \BZ^{n}\rtimes S_{2n}^*$, where $(Q^\vee)_\Gamma$ denotes the coroot lattice \begin{flalign*}
		(Q^\vee)_\Gamma =\cbra{(b_1,\ldots,b_n,-b_n,\ldots,-b_1,0)\in \BZ^{2n+1}}\sset X_*(T)_\Gamma.	
	\end{flalign*}
	\item the Kottwitz homomorphism for $G^\circ$ is \begin{flalign*}
		  \kappa: G^\circ(K)\ra \pi_1(G)_\Gamma=X_*(T)_\Gamma/(Q^\vee)_\Gamma\simeq \BZ.
	\end{flalign*}
\end{itemize}

Note that the group $\wt{W}^\circ$ acts on $\BR^{2n+1}$ via affine transformations: for any vector $v\in \BR^{2n+1}$ and $w=t^ww_0$, where $w_0\in W^\circ\simeq S_{2n}^*$ and $t^w\in X_*(T)_\Gamma\sset \BZ^{2n+1}$, define $wv\in \BR^{2n+1}$ via setting 
\begin{flalign}
    &wv(i)\coloneqq v(w_0\inverse(i))+t^w(i), \text{\ for\ } 1\leq i\leq 2n+1.   \label{actionW}
\end{flalign}

\subsection{Parahoric subgroups}\label{ParahoricSubsection}
\begin{defn}

     Set \begin{flalign*}
     	\lambda_i &\coloneqq \CO_K\pair{t\inverse e_1,\ldots,t\inverse e_i,e_{i+1},\ldots,e_{2n},t\inverse f_1,f_2},\ 0\leq i\leq n.
     \end{flalign*}
     For $i\in[0,n]$, denote by $\lambda_{-i}=\lambda_i^\vee$ the dual lattice of $\lambda_i$ with respect to $\phi$.  We have \begin{flalign*}
     	   \lambda_{-i}\coloneqq \CO_K\pair{e_1,\ldots,e_{2n-i},te_{2n+1-i},\ldots,te_{2n},f_1,f_2 } \text{\ and\ }  \lambda_{-i}\sset \lambda_i\sset t\inverse\lambda_{-i}.
     \end{flalign*}
     We note 
     that $\lambda_0\neq \lambda_{-0}$. 
\end{defn}

\begin{defn} \label{defn-pmI}
	Fix the totally ordered set $$\CS\coloneqq \cbra{-n<-(n-1)<\cdots<-1<-0<0<1<2<\cdots<n},$$ where we distinguish the symbols $-0$ and $0$.   
	
	For any subset $I\sset [0,n]$, define \begin{flalign*}
		2n\BZ\pm I\coloneqq \cbra{2nd+i\ |\ d\in\BZ,i\in I}\sqcup \cbra{2nd-i\ |\ d\in \BZ, i\in I},
	\end{flalign*} 
	where the two symbols $2nd+0$ and $2nd-0$ are regarded as distinct elements. 
	We have an injection \begin{flalign*}
		\phi: 2n\BZ\pm I\hookrightarrow \BZ\times \CS,\qquad \phi(2nd\pm i)\coloneqq (d,\pm i).
	\end{flalign*}
	We equip $2n\BZ\pm I$ with the following total order: for $x,y\in 2n\BZ\pm I$, define $x<y$ if $\phi(x)<\phi(y)$ in the lexicographic order of $\BZ\times \CS$.
\end{defn}

For $\ell=2nd\pm i$ for some $d\in\BZ$ and $i\in [0,n]$, define $\lambda_\ell\coloneqq t^{-d}\lambda_{\pm i}$. For any non-empty subset $I\sset [0,n]$, we obtain a self-dual lattice chain \begin{flalign}
     	 \lambda_{I}\coloneqq \cbra{\lambda_\ell}_{\ell\in \cbra{2n\BZ\pm I}}.  \label{eq-lambdaI}
     \end{flalign}
     
 \begin{prop} \label{coro-stabvi}
 	The stabilizer subgroup of $a_j$ (see \eqref{vertices})  for $j\in [0,n]$ in $G(K)$ equals \begin{flalign*}
    	P_{\cbra{j}}\coloneqq \cbra{g\in G(K)\ |\ g\lambda_j=\lambda_j}.
    \end{flalign*}
 \end{prop}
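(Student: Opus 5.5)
The plan is to identify the building $\sB(G,K)$, or at least the standard apartment $\CA$, with a space of (graded) self-dual periodic lattice chains, or norms, in the sense of Bruhat--Tits. Under this identification a point of the building corresponds to a norm on $V$, and its stabilizer in $G(K)$ is the stabilizer of the associated lattice chain. Concretely, a point $x=(r_1,\ldots,r_{2n},s)\in\CA$ gives the graded lattice family
\[
L_x(c)=\bigoplus_{m} \{\, v\in K e_m : \val(v)\geq c - r_m\,\}\ \oplus\ (\text{the }f_1,f_2\text{ part}),
\]
where the $f_1,f_2$ part is governed by the anisotropic plane $K f_1\oplus Kf_2\simeq L$ with the norm form. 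There one uses the valuation $\val_L$, scaled by $1/2$ and shifted by $s$, since $\phi(f_1,f_1)=t$ and $\phi(f_2,f_2)=1$, so that $y_1+uy_2$ has well-defined $\val_L$. The equivariance of this map under $N_G(K)$ acting via \eqref{actionW}, and its compatibility with $\wt W$, follow from the explicit description of $T$, $\kappa_T$ and the Iwahori--Weyl group above. The stabilizer of $x$ in $G(K)$ is then the stabilizer of the family $\{L_x(c)\}_{c\in\BR}$, modulo the homothety $\BR\cdot(1,\ldots,1)$, which is harmless since scalars preserve every lattice.

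The first step is to compute the lattices attached to $a_j=((-1/2)^{(j)},0^{(2n-2j)},(1/2)^{(j)},0)$. As $c$ varies, the distinct lattices that occur are exactly $t^{d}\lambda_j$ and $t^{d}\lambda_{-j}$ for $d\in\BZ$. The jumps happen only at $c\in\frac12\BZ$. At integer $c$ one gets one lattice, and at half-integers, where the entries $\pm1/2$ and the $\val_L$-contribution of $f_1$ (which has half-integral "weight") enter, one gets the other. I will verify on the basis vectors that the coordinates $e_1,\ldots,e_j$ acquire $t^{-1}$, that $e_{2n+1-j},\ldots,e_{2n}$ are multiplied by $t$ in the dual, and that $f_1$ gets $t^{-1}$ in $\lambda_j$ but not in $\lambda_{-j}$. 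This produces precisely the chain $\cdots\subset\lambda_{-j}\subset\lambda_j\subset t^{-1}\lambda_{-j}\subset\cdots$.

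Hence $\mathrm{Stab}(a_j)=\{g: g\lambda_j=\lambda_j,\ g\lambda_{-j}=\lambda_{-j}\}$, with the caveat that similitudes may shift the grading. A general $g\in G(K)$ acts on the building and on norms up to the scaling by $\val(\eta(g))$. The stabilizer of the point $a_j$ modulo $\BR(1,\ldots,1)$ allows $g\lambda_j=t^{m}\lambda_j$, which must be handled carefully. Next I show that the condition on $\lambda_{-j}$ is automatic. If $g\in G(K)$ and $g\lambda_j=\lambda_j$, then $g\lambda_j^\vee=\eta(g)^{-1}\lambda_j^\vee$. Since $\lambda_j$ is stable, comparing volumes shows that $\val\det g=0$, and $\det(g)^2=\eta(g)^{2n+2}$ then forces $\val\eta(g)=0$. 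Thus $\lambda_{-j}$ is stable as well, and $g$ preserves the norm exactly. Conversely, preserving the whole chain means preserving the norm, so $g$ fixes $a_j$. This gives $P_{\{j\}}\subset\mathrm{Stab}(a_j)$ and the reverse inclusion.

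The main obstacle is the first step, namely establishing rigorously that the Bruhat--Tits building of this quasi-split, non-split similitude group is equivariantly the space of self-dual norms. This needs the anisotropic kernel $Kf_1\oplus Kf_2$ to be handled with its unique norm. I would invoke Bruhat--Tits' description of buildings of classical groups via norms (as in \cite{tits1979reductive} and Bruhat--Tits, "Schémas en groupes et immeubles des groupes classiques"), check that the normalization matches the chosen apartment coordinates and the base alcove $\fa$, and then reduce everything to the explicit lattice computation above.
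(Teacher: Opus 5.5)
Your route is the paper's. You identify the building $G(K)$-equivariantly with almost self-dual graded lattice chains (norms); the paper does this via \cite[Lemma 15.2.8]{kaletha2023bruhat}, adapting \cite[Corollary 2.6]{yang25TopFlat}. You then read off that $a_j$ corresponds to $(\lambda_{\{j\}},c_{\{j\}})$, with $c_{\{j\}}(\lambda_{2nd-j})=-d$ and $c_{\{j\}}(\lambda_{2nd+j})=-d-\tfrac12$, which matches your integer/half-integer jumps and your $\tfrac12\mathrm{val}_L$ norm on $Kf_1\oplus Kf_2$. Your identity $2\,\mathrm{val}\det g=(2n+2)\,\mathrm{val}\,\eta(g)$ is a useful addition. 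It makes explicit what the paper leaves implicit in ``which equals $P_{\{j\}}$'', namely that $g\lambda_j=\lambda_j$ already forces $g\lambda_{-j}=\lambda_{-j}$. (Minor: for $g\lambda_j=\lambda_j$ one has $g\lambda_j^\vee=\eta(g)\lambda_j^\vee$, not $\eta(g)^{-1}\lambda_j^\vee$; this is harmless.)

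The gap is the inclusion $\mathrm{Stab}(a_j)\subseteq P_{\{j\}}$, which you do not actually prove. Both sentences of your final step establish the same direction: preserving the chain gives preserving the norm, hence fixing $a_j$. The real converse rests on your claim $\mathrm{Stab}(a_j)=\{g: g\lambda_{\pm j}=\lambda_{\pm j}\}$. You immediately qualify that claim by the possibility $g\lambda_j=t^m\lambda_j$ and never resolve it.

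In the framework you set up, the claim is false. You take stabilizers ``modulo the homothety $\mathbb{R}(1,\ldots,1)$, which is harmless since scalars preserve every lattice'', but non-unit scalars do not preserve lattices. The element $t\cdot\mathrm{id}\in G^\circ(K)$ fixes $a_j$ modulo the central direction, yet $t\lambda_j\neq\lambda_j$. For $n$ even and $j=n/2$, the element $\tau_1$ from the proof of Theorem~\ref{prop-paraIndex} preserves the chain $\lambda_{\{n/2\}}$ but sends $\lambda_{n/2}$ to $\lambda_{-n/2}$. That case is not even of the form $t^m\lambda_j$.

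The missing point is that $a_j$ is a point of the \emph{extended} building. $G(K)$ acts on graded chains by $g\cdot(\Lambda_\bullet,c)=(g\Lambda_\bullet,c\circ g^{-1})$, with no renormalization of the grading. Since $c_{\{j\}}$ is injective on the chain, fixing $(\lambda_{\{j\}},c_{\{j\}})$ forces $g\lambda_\ell=\lambda_\ell$ for every $\ell$, which is exactly the reverse inclusion.

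Equivalently: $g$ acts on the central factor of the extended apartment through $\mathrm{val}\,\eta(g)$, so fixing $a_j$ forces $\mathrm{val}\,\eta(g)=0$. Your determinant identity then rules out the remaining cases:
\begin{itemize}
\item $g\lambda_j=t^m\lambda_j$ with $m\neq 0$, which would give $\mathrm{val}\,\eta(g)=2m$;
\item $g\lambda_j=t^m\lambda_{-j}$, which would give $\mathrm{val}\,\eta(g)=2m+1$.
\end{itemize}
With this correction your argument is complete.
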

 \begin{proof}
    This follows from the lattice-theoretic description of the Bruhat--Tits building $\sB$. Using \cite[Lemma 15.2.8]{kaletha2023bruhat}, a direct adaptation of the arguments in \cite[Corollary 2.6]{yang25TopFlat} yields the following: $\sB$ can be identified, $G(K)$-equivariantly, with the set of almost self-dual graded lattice chains of $V$, see \cite[\S 2]{yang25TopFlat} for terminology. 
    	The (self-dual) graded lattice chain corresponding to $a_j$ is given by \begin{flalign*}
		  (\lambda_{\cbra{j}}, c_{\cbra{j}}),
	\end{flalign*}
	where the grading $c_{\cbra{j}}$ is 
	\begin{flalign*}
		  c_{\cbra{j}}(\lambda_\ell) = \begin{cases}
		  	 -d -1/2 \quad &\text{if $\ell=2nd+j,d\in \BZ$,}\\ -d &\text{if $\ell=2nd-j,d\in\BZ$}.
		  \end{cases}
	\end{flalign*}
	Hence, the stabilizer subgroup of $a_j$ is the stabilizer of $(\lambda_{\cbra{j}},c_{\cbra{j}})$, which equals $P_{\cbra{j}}$.
 \end{proof}

\begin{defn}
	For a non-empty subset $I\sset [0,n]$, set \begin{flalign}
		P_I\coloneqq \cbra{g\in G(K)\ |\ g\lambda_i=\lambda_i, i\in I} \text{\ and\ } Q_I\coloneqq P_I\cap G^\circ(K) \label{PJsubgroup}
	\end{flalign}
	If $I=\cbra{i}$ is a singleton, we simply write $P_i$ for $P_{\cbra{i}}$.
\end{defn}

\begin{defn}
	We say $P\sset G(K)$ is a \dfn{parahoric subgroup of $G(K)$} if $P$ is a parahoric subgroup of $G^\circ(K)$ in the usual sense of Bruhat--Tits theory (for connected reductive groups).  
\end{defn}

\begin{thm} \label{prop-paraIndex}
   The subgroup $Q_I$ in \eqref{PJsubgroup} is a parahoric subgroup of $G(K)$. Any parahoric subgroup of $G(K)$ is $G^\circ(K)$-conjugate to $Q_I$ for a unique non-empty subset $I\sset [0,n]$ with the property that if $i\in I$ with $i\geq \lfloor n/2\rfloor$, then $n-i\in I$.
   
   In particular, the $G^\circ(K)$-conjugacy classes of maximal parahoric subgroups of $G^\circ(K)$ are in bijection with the set $\cbra{Q_{i}\ |\ 0\leq i\leq \lfloor n/2\rfloor }$.
\end{thm}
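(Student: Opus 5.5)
The plan is to use the standard description of parahoric subgroups of the connected group $G^\circ$ in terms of facets of the base alcove $\fa$. Following Haines--Rapoport, the parahoric attached to a facet $F$ is $\mathrm{Fix}(F)\cap\ker\kappa$, where $\mathrm{Fix}(F)\subset G^\circ(K)$ is the pointwise fixer of $F$. Every parahoric is $G^\circ(K)$-conjugate to the parahoric of a facet of $\fa$. Two facets of $\fa$ give conjugate parahorics exactly when they are related by an element of $\Omega^\circ$, the stabilizer of $\fa$ in $\wt W^\circ$. The proof therefore has three steps. First, identify $Q_I$ with the parahoric of the facet $F_I$ spanned by $\{a_i\}_{i\in I}$. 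Second, compute the action of $\Omega^\circ$ on the vertices $a_0,\dots,a_n$. Third, do the combinatorics of orbits.

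\textbf{Step 1.} By Proposition~\ref{coro-stabvi}, $P_I$ is the intersection of the stabilizers of the vertices $a_i$, $i\in I$. Hence $Q_I=P_I\cap G^\circ(K)$ is the pointwise fixer of $F_I$ in $G^\circ(K)$, since $F_I$ is the convex hull of these vertices. It remains to show $Q_I\subset\ker\kappa$. The group $Q_I$ is compact, being the stabilizer of a lattice. The map $\kappa$ is a locally constant homomorphism to $\pi_1(G)_\Gamma\simeq\BZ$, which is torsion-free. So $\kappa(Q_I)$ is a finite subgroup of $\BZ$, hence trivial. Therefore $Q_I=\mathrm{Fix}(F_I)\cap\ker\kappa$ is the parahoric attached to $F_I$. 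Since every facet of $\fa$ has the form $F_I$ for a non-empty $I\subset[0,n]$, every parahoric is conjugate to some $Q_I$.

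\textbf{Step 2 (main obstacle).} I must determine the action of $\Omega^\circ\simeq\pi_1(G)_\Gamma\simeq\BZ$ on the vertices $a_0,\dots,a_n$. The local Dynkin diagram is of type ${}^2D_{n+1}$, a chain whose two end vertices $a_0,a_n$ are special. Its only automorphisms are the identity and the flip $a_i\mapsto a_{n-i}$.

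\begin{itemize}
\item The scalar $t\in G^\circ(K)$ acts trivially on the apartment modulo $\BR(1,\dots,1)$. In $X_*(T)_\Gamma$ its class is $(1^{(n)},2)$, so $\kappa(t)=2$.
\item It therefore suffices to exhibit $g\in N_{G^\circ}(K)$ with $\kappa(g)$ odd and $g\lambda_i=t^{m}\lambda_{-(n-i)}$ for a suitable $m$, up to the chain indexing. This forces $g$ to induce the flip.
\end{itemize}

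My first candidate for $g$ is a monomial matrix. It would send $e_j\mapsto e_{n+j}$ and $e_{n+j}\mapsto t^{-1}e_j$ (with sign adjustments if needed to keep $\det=\eta^{n+1}$). On the anisotropic plane it would act by $f_1\mapsto f_2$, $f_2\mapsto f_1$ twisted by $t$, giving similitude factor $t^{-1}$. I expect to check directly that $g$ is a similitude lying in $G^\circ$, and that $\kappa(g)$ is odd by computing its image in $X_*(T)_\Gamma$ via \eqref{kappaT}. I would then verify on the explicit bases that it carries $\lambda_i$ to a homothety of the dual lattice $\lambda_{-(n-i)}$. Since $\lambda_{-(n-i)}$ defines the same vertex as $\lambda_{n-i}$, this gives $a_i\mapsto a_{n-i}$. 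Combined with $\kappa(t)=2$, this shows that $\Omega^\circ$ acts through $\BZ/2$ by the flip.

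\textbf{Step 3.} By Steps 1 and 2, $Q_I$ and $Q_J$ are $G^\circ(K)$-conjugate if and only if $J=I$ or $J=n-I:=\{n-i\mid i\in I\}$. It remains to check combinatorially that the stated condition selects exactly one member of each pair $\{I,n-I\}$ (both when $I=n-I$). This is a direct case check on how $I$ meets the halves $[0,\lfloor n/2\rfloor]$ and $[\lceil n/2\rceil,n]$. For maximal parahorics, $I=\{i\}$, and the orbits are $\{\{i\},\{n-i\}\}$. Choosing $i\le\lfloor n/2\rfloor$ gives the claimed bijection with $\{Q_i\mid 0\le i\le\lfloor n/2\rfloor\}$.
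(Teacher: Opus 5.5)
Your Steps 1 and 2 are sound and follow the paper's route. There $Q_I=P_I\cap\ker\kappa$ is parahoric by Haines--Rapoport, and the image of $G^\circ(K)$ in the automorphisms of the local Dynkin diagram is $\mathbb{Z}/2$, acting by $a_i\mapsto a_{n-i}$. Your monomial $g$ equals $t^{-1}t_1\sigma_1\tau$, and it does satisfy $g\lambda_i=t^{-1}\lambda_{-(n-i)}$. Since $\det g=(-1)^{n+1}\eta(g)^{n+1}$, the sign correction is needed exactly when $n$ is even, and the corrected element is $t^{-1}\tau_1$ for the paper's $\tau_1$. For the parity of $\kappa(g)$, note that $g\notin T(K)$, so use $\kappa=\mathrm{val}\circ\eta$ on $G^\circ(K)$, which gives $\kappa(g)=-1$. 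Your use of $\Omega^\circ\simeq\mathbb{Z}$ with $\kappa(t)=2$ then replaces the paper's appeal to Tits' description $T(K)/T(K)_1C(K)H_0$ of the image in $\Aut(\Delta)$.

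The genuine problem is Step 3: the ``direct case check'' you defer is false, so it cannot be completed. Consider a pair $\{j,n-j\}$ with $j<\lfloor n/2\rfloor$. The condition ``$i\in I$, $i\ge\lfloor n/2\rfloor\Rightarrow n-i\in I$'' forbids $I$ from containing $n-j$ but not $j$. For odd $n$ it also forces the middle pair $\{\lfloor n/2\rfloor,\lceil n/2\rceil\}$ to lie entirely inside or entirely outside $I$. Hence it admits $2\cdot 3^{\lfloor n/2\rfloor}-1$ subsets, whereas there are $2^n+2^{\lfloor n/2\rfloor}-1$ orbits $\{I,n-I\}$; the two counts agree only for $n=2$. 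Concrete failures:
\begin{itemize}
\item For $n=1$ only $I=\{0,1\}$ is admissible. The maximal parahoric $Q_0$ is conjugate to $Q_1$ but not to the Iwahori $Q_{\{0,1\}}$, so it is conjugate to no admissible $Q_I$, even though the last sentence of the statement lists it.
\item For $n=3$ neither $\{1\}$ nor $\{2\}$ is admissible.
\item For $n=4$ neither $\{0,3\}$ nor $\{1,4\}$ is admissible.
\end{itemize}
Admissible sets are pairwise non-conjugate, so it is existence, not uniqueness, that fails.

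What your Steps 1--2 actually prove is a bijection between conjugacy classes of parahorics and non-empty $I\subset[0,n]$ modulo $I\sim n-I$. That is also all the paper's proof establishes: it asserts the same bijection with the stated condition without checking it, so the defect lies in the statement's normal form, not in your method. A correct normal form is: if some $j\in[0,\lfloor n/2\rfloor]$ has exactly one of $j,n-j$ in $I$, then the least such $j$ lies in $I$. The maximal-parahoric assertion is unaffected, since for singletons the orbits are $\{\{i\},\{n-i\}\}$ and $0\le i\le\lfloor n/2\rfloor$ selects exactly one of each.
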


\begin{proof} 
    (cf. \cite[Proposition 2.9]{yang25TopFlat}) Note that any non-empty $I\sset [0,n]$ determines a facet $\ff_I$ in the base alcove $\fa$, where the vertices of the closure $\ol{\ff}_I$ are given by $$\cbra{a_i+\BR(1,\ldots,1)\ |\ i\in I },$$ notation as in \eqref{vertices}. 
	 By Proposition \ref{coro-stabvi}, the (pointwise) stabilizer subgroup of $\ff_I$ is equal to $P_I$.
	 Since the target of the Kottwitz homomorphism $\kappa: G^\circ(K)\ra \pi_1(G^\circ)_\Gamma\simeq\BZ$ is torsion-free, we have \[Q_I=P_I\cap\ker\kappa,\] 
	and $Q_I$ is a parahoric subgroup of $G(K)$ by \cite[Proposition 3]{haines2008parahoric}.
	
	Note that any facet in the building $\sB$ is $G^\circ(K)$-conjugate to a facet $\ff$ in the alcove $\fa$. Set $I\coloneqq \cbra{i\in [0,n]\ |\ a_i\in \ov{\ff}}$. Then the parahoric subgroup attached to $\ff$ is equal to $Q_I$. 
	
	Let $G_{\mathrm{sc}}$ denote the simply connected cover of the derived subgroup $G_\der$. Let $T_{\mathrm{sc}}$ be the maximal torus of $G_\sc$.    Let $H_0$ denote the image of $T_{\mathrm{sc}}(K)$ in $T(K)$. Then \begin{flalign*}
		 H_0=(\ker\kappa)\cap T_\der(K)=T_\der(K).
	\end{flalign*} 
	Let $H_1$ denote the subgroup of $T(K)$ generated by \begin{flalign*}
		t_1\coloneqq \diag(1^{(n)},t^{(n)},\begin{psmallmatrix}
			0 &1\\ -t &0
		\end{psmallmatrix}). 
	\end{flalign*} 
	Note that the Kottwitz homomorphism \begin{flalign}
		\kappa_T: T(K)\twoheadrightarrow X_*(T)_\Gamma\simeq\BZ^{n+1}  \label{eq-kappaT}
	\end{flalign}  takes $H_0T(K)_1$ to the coroot lattice $(Q^\vee)_\Gamma\sset (X_*(T))_\Gamma$.
	Since the image of $H_0H_1\sset T(K)$ under \eqref{eq-kappaT} generates $X_*(T)_\Gamma$,  we have \begin{flalign*}
		  T(K)=T(K)_1H_0H_1.
	\end{flalign*}
	Denote by $\Delta$ the local Dynkin diagram of $G$. By \cite[\S 2.5]{tits1979reductive},  the $G^\circ(K)$-action \begin{flalign}
		G^\circ(K) \ra \Aut(\Delta)  \label{GDelta}
	\end{flalign}
	factors through $\kappa: G^\circ(K)\ra \pi_1(G^\circ)_\Gamma$, and the image in $\Aut(\Delta)$ 
	is isomorphic to $$T(K)/(T(K)_1C(K)H_0),$$ where $C(K)$ denotes the group of $K$-points of the center $C$ of $G$. It is straightforward to check that \begin{flalign*}
		\Xi \simeq \frac{T(K)_1H_0H_1}{T(K)_1H_0C(K)}\simeq \BZ/2\BZ
	\end{flalign*}
	generated by $t_1$. 
	Set \begin{flalign*}
		\tau_1 &\coloneqq \begin{cases}
			t_1\sigma_1\diag(1^{(2n)},-1,1) \in N_{G^\circ}(K) \quad &\text{if $n$ is odd};\\ t_1\sigma_1\in N_{G^\circ}(K) &\text{if $n$ is even}.
		\end{cases} 
	\end{flalign*}
	Here, $\sigma_1\in S_{2n}^*$ is the permutation sending $(x_1,\ldots,x_{2n})$ to $(x_{n+1},\ldots,x_{2n},x_1,\ldots,x_n)$. Then $\tau_1\in \Omega$ stabilizes the base alcove $\fa$, and $\kappa(\tau_1)=\kappa(t_1)$. Hence, $\Xi$ is also generated by (the action of) $\tau_1$. Let us identify the vertices of $\Delta$ with the set $\tcbra{\lambda_{\cbra{i}}}_{i\in [0,n]}$ of self-dual lattice chains $\lambda_{\cbra{i}}$ indexed by $[0,n]$. The $\Xi$-action on $\Delta$ can be explicitly described in the following way: $\tau_1\lambda_i=\lambda_{-(n-i)}$ for $0\leq i\leq n$. 
	
	Note that for lattices $\Lambda,\Lambda'$ in $V$ and $g\in G^\circ(K)$, if $g\Lambda=\Lambda$, then $g\Lambda^\vee=\Lambda^\vee$; and if $g\Lambda=\Lambda'$, then $g\Stab(\Lambda)g\inverse=\Stab(\Lambda')$, where $\Stab(\Lambda)=\cbra{h\in G(K)\ |\ h\Lambda=\Lambda}$.  
	By \cite[\S 2.5]{tits1979reductive}, the $G^\circ(K)$-conjugacy classes of parahoric subgroups are in bijection with the $\Xi$-orbits on the set of non-empty subsets of $\Delta$. By the above $\Xi$-action, this is in bijection with the set of non-empty subsets $I$ of $[0,n]$ with the property that if $i\in I$ with $i\geq \lfloor n/2\rfloor$, then $n-i\in I$.  
\end{proof}

\begin{remark}\label{in-i}
	By the proof of Theorem \ref{prop-paraIndex}, the parahoric subgroups $Q_i$ and $Q_{n-i}$ are conjugate by $\tau_1\in G^\circ(K)$. 
\end{remark}

An immediate corollary of Theorem \ref{prop-paraIndex} is the following.
\begin{corollary}
	Any self-dual lattice chain $\CL$ of $V$ is $G^\circ(K)$-conjugate to $\lambda_I$ (see \eqref{eq-lambdaI}) for a unique subset $I\sset [0,n]$ with the property that if $i\in I$ with $i\geq \lfloor n/2\rfloor$, then $n-i\in I$. 
\end{corollary}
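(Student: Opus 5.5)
The plan is to pass from self-dual lattice chains to facets of the Bruhat--Tits building, and then apply Theorem~\ref{prop-paraIndex} together with its proof.

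\emph{Existence.} Let $\CL$ be a self-dual lattice chain. By the lattice-theoretic description of $\sB$ used in the proof of Proposition~\ref{coro-stabvi}, $\CL$, equipped with a suitable grading, determines a facet $\ff_\CL$ of $\sB$. This facet is canonical; for instance, it is the facet containing the barycenter of the points given by the almost self-dual gradings on $\CL$. Its pointwise stabilizer in $G(K)$ is the stabilizer of $\CL$. As noted in the proof of Theorem~\ref{prop-paraIndex}, $G^\circ(K)$ acts transitively on alcoves. Hence there is $g\in G^\circ(K)$ with $g\ff_\CL\subset \ov{\fa}$, so $g\ff_\CL=\ff_J$ for some non-empty $J\sset[0,n]$. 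Under the correspondence, the chain attached to $\ff_J$ is $\lambda_J$. Indeed, the vertices $a_j$ correspond to the chains $\lambda_{\cbra{j}}$ by Proposition~\ref{coro-stabvi}, and the chain of the facet spanned by $\{a_j\}_{j\in J}$ is the union of these vertex chains, namely $\lambda_J$. It follows that $g\CL=\lambda_J$.

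Next, the element $\tau_1\in G^\circ(K)$ from the proof of Theorem~\ref{prop-paraIndex} stabilizes $\fa$ and sends $\lambda_i$ to $\lambda_{-(n-i)}$. Since $\tau_1\in G^\circ(K)$, it also preserves duals, so it sends the chain $\lambda_J$ to $\lambda_{n-J}$, where $n-J=\{n-j : j\in J\}$. Therefore we may replace $J$ by whichever of $J$ and $n-J$ satisfies the normalization in the statement. This is exactly the choice of a representative of a $\Xi$-orbit that was made in Theorem~\ref{prop-paraIndex}.

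\emph{Uniqueness.} Suppose $h\lambda_I=\lambda_{I'}$ with $h\in G^\circ(K)$. Then $h\,\Stab(\lambda_I)h^{-1}=\Stab(\lambda_{I'})$, and intersecting with $G^\circ(K)$ gives $hQ_Ih^{-1}=Q_{I'}$. The uniqueness part of Theorem~\ref{prop-paraIndex} then forces $I=I'$.

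\emph{Main obstacle.} The step needing the most care is the existence argument: checking that the building correspondence sends lattice chains to facets compatibly, so that a chain in the closed alcove is exactly some $\lambda_J$, and that $G^\circ(K)$, not merely $G(K)$, suffices for the conjugation. The first point follows from the grading description in the proof of Proposition~\ref{coro-stabvi}. The second follows from the transitivity of $G^\circ(K)$ on alcoves, since every facet lies in the closure of an alcove.
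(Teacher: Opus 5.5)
\paragraph{Same route as the paper, and the sound parts.} Your argument is the paper's own: the paper calls the corollary an immediate consequence of Theorem~\ref{prop-paraIndex}. That theorem's proof is exactly your dictionary. One identifies self-dual chains with facets via graded lattice chains, conjugates the facet into $\overline{\mathfrak a}$ by $G^\circ(K)$, and then uses $\Xi=\langle\tau_1\rangle\cong\mathbb{Z}/2\mathbb{Z}$, which sends $\lambda_J$ to $\lambda_{n-J}$. Your uniqueness step is also correct. If both $I$ and $n-I$ have the stated property, take $i\in I$. Either $i\geq\lfloor n/2\rfloor$, or $n-i>\lceil n/2\rceil$ and the property of $n-I$ applies. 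In both cases $n-i\in I$, so $I=n-I$.

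\paragraph{The step that fails.} The problem is the step ``replace $J$ by whichever of $J$ and $n-J$ satisfies the normalization.'' In general neither does, so the existence half of the statement is false as written for $n=1$ and for all $n\geq 3$. Take $n=3$, so $\lfloor n/2\rfloor=1$, and $\mathcal{L}=\lambda_{\{1\}}$. Then $\{1\}$ violates the property, since it would need $2$, and $\{2\}$ violates it, since it would need $1$. Any $\lambda_I$ conjugate to $\lambda_{\{1\}}$ has two lattices per period, so $I$ is a singleton, and the only singleton with the property is $\{0\}$. But the successive quotients in one period have lengths $3,5$ for $\lambda_{\{1\}}$ and $1,7$ for $\lambda_{\{0\}}$. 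This is an invariant of $G^\circ(K)$-conjugacy, so $\lambda_{\{1\}}$ is conjugate to no admissible $\lambda_I$. For $n=1$ the property forces $I=\{0,1\}$, whereas $\lambda_{\{0\}}$ has two lattices per period and $\lambda_{\{0,1\}}$ has four.

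\paragraph{What your argument actually proves.} The flaw comes from the normalization in Theorem~\ref{prop-paraIndex}. That condition is not a set of representatives of the $\Xi$-orbits $\{I,n-I\}$ unless $n=2$, and the paper's one-line deduction has the same gap. Your argument, like the paper's, shows that $\mathcal{L}$ is $G^\circ(K)$-conjugate to $\lambda_I$ for an $I$ that is unique up to $I\mapsto n-I$. To get a literally unique $I$ you need a genuine transversal, for example $\sum_{i\in I}2^{i}\leq\sum_{i\in I}2^{n-i}$. For singletons this transversal gives exactly $0\leq i\leq\lfloor n/2\rfloor$. So the maximal-parahoric statement that the paper uses later is unaffected.
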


\begin{remark}
	In the literature (e.g. \cite{I.Z,PaZa,howard2017pappas}), authors also use vertex lattices to describe maximal parahoric subgroups. In our setting, the standard lattice $\Lambda_i\sset V$ gives a vertex lattice such that $\pi_0\Lambda_i\subset \Lambda_i^\vee\sset \Lambda_i$ in $V$ and $\dim_k(\Lambda_i/\Lambda_i^\vee)=2i+1$.  
\end{remark}

\section{Naive and spin local models} \label{sec-naivespinmod}
In this section, we first review the constructions of naive and spin local models following \cite{RZbook,PR,smithling2011topological,Yang25}. We then identify their special fibers with unions of Schubert cells in an affine flag variety. These Schubert cells are indexed by certain permissible subsets in the Iwahori--Weyl groups.

Recall from \S\ref{MainResults} the quadratic extension $F/F_0$, and let $G\coloneqq \GO(V,\phi)$ be the orthogonal similitude group attached to a symmetric space $(V,\phi)$ of $F_0$-dimension $2n+2$ with distinguished $F_0$-basis $e_1,\ldots,e_{2n},f_1,f_2$. Fix a non-empty subset $I\sset [0,n]$.  We will denote the standard lattice chain $\lambda_I$ in  \eqref{eq-lambdaI} by $\Lambda_I$. 

\begin{defn}[{cf. \cite{RZbook,smithling2011topological}}] \label{defn-naive}
    The \dfn{naive local model} $\RM^\naive_I$ is the projective scheme over $\CO_{F_0}$ representing the functor that sends each $\CO_{F_0}$-algebra $R$ to the set of $R$-modules $(\CF_\Lambda)_{\Lambda\in \Lambda_I}$ such that 
    \begin{itemize}
        \item [LM1.] for any $\Lambda\in \Lambda_I$, Zariski locally on $\Spec R$, $\CF_\Lambda$ is a direct summand of $\Lambda_R\coloneqq \Lambda\otimes_{\CO_{F_0}} R$ of rank $n+1$;
        \item[LM2.] for any $\Lambda\in \Lambda_I$, the perfect pairing \[\phi\otimes 1: \Lambda_R\times \Lambda^\vee_R \ra R \] induced by $\phi$ satisfies $(\phi\otimes 1)(\CF_\Lambda, \CF_{\Lambda^\vee})=0$;
        \item[LM3.] for any inclusion $\Lambda\sset \Lambda'$ in $\Lambda_I$, the natural map $\Lambda_R\ra \Lambda'_R$ induced by $\Lambda\hookrightarrow \Lambda'$ sends $\CF_\Lambda$ to $\CF_{\Lambda'}$; and the isomorphism $\Lambda_R\simto (\pi_0\Lambda)_R$ induced by $\Lambda\overset{\pi_0}{\ra}\pi_0\Lambda$ identifies $\CF_\Lambda$ with $\CF_{\pi_0\Lambda}$.
    \end{itemize}
\end{defn}
We have \begin{equation*}
    \RM^\naive_{I,F_0} \simeq \OGr(n+1,V), 
\end{equation*} where $\OGr(n+1,V)$ denotes the orthogonal Grassmannian of (maximal) totally isotropic $(n+1)$-dimensional subspaces of $V$. By the standard theory of orthogonal Grassmannian (see e.g. \cite[\S 85]{elman2008algebraic}), we have the following.

\begin{lemma}
	The generic fiber $\RM^\naive_{I,F_0}$ is connected of dimension $n(n+1)/2$. The base change $\RM^\naive_{I,F_0}\otimes_{F_0}F$ consists of two connected components $\OGr(n+1,V)_\pm$, which are isomorphic to each other. 
\end{lemma}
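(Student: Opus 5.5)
The plan is to first identify $\RM^\naive_{I,F_0}$ with $\OGr(n+1,V)$ and then use the standard structure theory of maximal isotropic Grassmannians of a $(2n+2)$-dimensional quadratic space.

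First, over $F_0$ every lattice in $\Lambda_I$ becomes $V$, and all transition maps become isomorphisms (scalar multiples of the identity). Hence the datum $(\CF_\Lambda)$ is determined by a single rank $n+1$ subspace $\CF\subset V_R$. By LM2, since $\Lambda^\vee_{F_0}=V$, this subspace is totally isotropic, and conversely every such subspace gives a point. Thus $\RM^\naive_{I,F_0}\simeq \OGr(n+1,V)$, as already recorded before the lemma.

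Next, after base change to $F$ the form becomes split: $t$ (resp.\ $\pi_0$) becomes $-\pi^2$, so $f_1\pm \pi^{-1}\sqrt{-1}\,\cdot$ gives a hyperbolic pair after a harmless rescaling (e.g.\ $\pi_0 x^2+y^2=(y+\pi x\sqrt{-1})(y-\pi x\sqrt{-1})$, and we may assume $\sqrt{-1}$ is available after the unramified extension already made). So $(V_F,\phi)$ is split of dimension $2n+2$. For a split quadratic space of dimension $2m$ with $m=n+1$, the variety of maximal isotropic subspaces has exactly two connected components. Two maximal isotropic subspaces $W,W'$ lie in the same component iff $\dim(W\cap W')\equiv m \pmod 2$. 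Each component is homogeneous under $\SO(V_F)$, isomorphic to $\SO_{2m}/P_{\mu_\pm}$, and is smooth projective of dimension $m(m-1)/2=n(n+1)/2$. The two components are swapped by any reflection, for instance $\tau$, which lies in $G(F_0)$ and has determinant $-1$; this gives the isomorphism between them. This is classical (cf.\ \cite[\S 85]{elman2008algebraic}).

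Finally, connectedness over $F_0$: the scheme $\OGr(n+1,V)$ is smooth over $F_0$, and its base change to $F$ has two components. The Galois group $\Gal(F/F_0)$ permutes them, and the key point is to show that it swaps them. To see this, write $W=\bb e_1,\ldots,e_n, f_1+\pi^{-1}c f_2\pp$ for a suitable constant $c$ with $\phi(f_1+\pi^{-1}cf_2,f_1+\pi^{-1}cf_2)=0$. Its Galois conjugate is $\bar W=\bb e_1,\ldots,e_n, f_1-\pi^{-1}cf_2\pp$, and $\dim(W\cap\bar W)=n=m-1$, which has the wrong parity. So $W$ and $\bar W$ lie in different components, and hence the Galois action swaps the two components. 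It follows that $\OGr(n+1,V)$ is connected over $F_0$, in fact geometrically irreducible over $F_0$ only after passing to $F$. The dimension is unchanged by field extension.

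The main (mild) obstacle is justifying this parity/Galois-swap argument, equivalently checking that the discriminant of $\phi$ is nontrivial in $F_0^\times/F_0^{\times2}$ and becomes trivial over $F$. I would handle it by the explicit $W,\bar W$ computation above.
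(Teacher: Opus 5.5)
Your proposal is correct and follows essentially the paper's route: the paper identifies $\RM^\naive_{I,F_0}$ with $\OGr(n+1,V)$ and invokes the standard theory of orthogonal Grassmannians \cite[\S 85]{elman2008algebraic}, while you spell out that classical content (the parity criterion for the two families, the reflection $\tau$ giving the isomorphism, and the Galois swap of $W$ and $\bar W$). One slip to fix: $(y+\pi x\sqrt{-1})(y-\pi x\sqrt{-1})=y^2-\pi_0x^2$, so no $\sqrt{-1}$ belongs there---since $\pi^2=-\pi_0$ one has $\pi_0x^2+y^2=(y+\pi x)(y-\pi x)$, which matches the isotropic vectors $f_1\pm\pi f_2$ produced by your choice $c=\pm\pi_0$.
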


We can identify $\OGr(n+1,V)_\pm$ with the flag variety $G^\circ_F/P_{\mu_\pm}$, where $P_{\mu_\pm}$ denotes the parabolic subgroup (over $F$) associated to a minuscule cocharacter $\mu_\pm$ given by \begin{flalign}
    \mu_+\coloneqq (1^{(n+1)},0^{(n+1)}) \text{\ and\ } \mu_-\coloneqq (1^{(n)},0,1,0^{(n)}). \label{cochar}
\end{flalign}

\begin{conj}[{\cite[Conjecture 2.36]{PRS}}] \label{conj-topoflat}
	The naive local model $\RM^\naive_I$ is topologically flat.
\end{conj}

We prove Conjecture~\ref{conj-topoflat} in \S\ref{sec-topoflat}. Following \cite[\S 7]{PR}, Pappas and Rapoport defined an involution operator $a$ on $\wedge^{n+1}_FV_F$ inducing a decomposition  \begin{flalign}
	\wedge^{n+1}_FV_F= W_+\oplus W_-, \label{Wdecomp}
\end{flalign} where $W_\pm$ denotes the $\pm 1$-eigenspace for $a$. For an $\CO_F$-lattice $\Lambda$ in $V_F$, set \[(\wedge^{n+1}_{\CO_F}\Lambda)_\pm \coloneqq \wedge^{n+1}_{\CO_F}\Lambda\cap W_\pm.  \]
We now define a refinement of $\RM^\naive_I$ as follows; see \cite[Definition 3.2]{yang25TopFlat}.

\begin{defn}\label{defn-spin}
    The spin local model $\RM^\pm_I$ is the projective scheme over $\CO_F$ representing the functor sending each $\CO_F$-algebra $R$ to the set of $R$-modules $(\CF_\Lambda)_{\Lambda\in \Lambda_I}$ such that $(\CF_\Lambda)_{\Lambda\in \Lambda_I}\in \RM^\naive_I(R)$ and 
    \begin{itemize}
        \item[LM4$\pm$.] for any $\Lambda\in\Lambda_I$, Zariski locally on $\Spec R$,  the line $\bigwedge_{R}^{n+1} \mathcal{F}_{\Lambda}$ is contained in
    \[
    \operatorname{Im}\left[ \left( \wedge_{\CO_F}^{n+1} \Lambda_{\CO_F} \right)_{\pm} \otimes_{\CO_F} R \longrightarrow \wedge_{R}^{n+1} \Lambda_{R} \right].
    \]
    \end{itemize}
\end{defn}

The generic fiber of $\RM^\pm_I$ is isomorphic to $\OGr(n+1,V)_\pm$, cf. \cite[8.2.1]{PR}). 
Let \begin{flalign}
	\sG\coloneqq \sG_{I}  \label{Gpara}
\end{flalign}  
the (affine smooth) group scheme of similitude automorphisms of $\Lambda_I$.  By Bruhat--Tits theory, the neutral component $\sG^\circ$ is a parahoric group scheme for $G^\circ$. Then $\sG$ naturally acts on $\RM^\naive_I$, and  $\sG^\circ$ preserves $\RM^\pm_I$, cf. \cite[Lemma 3.4]{yang25TopFlat}.

\begin{defn}\label{Mlocdefin}
    Denote by $\RM^{\pm\loc}_I$ the schematic closure of $\RM^\pm_{I,F}$ in $\RM^\pm_I$.
\end{defn}
 

Let $\RM^{PZ}_{\sG^\circ,\mu_\pm}$ denote the {Pappas--Zhu local model} associated with $\sG^\circ$ and the minuscule cocharacter $\mu_\pm$ in \eqref{cochar}; cf. \cite{PZ}. As explained in \cite[Proposition 3.6]{yang25TopFlat}, we have the following. 

\begin{prop}\label{propiso}
	The scheme $\RM^{\pm\loc}_I$ is isomorphic to $\RM^{PZ}_{\sG^\circ,\mu_\pm}$ and represents the corresponding v-sheaf local model in the sense of Scholze--Weinstein \cite[\S 21.4]{scholze2020berkeley}. In particular, $\RM^{\pm\loc}_I$ is $\CO_F$-flat of relative dimension $n(n+1)/2$, normal, and Cohen-Macaulay with reduced special fiber. 
\end{prop}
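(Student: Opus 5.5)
The plan is to follow the argument of \cite[Proposition 3.6]{yang25TopFlat}, adapted to the quasi-split non-split setting, and to use the general theory of Pappas--Zhu local models together with the Scholze--Weinstein characterization of canonical local models. First, I will check the hypotheses of the Pappas--Zhu construction. The group $G^\circ$ splits over the tamely ramified quadratic extension $F/F_0$, since $p>2$ and the torus $T$ splits over $L$ as computed in \S\ref{OSG}. The group $\sG^\circ$ is a parahoric group scheme by Theorem~\ref{prop-paraIndex}, and $\mu_\pm$ is minuscule with reflex field contained in $F$. Therefore $\RM^{PZ}_{\sG^\circ,\mu_\pm}$ is defined, and by \cite{PZ} together with \cite{haines2022normality} it is $\CO_F$-flat, normal and Cohen--Macaulay with reduced special fiber. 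Its generic fiber is $G^\circ_F/P_{\mu_\pm}\simeq \OGr(n+1,V)_\pm$.

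Second, I will produce a closed immersion of $\RM^{PZ}_{\sG^\circ,\mu_\pm}$ into $\RM^\naive_I\otimes\CO_F$, compatible with the identification of generic fibers. The natural route is the lattice-chain representation: $\sG\hookrightarrow \prod_{\Lambda\in\Lambda_I}\GL(\Lambda)$ induces a morphism from the Beilinson--Drinfeld type affine Grassmannian of $\sG^\circ$ to that of the lattice chain. The naive local model embeds in the latter as the locus $\pi_0\Lambda\subset \CF'\subset\Lambda$ with isotropy, via $\CF\mapsto$ the preimage of $\CF$. I will invoke \cite[Lemma 4.1]{HPR}, which applies to PEL data: the resulting map is a closed immersion on the orbit closure, so $\RM^{PZ}$ identifies with the schematic closure of the generic fiber $\OGr(n+1,V)_\pm$ in $\RM^\naive_I\otimes\CO_F$. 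This closure lies in $\RM^\pm_I$, because $\RM^\pm_I$ is closed in $\RM^\naive_I\otimes\CO_F$ with the same generic fiber. Hence it equals $\RM^{\pm\loc}_I$, which proves the isomorphism $\RM^{\pm\loc}_I\simeq\RM^{PZ}_{\sG^\circ,\mu_\pm}$. The relative dimension $n(n+1)/2$ is then read off from the generic fiber.

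Third, for the v-sheaf statement, I will cite \cite[Corollary 2.17]{HPR} and \cite{AGLR22}. These show that in the tamely ramified case the Pappas--Zhu local model represents the Scholze--Weinstein v-sheaf local model $\RM_{\sG^\circ,\mu_\pm}$, since it is flat, projective and normal with reduced special fiber, and has the correct generic fiber.

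I expect the main obstacle to be the closed immersion in the second step. The group $\sG$ may be disconnected, and $G$ itself is disconnected, so one must check that the embedding of affine Grassmannians of $\sG^\circ$ into that of $\prod\GL(\Lambda)$ is a closed immersion. Pappas--Zhu show that this holds when the relevant fixed-point/descent conditions hold, which requires $p\nmid|\pi_1(G_\der)|$. This is fine for $p>2$, and $\sG^\circ$ is exactly the stabilizer intersected with $\ker\kappa$, by the proof of Theorem~\ref{prop-paraIndex}. I would handle this exactly as in the split case \cite{yang25TopFlat}, replacing the split torus computations there with the Kottwitz-map computations of \S\ref{OSG}.
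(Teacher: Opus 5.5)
Your proposal is correct and takes the same route as the paper, which simply invokes the argument of \cite[Proposition 3.6]{yang25TopFlat}. That argument realizes the Pappas--Zhu model inside $\RM^\naive_I\otimes\CO_F$ as the schematic closure of the common generic fiber (cf.\ \cite[Lemma 4.1]{HPR}), hence as $\RM^{\pm\loc}_I$, and imports the geometric properties from \cite{PZ,haines2022normality} and the v-sheaf statement from \cite{HPR,AGLR22}.

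One point in your last paragraph is imprecise. The hypothesis $p\nmid|\pi_1(G_{\rm der})|$ is what is needed for normality and reducedness of the special fiber, not for the closed immersion of affine Grassmannians; the latter rests on tameness and on the lattice-chain embedding of the group scheme. Since $p>2$ gives both here, nothing breaks.
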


\begin{conj}[{\cite[Conjecture 8.1]{PR}}] \label{conj-pr}
    The spin local model $\RM^\pm_I$ is flat over $\CO_F$. Equivalently, $\RM^\pm_I=\RM^{\pm\loc}_I$.
    \end{conj}
We prove Conjecture~\ref{conj-pr} in \S\ref{FlatSec}. In the rest of the paper, we let $\RM^\naive_{I,k}$ (resp. $\RM^\pm_{I,k}$) denote the special fiber of $\RM^\naive_{\Lambda_I}$ (resp. $\RM^\pm_{\Lambda_I}$). Sometimes, we simply write $\RM^\naive_k$ or $\RM^\pm_k$ when the index set is clear.

\section{Topological flatness of naive local models} \label{sec-topoflat}
In this section, we prove Theorem \ref{intro-thmtopo}, namely that the naive local model is topologically flat over $\CO_{F_0}$ for any parahoric level structure. We first treat the maximal parahoric case in \S \ref{subsec-pseud}, establishing Theorem \ref{coro-topoflat}. In \S \ref{subsec-genepara}, we then deduce topological flatness for an arbitrary parahoric level by reducing to the maximal case, using the vertexwise criterion \cite{haines2017vertexwise} for admissible subsets.

\subsection{Embedding \texorpdfstring{$\RM^\naive_{I,k}$}{M\^{}naive\_{}I,k} in the affine flag variety}
From now on, we take the field $K$ in \S \ref{sec-para} to be the field $k((t))$ of Laurent series over $k$. For ease of notation, we use the same symbol $G$ for the orthogonal similitude group over $F_0$ or $K$; the base field will be clear from context.

Let $I$ be a non-empty subset of $[0,n]$.
For the associated self-dual lattice chain $\lambda_I$ (see \eqref{eq-lambdaI}) of $K^{2n+2}$, we let $\CP_I$ be the $\CO_K$-group scheme of similitude automorphisms of $\lambda_I$. As $p\neq 2$, the group scheme $\CP_I$ is affine and smooth over $\CO_K$ by \cite[Theorem 3.16]{RZbook}, and the generic fiber of $\CP_I$ is $G$. The neutral component $\CP^{\circ}_I$ of $\CP_I$ is the parahoric group scheme attached to $\lambda_I$. Denote by $\CQ_I$ the schematic closure of $G^{\circ}$ in $\CP_I$. Then $\CQ_I$ is a connected component of $\CP_I$. Set $Q_I\coloneqq \CQ_I(\CO_K)$. We have \begin{flalign*}
	P_I=\CP_I(\CO_K), Q_I=P_I\cap G^\circ(K), \text{\ and\ } P^0_I=\CP^{\circ}_I(\CO_K).
\end{flalign*} 
\begin{lemma} \label{lem-Itype}
	Let $I\sset [0,n]$ be a non-empty subset. Then $P_I/Q_I\simeq \BZ/2\BZ$ is generated by $\tau= \diag(1^{(2n)},-1,1)$, and $Q_I=P_I^\circ$.
\end{lemma}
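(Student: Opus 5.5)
We need two facts: $P_I = Q_I \sqcup \tau Q_I$, and $Q_I = P_I^\circ$.

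\textbf{First claim.} Since $\tau=\diag(1^{(2n)},-1,1)$ is diagonal with unit entries, it preserves every lattice $\lambda_\ell$. Each $\lambda_\ell$ is spanned by scalar multiples of the basis vectors $e_j,f_1,f_2$, so $\tau\in P_I$. Moreover $\tau\notin G^\circ(K)$, because $\eta(\tau)^{n+1}=1\neq -1=\det\tau$. By \eqref{Gtwocompo}, $G(K)=G^\circ(K)\sqcup\tau G^\circ(K)$. Intersecting with $P_I$ and using $\tau\in P_I$ gives $P_I=Q_I\sqcup \tau Q_I$. Hence $P_I/Q_I\simeq \BZ/2\BZ$, generated by $\tau$.

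\textbf{Second claim, containment $P_I^\circ\subset Q_I$.} The neutral component $\CP_I^\circ$ is connected, and its generic fiber lies in the identity component $G^\circ$ of $G=\CP_{I,K}$. So $\CP_I^\circ\subset\CQ_I$, the closure of $G^\circ$ in $\CP_I$. Taking $\CO_K$-points gives $P_I^\circ\subset Q_I$.

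\textbf{Second claim, reverse containment.} I would use the Kottwitz homomorphism, as in the proof of Theorem \ref{prop-paraIndex}. There we saw that $P_I$ is the pointwise stabilizer of the facet $\ff_I$. Since $\pi_1(G^\circ)_\Gamma\simeq\BZ$ is torsion-free, $Q_I=P_I\cap G^\circ(K)\cap\ker\kappa$. Indeed, $\kappa$ is trivial on the bounded subgroup $Q_I$: its image is a bounded subgroup of $\BZ$, hence $0$. By Haines--Rapoport \cite[Proposition 3]{haines2008parahoric}, the parahoric attached to $\ff_I$ equals the fixer of $\ff_I$ intersected with $\ker\kappa$. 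That parahoric is $\CP_I^\circ(\CO_K)=P_I^\circ$, where we use that $\CP_I^\circ$ is the Bruhat--Tits parahoric group scheme. Hence $Q_I=P_I^\circ$.

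\textbf{Main obstacle.} The delicate point is the identification of the group scheme $\CP_I^\circ$, the neutral component of the Rapoport--Zink stabilizer scheme, with the Bruhat--Tits parahoric group scheme of $\ff_I$. This needs the smoothness of $\CP_I$ \cite[Theorem 3.16]{RZbook} and the fact that $\CP_I(\CO_{\breve K})$ is the full stabilizer $\breve P_I$. Then \cite[Proposition 3]{haines2008parahoric} characterizes the parahoric group scheme as the connected smooth model with $\CO_{\breve K}$-points $\breve P_I\cap\ker\kappa$. So the argument should be run over $\breve K$ first, where the same lattice description and torsion-freeness hold. One then descends, since both $\CQ_I$ and $\CP_I^\circ$ are defined over $\CO_K$ and have the same $\CO_{\breve K}$-points.

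An alternative, component-counting route would also do. $\CP_I$ has at least two connected components, since $\tau$ and $G^\circ$ lie in different ones. If the special fiber of $\CQ_I$ were disconnected, then Lang's theorem over $k$ and Hensel lifting would produce an element of $Q_I$ outside $P_I^\circ$ with nonzero Kottwitz invariant. That contradicts the vanishing of $\kappa$ on the bounded group $Q_I$.
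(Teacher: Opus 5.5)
Your proof is correct and follows essentially the same route as the paper. The paper gets the first claim from $\tau\in P_I\setminus Q_I$ and $P_I/Q_I\hookrightarrow G(K)/G^\circ(K)\simeq\BZ/2\BZ$, and the second from torsion-freeness of $\pi_1(G^\circ)_\Gamma$ together with the Haines--Rapoport description of parahorics; the identification of $\CP_I^\circ$ with the Bruhat--Tits parahoric group scheme, which you flag as the main obstacle, is simply assumed in the paper's setup rather than proved.
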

\begin{proof}
It is clear that the element $\diag(1^{(2n)},-1,1)\in P_I-Q_I$. Since \begin{flalign*}
	P_I/Q_I\hookrightarrow G(K)/G^\circ(K)=\BZ/2\BZ,
\end{flalign*}
we have $P_I/Q_I\simeq \BZ/2\BZ$. As $\pi_1(G^\circ)_\Gamma\simeq\BZ$ is torsion-free, we obtain that $Q_I=P_I^\circ$.
\end{proof}
In particular, $\CQ_I=\CP_I^\circ$, so $\CQ_I$ is parahoric.
\begin{defn}[{\cite[Definition 3.9]{yang25TopFlat}}]  \label{defnFlag}
	The affine flag variety $\sFl_I$ is the fpqc quotient sheaf\footnote{Although $\CP_I$ need not equal $\CP_I^\circ$ in general, we still refer to $\sFl_I$ as an affine flag variety. } $$\sFl_I\coloneqq LG/L^+\CP_I.$$ Taking $\lambda_I$ as the base point, one identifies $\sFl_I$ with the fpqc sheaf sending a $k$-algebra $R$ to the set of $R[[t]]$-lattice chains $(L_{\ell})_{\ell\in 2n\BZ\pm I}$ in $R((t))^{2n+2}$ with the property that \begin{enumerate}
	\item for $\ell<\ell'$ in $2n\BZ\pm I$, we have $L_\ell\sset L_{\ell'}$, and the quotient $L_{\ell'}/L_\ell$ is locally free of $R$-rank $\dim_k\lambda_{\ell'}/\lambda_\ell$;
	\item if $\ell'=\ell+2nd$ for some $d\in\BZ$, then $L_{\ell'}=t^{-d}L_\ell$;
	\item Zariski-locally on $\Spec R$, there exists a scalar $\alpha\in R((t))\cross$ such that ${L}_\ell^{\phi}=\alpha L_{-\ell}$ for all $\ell\in 2n\BZ\pm I$. Here ${L}_\ell^{\phi}\coloneqq \cbra{x\in R((t))^{2n+2} }\ |\ \phi(L_\ell,x)\sset R[[t]] . $
\end{enumerate} 
\end{defn}

Note that $G=G^\circ\sqcup\tau G^\circ$.
By Lemma \ref{lem-Itype}, we have  \begin{flalign} \label{sFI}
    \sFl_I= LG/L^+\CP_I \simeq LG^{\circ}/L^+\CP_I^\circ. 
\end{flalign}

\begin{defn}
    For $w\in\wt{W}$, the Schubert cell $C_w$ is the reduced $k$-subscheme  \[C_w\coloneqq L^+\CP^{\circ}_Iw \sset\sFl_I, \]
    and the Schubert variety $S_w$ is the reduced closure of $C_w$ in $\sFl_I$. 
\end{defn}

\begin{prop} \label{lem-cell}
	There is a bijection between the set of Schubert cells in $\sFl_I$ and the double coset $W_I\backslash \wt{W}^\circ/W_I$,
	where $W_I\coloneqq \cbra{w\in W_\aff\ |\ wa_i=a_i \text{\ for\ } i\in I}$.
\end{prop}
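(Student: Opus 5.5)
The plan is to deduce the statement from the Bruhat decomposition for the connected reductive group $G^\circ$ and its parahoric $\CP_I^\circ$, using the identification \eqref{sFI}
\[
\sFl_I\simeq LG^\circ/L^+\CP_I^\circ .
\]
On $\bar k$-points the Schubert cells $C_w=L^+\CP_I^\circ w$ are exactly the $L^+\CP_I^\circ$-orbits. So it suffices to show that the orbits of $\CP_I^\circ(\bar k[[t]])$ on $G^\circ(\bar k((t)))/\CP_I^\circ(\bar k[[t]])$ are indexed by $W_I\backslash\wt W^\circ/W_I$, and that $w\mapsto C_w$ is well defined on double cosets.

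First, recall the Iwahori--Bruhat decomposition of Haines--Rapoport for a connected reductive group over $\breve K=\bar k((t))$. Let $\CP_J^\circ$ be the parahoric attached to a facet of the base alcove $\fa$, and let $W_J=(N(\breve K)\cap \CP_J^\circ(\CO_{\breve K}))/T(\breve K)_1$. Then
\[
G^\circ(\breve K)=\coprod_{w\in W_J\backslash \wt W^\circ/W_J}\CP_J^\circ(\CO_{\breve K})\,\dot w\,\CP_J^\circ(\CO_{\breve K}).
\]
This is \cite[Proposition 8]{haines2008parahoric}, with $\wt W^\circ=N_{G^\circ}(\breve K)/T(\breve K)_1$. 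The Iwahori--Weyl group defined in \S\ref{OSG} over $K$ agrees with the one over $\breve K$: we enlarged $K$ to have enough unramified roots, $S$ remains maximal split, and $T$ splits over the totally ramified $L$. I would check this, or simply work over $\breve K$ throughout.

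Second, identify $W_J$ with the $W_I$ of the statement. The parahoric $\CP_I^\circ(\CO_K)=Q_I$ is the connected stabilizer of the facet $\ff_I$ with vertices $\{a_i : i\in I\}$ (Theorem \ref{prop-paraIndex} and Lemma \ref{lem-Itype}). By Bruhat--Tits, $N(K)\cap Q_I$ modulo $T(K)_1$ is the subgroup of $W_\aff$ generated by the reflections in the walls through $\ff_I$. This is the parabolic subgroup of the Coxeter group $W_\aff$ fixing $\ff_I$ pointwise, namely $\{w\in W_\aff: wa_i=a_i,\ i\in I\}=W_I$.

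The one subtle point is that elements of $\Omega\cap \wt W^\circ$ might also fix $\ff_I$ pointwise, and such elements need not lie in the parahoric. Here $\Omega$ is detected by $\kappa$, which is injective on it, while $Q_I\subset\ker\kappa$. So $N\cap Q_I$ lies in $\ker\kappa\cap \wt W^\circ=W_\aff$ and equals $W_I$; the action \eqref{actionW} lets us test pointwise fixing concretely. Hence the double cosets are exactly $W_I\backslash\wt W^\circ/W_I$.

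Finally, pass from group points to the reduced sub-ind-schemes. The orbit $C_w$ depends only on the double coset, since $W_I$ has representatives in $L^+\CP_I^\circ(k)$, and distinct double cosets give disjoint orbits on $\bar k$-points. Every point of $\sFl_I$ lies in some orbit because $LG^\circ(\bar k)\to\sFl_I(\bar k)$ is surjective. I expect the main obstacle to be the middle step: justifying the identification $N\cap \CP_I^\circ=W_I$ while avoiding both $\tau$ and the nontrivial $\Omega$. The torsion-freeness of $\pi_1(G^\circ)_\Gamma$, which gives $Q_I=P_I^\circ$, is what resolves it.
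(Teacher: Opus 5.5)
Your proposal is correct and takes the same route as the paper: use \eqref{sFI} to pass to $LG^\circ/L^+\CP_I^\circ$, then apply the standard affine Bruhat decomposition for parahoric subgroups of a connected reductive group. The paper cites \cite[Proposition 4.8]{PRS} for this step, whereas you cite Haines--Rapoport and also spell out what the paper leaves implicit, namely that $(N\cap Q_I)/T(K)_1=W_I$ because $Q_I\subset\ker\kappa$.
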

\begin{proof}
   By \eqref{sFI}, the lemma follows from the standard affine Bruhat decomposition for connected reductive groups (see  \cite[Proposition 4.8, pp. 184]{PRS}).
\end{proof}

\begin{lemma}\label{lem-Wmperm}
    Suppose that $I=\cbra{i}\sset [0,n]$. Denote $W_i\coloneqq W_I$.
    We have $$W_i\simeq S_{2i}^*\times S_{2(n-i)}^*\sset S_{2n}^*,$$ where the two factors permute $A_i\coloneqq \sbra{1,i}\cup [i^*,2n]$ and $B_i\coloneqq \sbra{i+1,i^*-1}$ respectively.
\end{lemma}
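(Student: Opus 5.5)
We will compute $W_i$ as the stabilizer in $W_\aff\simeq (Q^\vee)_\Gamma\rtimes S_{2n}^*$ of the vertex $a_i$, using the affine action \eqref{actionW}. Write $w=t^{b}w_0$ with $b=(b_1,\ldots,b_n,-b_n,\ldots,-b_1,0)\in(Q^\vee)_\Gamma$ and $w_0\in S_{2n}^*$. Since $w_0$ permutes the first $2n$ coordinates and fixes the last one, the condition $wa_i=a_i$ reads
\[
a_i(w_0^{-1}(j))+b(j)=a_i(j)\quad\text{for }1\le j\le 2n.
\]
The entries of $a_i$ are $-1/2$ on $[1,i]$, $0$ on $B_i=[i+1,i^*-1]$, and $1/2$ on $[i^*,2n]$.

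First, we show $w_0$ preserves $A_i$ and $B_i$. Since $b(j)\in\BZ$, $a_i(w_0^{-1}(j))$ and $a_i(j)$ differ by an integer. Hence $w_0^{-1}$ sends $B_i$ (entries $0$) into $B_i$, and $A_i$ (entries $\pm1/2$) into $A_i$. Because $w_0$ commutes with $j\mapsto j^*$, its restriction to each of $A_i$ and $B_i$ lies in the corresponding $S^*$-group. We identify $A_i$ with $[1,2i]$ via the order-preserving map, which is compatible with $*$.

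Second, we show $b$ is determined by $w_0$ and that every such $w_0$ occurs. On $B_i$ we get $b(j)=0$. On $A_i$, $b(j)=a_i(j)-a_i(w_0^{-1}(j))\in\{0,\pm1\}$, and this is automatically compatible with the constraint $b(j^*)=-b(j)$, since $a_i(j^*)=-a_i(j)$ on $A_i$ and $w_0$ commutes with $*$. So for each pair $(\sigma,\sigma')\in S_{2i}^*\times S_{2(n-i)}^*$ there is exactly one $b\in(Q^\vee)_\Gamma$ with $t^b(\sigma,\sigma')\in W_i$. Thus $w\mapsto w_0$ restricts to an injective map $W_i\to S_{2n}^*$ whose image is $S_{2i}^*\times S_{2(n-i)}^*$. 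It is a homomorphism, being the restriction of the projection $W_\aff\to W^\circ$.

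The only delicate point is membership in $(Q^\vee)_\Gamma$. In the non-split case this lattice is all of $\{(b_1,\ldots,b_n,-b_n,\ldots,-b_1,0)\}$, with no parity condition, as computed in \S\ref{OSG}. So the translations with entries $\pm1$ that swap $[1,i]$ and $[i^*,2n]$ are allowed. This is where the argument differs from the split case. We must also check the edge cases $i=0$ and $i=n$, where one factor is trivial.
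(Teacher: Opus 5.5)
Your proposal is correct and follows the same route as the paper. Integrality of $wa_i-a_i$ forces $w_0$ to preserve $A_i$ and $B_i$, the translation part is determined by $w_0$ (which gives injectivity of the projection to $S_{2n}^*$), and every $w_0\in S_{2i}^*\times S_{2(n-i)}^*$ lifts to $W_i$ (which gives surjectivity). Your explicit check that the resulting translation lies in $(Q^\vee)_\Gamma$ fills in a step the paper leaves implicit; this uses the absence of a parity condition in the non-split case together with $b(j^*)=-b(j)$.
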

\begin{proof}(cf. \cite[Lemma 3.14]{yang25TopFlat})
    Let $w=t^ww_0\in W_i\sset W_\aff$. By definition, we have $wa_i=a_i$. Namely,  \begin{flalign}
    	a_i(w_0\inverse(j))+t^w(j)=a_i(j) \text{\ for $1\leq j\leq 2n$, and $t^w(2n+1)=0$}.  \label{am}
    \end{flalign}
    Hence, the translation part \begin{equation}
    	t^w=w_0a_i-a_i \label{tw}
    \end{equation} is determined by $w_0$. 
    Since $w_0\in S_{2n}^*$ and $w_0a_i-a_i=t^w\in\BZ^{2n+1}$, we conclude that $w_0$ permutes the subsets $A_i$ and $B_i$.  
   Therefore, the projection $\wt{W}^\circ\twoheadrightarrow S_{2n}^*$ induces a homomorphism $$f:W_i\ra  S_{2i}^*\times S_{2(n-i)}^*. $$
    By \eqref{tw}, the map $f$ is an injection. For any $w_0\in S_{2i}^*\times S_{2(n-i)}^*$, the equation \eqref{tw} defines an element $t^ww_0$ in $W_i$. It follows that $f$ is also surjective; hence $W_i\simeq S_{2i}^*\times S_{2(n-i)}^*$. 
\end{proof}

Using the obvious isomorphism $\CO_{F_0}/\pi_0\CO_{F_0}\simeq k\simeq \CO_K/t\CO_K,$ we may identify \[\Lambda_I\otimes_{\CO_{F_0}} k\simeq \lambda_I\otimes_{\CO_K}k. \] 
Let $R$ be a $k$-algebra and let $(\CF_{\Lambda_i})_{\Lambda_i\in\Lambda_I}\in \RM^\naive_I(R)$ (see \S \ref{sec-naivespinmod}). Denote by  $$\wt{\CF}_{\Lambda_i}\sset \lambda_i\otimes_{\CO_K}R[[t]] $$ the inverse image of the $R$-submodule $\CF_{\Lambda_i}\sset \Lambda_i\otimes_{\CO_{F_0}} R\simeq\lambda_i\otimes_kR$ along the natural reduction map $\lambda_i\otimes_{\CO_K}R[[t]]\twoheadrightarrow \lambda_i\otimes_kR$. As in \cite[\S 3.2]{yang25TopFlat}, we obtain a closed immersion \begin{flalign*}
    \RM^\naive_{I,k}\hookrightarrow \sFl_I,\quad (\CF_{\Lambda_i})\mapsto (\wt{\CF}_{\Lambda_i}).
\end{flalign*}
Furthermore, the underlying topological spaces of $\RM^\naive_{I,k}$ and $\RM^\pm_{I,k}$ are unions of Schubert cells in $\sFl_I$.

\begin{defn}\label{defn-permi}
    Fix a non-empty subset $I\sset [0,n]$. 
    For $w\in\wt{W}^\circ$, we say $w$ is \dfn{naively-permissible} (with respect to $I$) if the Schubert cell $C_w$ is contained in $\RM^\naive_{I,k}$.
\end{defn}

\subsection{Maximal parahoric case} \label{subsec-pseud}
Suppose $I=\cbra{i}$, where $0\leq i\leq n$. 

Let $(\alpha_j)_{1\leq j\leq 2n+2}$ and $(\beta_j)_{1\leq j\leq 2n+2}$ denote the following ordered $\CO_{F_0}$-bases of $\Lambda_i$ and $\Lambda_{-i}$, respectively:
\begin{flalign*}
    \Lambda_i &: \cbra{\pi_0\inverse e_1,\ldots,\pi_0\inverse e_i,e_{i+1},\ldots,e_{2n},\pi_0\inverse f_1,f_2 } \text{and\ }\\ \Lambda_{-i} &:\cbra{e_1,\ldots,e_{2n-i},\pi_0 e_{i^*},\ldots,\pi_0 e_{2n},f_1,f_2}.
\end{flalign*} 
For an $\CO_{F_0}$-algebra $R$, we use the same notation for the induced $R$-bases of $\Lambda_i\otimes R$ and $\Lambda_{-i}\otimes R$.

\begin{defn}\label{defn-face}
	An $I$-face (or simply \dfn{face}) is a pair $(v_i,v_{-i})$ of vectors in $\BZ^{2n+1}$ such that \begin{enumerate}
        \item $v_{-i}\geq v_i\geq v_{-i}-\mathbf 1 $; 
        \item $\Sigma v_i=\Sigma v_{-i}-2i-1$;
        \item there exists $d\in\BZ$ such that $v_i^*+v_{-i}=\mathbf d_0$ (notation as in \S \ref{subsec-notation-intro}).
    \end{enumerate}  
\end{defn}

Note that the action of $\wt{W}^\circ$ on $\BR^{2n+1}$ (see \eqref{actionW}) induces an action on the set of faces. Set \begin{flalign*}
	\omega_{i}\coloneqq ((-1)^{(i)},0^{(2n-i)},-1)\in \BZ^{2n+1} \text{\ and\ } \omega_{-i}\coloneqq (0^{(2n-i)},1^{(i)},0)\in\BZ^{2n+1}.
\end{flalign*}
The pair $(\omega_i,\omega_{-i})$ is clearly a face.

Let $R$ be a $k$-algebra. By construction, the image of the embedding $\RM^\naive_{\cbra{i},k}(R)\hookrightarrow \sFl_{\cbra{i}}(R)$ consists of $(L_i,L_{-i})$ in $\sFl_{\cbra{i}}(R)$ such that, for $j=\pm i$, 
\begin{flalign}\label{m1m2}
\begin{aligned}
&\text{(M1)}\quad \lambda_{j}\otimes_{\mathcal{O}_K} R[[t]]\supset L_j\supset t\lambda_{j}\otimes_{\mathcal{O}_K}R[[t]], \\
&\text{(M2)}\quad (\lambda_j\otimes_{\mathcal{O}_K}R[[t]])/L_j \text{ is locally free of rank } n+1.
\end{aligned}
& & 
\end{flalign}

\begin{lemma}\label{permp1p2}
	Let $w\in\wt{W}^\circ$. Then $w$ is naively-permissible (see Definition \ref{defn-permi}) if and only if  \begin{enumerate}
    \item[(P1)] for $j=\pm i$, we have $\omega_j\leq w\omega_j\leq \omega_j+\mathbf 1$;
    \item[(P2)] $\Sigma (w\omega_i)=n-i$.
    \end{enumerate}
    Moreover, in this case, we have $w\omega_i(2n+1)=0$ and $w\omega_{-i}(2n+1)=1$.
\end{lemma}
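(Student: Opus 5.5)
The plan is to translate conditions (M1) and (M2) for the $T$-fixed point $w\lambda_{\{i\}}\in\sFl_{\{i\}}$ into combinatorics of the vectors $w\omega_{\pm i}$. The bridge is that a lattice fixed by the torus is determined by the valuations of its basis vectors. Write $w=t^ww_0$ and lift it to $\dot w\in N_{G^\circ}(K)$. Then $C_w=L^+\CP^\circ_I\dot w\lambda_I$. Since $\RM^\naive_{I,k}$ is a closed $L^+\CP^\circ_I$-stable subscheme of $\sFl_I$, we get $C_w\subset \RM^\naive_{I,k}$ if and only if the single point $\dot w\lambda_I=(\dot w\lambda_i,\dot w\lambda_{-i})$ satisfies (M1) and (M2). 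So the task is to compute the lattices $\dot w\lambda_{\pm i}$.

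First I will set up a dictionary. To a vector $v\in\BZ^{2n+1}$, attach the $\CO_K$-lattice
\[
L(v)=\CO_K\pair{t^{v(1)}e_1,\ldots,t^{v(2n)}e_{2n}}\oplus M_{v(2n+1)},
\]
where $M_c$ denotes the $c$-th member of the $\CO_K$-lattice chain in $Kf_1\oplus Kf_2$ stable under the ramified torus $\Res_{L/K}\BG_m$. Under $\varphi$ this plane is the $\CO_L$-module $L$, and its chain is $\{u^c\CO_L\}$. The normalization is fixed so that $\CO_K\pair{t^{-1}f_1,f_2}$ and $\CO_K\pair{f_1,f_2}$ correspond to $c=-1$ and $c=0$. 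With this choice $L(\omega_i)=\lambda_i$ and $L(\omega_{-i})=\lambda_{-i}$ up to the sign convention, which I would check against \eqref{Tlattice} and \eqref{kappaT}. The key claim is then $\dot w L(v)=L(wv)$ with the action \eqref{actionW}. This holds because $N_{G^\circ}$ permutes the lines $Ke_j$ through $w_0$, and $T(K)$ shifts valuations by $\kappa_T$. On the last coordinate, elements of $L^\times$ act on the chain $u^c\CO_L$ by the shift $\val_L$, and the part of the Weyl group action that swaps $f_1,f_2$ is absorbed in the coinvariants. This is exactly where the choice of $v^*(2n+1)=-v(2n+1)-1$ in the notation comes from.

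With the dictionary in hand, the conditions become componentwise. Inclusion of torus-stable lattices is the inequality $L(v)\subset L(v')\iff v\ge v'$, using that the chain $M_c$ is totally ordered. Hence (M1) for $j=\pm i$ reads $\lambda_j\supset \dot w\lambda_j\supset t\lambda_j$. Since $tM_c=M_{c+2}$, I need to be careful with the last coordinate: the condition is $\omega_j\le w\omega_j\le\omega_j+(\mathbf 1_0$ plus $2$ in the last slot$)$. That is not literally (P1). The discrepancy is resolved by noting that the last coordinate of $w\omega_j$ differs from that of $\omega_j$ by $\val_L$ of a norm-type element. Combined with the face condition (3), $v_i^*+v_{-i}=\mathbf d_0$, which forces $w\omega_i(2n+1)+w\omega_{-i}(2n+1)$ to equal $\omega_i(2n+1)+\omega_{-i}(2n+1)=-1$, together with the inequalities, this leaves only the values $0$ for $j=i$ and $1$ for $j=-i$. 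These are exactly the values asserted in the ``moreover'' part, and they also show that (P1) is equivalent to (M1). Condition (M2) is a length computation, $\dim_k\lambda_j/\dot w\lambda_j=\Sigma(w\omega_j)-\Sigma\omega_j$, which must equal $n+1$. For $j=i$ we have $\Sigma\omega_i=-i-1$, giving $\Sigma w\omega_i=n-i$, which is (P2). For $j=-i$ the same statement follows automatically from (P2) via the face identities (2),(3) preserved by $w$, so only one sum condition is needed.

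I expect the main obstacle to be the last coordinate: pinning down the indexing of the ramified rank-two chain, and checking that $\dot w$ acts on it through $t^w(2n+1)$ compatibly with \eqref{actionW} and \eqref{Tlattice}. Once that convention is fixed, the rest is bookkeeping. I would first verify the convention on $t_1$ and $\tau_1$ from the proof of Theorem \ref{prop-paraIndex}, where $\tau_1\lambda_i=\lambda_{-(n-i)}$ gives a concrete test.
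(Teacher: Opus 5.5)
\textbf{There is a gap: the step excluding the wrong values of the last coordinate fails.}

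Your overall approach matches the paper's: torus-fixed lattices, your dictionary $\dot w L(v)=L(wv)$ (which is the paper's formula for $w\lambda_{\pm i}$), coordinatewise inclusion, and a length count. You are also right that, since $tM_c=M_{c+2}$, (M1) only gives $\omega_j(2n+1)\le w\omega_j(2n+1)\le \omega_j(2n+1)+2$ in the last slot. The paper's proof passes over this: it asserts (M1)$\iff$(P1) and then only rules out $w\omega_i(2n+1)=-1$.

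Your resolution of the discrepancy does not work:
\begin{itemize}
\item In the last coordinate, face condition (3) reads $-w\omega_i(2n+1)-1+w\omega_{-i}(2n+1)=0$. It fixes the \emph{difference} $w\omega_{-i}(2n+1)-w\omega_i(2n+1)=1$, not the sum.
\item Your stated constraint (sum $=-1$) contradicts your own conclusion, since the values $0$ and $1$ have sum $1$.
\item Read correctly, (3) is automatic: both last coordinates are shifted by the same integer $c:=t^w(2n+1)$. Combined with the (M1) inequalities, it still allows $c\in\{0,1,2\}$, i.e.\ $w\omega_i(2n+1)\in\{-1,0,1\}$.
\item The ``norm-type element'' remark does not help. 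Here $c=\mathrm{val}_L(y)$ for an arbitrary $y\in L^\times$, which can be odd (e.g.\ $c=1$ for $t_1$). If $c$ were forced to be even, you would get the wrong answer.
\end{itemize}
So neither the ``moreover'' part nor the last-coordinate half of (M1)$\Rightarrow$(P1) is established.

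\textbf{The fix.} The missing input is (M2), not (3). The translation part has the form $t^w=(b_1,\ldots,b_n,c-b_n,\ldots,c-b_1,c)$, and $w_0$ only permutes the first $2n$ coordinates. Hence $\Sigma(w\omega_i)=\Sigma\omega_i+(n+1)c$. Your own length computation then shows that (M2), equivalently (P2), holds if and only if $(n+1)c=n+1$, i.e.\ $c=1$. This gives $w\omega_i(2n+1)=0$ and $w\omega_{-i}(2n+1)=1$, which lie in the range allowed by (P1). From there, (M1)$+$(M2)$\iff$(P1)$+$(P2) follows from your coordinatewise comparison.

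\textbf{The paper's alternative.} You can instead use the orthogonality of $\mathcal{F}_{\pm i}=w\lambda_{\pm i}/t\lambda_{\pm i}$:
\begin{itemize}
\item If $w\omega_i(2n+1)=-1$, both $\mathcal{F}_{\pm i}$ contain the full $f$-part. This contradicts $\phi(\alpha_{2n+1},\beta_{2n+1})=1$.
\item If $w\omega_i(2n+1)=1$ (the case the paper leaves implicit), both $\mathcal{F}_{\pm i}$ have zero $f$-part. But $\mathcal{F}_i^\perp$ would then contain $\beta_{2n+1},\beta_{2n+2}$, contradicting $\mathcal{F}_{-i}=\mathcal{F}_i^\perp$.
\end{itemize}
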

\begin{proof}
	(cf. \cite[Lemma 3.18]{yang25TopFlat}) Note that $w$ corresponds to $(w\lambda_i,\omega\lambda_{-i})\in\sFl_{\cbra{i}}(k)$. If $w\omega_{\pm i}=(r_1,\ldots,r_{2n},s)\in\BZ^{2n+1}$, then a direct computation gives  \begin{flalign}
		  w\lambda_{\pm i}=k[[t]]\pair{t^{r_1}e_1,\ldots,t^{r_{2n}}e_{2n},t^{\lfloor s/2\rfloor}f_1, t^{\lceil s/2\rceil}f_2 }. \label{ai}
	\end{flalign} 
	Note that $w$ is naively-permissible if and only if $w\lambda_{\pm i}$ satisfies (M1) and (M2). By \eqref{ai}, we obtain that (M1) (resp. (M2)) is equivalent to (P1) (resp. (P2)). 
	
	Let $w\in\wt{W}^\circ$ be naively permissible. Set $\CF_i\coloneqq w\lambda_i/t\lambda_i$ and $\CF_{-i}\coloneqq w\lambda_{-i}/t\lambda_{-i}$. Then $(\CF_i,\CF_{-i})\in \RM^\naive_{\cbra{i}}(k)$. By (P1),  we have that $w\omega_i(2n+1)$ equals $-1$ or $0$. Suppose $w\omega_i(2n+1)=-1$. Then $w\omega_{-i}(2n+1)=0$, and hence, $k\pair{\alpha_{2n+1},\alpha_{2n+2}}\sset \CF_i$ and $k\pair{\beta_{2n+1},\beta_{2n+2}}\sset \CF_{-i}$. This contradicts the fact that $\CF_i$ and $\CF_{-i}$ are orthogonal to each other. We obtain that $w\omega_i(2n+1)=0$ and $w\omega_{-i}(2n+1)=1$. 
\end{proof}


\begin{lemma} \label{lem-perm}
    Let $w\in\wt{W}^\circ$ be naively-permissible.  
    \begin{enumerate}
        \item Set $v_{\pm i}\coloneqq w\omega_{\pm i}$. We have $v_i^*+v_{-i}=(1^{(2n)},0)$. In particular, $v_{-i}$ is completely determined by $v_i$. 
        \item Set $\mu^w_{\pm i}\coloneqq w\omega_{\pm i}-\omega_{\pm i}$. The point $(w\lambda_{i},w\lambda_{-i})\in \sFl_i(k)$ corresponds to $(\CF^w_i,\CF^w_{-i})\in \RM^\naive_i(k)$, where \begin{flalign*}
        \CF^w_i&=k\pair{\alpha_j,\alpha_{2n+2}\ |\ \mu_i^w(j)=0 \text{\ for\ } 1\leq j\leq 2n} \text{\ and\ }\\ \CF^w_{-i} &=k\pair{\beta_j,\beta_{2n+1}\ |\ \mu_{-i}^w(j)=0 \text{\ for\ } 1\leq j\leq 2n  }.
    \end{flalign*}
    \end{enumerate} 
\end{lemma}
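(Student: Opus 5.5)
For (1), I would use that $w\in\wt{W}^\circ$ preserves faces. This gives $(v_i,v_{-i})$ as a face, so by Definition~\ref{defn-face}(3) there is $d\in\BZ$ with $v_i^*+v_{-i}=\mathbf d_0$. To fix $d$, I would look at the last coordinate. By definition of $v^*$ we have $v_i^*(2n+1)=-v_i(2n+1)-1$. By the final assertion of Lemma~\ref{permp1p2}, $v_i(2n+1)=0$ and $v_{-i}(2n+1)=1$, so the last coordinate of $v_i^*+v_{-i}$ is $-1+1=0$, consistent with $\mathbf d_0$.

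Checking the last coordinate alone does not determine $d$, so I would pin it down by a sum count. Summing the first $2n$ coordinates gives $\Sigma v_i-v_i(2n+1)+\Sigma v_{-i}-v_{-i}(2n+1)=2nd$. From the face condition (2) and (P2), $\Sigma v_i=n-i$ and $\Sigma v_{-i}=n-i+2i+1=n+i+1$. Hence $2nd=(n-i)+(n+i+1)-1=2n$, so $d=1$. Since the first $2n$ coordinates of $v_i^*$ are just a permutation of those of $v_i$, this identity determines $v_{-i}$ from $v_i$.

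Before this, I have to justify that $w$ maps faces to faces. Condition (1) and the difference of sums in (2) are invariant under the action \eqref{actionW}: translation by $t^w$ shifts $v_i$ and $v_{-i}$ equally, and $w_0$ permutes the first $2n$ coordinates. For condition (3) I would use that $t^w\in X_*(T)_\Gamma$ has the shape $(b,c-b^{\rm rev},c)$ and that $w_0\in S_{2n}^*$ commutes with $*$. This makes $(wv)^*+wv'$ equal to $w_0$ applied to $v^*+v'$ plus $(c^{(2n)},\ast)$, where the last coordinate is $-c-1+1+c$ adjusted. The main obstacle is this bookkeeping for the $(2n+1)$-st coordinate under $*$; if it fails, I would instead verify (1) directly from the duality $L_i^\phi=\alpha L_{-i}$ in Definition~\ref{defnFlag}(3).

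For (2), I would apply formula \eqref{ai} with $w\omega_{\pm i}=v_{\pm i}$ and compare with $\lambda_{\pm i}$, which corresponds to $\omega_{\pm i}$. For $j\le 2n$, the exponent difference at $e_j$ is $\mu^w(j)\in\{0,1\}$ by (P1). The basis vector $t^{\omega(j)}e_j$, identified with $\alpha_j$ (resp.\ $\beta_j$), therefore lies in $w\lambda$ exactly when $\mu(j)=0$; otherwise only $t$ times it does. For the $f$-part I would use $v_i(2n+1)=0$: then $w\lambda_i$ contains $f_1,f_2$, while $\lambda_i$ has $t^{-1}f_1,f_2$. Modulo $t\lambda_i$ this contributes $\alpha_{2n+2}$ but not $\alpha_{2n+1}$. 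Similarly $v_{-i}(2n+1)=1$ gives $w\lambda_{-i}\ni f_1,tf_2$, which contributes $\beta_{2n+1}$ only. Reducing $w\lambda_{\pm i}$ modulo $t\lambda_{\pm i}$ then yields the stated $\CF^w_{\pm i}$.
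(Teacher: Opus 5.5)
Your proof is correct. Part (2) is the paper's argument via \eqref{ai}. For part (1) you determine $d$ by a different route.

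The paper works coordinatewise. It uses the bounds (P1) together with $\omega_{-i}(j)+\omega_i(j^*)=0$ to get $d=v_i(j)+v_{-i}(j^*)\in\{0,1,2\}$. It then rules out $d=0$ and $d=2$, because either would force $v_i$ to be extremal (e.g.\ $v_i=\omega_i$), contradicting (P2).

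You instead sum the first $2n$ coordinates and combine (P2) with axiom (2) of Definition~\ref{defn-face} to get $2nd=2n$ in one line. This avoids the extremal case analysis. It uses (P1) only through the last-coordinate values supplied by Lemma~\ref{permp1p2}. The paper's argument, by contrast, never invokes axiom (2).

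You also verify that $\widetilde{W}^\circ$ preserves faces, which the paper simply asserts. Your worry about the $(2n+1)$-st coordinate is unfounded; the bookkeeping closes. For $j\le 2n$, the identities $w_0^{-1}(j^*)=(w_0^{-1}(j))^*$ and $t^w(j)+t^w(j^*)=c$ give $(wv)^*(j)+wv'(j)=(v^*+v')(w_0^{-1}(j))+c$. In the last coordinate, $-(v(2n+1)+c)-1+v'(2n+1)+c=(v^*+v')(2n+1)$. So $d\mapsto d+c$, and no fallback to Definition~\ref{defnFlag}(3) is needed.
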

\begin{proof}
    (1) By Lemma \ref{permp1p2}, we have $v_i(2n+1)=0$ and $v_{-i}(2n+1)=1$. It follows that $(v_i^*+v_{-i})(2n+1)=0$. Since $(v_i,v_{-i})$ is a face, there exists a constant $d\in\BZ$ such that $$d=v_i(j)+v_{-i}(j^*)$$ for all $1\leq j\leq 2n$. By condition (P1), we have  \begin{flalign*}
        \omega_{i}(j)+\omega_{-i}(j^*)\leq v_i(j)+v_{-i}(j^*)\leq \omega_{i}(j)+\omega_{-i}(j^*)+2.
    \end{flalign*}
    Since $\omega_{-i}(j)+\omega_{i}(j^*)=0$ for $1\leq j\leq 2n$, we obtain that $d\in\cbra{0,1,2}$. Suppose $d=0$. Then $v_i(j)$ must attain its minimal possible value, namely $v_i=\omega_{i}$. However, this contradicts condition (P2). Similarly, $d=2$ is also impossible. We conclude that $d=1$. Therefore, $v_i^*+v_{-i}=(1^{(2n)},0)$. 

    (2) By \eqref{ai} in the proof of Lemma \ref{permp1p2}, the $k[[t]]$-lattice $w\lambda_i$ is \[k[[t]]\pair{t^{r_1}e_1,\ldots, t^{r_{2n}}e_{2n},f_1, f_2}, \]
        where $(r_1,\ldots,r_{2n},0)=w\omega_i\in\BZ^{2n+1}$. Then we obtain that $$\CF^w_i=w\lambda_i/t\lambda_i=  k\pair{\alpha_j,\alpha_{2n+2}\ |\ \mu_i^w(j) =0\text{\ for\ } 1\leq j\leq 2n}.$$
        We note that as $w$ is naively-permissible, $\CF^w_i$ is indeed an $n+1$-dimensional $k$-vector space. Similarly, we have \begin{flalign*}
        	\CF^w_{-i} &=k\pair{\beta_j,\beta_{2n+1}\ |\ \mu_{-i}^w(j)=0 \text{\ for\ } 1\leq j\leq 2n  }.
        \end{flalign*}
\end{proof}

\begin{lemma} \label{lem-Wmfaces}
	The map $w\mapsto w(\omega_i,\omega_{-i})$ induces a bijection \begin{flalign*} 
		 \wt{W}^\circ/W_i \simto \cbra{\text{faces}}.
	\end{flalign*} 
    Recall that $W_i=W_{\cbra{i}}$ is defined in Proposition \ref{lem-cell}.
\end{lemma}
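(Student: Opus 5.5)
The plan is the orbit–stabilizer principle: show that $\wt{W}^\circ$ acts transitively on the set of faces and that the stabilizer of the base face $(\omega_i,\omega_{-i})$ is exactly $W_i$. The first step is to check that faces are preserved by the action \eqref{actionW}. Write $w=t^ww_0$ with $w_0\in S_{2n}^*$ and $t^w=(b_1,\ldots,b_n,c-b_n,\ldots,c-b_1,c)\in X_*(T)_\Gamma$. Conditions (1) and (2) of Definition~\ref{defn-face} are stable under adding the same vector to both entries and under permuting the first $2n$ coordinates, so they are preserved. For (3), compatibility of $w_0$ with $*$ (since $w_0(j^*)=w_0(j)^*$) gives $(wv_i)^*+wv_{-i}=w_0(v_i^*+v_{-i})+(t^w)^*+t^w$. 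Here $(t^w)^*(j)=t^w(j^*)$ for $j\le 2n$ and $(t^w)^*(2n+1)=-c-1$. Because of the $-1$ in the last coordinate of $v^*$, the last-coordinate bookkeeping has to be redone carefully; I expect the outcome is $\mathbf d_0$ replaced by $(\mathbf d+\mathbf c)_0$.

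The second step identifies the stabilizer. By definition $W_i$ consists of the $w\in W_\aff$ fixing $a_i$. I would observe that $\omega_i+\omega_{-i}=2a_i+(0^{(2n)},-1)$ modulo a multiple of $\mathbf 1_0$. More precisely, the face $(\omega_i,\omega_{-i})$ is the integral encoding of the graded lattice chain $(\lambda_{\{i\}},c_{\{i\}})$ from the proof of Proposition~\ref{coro-stabvi}: via \eqref{ai}, $w$ fixes the face iff $w\lambda_{\pm i}=\lambda_{\pm i}$. Hence $\mathrm{Stab}(\omega_i,\omega_{-i})=\wt{W}^\circ\cap P_i$ modulo $T(K)_1$.

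Showing that this stabilizer equals $W_i$, rather than something larger containing length-zero elements of $\Omega$, is the delicate point. An element of $\Omega\cap\mathrm{Stab}$ acts on $\Delta$. The only nontrivial element of $\Xi$ is $\tau_1$, which sends $\lambda_i$ to $\lambda_{-(n-i)}$ and so does not fix the face. Moreover $\kappa$ must vanish on $P_i\cap G^\circ(K)=Q_i$, because $Q_i=P_i^\circ$ is parahoric by Lemma~\ref{lem-Itype}. Therefore the stabilizer lies in $\ker\kappa\cap\wt{W}^\circ=W_\aff$, where it is exactly $W_i$.

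The third step, transitivity, is the main obstacle. Given a face $(v_i,v_{-i})$ with constant $d$, I would first translate by $t^w$ with $c$ chosen to normalize the last coordinate so that $v_i(2n+1)=-1$. Condition (1) then says $v_{-i}-v_i\in\{0,1\}^{2n+1}$. Conditions (2) and (3) force the last coordinates to differ by $1$ and force the set $E=\{j\le 2n: v_{-i}(j)=v_i(j)+1\}$ to have size $2i$ and satisfy $E^*=E$. Next I would pick $w_0\in S_{2n}^*$ carrying $E$ onto $A_i=[1,i]\cup[i^*,2n]$, which is possible because $S_{2n}^*$ acts transitively on $*$-stable subsets of a fixed size. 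After this, $v_i-\omega_i$ is a vector whose $j$ and $j^*$ entries sum to a constant, i.e. it has the shape of an element of $X_*(T)_\Gamma$, and translating by it finishes the argument. The parity of the last coordinate is what must be checked against the parity constraint in the identification of $X_*(T)_\Gamma\subset\BZ^{2n+1}$ and against condition (3). This is where I expect the computation to need care, and I would verify it directly.
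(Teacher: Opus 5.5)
Your orbit--stabilizer route, with Lemma~\ref{lem-Wmperm} controlling the stabilizer, is the one the paper intends through its split-case reference. Your first two steps are sound.
\begin{itemize}
\item The last coordinate does come out as $(\mathbf d+\mathbf c)_0$. The $-1$ in $v^*(2n+1)$ should be counted once, not twice as your displayed formula has it.
\item The stabilizer is $W_i$. A more direct argument: the last coordinate of $\omega_i$ forces $c=0$, and the exact identity $\omega_i+\omega_{-i}=2a_i+(0^{(2n)},-1)$ then gives $wa_i=a_i$. Conversely, $\omega_{\pm i}-a_i$ is constant on $A_i$ and zero on $B_i$, so $W_i$ fixes the face.
\end{itemize}

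The gap is in the transitivity step. Normalize so that $v_i(2n+1)=-1$ and $E$ has been moved onto $A_i$. Then $\delta:=v_i-\omega_i$ satisfies $\delta(2n+1)=0$ and $\delta(j)+\delta(j^*)=d$ for $j\le 2n$, where $d$ is the constant in condition (3) of the normalized face. Every $t\in X_*(T)_\Gamma$ satisfies $t(j)+t(j^*)=t(2n+1)$, so $\delta\in X_*(T)_\Gamma$ only if $d=0$. (There is no parity condition on $X_*(T)_\Gamma$; that lives on $Q^\vee\subset X_*(T)$.) Nothing in Definition~\ref{defn-face} forces $d=0$. In fact the integer $d-v_i(2n+1)$ is a $\wt{W}^\circ$-invariant of a face: a translation with parameter $c$ shifts both $d$ and the last coordinate by $c$, and $w_0$ changes neither. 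This invariant equals $1$ on $(\omega_i,\omega_{-i})$, but conditions (1)--(3) leave it free. For example, $(\omega_i+(1^{(2n)},0),\,\omega_{-i}+(1^{(2n)},0))$ satisfies (1)--(3) with $d=2$ and last coordinate $-1$, so it is not in the orbit. Hence ``translating by it finishes the argument'' fails. Moreover, the map is not surjective onto faces in the sense of Definition~\ref{defn-face} as literally written, so no argument can close this step from that definition alone.

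What is missing is the duality constraint on the anisotropic coordinate. Identify $af_1+bf_2$ with $b+ua\in L$. Then the last coordinate $s$ in \eqref{ai} encodes the lattice $u^s\CO_L$. Its dual under the restriction of $\phi$ is $u^{-s-1}\CO_L$, while rescaling by $t^d$ shifts $s$ by $2d$. So a pair of vectors coming from a point of $\sFl_{\cbra{i}}$ satisfies, in addition to (3), the relation $v_{-i}(2n+1)+v_i(2n+1)+1=2d$, which together with (3) means $v_i(2n+1)=d-1$. This condition has three useful properties:
\begin{itemize}
\item it is $\wt{W}^\circ$-stable;
\item it holds for $(\omega_i,\omega_{-i})$;
\item it holds for the faces used later, since Lemmas~\ref{permp1p2} and~\ref{lem-perm} give $v_i(2n+1)=0$ and $d=1$.
\end{itemize}
Once this condition is added to the definition of a face, your argument goes through. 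Normalizing $v_i(2n+1)=-1$ forces $d=0$, so after moving $E$ onto $A_i$ you get $\delta\in(Q^\vee)_\Gamma$. Then $t^{\delta}$ carries $\omega_i$ to $v_i$, and $v_{-i}$ follows because (3) determines it from $v_i$ and $d$.
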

\begin{proof}
	Using Lemma \ref{lem-Wmperm}, the proof proceeds similarly as in \cite[Lemma 4.2]{yang25TopFlat}. We omit the details.
\end{proof}

\begin{defn}
    Let $E$ be a subset of $\sbra{1,2n}$ with cardinality $n$. \begin{enumerate}
        \item We say that a subspace $\CF_i\sset \Lambda_i\otimes k$ (resp. $\CF_{-i}\sset \Lambda_{-i}\otimes k$)  is \dfn{given by} $E$ if $\CF_i$ has a $k$-basis consisting of $f_j$ (resp. $g_j$) for $j\in E$ and $f_{2n+2}$ (resp. $g_{2n+1}$). 
        \item We say that $E$ is \dfn{naively-permissible} (with respect to $\cbra{i}$) if, for every pair $\cbra{j,j^*}\sset A_i$ (notation as in Lemma \ref{lem-Wmperm}), the pair is not contained in $E$, and for every pair $\cbra{j,j^*}\sset B_i$, at least one element lies in $E$.
    \end{enumerate} 
\end{defn}

Let $w\in \wt{W}^\circ$ be naively-permissible. A similar argument as \cite[Lemma 4.5]{yang25TopFlat} shows that \[E^w\coloneqq \cbra{j\ |\ \mu_i^w(j)=0\text{\ and\ } 1\leq j\leq 2n} \] is naively-permissible. 
By Lemma \ref{lem-perm}, the subspace $\CF^w_i$ (resp. $\CF^w_{-i}$) is given by $E^w$ (resp. $(E^w)^\perp$). Here, $(E^w)^\perp$ denotes the complement in $[1,2n]$ of the subset $\cbra{j^*\ |\ j\in E^w}$.
 
 Clearly, for $u\in W_i$, we have $$E^w=E^{wu}.$$ 


\begin{lemma}\label{lem-orbits4}
    There are precisely $\min{\cbra{i,n-i}}+1$ orbits of naively-permissible subsets under the action of $W_i=S_{2i}^*\times S_{2n-2i}^*$.
\end{lemma}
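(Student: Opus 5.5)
We classify naively-permissible subsets $E\subset[1,2n]$ with $|E|=n$ by simple combinatorial invariants, and show that these invariants are a complete set of invariants for the $W_i$-orbits.

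\textbf{Structure of $E$.} The set $[1,2n]$ splits into the $i$ pairs $\{j,j^*\}\subset A_i$ and the $n-i$ pairs $\{j,j^*\}\subset B_i$. Permissibility says two things:
\begin{itemize}
\item each pair in $A_i$ meets $E$ in at most one element;
\item each pair in $B_i$ meets $E$ in at least one element.
\end{itemize}
Let $a$ be the number of $A_i$-pairs meeting $E$ (each in exactly one element). Let $b_2$ and $b_1$ be the numbers of $B_i$-pairs meeting $E$ in two, respectively one, element. Then $b_1+b_2=n-i$ and $|E|=a+b_1+2b_2=n$. Hence $a+b_2=i$, and the orbit data reduce to the single integer $\ell:=a$ (equivalently $b_2=i-\ell$).

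\textbf{Range of $\ell$.} The constraints are $0\le a\le i$ and $0\le b_2\le n-i$, i.e.
\[
\max\{0,\,2i-n\}\le \ell\le i .
\]
Every such value is realized, for instance by taking $E$ to contain:
\begin{itemize}
\item the first $\ell$ elements $1,\dots,\ell$ of $A_i$ (no $A_i$-pair is fully contained);
\item both elements of $i-\ell$ pairs in $B_i$;
\item one element of each of the remaining $B_i$-pairs.
\end{itemize}
This gives exactly $i-\max\{0,2i-n\}+1=\min\{i,n-i\}+1$ possible values of $\ell$.

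\textbf{Invariance and transitivity.} Since $W_i\simeq S_{2i}^*\times S_{2(n-i)}^*$ (Lemma~\ref{lem-Wmperm}), each factor permutes $A_i$ (resp. $B_i$) while preserving the pairing $j\leftrightarrow j^*$. So $W_i$ sends pairs to pairs inside $A_i$ and inside $B_i$, and preserves how many elements of each pair lie in $E$. Thus $\ell$ is $W_i$-invariant and $W_i$ preserves permissibility.

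Conversely, suppose $E,E'$ have the same $\ell$. The group $S_{2i}^*$ is the hyperoctahedral group on the $i$ pairs of $A_i$: it permutes the pairs arbitrarily and can swap $j\leftrightarrow j^*$ within any pair. Using it, move the $\ell$ occupied pairs of $E$ onto those of $E'$, then apply within-pair swaps to match the chosen element of each pair.

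Similarly, $S_{2(n-i)}^*$ acts on the $n-i$ pairs of $B_i$. Use it to send the $b_2$ doubly-occupied pairs to the doubly-occupied pairs of $E'$ and the singly-occupied pairs to the singly-occupied ones. Then use swaps to match the chosen element in each singly-occupied pair.

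Hence the orbits are in bijection with the admissible values of $\ell$, giving $\min\{i,n-i\}+1$ orbits.

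\textbf{Main point to check.} The only delicate step is the claim that $S_{2m}^*$ contains arbitrary within-pair swaps and arbitrary permutations of pairs, i.e. that it is the full hyperoctahedral group rather than an index-two subgroup. This follows directly from the definition of $S_{2n}^*$ given in \S\ref{OSG}: it consists of all permutations $\sigma$ with $\sigma(j^*)=\sigma(j)^*$, with no parity condition. Restricted to $A_i$ or $B_i$, this yields the full group of pair-preserving permutations.
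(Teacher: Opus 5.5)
Your proposal is correct and follows the paper's proof essentially step by step. You use the same invariants ($a_1=\ell$, $b_2=i-\ell$, $b_1=n-2i+\ell$) and the same range $\max\{0,2i-n\}\le \ell\le i$. Transitivity also goes the same way, through pair permutations $\tau_{j'j}$ and within-pair swaps $\tau_j$ in the full hyperoctahedral group. You additionally check within the proof that every value of $\ell$ is realized; the paper supplies this through the representatives $E^\ell=[i+1-\ell,n+i-\ell]$ given right after the lemma.
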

\begin{proof}
    Let $E$ be a naively-permissible subset. Let $a_1$ (resp. $a_0$) denote the number of pairs in $A_i$ contributing one (resp. zero) element in $E$. Let $b_2$ (resp. $b_1$) denote the number of pairs in $B_i$ contributing two (resp. one) elements in $E$. Since $E$ is naively-permissible, we have \begin{flalign*}
        \max{\cbra{0,2i-n}}\leq a_1\leq i,\ a_0=i-a_1,\ b_1=n-2i+a_1,\ b_2=i-a_1.
    \end{flalign*}
    Note that the lower bound for $a_1$ is required to guarantee $b_1\geq 0$.
    
    Let $\ell$ be an integer satisfying $\max{\cbra{0,2i-n}}\leq \ell\leq i$. We say $E$ is of type $\ell$ if $a_1(E)=\ell$. It is clear that for $h\in W_i$, the (naively-permissible) subsets $h(E)$ and $E$ have the same type. Note that $h\in W_i\sset S_{2n}^*$ decomposes into a (reduced) product of $\tau_{j'j}\coloneqq (j'j)(j^{'*}j^*)$ and $\tau_j\coloneqq (jj^*)$ for $1\leq j\leq 2n$. If $\cbra{j',j^{'*}}\cap E\neq \emptyset$, then $\cbra{j,j^*}\cap (\tau_{j'j}E)\neq \emptyset$. If $j\in E$, then $j^*\in (\tau_jE)$.  

    Let $E,E'$ be two naively-permissible subsets of type $\ell$. Using the action of $\tau_{j'j}$, we may assume that the pairs $\cbra{j,j^*}$ contributing one (resp. two) element(s) in $E$ and $E'$ are the same. Using the action of $\tau_j$, we may assume that $E=E'$. Hence, the set \begin{flalign}
    	\cbra{E\in\CS\ |\ a_1(E)=\ell}  \label{ESl}
    \end{flalign} is a $W_i$-orbit.
    
    From the above discussion, we conclude that there are precisely $\min{\cbra{i,n-i}}+1$ $W_i$-orbits in $\CS$.  
\end{proof}

For $\max{\cbra{0,2i-n}}\leq \ell\leq i$, the (naively-permissible) subset \begin{flalign*}
	E^\ell\coloneqq \sbra{i+1-\ell,n+i-\ell}\sset [1,2n]
\end{flalign*}
is a representative for the $W_i$-orbit in \eqref{ESl}. 
Then $E^\ell$ determines a point $(\CF^\ell_{i},\CF^\ell_{-i})$ in $\RM^\naive_i(k)$ via
\begin{equation}
	\begin{split}
		\CF_i^\ell &\coloneqq k\pair{f_{i+1-\ell},\ldots,f_{n+i-\ell},f_{2n+2}} \sset \Lambda_i\otimes k, \\ \CF_{-i}^\ell &\coloneqq k\pair{g_1,\ldots,g_{n-i+\ell},g_{i^*+\ell},\ldots,g_{2n}, g_{2n+1} }\sset \Lambda_{-i}\otimes k.
	\end{split}  \label{CF1234}
\end{equation}

\begin{corollary}\label{coro-bijection}
    The Schubert cells in $\RM^\naive_{\cbra{i},k}$ are in bijection with the orbits of naively-permissible subsets of $\sbra{1,2n}$ under the action of $W_i\simeq S_{2i}^*\times S_{2n-2i}^*$. Each Schubert cell in $\RM^\naive_{\cbra{i},k}$ admits a representative of the form $(\CF^\ell_{i},\CF^\ell_{-i})$ in \eqref{CF1234}.  In particular, there are precisely $\min{\cbra{i,n-i}}+1$ Schubert cells in $\RM^\naive_{\cbra{i},k}$. 
\end{corollary}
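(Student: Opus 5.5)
The plan is to assemble Corollary~\ref{coro-bijection} from the lemmas already established in \S\ref{subsec-pseud}. By Proposition~\ref{lem-cell}, Schubert cells in $\sFl_{\{i\}}$ correspond to double cosets $W_i\backslash\wt{W}^\circ/W_i$. The cells contained in $\RM^\naive_{\{i\},k}$ are exactly those $C_w$ with $w$ naively-permissible; this property depends only on the double coset, since $\RM^\naive_{\{i\},k}$ is $L^+\CP^\circ_I$-stable.

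\textbf{Step 1: from right cosets to subsets.} By Lemma~\ref{lem-Wmfaces}, the right coset $wW_i$ is determined by the face $w(\omega_i,\omega_{-i})$. By Lemma~\ref{lem-perm}(1), when $w$ is naively-permissible this face is determined by $v_i=w\omega_i$. Conditions (P1) and Lemma~\ref{permp1p2} force $v_i(2n+1)=0$ and $v_i(j)-\omega_i(j)\in\{0,1\}$ for $1\le j\le 2n$. Hence $v_i$ is determined by the set $E^w=\{j\mid\mu^w_i(j)=0\}$. We already know from the text that $E^w$ is naively-permissible. Conversely, given a naively-permissible $E$, define $v_i$ from $E$ and set $v_{-i}$ by $v_i^*+v_{-i}=(1^{(2n)},0)$. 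One checks that $(v_i,v_{-i})$ is a face satisfying (P1) and (P2). The count $\Sigma v_i=n-i$ follows from $|E|=n$ and the permissibility pattern on $A_i$ and $B_i$. The checks $v_{-i}\ge v_i\ge v_{-i}-\mathbf 1$ and (P1) for $v_{-i}$ come exactly from the conditions that no pair in $A_i$ lies in $E$ and every pair in $B_i$ meets $E$. By Lemma~\ref{lem-Wmfaces} this face comes from some $w\in\wt W^\circ$, which is naively-permissible by Lemma~\ref{permp1p2}. This yields a bijection between naively-permissible right cosets $wW_i$ and naively-permissible subsets $E$.

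\textbf{Step 2: left $W_i$-equivariance.} The left action of $u=t^uu_0\in W_i$ on faces sends $v_i$ to $u v_i$. Using $t^u=u_0a_i-a_i$ from Lemma~\ref{lem-Wmperm}, one checks that $\mu^{uw}_i$ is a permutation of $\mu^w_i$ by $u_0$, up to the translation, which cancels against the corresponding change in $\omega_i$. Therefore $E^{uw}=u_0(E^w)$, where $u_0\in S_{2i}^*\times S_{2n-2i}^*$ acts on $[1,2n]$. Consequently, double cosets of naively-permissible elements correspond to $W_i$-orbits of naively-permissible subsets. I expect this step to be the main obstacle: the half-integral entries of $a_i$ on $A_i$ interact with the translation part, and one must verify that they exactly compensate so that the zero-set of $\mu$ transforms by $u_0$.

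\textbf{Step 3: counting and representatives.} Lemma~\ref{lem-orbits4} gives exactly $\min\{i,n-i\}+1$ orbits, with representatives $E^\ell$. Lemma~\ref{lem-perm}(2) identifies the corresponding $k$-point with $(\CF^\ell_i,\CF^\ell_{-i})$ in \eqref{CF1234}, where $\CF^\ell_{-i}$ is given by $(E^\ell)^\perp$. This identification gives the final two assertions of the corollary.
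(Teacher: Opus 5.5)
Your proposal is correct, but it gets the bijection by a different route from the paper. The paper uses only the injective half. From Lemma \ref{lem-Wmfaces}, Lemma \ref{lem-perm}(1) and $u\omega_i=\omega_i$ for $u\in W_i$, it obtains a $W_i$-equivariant injection of naively-permissible right cosets into naively-permissible subsets. This gives an injection of double cosets into $W_i$-orbits, hence at most $\min\{i,n-i\}+1$ cells. The opposite bound comes from the explicit points \eqref{CF1234}, which lie in distinct cells; for instance, the rank of $\iota$ on $\CF_i$ is an $L^+\CP_I$-invariant and equals $\ell$ there. Surjectivity $E=E^w$ appears only afterwards, as a remark.

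You instead prove surjectivity directly by building a face from $E$. This yields the bijection and the representatives in one stroke and makes that remark part of the proof. The cost is that you need the surjective half of Lemma \ref{lem-Wmfaces}, which the paper never uses, and that half is delicate as stated. Translations shift $d$ and $v_i(2n+1)$ by the same amount, and $W^\circ$ fixes both. So every face in the $\wt{W}^\circ$-orbit of $(\omega_i,\omega_{-i})$ satisfies $d=v_i(2n+1)+1$. Definition \ref{defn-face} does not imply this: shifting the last coordinates of $\omega_{\pm i}$ by $2$ gives a face with $d=0$ and $v_i(2n+1)=1$. Your face has $d=1=v_i(2n+1)+1$, so your argument survives. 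You should check orbit membership directly rather than cite the lemma: translate to $d=0$, normalize each pair $\{j,j^*\}$ by a coroot translation, then apply a signed permutation.

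There are also two smaller points.

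\emph{Step 2.} With the convention \eqref{actionW}, the translation part is $t^u=a_i-u_0a_i$; the sign in \eqref{tw} is reversed. Computing with $u_0a_i-a_i$ would therefore not cancel. It is simpler to argue as the paper does: since $u\omega_i=\omega_i$, you get $\mu^{uw}_i=uw\omega_i-u\omega_i=u_0\mu^w_i$, because $u$ acts affinely with linear part $u_0$. The half-integral entries of $a_i$ never enter; equivalently, $\omega_i-a_i$ is constant on $A_i$ and on $B_i$.

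\emph{Step 1.} The identity $\Sigma v_i=n-i$ needs only $|E|=n$. Condition (P1) for $v_{-i}$ holds for every $E$, because $v_{-i}(j)=1-v_i(j^*)$. What is equivalent to naive permissibility is the face inequality $v_{-i}\ge v_i\ge v_{-i}-\mathbf 1$, i.e.\ $v_i(j)+v_i(j^*)\in\{0,1\}$.
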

\begin{proof}
    Let $u=t^uu_0\in W_i$. Then $u\omega_i=\omega_i$ by Lemma \ref{lem-Wmfaces}. If $w$ is naively-permissible and $j\in E^w$, then \begin{flalign*}
    	uw\omega_i(u_0(j)) &=w\omega_i(j)+t^{u}(u_0(j))\\ &=\omega_i(j)+t^u(u_0(j)) \text{\quad (since $j\in E^w$)}\\ &= \omega_i(u_0(j)) \text{\quad (since $u\omega_i=\omega_i$).}
    \end{flalign*} 
    Hence, $u_0(j)\in E^{uw}$. By Lemma \ref{lem-Wmfaces}, $w\mapsto E^w$ defines a $W_i$-equivariant injective map \begin{flalign*}
    	\tcbra{\text{Naively-permissible elements in $\wt{W}^\circ$}}/W_i \hookrightarrow \CS=\cbra{ \begin{array}{l}
    		\text{Naively-permissible} \\ \text{subsets in $\sbra{1,2n}$}
    	\end{array} },
    \end{flalign*}
    and hence an injection on (left) $W_i$-orbit sets \begin{flalign*}
    	W_i\backslash\tcbra{\text{Naively-permissible elements in $\wt{W}^\circ$}}/W_i \hookrightarrow W_i\backslash \CS.
    \end{flalign*}
    By Lemma \ref{lem-cell}, the Schubert cells in $\RM^\naive_{\cbra{i},k}$ are in bijection with the left-hand orbit set. Thus, the number of Schubert cells in $\RM^\naive_{\cbra{i},k}$ is at most $|W_i\backslash \CS|=\min{\cbra{i,n-i}}+1$. On the other hand, by \eqref{CF1234}, there are at least $\min{\cbra{i,n-i}}+1$ Schubert cells in $\RM^\naive_{\cbra{i},k}$. This completes the proof. 
\end{proof}
\begin{remark}
	The proof of Corollary \ref{coro-bijection} also implies that for any naively-permissible subset $E$, there exists a naively-permissible $w\in \wt{W}^\circ$ such that $E=E^w$.
\end{remark}

\begin{prop}\label{prop-liftm4}
    Each of the $\min{\cbra{i,n-i}}+1$ points $(\CF^\ell_{i},\CF^\ell_{-i})\in \RM^\naive_{\cbra{i}}(k)$ has a lift to the generic fiber $\RM^\naive_{\cbra{i},F_0}$.
\end{prop}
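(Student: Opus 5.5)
The plan is to lift over $\CO_F$ rather than over $\CO_{F_0}$. This is harmless for topological flatness, since a point of the generic fiber over the finite extension $F$ still lies in the closure of $\RM^\naive_{\cbra{i},F_0}$. It is also forced: the form on $\langle f_1,f_2\rangle$ is anisotropic over $F_0$, so $V$ has Witt index $n$ over $F_0$, and $\OGr(n+1,V)$ has no $F_0$-points at all. Over $F$, with $\pi^2=-\pi_0$, the vector $f_1+\pi f_2$ is isotropic, because $\phi(f_1+\pi f_2,f_1+\pi f_2)=\pi_0+\pi^2=0$.

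For each $\ell$ I will write down an explicit totally isotropic $(n+1)$-dimensional subspace $\CF\subset V_F$. Taking $\CF_{\pm i}:=\CF\cap(\Lambda_{\pm i}\otimes\CO_F)$ then gives an $\CO_F$-point of $\RM^\naive_{\cbra{i}}$. Conditions LM1--LM3 hold automatically for saturated intersections with a single isotropic subspace, since these are direct summands of rank $n+1$, mutually orthogonal, and compatible with the inclusions and with multiplication by $\pi_0$. It therefore only remains to check that the reductions mod $\pi$ are $\CF^\ell_i$ and $\CF^\ell_{-i}$ from \eqref{CF1234}.

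I will build $\CF$ as an orthogonal direct sum adapted to the decomposition of $V$ into the hyperbolic planes $\langle e_j,e_{j^*}\rangle$ and the plane $\langle f_1,f_2\rangle$.

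First, the $f$-part is the line spanned by $f_1+\pi f_2$. In $\Lambda_i$ its saturation is generated by $\pi^{-1}f_1+f_2=-\pi\alpha_{2n+1}+\alpha_{2n+2}$, which reduces to $\alpha_{2n+2}$. In $\Lambda_{-i}$ the vector $f_1+\pi f_2$ is itself primitive and reduces to $\beta_{2n+1}$, as required.

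Second, consider a pair $\cbra{j,j^*}$ contributing exactly one element to $E^\ell$. For such a pair I take the line spanned by $e_j$. I expect its saturations in $\Lambda_{\pm i}$ to reduce to the correct basis vectors $\alpha_j$ and $\beta_{j^*}$, respectively; this is the content of $E^\perp$ and needs checking against the bases of $\Lambda_i$ and $\Lambda_{-i}$.

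Third, by the counting in Lemma~\ref{lem-orbits4}, the pairs in $A_i$ contributing no element and the pairs in $B_i$ contributing two elements occur in equal number, namely $a_0=b_2=i-\ell$. I will match them, so that each match is a pair $\cbra{a,a^*}\subset A_i$ together with a pair $\cbra{b,b^*}\subset B_i$. On each resulting $4$-dimensional split block $\langle e_a,e_{a^*},e_b,e_{b^*}\rangle$ I choose a $2$-dimensional isotropic plane of the form
\[
\langle\, e_b+c\,e_a,\ \ e_{b^*}-c'\,e_{a^*}\,\rangle,
\]
where $c,c'$ are suitable powers of $\pi_0$ (or of $\pi$), and the signs are chosen so that the two generators are orthogonal. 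The goal is that the saturation in $\Lambda_i$ reduces to $\langle\alpha_b,\alpha_{b^*}\rangle$, while the saturation in $\Lambda_{-i}$ reduces to the span of $\beta_a,\beta_{a^*}$. The latter is what $(E^\ell)^\perp$ requires on this block. Concretely, in $\Lambda_{-i}$ the mixing coefficients become the leading terms after rescaling by $\pi_0$, because of the different scalings $\pi_0^{-1}e_a$ in $\Lambda_i$ versus $e_a,\pi_0e_{a^*}$ in $\Lambda_{-i}$.

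The main obstacle is this third step: choosing the exponents of $c,c'$ so that both reductions come out right simultaneously, since the $e_a$ and $e_{a^*}$ directions are scaled asymmetrically in $\Lambda_i$ and $\Lambda_{-i}$. I would first try $c=1$, $c'=\pi_0$, or the analogous choice from the split-case computation in \cite[\S4]{yang25TopFlat}, and compute the four saturations directly. If a single plane cannot achieve both reductions, I would instead mix $e_a$ with $e_{b^*}$ and $e_{a^*}$ with $e_b$.

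Once the blocks are settled, I will count dimensions: $\ell+(n-2i+\ell)$ lines from one-element pairs, $2(i-\ell)$ from the matched blocks, and $1$ from the $f$-part, for a total of $n+1$. Orthogonality of the different blocks gives total isotropy, which completes the lift.
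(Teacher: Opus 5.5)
The strategy is right and is the paper's: write down an explicit $\CO_F$-point whose $\CF_{\pm i}$ reduce to \eqref{CF1234}. However, the proposal stops exactly where the content of the proposition lies. The matched $4$-dimensional blocks in your third step are never constructed, and the first candidate you name fails.

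With your normalization $\langle e_b+c\,e_a,\ e_{b^*}-c'\,e_{a^*}\rangle$ and $a\le i$ (so $a^*\ge i^*$), orthogonality of the two generators is $1-cc'=0$. So $c=1$, $c'=\pi_0$ is not even isotropic. The four reduction requirements then pin down the valuations:
\begin{itemize}
\item In $\Lambda_i$ the generators are $\alpha_b+c\pi_0\alpha_a$ and $\alpha_{b^*}-c'\alpha_{a^*}$. These must reduce to $\alpha_b$ and $\alpha_{b^*}$, which forces $\mathrm{ord}_\pi(c)\ge -1$ and $\mathrm{ord}_\pi(c')\ge 1$.
\item In $\Lambda_{-i}$ they are $\beta_b+c\,\beta_a$ and $\beta_{b^*}-c'\pi_0^{-1}\beta_{a^*}$. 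Their saturations must reduce to $\beta_a$ and $\beta_{a^*}$, which forces $\mathrm{ord}_\pi(c)\le -1$ and $\mathrm{ord}_\pi(c')\le 1$.
\end{itemize}
Hence $\mathrm{ord}_\pi(c)=-1$ and $\mathrm{ord}_\pi(c')=1$. No power of $\pi_0$ can work, so the blocks, like the $f$-line, genuinely need $\pi$. Taking $c=-\pi^{-1}$, $c'=-\pi$ gives
\[
\CO_F\langle \alpha_b+\pi\alpha_a,\ \alpha_{b^*}+\pi\alpha_{a^*}\rangle\subset \Lambda_i\otimes\CO_F .
\]
The saturation of the same plane in $\Lambda_{-i}\otimes\CO_F$ is spanned by $\beta_a-\pi\beta_b$ and $\beta_{a^*}-\pi\beta_{b^*}$. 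This is precisely the $e$-part of the paper's lift, with $a=i-\ell+1-k$ and $b=n-i+\ell+k$ for $1\le k\le i-\ell$; in the paper's notation $f_j=\alpha_j$ and $g_j=\beta_j$.

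A small slip in your second step: the line $Fe_j$ saturates to $\CO_F\beta_j$ in $\Lambda_{-i}$, not to $\beta_{j^*}$. That is fine, since $\beta_j$ is what $(E^\ell)^\perp$ requires.

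Once this is filled in, the rest of your argument is sound, and on the $f$-part it is more careful than the paper. The paper claims the $\ell=i$ lift over $\CO_{F_0}$, and all of its lifts use $f_{2n+2}$ and $g_{2n+1}$ as the $f$-part generators. These violate LM3: for example $\lambda_1(g_{2n+1})=\pi_0 f_{2n+1}\notin\wt{\CF}^\ell_i$. This failure is unavoidable, since, as you observe, $\RM^\naive_{\cbra{i}}(F_0)=\emptyset$. Your isotropic line $f_1+\pi f_2$ is the correct replacement; in the paper's notation its generators are $f_{2n+2}-\pi f_{2n+1}$ and $g_{2n+1}+\pi g_{2n+2}$. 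Your verification of LM1--LM3 via saturated intersections with a single totally isotropic $\CF\subset V_F$ is also cleaner than checking the conditions on the two modules by hand.
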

\begin{proof}
    
    Note that for any $\CO_{F_0}$-algebra $R$, $\RM^\naive_{\cbra{i}}(R)$ is the set of pairs $(\CF_i,\CF_{-i})$ of $R$-modules such that 
    \begin{enumerate}[label=(\roman*)]
    	\item $\CF_i$ (resp. $\CF_{-i}$) is a locally direct summand of rank $n+1$ in $\Lambda_i\otimes k$ (resp. $\Lambda_{-i}\otimes k$);
    	\item $\CF_{-i}=\CF^\perp_i$;
    	\item Denote \begin{flalign*}
    		\lambda_1\coloneqq \begin{pmatrix}
    			\pi_0 I_i & & \\ &I_{2n-2i} & \\ & &\pi_0 I_i \\ & & &\begin{pmatrix} \pi_0 \\ &1 \end{pmatrix}
    		\end{pmatrix} \text{\ and\ } \lambda_2\coloneqq \begin{pmatrix}
    			I_i & & \\ &\pi_0 I_{2n-2i} & \\ & &I_i \\ & & &\begin{pmatrix} 1 \\ &\pi_0 \end{pmatrix}
    		\end{pmatrix},
    	\end{flalign*}
    	which are viewed as $R$-linear maps $\Lambda_{-i}\otimes_{\CO_{F_0}}R\ra \Lambda_i\otimes_{\CO_{F_0}} R$ and $\Lambda_i\otimes_{\CO_{F_0}} R\ra \Lambda_{-i}\otimes_{\CO_{F_0}} R$ respectively. We require $\lambda_1(\CF_{-i})\sset \CF_i$ and $\lambda_2(\CF_i)\sset \CF_{-i}$. 
    \end{enumerate} 
    Suppose that $\ell=i$. Define \begin{flalign*}
        \wt{\CF}^i_{i}\coloneqq \CO_{F_0}\pair{f_1,\ldots,f_{n},f_{2n+2} } \text{\ and\ } \wt{\CF}^i_{-i}\coloneqq \CO_{F_0}\pair{g_1,\ldots,g_{n},g_{2n+1} }
    \end{flalign*}
    Then $(\wt{\CF}^i_{i},\wt{\CF}^i_{-i})\in \RM^\naive_{\cbra{i}}(\CO_{F_0})$ lifts the point $(\CF^i_{i},\CF^i_{-i})$.

    Suppose that $\max{\cbra{0,2i-n}}\leq \ell<i$. Recall that $F=F_0(\pi)$ with $\pi^2=-\pi_0$. Define \begin{flalign*}
        \wt{\CF}^\ell_{i}\coloneqq \CO_F &\langle f_{i+1-\ell},\ldots,f_{n-i+\ell}, f_{n-i+\ell+1}+\pi f_{i-\ell}, f_{n-i+\ell+2}+\pi f_{i-\ell-1},\ldots, \\ &f_{n}+\pi f_1, f_{n+1}+\pi f_{2n}, f_{n+2}+\pi f_{2n-1},\ldots,f_{n+i-\ell}+\pi f_{2n-i+\ell+1}, f_{2n+2} \rangle.
    \end{flalign*}
    We have \begin{flalign*}
    	\wt{\CF}_{-i}^\ell = &\CO_F \langle g_1-{\pi}g_{n},g_2-{\pi}g_{n-1},\ldots,g_{i-\ell}-{\pi}g_{n-i+\ell+1},g_{i-\ell+1}, g_{i-\ell+2}, \\ &\ldots,g_{n-i+\ell}, g_{2n-i+\ell+1}-{\pi}g_{n+i-\ell}, g_{2n-i+\ell+2}-{\pi}g_{n+i-\ell-1}, \ldots, g_{2n}-{\pi}g_{n+1}, g_{2n+1}\rangle.
    \end{flalign*}
    Then it is easy to check that $(\wt{\CF}^\ell_{i},\wt{\CF}^\ell_{-i})\in \RM^\naive_{\cbra{i}}(\CO_F)$ and lifts $(\CF^\ell_{i},\CF^\ell_{-i})$.
\end{proof}

\begin{thm}\label{coro-topoflat}
    The naive local model $\RM^\naive_{\cbra{i}}$, and hence $\RM^\pm_{\cbra{i}}$, is topologically flat. 
\end{thm}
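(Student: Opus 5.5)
The plan is the standard one: show that the closure $\CZ$ of the generic fiber in $\RM^\naive_{\cbra{i}}$ contains the whole special fiber. Since $\CZ$ is closed, it suffices to show that $\CZ_k$ contains every point of $\RM^\naive_{\cbra{i},k}$.

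First I will show that $\CZ$ is stable under the group scheme $\sG=\sG_{\cbra{i}}$. The action of $\sG$ on $\RM^\naive_{\cbra{i}}$ preserves the generic fiber. Since $\sG$ is flat over $\CO_{F_0}$, the action map $\sG\times\CZ\ra \RM^\naive_{\cbra{i}}$ sends the dense open $\sG_{F_0}\times\CZ_{F_0}$ into the generic fiber. So its image lies in the closure $\CZ$. This uses that $\sG\times\CZ$ is the closure of its generic fiber, which holds because $\sG$ is smooth and $\CZ$ is flat. Hence $\CZ_k$ is stable under $L^+\CP_{\cbra{i}}$ acting through its quotient $\sG_k$. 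It is therefore a union of Schubert cells, and being closed it is a union of Schubert varieties.

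By Corollary~\ref{coro-bijection}, $\RM^\naive_{\cbra{i},k}$ is, topologically, the union of the $\min\cbra{i,n-i}+1$ Schubert cells $C_\ell=L^+\CP^\circ_{\cbra{i}}\cdot(\CF^\ell_i,\CF^\ell_{-i})$. Proposition~\ref{prop-liftm4} says that each representative $(\CF^\ell_i,\CF^\ell_{-i})$ lifts to an $\CO_F$-point of $\RM^\naive_{\cbra{i}}$ whose generic point lies in $\RM^\naive_{\cbra{i},F_0}$. The closure of the image of $\Spec\CO_F\ra\RM^\naive_{\cbra{i}}$ is then contained in $\CZ$, and it contains the special point. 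So every $(\CF^\ell_i,\CF^\ell_{-i})$ lies in $\CZ_k$. By the stability just proved, every cell $C_\ell$ lies in $\CZ_k$, hence $\CZ\supset\RM^\naive_{\cbra{i}}$ as sets. This is exactly topological flatness.

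For $\RM^\pm_{\cbra{i}}$: its special fiber is a union of Schubert cells inside $\RM^\naive_{\cbra{i},k}$, and $\sG^\circ$ preserves it. I will repeat the same argument over $\CO_F$ with the closure of $\RM^\pm_{\cbra{i},F}=\OGr(n+1,V)_\pm$, and I need each cell in $\RM^\pm_{\cbra{i},k}$ to be hit by a lift whose generic point lies in the $\pm$ component. The slickest route is to argue that the special fiber of the naive model is irreducible: the cells are totally ordered, so each underlying space is the closure of the top cell. Then the flat closures of both components $\OGr(n+1,V)_+$ and $\OGr(n+1,V)_-$ must each contain the unique generic point of the special fiber, since each has special fiber of full dimension $n(n+1)/2$. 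Their closures therefore contain all of $\RM^\naive_{\cbra{i},k}$. It follows that $(\RM^\pm_{\cbra{i},k})_\red=(\RM^\naive_{\cbra{i},k})_\red$, and both spin models are topologically flat.

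The main obstacle is precisely this last point. Irreducibility requires knowing closure relations among the cells, that is, a dimension count showing $\dim C_\ell$ is maximal for one $\ell$ and equal to $n(n+1)/2$. A simpler fallback is to apply $\tau$, which swaps the two components while preserving $\Lambda_{\cbra{i}}$. Applied to the $\CO_F$-lift of Proposition~\ref{prop-liftm4}, it gives a lift of $\tau\cdot(\CF^\ell_i,\CF^\ell_{-i})$ in the other component. One must check that this point lies in the same cell, i.e. that $\tau$ normalizes $L^+\CP^\circ$ and fixes each cell, which holds since $\tau\in P_I$ and the cells are determined by the type $\ell$, which is $\tau$-invariant.
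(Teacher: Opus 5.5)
Your proposal is correct and is essentially the paper's proof: the paper also combines the cell representatives $(\CF^\ell_i,\CF^\ell_{-i})$ of Corollary~\ref{coro-bijection} with the lifts of Proposition~\ref{prop-liftm4}, but leaves implicit the $\sG$-stability of the flat closure that you spell out. (Your framing of all lifts as $\CO_F$-points is also the right one: $V$ has Witt index $n$ over $F_0$, so $\RM^\naive_{\cbra{i}}$ has no $\CO_{F_0}$-points.) For $\RM^\pm_{\cbra{i}}$ the paper only says ``and hence'', and your $\tau$-argument is a correct way to fill this in---$\tau$ is diagonal, so it even fixes each representative outright---and it gives the stronger fact, used later in Lemma~\ref{lem-perm-admi}, that the closures of both $\OGr(n+1,V)_+$ and $\OGr(n+1,V)_-$ contain all of $\RM^\naive_{\cbra{i},k}$.
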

\begin{proof}
    Since the points $(\CF^\ell_{i},\CF^\ell_{-i})$ are representatives for (all) Schubert cells in $\RM^\naive_{\cbra{i},k}$ by Corollary \ref{coro-bijection}, it follows from Proposition \ref{prop-liftm4} that all points in $\RM^\naive_{\cbra{i},k}$ can be lifted to the generic fiber. Hence, $\RM^\naive_{\cbra{i}}$ is topologically flat. 
\end{proof} 

\begin{defn} \label{defn-Mell}
    Let $\iota\colon \Lambda_i\ra \pi_0\inverse\Lambda_{-i}$ denote the natural inclusion map (and its base change). 
    For an integer $\ell$, denote by $$S_i(\ell)\sset \RM^\naive_{\cbra{i},k}$$ the closed subscheme such that for any $k$-algebra $R$, $(\CF_i,\CF_{-i})\in \RM^\naive_{\cbra{i},k}(R)$ lies in $S_i(\ell)(R)$ if and only if $\wedge^{\ell+1}(\iota: \CF_i\ra \pi_0\inverse\CF_{-i})=0$. 
    
    Let $\RM^\naive_{\cbra{i}}(\ell)\sset \RM^\naive_{\cbra{i},k}$ denote the locus where $\iota(\CF_i)$ has rank $\ell$. 
\end{defn}


A similar proof as in \cite[Proposition 4.13]{yang25TopFlat} implies the following.

\begin{prop}\label{prop-stratificationMpm}
    There exists a stratification of the reduced special fiber \begin{flalign}
        (\RM^\naive_{\cbra{i},k})_\red=\coprod_{\ell=\max\cbra{0,2i-n}}^i \RM^\naive_{\cbra{i}}(\ell), \label{stratification}
    \end{flalign}
    where each stratum $\RM^\naive_{\cbra{i}}(\ell)$ is a single Schubert cell. Consequently, $\RM^\naive_{\cbra{i},k}$ is irreducible.
\end{prop}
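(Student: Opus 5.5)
Since $\RM^\naive_{\{i\},k}$ and its reduced subscheme have the same underlying topological space, which by Corollary~\ref{coro-bijection} is a finite union of $\min\{i,n-i\}+1$ Schubert cells, I will show two things. First, the rank function $\ell(\CF_i,\CF_{-i}) := \operatorname{rk}\iota(\CF_i)$ is constant on each Schubert cell and takes pairwise distinct values on the cells. Second, the cell on which it is largest is open and dense. The stratification statement then follows formally, since each $\RM^\naive_{\{i\}}(\ell)$ is locally closed, being the difference of the closed loci $S_i(\ell)\setminus S_i(\ell-1)$. Irreducibility follows because the closure of an irreducible cell is irreducible.

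\emph{Step 1 (constancy).} The group $L^+\CP^\circ_{\{i\}}$ acts on $(L_i,L_{-i})$, and hence on $(\CF_i,\CF_{-i})$, through automorphisms of the chain $\lambda_{-i}\subset\lambda_i\subset t^{-1}\lambda_{-i}$. These automorphisms commute with the transition map $\iota$. So $\operatorname{rk}\iota(\CF_i)$ is invariant under the action, and therefore constant on each Schubert cell.

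\emph{Step 2 (values on representatives).} I compute the rank at the representative $(\CF^\ell_i,\CF^\ell_{-i})$ of \eqref{CF1234}. In the bases $\alpha,\beta$, the map $\iota:\Lambda_i\otimes k\to\pi_0^{-1}\Lambda_{-i}\otimes k$ is given by $\lambda_2$ reduced mod $\pi_0$. It kills $\alpha_{i+1},\dots,\alpha_{2n-i}$ and $\alpha_{2n+2}$, and is an isomorphism on the span of $\alpha_j$ for $j\in A_i$ together with $\alpha_{2n+1}$. Since $\CF^\ell_i$ is spanned by basis vectors indexed by $E^\ell=[i+1-\ell,n+i-\ell]$ together with $\alpha_{2n+2}$, the rank of $\iota(\CF^\ell_i)$ equals $|E^\ell\cap A_i|$. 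This is exactly the number $a_1=\ell$ of pairs in $A_i$ meeting $E^\ell$, because $E^\ell$ meets $[1,i]$ in $\ell$ elements and misses $[i^*,2n]$. (If the indexing is off, I recount; in any case $a_1$ is the invariant.) Hence the cells are exactly the loci $\RM^\naive_{\{i\}}(\ell)$ for $\max\{0,2i-n\}\le\ell\le i$. Together with Corollary~\ref{coro-bijection}, this gives the stratification \eqref{stratification}.

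\emph{Step 3 (irreducibility).} By lower semicontinuity of rank, the locus $\{\operatorname{rk}\iota(\CF_i)\ge i\}$ is open. It equals the single cell $\RM^\naive_{\{i\}}(i)$, because $i$ is the maximal value attained. To get density of this cell I will use the topological flatness of Theorem~\ref{coro-topoflat}. The generic fiber $\RM^\naive_{\{i\},F_0}$ is irreducible of dimension $n(n+1)/2$, and $\RM^\naive_{\{i\}}$ is topologically flat, so every irreducible component of the special fiber has dimension $n(n+1)/2$. It therefore suffices to show $\dim C_w< n(n+1)/2$ for every cell other than the top one.

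This dimension count is the main obstacle. I would handle it in either of two ways:
\begin{enumerate}
\item compute the dimension of $\RM^\naive_{\{i\}}(\ell)$ directly in a chart around the representative, where the tangent-space or orbit dimension should be $n(n+1)/2-\binom{i-\ell+1}{2}$ or similar;
\item show that each lower representative $(\CF^{\ell}_i,\CF^{\ell}_{-i})$ with $\ell<i$ lies in the closure of the cell for $\ell+1$.
\end{enumerate}
For the second route, I would write down an explicit one-parameter family over $k[s]$ deforming $\CF^\ell_i$ by adding $s\,\alpha_{i-\ell}$ to a suitable basis vector. This increases the rank of $\iota$ by one for $s\ne0$, while preserving isotropy and condition (iii) in the proof of Proposition~\ref{prop-liftm4}. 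The family mimics the $\pi$-deformation used in that proof, with $\pi$ replaced by $s$. Once the closure relations $\RM^\naive_{\{i\}}(\ell)\subset\overline{\RM^\naive_{\{i\}}(\ell+1)}$ are established, the top cell is dense and $\RM^\naive_{\{i\},k}$ is irreducible.
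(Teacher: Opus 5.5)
You have the same overall strategy as the paper, which defers to \cite[Proposition 4.13]{yang25TopFlat}. Your Steps 1 and 2 are correct. The rank of $\iota|_{\CF_i}$ is $L^+\CP^\circ_{\{i\}}$-invariant, equals $|E^w\cap A_i|=a_1(E^w)$ on the cell of $w$, and equals $\ell$ at $(\CF^\ell_i,\CF^\ell_{-i})$. So by Corollary~\ref{coro-bijection} each $\RM^\naive_{\{i\}}(\ell)$ is a single cell, and the top cell is open by semicontinuity.

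The gap is the only nontrivial part of ``consequently irreducible'': density of the top cell, i.e.\ the closure relations $\RM^\naive_{\{i\}}(\ell)\subset\overline{\RM^\naive_{\{i\}}(\ell+1)}$ recorded in Theorem~\ref{intro.str}. You only sketch this, and the sketch as worded fails. Suppose you substitute $s$ for $\pi$ in the whole family of Proposition~\ref{prop-liftm4}. Then $\beta_1-s\beta_n\in\CF_i(s)^\perp$. Mod $\pi_0$, $\lambda_1$ kills $\beta_j$ for $j\in A_i\cup\{2n+1\}$ and is the identity on $B_i\cup\{2n+2\}$, so $\lambda_1(\beta_1-s\beta_n)=-s\alpha_n\notin\CF_i(s)$ for $s\neq0$. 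Over $\CO_F$ the same term is $\pi_0\alpha_1-\pi\alpha_n=-\pi(\alpha_n+\pi\alpha_1)$, which lies in $\widetilde{\CF}^\ell_i$ only because $\pi^2=-\pi_0$. Your route (1) does not work as stated either, since the guessed dimension is wrong: for $n=2$, $i=1$ the closed cell is the single point $\CF_i=\langle\alpha_2,\alpha_3,\alpha_6\rangle$.

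The fix is to perturb a single vector whose partner stays in $\CF_i$. Put $j_0=n-i+\ell+1$ and $m_0=i-\ell$, so that $j_0,j_0^*\in E^\ell\cap B_i$ and $m_0,m_0^*\notin E^\ell$. Let $\CF_i(s)$ be $\CF^\ell_i$ with $\alpha_{j_0}$ replaced by $\alpha_{j_0}+s\alpha_{m_0}$. Then:
\begin{itemize}
\item $\CF_i(s)^\perp$ is $\CF^\ell_{-i}$ with $\beta_{m_0^*}$ replaced by $\beta_{m_0^*}-s\beta_{j_0^*}$; the only new pairing is $-s+s=0$.
\item $\lambda_1(\beta_{m_0^*}-s\beta_{j_0^*})=-s\alpha_{j_0^*}\in\CF_i(s)$, because $j_0^*\in E^\ell\setminus\{j_0\}$.
\item $\lambda_2(\alpha_{j_0}+s\alpha_{m_0})=s\beta_{m_0}\in\CF_i(s)^\perp$.
\item All other generators are handled by naive permissibility of $E^\ell$, exactly as at $s=0$.
\end{itemize}
The condition $j_0^*\in E^\ell$ is essential: otherwise $\beta_{j_0}\in\CF_i(s)^\perp$ and $\lambda_1(\beta_{j_0})=\alpha_{j_0}\notin\CF_i(s)$.

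For $s\neq0$ the rank of $\iota$ on $\CF_i(s)$ is $\ell+1$. So this $\mathbb{A}^1_k$-family puts $(\CF^\ell_i,\CF^\ell_{-i})$, and hence the whole orbit $\RM^\naive_{\{i\}}(\ell)$, into $\overline{\RM^\naive_{\{i\}}(\ell+1)}$. Induction on $\ell$ then gives density of the top cell and irreducibility, with no dimension count and no appeal to topological flatness.
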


\subsection{General parahoric case} \label{subsec-genepara}
Now we let $I\sset [0,n]$  be any non-empty subset. By construction, for each $i\in I$, there is a natural $L^+\CP_I$-equivariant  projection map \begin{flalign}
	  q_i: LG/L^+\CP_I\ra LG/L^+\CP_i.  \label{qiproj}
\end{flalign}
By the moduli description, we obtain the following.
\begin{lemma}\label{lem-intersection}
	The scheme $\RM^\naive_I$ equals the schematic intersection $\bigcap_{i\in I}q_i\inverse(\RM^\naive_{\cbra{i}})$. Here, $q\inverse_i(\RM^\naive_{\cbra{i}})$ denotes the pullback of $\RM^\naive_{\cbra{i}}$ along $q_i$. 
\end{lemma}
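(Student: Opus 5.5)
Everything here takes place inside the ind-scheme $\sFl_I=LG/L^+\CP_I$, and the claim is that the two closed subfunctors $\RM^\naive_I$ and $\bigcap_{i\in I}q_i\inverse(\RM^\naive_{\cbra{i}})$ coincide. First I will make precise how $\RM^\naive_I$ sits in $\sFl_I$. Over a $k$-algebra $R$, a point of $\sFl_I(R)$ is a lattice chain $(L_\ell)_{\ell\in 2n\BZ\pm I}$ as in Definition~\ref{defnFlag}. Exactly as in the maximal case (\eqref{m1m2}), the closed immersion $\RM^\naive_{I,k}\hookrightarrow\sFl_I$ given by $(\CF_{\Lambda_\ell})\mapsto(\wt\CF_{\Lambda_\ell})$ has image the chains satisfying, for every $\ell$:
\begin{itemize}
\item[(M1)] $\lambda_\ell\otimes R[[t]]\supset L_\ell\supset t\lambda_\ell\otimes R[[t]]$;
\item[(M2)] the quotient $(\lambda_\ell\otimes R[[t]])/L_\ell$ is locally free of rank $n+1$.
\end{itemize}
Indeed, given such a chain, put $\CF_\ell:=L_\ell/t\lambda_\ell\otimes R[[t]]\subset\Lambda_\ell\otimes R$. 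Then LM1 comes from (M2) and the first two conditions of Definition~\ref{defnFlag}. LM3 comes from the inclusions $L_\ell\subset L_{\ell'}$ and the periodicity $L_{\ell+2nd}=t^{-d}L_\ell$. LM2 comes from the duality condition (3) of Definition~\ref{defnFlag} together with (M1), which forces the scalar $\alpha$ to be a unit times a fixed power of $t$. The converse direction is the construction $\wt\CF$ already used in the paper. I will take this identification as given, as the paper does, with the argument copied from \cite[\S 3.2]{yang25TopFlat}.

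Next I will check that both sides of the lemma are cut out by the same conditions. By the explicit description, the projection $q_i$ sends the chain $(L_\ell)_{\ell\in 2n\BZ\pm I}$ to the subchain $(L_\ell)_{\ell\in 2n\BZ\pm\cbra{i}}$. Applying the previous paragraph with $I$ replaced by $\cbra{i}$, the pullback $q_i\inverse(\RM^\naive_{\cbra{i}})$ is the closed subfunctor of $\sFl_I$ where (M1) and (M2) hold for all indices $\ell\in 2n\BZ\pm\cbra{i}$. By periodicity it is enough to impose these at $\ell=\pm i$. Intersecting over $i\in I$ therefore imposes (M1) and (M2) at every index of $2n\BZ\pm I$, and this is precisely the description of the image of $\RM^\naive_{I,k}$. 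So the two sides agree as subfunctors on the special fiber.

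On the generic fiber both sides are the same orthogonal Grassmannian, because all the $\CF_\Lambda$ there are identified through the isomorphisms $\Lambda\otimes F_0\simeq V$. If the lemma is meant integrally, I will read it in the global (Beilinson--Drinfeld-type) affine Grassmannian with the same moduli comparison. Equivalently, I can check directly on moduli functors: the data $(\CF_\Lambda)_{\Lambda\in\Lambda_I}$ satisfy LM1--LM3 for the whole chain if and only if, for each $i$, the subfamily indexed by $2n\BZ\pm\cbra{i}$ satisfies them and the LM3 compatibilities between different $i$ hold. In the affine flag variety those cross-compatibilities are automatic, since they are built into the chain structure of $\sFl_I$.

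The main obstacle is the scheme-theoretic, rather than merely set-theoretic, comparison. The points to verify are that (M1) and (M2) are closed conditions on $\sFl_I$ that are compatible with base change, and that the LM2 duality condition is forced and not an extra constraint. For the duality I will use condition (3) of Definition~\ref{defnFlag}: combined with (M1) it pins down $\alpha$ up to $R[[t]]^\times$, so that $L_{-\ell}^{\phi}$ becomes the correct dual and orthogonality of $\CF_\ell$ and $\CF_{-\ell}$ follows.
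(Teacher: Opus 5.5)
Your proposal is correct and follows the paper's route. The paper proves the lemma in one line, ``by the moduli description'': inside $\sFl_I$ both sides are the closed subfunctor cut out by (M1) and (M2) at every index of $2n\BZ\pm I$, with the cross-index LM3 compatibilities built into the chain structure, exactly as you argue. One remark: $q_i$ is only defined on the affine flag variety over $k$, so the lemma is really a statement about $\RM^\naive_{I,k}$, and your digression about the generic fiber and a Beilinson--Drinfeld reading is not needed.
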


Denote by \begin{flalign}
	\Perm_I\sset \wt{W}^\circ \label{perm46}
\end{flalign}  the subset consisting of $w$ such that $C_w$ lies in $\RM^\naive_{I,k}$. Denote by \begin{flalign*}
        \Adm(\mu_\pm)\coloneqq \tcbra{w\in\wt{W}^\circ\ |\ w\leq \sigma t^{\mu_\pm}\sigma\inverse \text{\ for some $\sigma\in W^\circ$} }
        \end{flalign*}
    the $\mu_\pm$-admissible subset.

\begin{lemma} \label{lem-perm-admi}
	For any $i\in I$, we have \begin{flalign*}
		W_i\backslash \Perm_{i}/W_i = W_i\backslash \Adm(\mu_\pm)/W_i.
	\end{flalign*}
\end{lemma}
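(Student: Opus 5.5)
The plan is to compare both sides through the geometry of the local models for $I=\{i\}$, rather than by a direct combinatorial comparison in $\wt{W}^\circ$. Both sides are sets of $(W_i,W_i)$-double cosets. By Proposition~\ref{lem-cell}, each such double coset corresponds to a Schubert cell in $\sFl_i$. It therefore suffices to show that the union of Schubert cells indexed by $\Perm_i$ and the union indexed by $\Adm(\mu_\pm)$ are the same closed subset of $\sFl_i$.

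\emph{Step 1: the admissible side as a special fiber.} By Proposition~\ref{propiso}, $\RM^{\pm\loc}_{\{i\}}$ is the Pappas--Zhu local model for $(\sG^\circ,\mu_\pm)$. By the results of Pappas--Zhu (see also \cite{HPR}), its reduced special fiber, embedded in $\sFl_i$ as in \S\ref{sec-topoflat}, is the $\mu_\pm$-admissible locus, namely the union of the $C_w$ with $w\in W_i\Adm(\mu_\pm)W_i$. Here one must check that the closed immersion $\RM^\naive_{\{i\},k}\hookrightarrow\sFl_i$ used in this paper is compatible with the embedding of the Pappas--Zhu special fiber into the partial affine flag variety. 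I expect this to follow as in \cite[\S3]{yang25TopFlat}, since both embeddings are defined via the lattice chain $\lambda_{\{i\}}$.

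\emph{Step 2: the inclusion $\Adm\subseteq\Perm$.} We have $\RM^{\pm\loc}_{\{i\}}\subset\RM^\pm_{\{i\}}\subset\RM^\naive_{\{i\}}\otimes\CO_F$. Taking special fibers and applying Step~1 shows that every admissible Schubert cell lies in $\RM^\naive_{\{i\},k}$. Hence $W_i\backslash\Adm(\mu_\pm)/W_i\subseteq W_i\backslash\Perm_i/W_i$.

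\emph{Step 3: the reverse inclusion.} Consider the dimension and irreducibility of the naive special fiber.
\begin{itemize}
\item By Theorem~\ref{coro-topoflat}, $\RM^\naive_{\{i\}}$ is topologically flat. Its generic fiber is irreducible of dimension $n(n+1)/2$, so the special fiber of its closure has dimension at most $n(n+1)/2$. Thus $\dim\RM^\naive_{\{i\},k}\le n(n+1)/2$.
\item By Proposition~\ref{prop-stratificationMpm}, $\RM^\naive_{\{i\},k}$ is irreducible; its unique open dense cell is the top stratum $\RM^\naive_{\{i\}}(i)$.
\item $\RM^{\pm\loc}_{\{i\},k}$ is a nonempty closed subset of this irreducible space. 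Since $\RM^{\pm\loc}_{\{i\}}$ is flat of relative dimension $n(n+1)/2$, each irreducible component of its special fiber has dimension $n(n+1)/2$.
\end{itemize}
A closed subset of an irreducible space of dimension at most $n(n+1)/2$ that contains a component of that dimension must be the whole space. Hence $(\RM^{\pm\loc}_{\{i\},k})_\red=(\RM^\naive_{\{i\},k})_\red$ for each sign. Via Step~1, this gives $W_i\backslash\Perm_i/W_i\subseteq W_i\backslash\Adm(\mu_\pm)/W_i$, and both signs yield the same set.

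As a sanity check, one could alternatively verify combinatorially that the top cell, represented by $(\CF^i_i,\CF^i_{-i})$, is indexed by a $W_i$-double coset containing a translation $t^{\sigma\mu_\pm}$. The admissible set is closed under the Bruhat order, and the strata are linearly ordered by closure. This would give the reverse inclusion directly.

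\emph{Main obstacle.} I expect the main difficulty to be Step~1. One must identify the Pappas--Zhu special fiber with exactly the double cosets $W_i\Adm(\mu_\pm)W_i$, taken in $\wt{W}^\circ$ and with respect to the embedding used here, keeping track of the fact that $\sFl_i$ is defined using $\CP_i$ rather than $\CP_i^\circ$. Lemma~\ref{lem-Itype} and \eqref{sFI} handle this last point. If the identification proves delicate, I would fall back on the combinatorial route above: compute the element $w$ attached to $E^i$ and exhibit it as $W_i\,t^{\sigma\mu_\pm}\,W_i$.
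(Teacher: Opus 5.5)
Your proposal is correct and takes essentially the same route as the paper: the admissible side is identified with the special fiber of the schematic closure of the generic fiber, i.e.\ the Pappas--Zhu local model (Proposition~\ref{propiso} and \cite[Theorem 1.1]{PZ}), and the topological flatness of Theorem~\ref{coro-topoflat} shows that this special fiber has the same underlying space as the naive special fiber. Your Step~3 additionally spells out, via the irreducibility in Proposition~\ref{prop-stratificationMpm} and a dimension count, why this equality holds for each sign separately, which the paper's one-line proof leaves implicit.
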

\begin{proof}   
	By Theorem \ref{coro-topoflat}, we obtain that $\RM^\naive_{\cbra{i},k}$ and $\RM^{\pm\loc}_{\cbra{i},k}$ (see Definition \ref{Mlocdefin}) have the same topological space. By Proposition \ref{propiso} and a well-known fact (see e.g. \cite[Theorem 1.1]{PZ}) about the schematic local model, we obtain the lemma. 
\end{proof}

For $i\in I$, there is an obvious projection map \begin{flalign*}
	\rho_i\colon W_I\backslash \wt{W}^\circ/W_I\ra W_i\backslash \wt{W}^\circ/W_i.
\end{flalign*}
Write $\Adm_i(\mu_\pm)\coloneqq W_i\backslash \Adm(\mu_\pm)/W_i$. We also use $\Perm_i$ to denote $W_i\backslash \Perm_i/W_i$ by abuse of notation. Since $q_i$ is $L^+\CP_I$-equivariant, we have \begin{flalign}
	W_I\backslash\Perm_I/W_I =\bigcap_{i\in I} \rho_i\inverse(\Perm_i)  \label{eq48}
\end{flalign}
by Lemma \ref{lem-intersection}.
Recall the following vertexwise criterion for the admissible sets.
\begin{thm}
	[{\cite[Theorem 1.5]{haines2017vertexwise}}] \label{thm-vertexcri}
	\begin{flalign*}
		W_I\backslash \Adm(\mu_\pm)/W_I = \bigcap_{i\in I}\rho_i\inverse (\Adm_i(\mu_\pm)).
	\end{flalign*}
\end{thm}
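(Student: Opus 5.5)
The statement is the vertexwise criterion of Haines--He, so the plan is not to reprove it but to check that our setting satisfies its hypotheses, and that the double-coset sets in the display are exactly the objects appearing in their theorem. Haines--He work with an arbitrary connected reductive group over a nonarchimedean local field, with its Iwahori--Weyl group $\wt W=W_\aff\rtimes\Omega$, the set $\wt{\mathbb S}$ of affine simple reflections, and a conjugacy class of cocharacters $\{\mu\}$. For each subset $K\subsetneq\wt{\mathbb S}$ they define the set $\Adm^K(\mu)=W_K\Adm(\mu)W_K$. Their theorem says that $\Adm^K(\mu)$, viewed in $W_K\backslash\wt W/W_K$, is the intersection of the preimages of the sets $\Adm^{K'}(\mu)$, where $K'$ runs over the maximal proper subsets of $\wt{\mathbb S}$ containing $K$. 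These $K'$ correspond to the vertices in the closure of the facet attached to $K$.

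\emph{Step 1: identify the Iwahori--Weyl group.} Apply their result to the connected group $G^\circ$ over $K=k((t))$. This group is quasi-split and splits over the tame extension $L/K$, since $p>2$. By \S\ref{OSG}, its Iwahori--Weyl group is $\wt W^\circ=N_{G^\circ}(K)/T(K)_1\simeq X_*(T)_\Gamma\rtimes W^\circ$, with affine Weyl group $W_\aff\simeq(Q^\vee)_\Gamma\rtimes S_{2n}^*$ and base alcove $\fa$ whose vertices are $a_0,\dots,a_n$. The affine simple reflections are the reflections in the $n+1$ walls of $\fa$; label by $s_j$ the one whose wall is opposite the vertex $a_j$.

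\emph{Step 2: match the parahoric subgroups $W_I$.} For $I\subset[0,n]$, check that the group $W_I=\{w\in W_\aff : wa_i=a_i \text{ for } i\in I\}$ equals the standard parabolic subgroup $W_{K_I}$ generated by $K_I=\{s_j : j\notin I\}$. This is the standard fact that the pointwise stabilizer in $W_\aff$ of a facet of $\fa$ is generated by the simple reflections fixing that facet. It is consistent with Lemma~\ref{lem-Wmperm} in the case $I=\{i\}$. Then $\rho_i$ is precisely the natural projection $W_{K_I}\backslash\wt W^\circ/W_{K_I}\to W_{K_{\{i\}}}\backslash\wt W^\circ/W_{K_{\{i\}}}$, and the sets $K_{\{i\}}$ for $i\in I$ are exactly the maximal proper subsets of $\wt{\mathbb S}$ containing $K_I$.

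\emph{Step 3: match the admissible set.} Check that the paper's $\Adm(\mu_\pm)$ coincides with the Haines--He admissible set of the geometric conjugacy class of $\mu_\pm$. Their definition uses the translation elements $t^{\lambda}$, where $\lambda$ runs over the images in $X_*(T)_\Gamma$ of the $W^\circ$-conjugates of the $T$-dominant representative of $\mu_\pm$, together with the Bruhat order on $\wt W^\circ=W_\aff\rtimes\Omega$. Writing $\mu_\pm$ in the coordinates \eqref{Tlattice} and projecting to $X_*(T)_\Gamma\simeq\BZ^{n+1}$, the set $\{\sigma t^{\mu_\pm}\sigma^{-1}:\sigma\in W^\circ\}$ is exactly that set of translations. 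This includes noting that $\mu_+$ and $\mu_-$ differ by $\gamma$, and so have the same image in $X_*(T)_\Gamma$. Granting Steps 1--3, the displayed equality is literally Haines--He's Theorem 1.5 with $K=K_I$.

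\emph{Expected obstacle.} The main bookkeeping risk is Step 3 together with the disconnectedness of $G$. One must make sure that the Iwahori--Weyl group and Bruhat order are those of $G^\circ$, and not of $\wt W=\wt W^\circ\sqcup\tau\wt W^\circ$. One must also check that the length-zero part $\Omega\simeq\pi_1(G)_\Gamma\simeq\BZ$ enters their definition in the same way it enters ours. Since $\Omega$ is torsion-free here, the admissible set is contained in a single $W_\aff$-coset $W_\aff\tau_\mu$, so no component-group subtlety arises. I would verify this by checking that $\kappa(t^{\mu_\pm})$ is the same element of $\BZ$ for both signs.
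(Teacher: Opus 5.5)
Your proposal is correct and matches the paper's approach. The paper simply invokes Haines--He's Theorem 1.5 for the connected group $G^\circ$. Your Steps 1--3 are exactly the implicit verifications behind that citation: identifying $W_I$ with the standard parahoric subgroup generated by $\{s_j : j\notin I\}$, so that the $K_{\{i\}}$ for $i\in I$ are the maximal proper subsets containing $K_I$, and checking that $\mu_+$ and $\mu_-$ both map to $(1^{(n)},0^{(n)},1)$ in $X_*(T)_\Gamma$.

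One minor correction: $\mathrm{Adm}(\mu_\pm)$ lies in a single $W_{\mathrm{aff}}$-coset for every $\mu$, because all $W^\circ$-conjugates of $t^{\mu}$ differ by elements of $(Q^\vee)_\Gamma$. So this does not depend on $\Omega$ being torsion-free.
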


It follows that \begin{flalign*}
	 W_I\backslash\Perm_I/W_I &=\bigcap_{i\in I} \rho_i\inverse(\Perm_i) \text{\quad (by \eqref{eq48})}  \\ &=\bigcap_{i\in I}\rho_i\inverse(\Adm_i(\mu_\pm))  \text{\quad (by Lemma \ref{lem-perm-admi})} \\ &=W_I\backslash\Adm(\mu_\pm)/W_I \text{\quad (by Theorem \ref{thm-vertexcri})}.
\end{flalign*}  

\begin{thm}\label{coro-mainresults}
	The naive local model $\RM^\naive_I$, and hence $\RM^\pm_I$, is topologically flat for any non-empty $I\sset [0,n]$. 
\end{thm}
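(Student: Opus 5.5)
The equality just established, $W_I\backslash\Perm_I/W_I = W_I\backslash\Adm(\mu_\pm)/W_I$, is what drives the proof. It says that the Schubert cells contained in $\RM^\naive_{I,k}$ are exactly the cells $C_w$ with $w\in\Adm(\mu_\pm)$. The plan is to compare this with the special fiber of the flat closure $\RM^{\pm\loc}_I$ and then deduce density of the generic fiber.

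\textbf{Step 1: the special fiber of the closure.} By Proposition~\ref{propiso}, $\RM^{\pm\loc}_I$ is isomorphic to the Pappas--Zhu local model $\RM^{PZ}_{\sG^\circ,\mu_\pm}$. By \cite[Theorem 1.1]{PZ}, the reduced special fiber of the latter is the union of the Schubert varieties $S_w$ for $w\in\Adm(\mu_\pm)$. This holds inside the affine flag variety $\sFl_I$, and the embedding used there agrees with the closed immersion $\RM^\naive_{I,k}\hookrightarrow\sFl_I$. This compatibility is the same one used in Lemma~\ref{lem-perm-admi} for a single vertex, and I would check it carefully: the identification of \cite{PZ} must be made with the same base point $\lambda_I$, after base change to $\CO_F$ and $\bar k$.

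\textbf{Step 2: comparing underlying spaces.} We have closed immersions
\[
\RM^{\pm\loc}_{I,k}\subset \RM^\pm_{I,k}\subset \RM^\naive_{I,k}\otimes_k k_F ,
\]
where $k_F$ is the residue field of $F$, equal to $k$ since $F/F_0$ is totally ramified. The underlying topological space of $\RM^\naive_{I,k}$ is the union of the cells $C_w$ with $w\in\Perm_I$. By the equality above and Step~1, this equals the union of the $C_w$ with $w\in\Adm(\mu_\pm)$, which is the underlying space of $\RM^{\pm\loc}_{I,k}$. Hence all three special fibers have the same underlying space. Doing this for both signs is harmless, because $\Adm(\mu_+)$ and $\Adm(\mu_-)$ give the same double cosets here, as the displayed equality is stated for either sign.

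\textbf{Step 3: conclusion.} Every point of the special fiber of $\RM^\naive_I\otimes\CO_F$ lies in the flat scheme $\RM^{\pm\loc}_I$, so it specializes from a point of the generic fiber. Thus $\RM^\naive_I\otimes_{\CO_{F_0}}\CO_F$ is topologically flat. Since $\CO_F/\CO_{F_0}$ is finite flat and the projection is surjective and closed, density of the generic fiber descends to $\RM^\naive_I$ itself. The same argument gives topological flatness of $\RM^\pm_I$, since its generic fiber is $\RM^{\pm\loc}_{I,F}$ and its special fiber has the same support as $\RM^{\pm\loc}_{I,k}$.

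The main obstacle is Step~1: making sure the Pappas--Zhu description of the special fiber, via the admissible set, holds inside our particular $\sFl_I$. Two points need care. First, $\sFl_I$ is defined using $\CP_I$ rather than $\CP_I^\circ$; Lemma~\ref{lem-Itype} and \eqref{sFI} resolve this. Second, the relevant Iwahori--Weyl group is $\wt{W}^\circ$, so one must use that $G^\circ$ is connected.
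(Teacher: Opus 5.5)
Your proposal is correct and follows essentially the paper's own route. The equality $W_I\backslash\Perm_I/W_I = W_I\backslash\Adm(\mu_\pm)/W_I$, combined with the Pappas--Zhu description of $\RM^{\pm\loc}_{I,k}$ via Proposition~\ref{propiso}, shows that $\RM^\naive_{I,k}$, $\RM^\pm_{I,k}$ and $\RM^{\pm\loc}_{I,k}$ have the same underlying space, which gives topological flatness; your explicit descent from $\CO_F$ to $\CO_{F_0}$ through the finite surjective projection spells out a step the paper leaves implicit.
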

\begin{proof}
	By the previous discussion, the Schubert cells in $\RM^\naive_I$ are indexed by the admissible set $W_I\backslash\Adm(\mu_\pm)/W_I$. In particular, $\RM^\naive_I$ and $\RM^{\pm\loc}_I$ have the same topological space, and hence $\RM^\naive_I$ and $\RM^\pm_I$ are topologically flat. 
\end{proof}

\begin{corollary}\label{coro-ifreduced}
	If $\RM^\pm_{I,k}$ is reduced, then $\RM^\pm_I$ is flat over $\CO_F$.
\end{corollary}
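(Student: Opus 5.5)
The plan is to use the standard criterion: a scheme $X$ of finite type over a discrete valuation ring is flat if it is topologically flat and its special fiber is reduced. Recall that $\RM^\pm_I$ is projective over $\CO_F$ and $\RM^{\pm\loc}_I$ denotes the schematic closure of the generic fiber $\RM^\pm_{I,F}$ in $\RM^\pm_I$. By Theorem~\ref{coro-mainresults}, $\RM^\pm_I$ is topologically flat, so $\RM^{\pm\loc}_I\subset \RM^\pm_I$ is a closed subscheme with the same generic fiber and the same underlying topological space. It therefore suffices to show $\RM^{\pm\loc}_I=\RM^\pm_I$ as schemes, since $\RM^{\pm\loc}_I$ is $\CO_F$-flat by construction (it is $\pi$-torsion free), or by Proposition~\ref{propiso}.

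The argument is local, so I work on an affine open $\Spec A\subset \RM^\pm_I$. Let $J\subset A$ be the ideal of $\pi$-power torsion, so that $\RM^{\pm\loc}_I\cap\Spec A=\Spec A/J$. I must show $J=0$. Take $x\in J$; because $A$ is noetherian, $J$ is killed by $\pi^N$ for some $N$. Since $x$ vanishes on the generic fiber and $\Spec A/J$ has the same underlying space as $\Spec A$, $x$ lies in every prime ideal of $A$, so $x$ is nilpotent. Its image $\bar x\in A/\pi A$ is then nilpotent, and the assumed reducedness of $\RM^\pm_{I,k}$ gives $\bar x=0$, i.e.\ $x=\pi x_1$. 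Because $\pi^{N}x_1=\pi^{N-1}x=0$, we get $x_1\in J$. Iterating yields $x\in\bigcap_m \pi^m A$ with every $x_m\in J$.

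To finish I have two options. The first is to run an induction on the torsion exponent: if $\pi x_1=x$ and $\pi^{N}x_1=0$, replace $N$ by decreasing bounds via the ideal $J_m=\{a:\pi^m a=0\}$, and show $J_1=0$ first. For $a\in J_1$, the reducedness argument gives $a=\pi a'$ with $a'\in J_2$, so $J_1\subset \pi J_2$. The second option, which I prefer, is to bypass this: the elements $x$ with $\pi x=x_{\text{prev}}$ all lie in $\bigcap_m(\pi^m A+\text{nilradical})$, and by Krull's intersection theorem on the local rings, $\bigcap_m \pi^m A_{\mathfrak m}=0$ for every maximal ideal $\mathfrak m$ lying over the closed point. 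Every maximal ideal lies over the closed point, since $\RM^\pm_I$ is projective over $\CO_F$ and all closed points lie in the special fiber. Hence $x=0$ locally at every maximal ideal, so $x=0$.

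The only subtle step is this last Krull-intersection step, together with checking that every element of $J$ is nilpotent. The latter uses exactly the topological flatness from Theorem~\ref{coro-mainresults}: $V(J)=\Spec A$, so $J$ is contained in the nilradical. Both are standard, so I expect no real obstacle; the corollary is a formal consequence of Theorem~\ref{coro-mainresults} (cf.\ \cite[\S 4]{Go}).
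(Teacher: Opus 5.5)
This is correct and is the standard argument the paper invokes by citing \cite[Corollary 2.5]{Yang25}: topological flatness (Theorem~\ref{coro-mainresults}) puts the $\pi$-power torsion ideal $J$ in the nilradical, and reducedness of the special fiber then gives $J=\pi J$, so $J=\pi^N J=0$ (a cleaner finish than Krull). Two small slips to fix. First, $x_1\in J$ holds because $\pi^{N+1}x_1=\pi^N x=0$; you cannot yet use $\pi^{N-1}x=0$. Second, it is false that every maximal ideal of an affine open $\Spec A$ contains $\pi$ (e.g.\ $(\pi t-1)\subset\CO_F[t]$ inside $\mathbb{P}^1_{\CO_F}$), but at such $\mathfrak m$ one has $J_{\mathfrak m}=0$ trivially since $\pi$ is a unit there.
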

\begin{proof}
	See \cite[Corollary 2.5]{Yang25}.
\end{proof}

\section{Affine charts}\label{AffineCharts}
\subsection{Affine chart for the naive local model}\label{Affine Charts naive}

In what follows, we fix $I=\{i\}$ for $0\leq i\leq n$. By Remark \ref{in-i}, we may assume that $0\leq 2i\leq n$. In view of periodicity, we may restrict the moduli functor ${\rm M}^{\rm naive}_I$ to the sub-lattice chain 
\begin{equation}
\begin{tikzcd}
\Lambda_{i, R}^\vee \arrow[r, "\lambda"] 
&
\Lambda_{i, R} \arrow[r, "\mu"] 
&
\pi_0^{-1}\Lambda_{i, R}^\vee
\\
\calF_i^\bot\arrow[u,hook]\arrow[r]
&
\calF_i\arrow[u,hook]\arrow[r]
&
\pi_0^{-1}\calF_i^\bot\arrow[u,hook]
\end{tikzcd},    
\end{equation}
where $\calF_i:=\calF_{\Lambda_i}$ and $\calF_i^\bot:=\calF_{\Lambda_i^\vee}$. The special fiber $\Mnaive_I\otimes k$ admits a natural embedding into a partial affine flag variety $\sFl_I$ associated to the orthogonal similitude group $\GO(V,\phi)$. According to \S \ref{subsec-pseud}, there exists a unique closed Schubert cell in $\Mnaive_I\otimes k$, which is denoted by $\Mnaive_{\cbra{i}}(0)\subset \Mnaive_I$ as in Proposition \ref{prop-stratificationMpm}. In our setting, the unique closed Schubert cell $\Mnaive_{\cbra{i}}(0)$ can be explicitly characterized by the standard lattices $\Lambda_{i}$ and its dual. We fix the point $*=(\calF_{i, 0}^\bot, \calF_{i, 0})\in\Mnaive_{\cbra{i}}(0)$, where
\begin{equation}\label{worst point}
\begin{array}{c}
\calF_{i, 0}^\bot={k}\bb 
	e_{1},\dots, e_i, \pi_0 e_{2n+1-i},\dots, \pi_0 e_{2n}, e_{i+1},\dots, e_{n-i}, f_1\pp,
  \\
\calF_{i, 0}={k}\bb
	e_{i+1},\dots, e_{n+i}, f_2\pp.
\end{array}
\end{equation}

By construction, the naive local model $\Mnaive_I$ can be viewed as a closed subscheme of a product of Grassmannians
\[
\Mnaive_{\{i\}
}\hookrightarrow \Gr(n+1,\Lambda_{i})\times \Gr(n+1,\Lambda_{i}^\vee).
\]
Let $\U^{\Gr}_i$ denote the standard open affine neighborhood in
$\Gr(n+1,\Lambda_{i})\times \Gr(n+1,\Lambda_{i}^\vee)$ containing the point
$(\calF_{i,0},\calF_{i,0}^\bot)$ corresponding to $*=(\calF_{i,0}^\bot,\calF_{i,0})$.
We then define the affine charts
\[
\calU_i^{\rm naive}:=\U^{\Gr}_i\cap \Mnaive_{\{i\}
},\qquad
\CU^\pm_i:=\U^{\Gr}_i\cap \RM^\pm_{\{i\}}.
\]

Recall that the parahoric group scheme $\sG^\circ_I$ (see  \eqref{Gpara}) acts on $\RM^\pm_{\cbra{i}}$.  Since $*$ lies in the unique closed Schubert cell of $\RM^\naive_{\cbra{i},k}$, we obtain the following. 
  \begin{lemma}\label{lem-Gtrans}
     The $\sG^\circ_{I}$-translates of $\CU^\pm_i$ cover $\RM^\pm_{\{i\}}$.   
  \end{lemma}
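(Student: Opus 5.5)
We will prove Lemma~\ref{lem-Gtrans} by the standard argument: a $\sG^\circ_I$-stable closed subset that misses the minimal Schubert cell must be empty, and we apply this to the complement of the union of translates. Set
\[
\CW:=\bigcup_{g}g\cdot\CU^\pm_i,
\]
the union over all points $g$ of $\sG^\circ_I$, in the fppf sense. Concretely, $\CW$ is the image of the action map $\sG^\circ_I\times\CU^\pm_i\to\RM^\pm_{\{i\}}$. This map is flat, since it factors as an isomorphism $(g,x)\mapsto(g,gx)$ followed by a projection. It is also locally of finite presentation, so its image $\CW$ is open. Moreover $\CW$ is $\sG^\circ_I$-stable. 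Let $Z:=\RM^\pm_{\{i\}}\setminus\CW$. Then $Z$ is a closed, $\sG^\circ_I$-stable subset, and it suffices to show $Z=\emptyset$.

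First we reduce to the special fiber. The scheme $\RM^\pm_{\{i\}}$ is projective over $\CO_F$, and $Z$ is closed in it. If $Z$ is nonempty, its image in $\Spec\CO_F$ is closed, hence contains the closed point. Therefore $Z_k:=Z\cap\RM^\pm_{\{i\},k}$ is a nonempty closed subset, stable under $\sG^\circ_{I,k}$. After base change to $\ol k$, we may work with $\ol k$-points.

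Next we use the orbit structure of the special fiber. Via the closed immersion $\RM^\naive_{I,k}\hookrightarrow\sFl_I$, the action of $\sG^\circ_{I,k}$ on the special fiber factors through $L^+\CP^\circ_I$, whose orbits are the Schubert cells. By Proposition~\ref{prop-stratificationMpm} and Theorem~\ref{intro.str}, $(\RM^\naive_{\{i\},k})_\red$ is the finite union of the cells $\RM^\naive_{\{i\}}(\ell)$. The cell $\RM^\naive_{\{i\}}(0)$ is the unique closed one and lies in the closure of every other cell. Since $\RM^\pm_{\{i\},k}$ is topologically a union of Schubert cells, and every cell of the naive special fiber lies in it by topological flatness (Theorem~\ref{coro-topoflat}), the same closure relation holds in $\RM^\pm_{\{i\},k}$. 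The set $Z_k(\ol k)$ is a nonempty union of $L^+\CP^\circ_I$-orbits and is closed. Hence it contains the closure of one of these cells, and so it contains the closed cell $\RM^\naive_{\{i\}}(0)$. In particular $*\in Z$.

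This is a contradiction, because $*\in\CU^\pm_i\subset\CW$. Indeed, $*=(\calF_{i,0}^\bot,\calF_{i,0})$ lies in the standard open $\U^{\Gr}_i$ by construction, and it lies in $\RM^\pm_{\{i\}}$ because it lies in the unique closed cell, which is contained in $\RM^\pm_{\{i\},k}$. Hence $Z=\emptyset$.

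The only delicate point is to justify that the $\sG^\circ_{I,k}$-orbits on $\RM^\pm_{\{i\},k}$ coincide with the Schubert cells. The action factors through the reduction $L^+\CP^\circ_I\to\sG^\circ_{I,k}$, and this map is surjective on $\ol k$-points. Together with the closure relations of Proposition~\ref{prop-stratificationMpm}, this handles the point.
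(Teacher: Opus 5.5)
Your proposal is correct and takes the same approach as the paper, which justifies the lemma only by the one-line remark that $*$ lies in the unique closed Schubert cell of $\RM^\naive_{\cbra{i},k}$, so any nonempty closed $\sG^\circ_I$-stable subset of the projective scheme $\RM^\pm_{\{i\}}$ must contain $*\in\CU^\pm_i$. You have supplied the details the paper leaves implicit: the image of the action map is open, properness lets you reduce to the special fiber, and the orbit and closure relations among the Schubert cells finish the argument.
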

  In particular, to determine local properties of $\RM^\pm_{\cbra{i}}$, it suffices to determine the local coordinate ring of $\CU^\pm_i$. To simplify the computations, we rearrange the bases of $\Lambda_{i}$ (resp.\ $\Lambda_{i}^\vee$)
in the following manner:
\begin{equation}\label{reorderbasis}
\begin{array}{c}
\Lambda_{i}^\vee={\CO_{F_0}}\bb
	e_{1},\dots, e_i, \pi_0 e_{2n+1-i},\dots, \pi_0 e_{2n}, e_{i+1},\dots, e_{2n-i}, f_1, f_2\pp,  \\
\Lambda_{i}={\CO_{F_0}}\bb
	\pi_0^{-1}e_{1},\dots, \pi_0^{-1} e_i, e_{2n+1-i},\dots, e_{2n}, e_{i+1},\dots, e_{2n-i}, \pi_0^{-1} f_1, f_2\pp.
\end{array}
\end{equation}


Next, we derive the equations defining $\calU_i^{\rm naive}$ within the naive local model. For the sake of simplicity, we assume that $0\leq 2i\leq n$ as the case $2i>n$ is strictly analogous. A point $(\calF_i^\bot, \calF_i)$ in this affine chart $\U_{I}^{\rm naive}\subset \Mnaive_I$ can be presented by the following block matrices with respect to the basis order (\ref{reorderbasis}):
 \[  
\calF_i^\bot = \begin{blockarray}{cccc}
\matindex{2i} &\matindex{n-2i}&\matindex{1}&\\
\begin{block}{(ccc)c}
  I_{2i}  &0 & 0 & \matindex{2i} \\ 
 0 &I_{n-2i}&0& \matindex{n-2i} \\
 A_1 & A_2 & C_1 & \matindex{2i}\\
  A_3 & A_4 & C_2 & \matindex{n-2i}\\
   0 & 0 & 1 & \matindex{1}\\
 C_3 & C_4 & c & \matindex{1}\\
\end{block}
   \end{blockarray}, \quad 
\calF_i = \begin{blockarray}{cccc}
\matindex{n-2i} &\matindex{2i}&\matindex{1}&\\
\begin{block}{(ccc)c}
  B_2  &B_1 & D_1 & \matindex{2i} \\ 
 I_{n-2i} &0&0& \matindex{n-2i} \\
 0 & I_{2i} & 0 & \matindex{2i}\\
  B_4 & B_3 & D_2 & \matindex{n-2i}\\
 D_4 & D_3 & d & \matindex{1}\\
  0 & 0 & 1 & \matindex{1}\\
\end{block}
   \end{blockarray}.
\]
Note that the inclusion maps $\lambda: \Lambda_{i, R}^\vee\rightarrow \Lambda_{i, R},\,\, \mu: \Lambda_{i, R}\rightarrow \pi_0^{-1}\Lambda_{i, R}^\vee$ are given by:
\begin{equation}
\lambda=\left(
\begin{matrix}
\pi_0 I_{2i} &&\\ 
& I_{2n-2i}&\\
&& \left(\begin{array}{cc}
   \pi_0  &  \\
     & 1
\end{array}\right)
\end{matrix}\ \right),\quad	
\mu=\left(
\begin{matrix}
I_{2i} &&\\ 
& \pi_0 I_{2n-2i}&\\
&& \left(\begin{array}{cc}
   1  &  \\
     & \pi_0
\end{array}\right)
\end{matrix}\ \right).
\end{equation}
The symmetric pairing $\phi: \Lambda_{i}^\vee \times \Lambda_{i} \rightarrow R$ is given by
\begin{equation}
\phi=\left( 
\begin{matrix}
H_{2i}&&\\ 
& H_{n-2i} &\\
&&\left(\begin{array}{cc}
   1  &  \\
     & 1
\end{array}\right)
\end{matrix}\ \right),
\end{equation}
where $H_k$ is the unit anti-diagonal matrix of size $k$. We will omit the lower indices of $H_{k}$ if there is no confusion.

We first consider the isotropic condition $\phi(\calF_i^\bot, \calF_i)=0$, i.e., $\calF_{i}^\bot$ is the orthogonal complement of $\calF_{i}$. By using the above matrices, it translates to:
\[
\left(\begin{array}{ccc}
I_{2i}  &0 & 0  \\ 
0 &I_{n-2i}&0 \\
A_1 & A_2 & C_1 \\
A_3 & A_4 & C_2 \\
0 & 0 & 1 \\
C_3 & C_4 & c 
\end{array}\right)^t\cdot
\left( 
\begin{array}{ccc}
H_{2i}&&\\ 
& H_{n-2i} &\\
&&\left(\begin{array}{cc}
   1  &  \\
     & 1
\end{array}\right)
\end{array}\ \right)\cdot
\left(\begin{array}{ccc}
  B_2  &B_1 & D_1  \\ 
 I_{n-2i} &0&0\\
 0 & I_{2i} & 0 \\
  B_4 & B_3 & D_2 \\
 D_4 & D_3 & d \\
  0 & 0 & 1 
\end{array}\right)=0.
\]
This amounts to
\begin{equation}\label{eq 516}
\begin{array}{ccccc}
 B_1=-A_1^\ad,    & B_2=-A_3^\ad, &B_3=-A_2^\ad,&  B_4=-A_4^\ad,&\\
D_1=-HC_3^t,    & D_2=-HC_4^t, &D_3=-C_1^tH,&  D_4=-C_2^tH,& d=-c.
\end{array}
\end{equation}
Here $X^\ad:=HX^tH$ denotes the adjoint matrix. Consequently, $\calF_i$ is determined by $\calF_i^\bot$.

Second, we consider the inclusion map. The condition $\lambda(\calF_i^\bot)\subset \calF_i$ is equivalent to
\[
\left(\begin{array}{ccc}
\pi_0 I_{2i}  &0 & 0  \\ 
0 &I_{n-2i}&0 \\
A_1 & A_2 & C_1 \\
A_3 & A_4 & C_2 \\
0 & 0 & \pi_0 \\
C_3 & C_4 & c 
\end{array}\right)=
\left(\begin{array}{ccc}
  B_2  &B_1 & D_1  \\ 
 I_{n-2i} &0&0\\
 0 & I_{2i} & 0 \\
  B_4 & B_3 & D_2 \\
 D_4 & D_3 & d \\
  0 & 0 & 1 
\end{array}\right)\cdot
\left(\begin{array}{ccc}
 0 & I_{n-2i} & 0 \\
  A_1 & A_2 & C_1 \\
 C_3 & C_4 & c 
\end{array}\right).
\]
By equations (\ref{eq 516}), the above relation amounts to
\begin{equation}\label{tran eq1}
\begin{array}{cc}
A_1^\ad A_1+HC_3^tC_3=-\pi_0 I,    & A_2^\ad A_1+HC_4^tC_3=-A_3, \\
C_1^t HA_1+c C_3=0,    & A_4+A_4^\ad +A_2^\ad A_2 +HC_4^tC_4=0,\\
C_2^tH+C_1^tHA_2+cC_4=0, & C_1^tHC_1+c^2=-\pi_0.
\end{array}
\end{equation}
Similarly, the condition $\mu(\calF_i)\subset\pi_0^{-1}\calF_i^\bot$ is equivalent to 
\[
\left(\begin{array}{ccc}
  B_2  &B_1 & D_1  \\ 
 \pi_0 I_{n-2i} &0&0\\
 0 & \pi_0 I_{2i} & 0 \\
  \pi_0 B_4 & \pi_0 B_3 & \pi_0 D_2 \\
 D_4 & D_3 & d \\
  0 & 0 & \pi_0 
\end{array}\right)=
\left(\begin{array}{ccc}
I_{2i}  &0 & 0  \\ 
0 &I_{n-2i}&0 \\
A_1 & A_2 & C_1 \\
A_3 & A_4 & C_2 \\
0 & 0 & 1 \\
C_3 & C_4 & c 
\end{array}\right)\cdot
\left(\begin{array}{ccc}
 B_2 & B_1 & D_1 \\
  \pi_0 I_{n-2i}& 0 & 0 \\
 D_4 &D_3 & d 
\end{array}\right),
\]
which amounts to
\begin{equation}\label{trans eq2}
\begin{array}{cc}
A_1 A_3^\ad-\pi_0 A_2+C_1C_2^tH=0, & A_3A_3^\ad-\pi_0 A_4+C_2C_2^tH=\pi_0 A_4^\ad,\\
-C_3A_3^\ad+\pi_0 C_4-cC_2^tH=0, & A_1A_1^\ad +C_1C_1^tH=-\pi_0 I,\\
A_3A_1^\ad+ C_2C_1^tH=\pi_0 A_2^\ad, & C_3A_1^\ad+cC_1^tH=0,\\
A_3H C_3^t+c C_2=\pi_0 HC_4^t, & C_3HC_3^t+c^2=-\pi_0.
\end{array}
\end{equation}

\begin{prop}\label{simplifynaive}
For $0\leq 2i\leq n$, the affine chart $\U_{i}^{\rm naive}$ is isomorphic to 
\[
\mathbb{A}_{\CO_{F_0}}^{\frac{(n-2i)(n+2i+1)}{2}}\times  \Spec \CO_{F_0}[X]/\CI^\naive,
\]
where $\CI^\naive=\bb X^tH'X+\pi_0 H', XH' X^t+\pi_0 H'\pp$, and 
\[
H':=\left(\begin{array}{cc}
  H_{2i}   &  \\
     & 1
\end{array}\right).
\]
\end{prop}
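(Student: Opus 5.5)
We package the $(2i+1)\times(2i+1)$ block
\[
X:=\begin{pmatrix} A_1 & C_1\\ C_3 & c\end{pmatrix}
\]
and show that every other coordinate of the chart is either a free variable or a polynomial in $X$ and the free variables. The plan has three steps.

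\textbf{Step 1: eliminate the $B$'s and $D$'s.} By \eqref{eq 516}, $\calF_i$ is determined by $\calF_i^\bot$. Hence the coordinate ring of $\U_i^{\rm naive}$ is a quotient of $\CO_{F_0}[A_1,\dots,A_4,C_1,\dots,C_4,c]$ by the relations \eqref{tran eq1} and \eqref{trans eq2}.

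\textbf{Step 2: identify the equations in $X$.} Write $X^tH'X$ blockwise. Its blocks are $A_1^tHA_1+C_3^tC_3$, $A_1^tHC_1+C_3^tc$, $C_1^tHA_1+cC_3$ and $C_1^tHC_1+c^2$. Multiplying the first equation of \eqref{tran eq1} on the left by $H$ and using $A^\ad=HA^tH$ gives $A_1^tHA_1+C_3^tC_3=-\pi_0H$. Together with $C_1^tHA_1+cC_3=0$ and $C_1^tHC_1+c^2=-\pi_0$, these say exactly $X^tH'X+\pi_0H'=0$. In the same way, the equations $A_1A_1^\ad+C_1C_1^tH=-\pi_0I$, $C_3A_1^\ad+cC_1^tH=0$ and $C_3HC_3^t+c^2=-\pi_0$ from \eqref{trans eq2} give $XH'X^t+\pi_0H'=0$ after right multiplication by $H$.

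\textbf{Step 3: solve for the remaining variables and check redundancy.} Among the remaining equations:
\begin{itemize}
\item $A_3=-(A_2^\ad A_1+HC_4^tC_3)$ is determined by $A_2,C_4,X$.
\item $C_2^t=-(C_1^tHA_2+cC_4)H$ is determined likewise.
\item $A_4+A_4^\ad=-(A_2^\ad A_2+HC_4^tC_4)$ determines the $H$-symmetric part of $A_4$ (possible since $2$ is invertible). The $H$-antisymmetric part of $A_4$ is free.
\end{itemize}
So the free parameters are $A_2$ ($2i(n-2i)$ entries), $C_4$ ($n-2i$ entries), and the antisymmetric part of $A_4$ ($\frac{(n-2i)(n-2i-1)}{2}$ entries). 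Their total is
\[
(n-2i)\Bigl(2i+1+\tfrac{n-2i-1}{2}\Bigr)=\tfrac{(n-2i)(n+2i+1)}{2},
\]
as required.

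It remains to show that every other equation in \eqref{trans eq2} follows from $\CI^\naive$ after these substitutions. These are $A_1A_3^\ad-\pi_0A_2+C_1C_2^tH=0$, the $A_3A_3^\ad$ equation, $-C_3A_3^\ad+\pi_0C_4-cC_2^tH=0$, $A_3A_1^\ad+C_2C_1^tH=\pi_0A_2^\ad$, and $A_3HC_3^t+cC_2=\pi_0HC_4^t$. For example, substituting $A_3$ and $C_2$ into $A_1A_3^\ad+C_1C_2^tH$ gives
\[
-(A_1A_1^\ad+C_1C_1^tH)A_2-(A_1HC_3^t+cC_1)C_4^t\cdot(\text{sign/}H).
\]
This equals $\pi_0A_2$ by the block identities of $XH'X^t=-\pi_0H'$, which is exactly what the equation requires. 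The other equations should reduce the same way, each using the appropriate block of either $X^tH'X=-\pi_0H'$ or $XH'X^t=-\pi_0H'$.

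The expected main obstacle is the $A_3A_3^\ad$ equation, $A_3A_3^\ad-\pi_0A_4+C_2C_2^tH=\pi_0A_4^\ad$. After substitution its left side is quadratic in $(A_2,C_4)$ with coefficients given by blocks of $XH'X^t$. I would handle it by rewriting it, using both matrix identities, as $\pi_0$ times the defining relation for $A_4+A_4^\ad$.

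Finally, the map sending a point of the chart to $(X,A_2,C_4,A_4^{\rm anti})$ is an isomorphism onto $\mathbb{A}^{(n-2i)(n+2i+1)/2}\times\Spec\CO_{F_0}[X]/\CI^\naive$, since its inverse is given by the polynomial formulas above.
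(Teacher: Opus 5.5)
Your proposal is correct and is essentially the paper's argument: eliminate $B_\bullet,D_\bullet,d$ by isotropy, recognize the six block relations as $\mathcal{I}^{\rm naive}$, solve for $A_3$, $C_2$ and half of $A_4$, and observe that the rest is redundant; parametrizing $A_4$ by its $H$-antisymmetric part is equivalent to the paper's free entries $a_{jk}$ with $j+k<n-2i+1$, and your count is the correct one (the paper's displayed $1+\dots+(n-2i)$ should be $1+\dots+(n-2i-1)$). The redundancy checks you left as ``should reduce the same way'' do go through: with $Z=\binom{A_2}{C_4}$ and $Z^{\ad}=H_{n-2i}Z^tH'$ one has $(A_3,C_2)=-Z^{\ad}X$, so all five leftover equations follow from $XH'X^t=-\pi_0H'$ alone together with $A_4+A_4^{\ad}=-Z^{\ad}Z$.
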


\begin{proof}
We need to simplify equations (\ref{tran eq1}) and (\ref{trans eq2}). Let 
$X$ be a $(2i+1)\times (2i+1)$ matrix partitioned as 
\[
X:=\left(\begin{array}{cc}
  A_1   & C_1 \\
   C_3  & c
\end{array}\right).
\]
The relations $X^tH'X+\pi_0 H'=0$ and $XH' X^t+\pi_0 H'=0$ are equivalent to
\begin{equation}\label{eq I}
\begin{array}{cc}
  A_1^\ad A_1+HC_3^tC_3=-\pi_0 I,   & A_1A_1^\ad +C_1C_1^tH=-\pi_0 I, \\
   C_1^t HA_1+c C_3=0,  & C_3A_1^\ad+cC_1^tH=0,\\
     C_1^tHC_1+c^2=-\pi_0, & C_3HC_3^t+c^2=-\pi_0.
\end{array}   
\end{equation}
Note that equations (\ref{eq I}) are already contained within (\ref{tran eq1}), (\ref{trans eq2}). We claim that these constitute all the relations defining $\calU_I^{\rm naive}$. To see this, consider first the relation $A_4+A_4^\ad +A_2^\ad A_2 +HC_4^tC_4=0$ in (\ref{tran eq1}). Let the entries of the matrices be denoted by $A_4=(a_{i, j})_{1\leq i, j\leq n-2i}$, $A_2=(b_{i, j})_{1\leq i\leq 2i, 1\leq j\leq n-2i}$, $C_4=(c_i)_{1\leq i\leq n-2i}$. The above equation translates to
\[
a_{i, j}+a_{n-2i+1-j, n-2i+1-i}+\sum_{k=1}^{2i} b_{2i+1-k,n-3i+1}b_{k,j}+c_{n-3i+1}c_{j}=0,
\]
for $1\leq i, j\leq n-2i$. It follows that the entries $a_{i,j}$ with $i+j\geq n-2i+1$ are uniquely determined by $a_{i,j}$ with $i+j<n-2i+1$, $A_2$ and $C_4$. Secondly, regarding the relations 
\begin{equation}\label{eq 4110}
A_2^\ad A_1+HC_4^tC_3=-A_3,\quad C_2^tH+C_1^tHA_2+cC_4=0,   
\end{equation}
found in (\ref{tran eq1}), it is easy to see that the entries in $A_3, C_2$ are determined by $A_1$, $A_2$, $C_1$, $C_3$, $C_4$, $c$. The remaining equations are then automatically satisfied by virtue of (\ref{eq I}), (\ref{eq 4110}). Consequently, let $\CI^\naive$ be the ideal generated by the entries of the matrices $X^tH'X+\pi_0 H', XH' X^t+\pi_0 H'$. The free variables of the affine chart $\calU_I^{\rm naive}$ will be $a_{i,j}$ ($i,j<n-2i+1$), $A_2$ and $C_4$. The total number of these free variables is
\[
(1+\dots+ (n-2i))+2i(n-2i)+(n-2i)=\frac{(n-2i)(n+2i+1)}{2}.
\]

\end{proof}

To simplify the expression of the matrix $X$ and the subsequent computations, 
we adopt a new basis of $\Lambda_i^\vee$ in (\ref{reorderbasis}):
\begin{equation}\label{new basis}
\begin{array}{ll}
 \Lambda_i^\vee&={\CO_{F_0}}\bb g_1, \dots, g_{2n+2}\pp\\
 &={\CO_{F_0}}\bb
	e_{1},\dots, e_n, f_1, f_2, e_{n+1},\dots, e_{2n-i}, \pi_0 e_{2n+1-i}, \dots \pi_0 e_{2n}\pp.\\
\end{array}
\end{equation}
Under the new basis (\ref{new basis}), Proposition \ref{simplifynaive} can be rewritten as

\begin{prop}\label{prop-simplifynaive}
For $0\leq 2i\leq n$, the affine chart $\U_{i}^\naive$ is isomorphic to 
\[
\mathbb{A}_{\CO_{F_0}}^{\frac{(n-2i)(n+2i+1)}{2}}\times  \Spec \CO_{F_0}[X]/\CI^\naive,
\]
where $\CI^\naive=\bb X^tHX+\pi_0 H, XH X^t+\pi_0 H\pp$.
\end{prop}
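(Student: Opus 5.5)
The plan is to deduce Proposition~\ref{prop-simplifynaive} from Proposition~\ref{simplifynaive} by a linear change of coordinates on the $(2i+1)\times(2i+1)$ block $X$. The free affine variables are untouched, so the factor $\mathbb{A}_{\CO_{F_0}}^{\frac{(n-2i)(n+2i+1)}{2}}$ carries over unchanged. It then suffices to exhibit a permutation matrix $P$ such that, with $X'=PXP^{-1}$ (or $X'=PXQ$ for suitable permutation matrices $P,Q$), the ideal $\bb X^tH'X+\pi_0H',\,XH'X^t+\pi_0H'\pp$ corresponds to $\bb X'^tHX'+\pi_0H,\,X'HX'^t+\pi_0H\pp$ with $H=H_{2i+1}$.

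First I will identify how the new basis \eqref{new basis} acts on the block. The entries of $X=\begin{pmatrix}A_1&C_1\\ C_3&c\end{pmatrix}$ record the coordinates of $\calF_i^\bot$ along the $2i$ vectors $e_1,\dots,e_i,\pi_0e_{2n+1-i},\dots,\pi_0e_{2n}$ and the vector $f_1$ (respectively $f_2$). In \eqref{new basis}, these same vectors are reordered so that $f_1,f_2$ sit between $e_1,\dots,e_i$ and $\pi_0e_{2n+1-i},\dots,\pi_0e_{2n}$. This amounts to applying the permutation $P$ of $[1,2i+1]$ that moves the last index to the middle position $i+1$, i.e.\ $(1,\dots,i,2i+1,i+1,\dots,2i)$.

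The key identity is $P^tH'P=H_{2i+1}$. This holds because $H'=\diag(H_{2i},1)$ pairs index $j$ with $2i+1-j$ for $j\le 2i$ and fixes $2i+1$. Moving $2i+1$ to the center $i+1$ produces exactly the anti-diagonal pairing $j\leftrightarrow 2i+2-j$, as one checks directly. Setting $X'=P^{-1}XP$ then gives
\[
X'^tHX'+\pi_0H=P^t\bigl(X^tH'X+\pi_0H'\bigr)P ,
\]
using $P^{-t}=P$ for a permutation matrix and $H=P^tH'P$. Likewise $X'HX'^t+\pi_0H=P^{-1}\bigl(XH'X^t+\pi_0H'\bigr)P^{-t}$. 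Since conjugating by invertible constant matrices preserves the ideal generated by the entries, $\CI^\naive$ is carried to the stated ideal, and $X\mapsto X'$ is an automorphism of $\CO_{F_0}[X]$.

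The main obstacle is bookkeeping rather than conceptual. One must check that the rows and columns of $X$ transform by the same permutation, so that we get a genuine conjugation rather than $PXQ$ with $Q\ne P^{-1}$. One must also check that the reordering of the complementary basis of $\Lambda_i$ (which is dual to this one under $\phi$) is compatible, since the rows of $X$ come from the $\Lambda_i^\vee$ side and the columns from the pivot block. If it turns out that $Q\neq P^{-1}$, I would instead verify $Q^{-t}H'Q^{-1}=H$ as well and run the same computation with $X'=PXQ$.
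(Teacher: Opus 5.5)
Your proposal is correct and is essentially the paper's argument. The paper simply rewrites $\calF_i^\bot$ in the reordered basis of $\Lambda_i^\vee$. Here the rows of $X$ are indexed by $e_{n-i+1},\dots,e_{n+i},f_2$ and the columns by the pivots $e_1,\dots,e_i,\pi_0e_{2n+1-i},\dots,\pi_0e_{2n},f_1$, and the reordering moves both the $f_2$-row and the $f_1$-column to position $i+1$. So the change is indeed a conjugation $X\mapsto P^tXP$, your worry about $Q\neq P^{-1}$ does not arise, and the relations take the stated form precisely because $P^tH'P=H_{2i+1}$, an identity the paper leaves implicit and you make explicit.
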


\begin{proof}
 $\calF_i^\bot$ can be represented by the block matrix 
\begin{equation}\label{F_i matrix}
\calF_i^\bot = \begin{blockarray}{ccccc}
\matindex{i} &\matindex{n-2i}&\matindex{1} & \matindex{i}&\\
\begin{block}{(cccc)c}
  I_{i}  &0 & 0& 0 & \matindex{i} \\ 
 0 &I_{n-2i}&0& 0& \matindex{n-2i} \\
 A_1^1 & A_2^1 & C_1^1 & A_1^2& \matindex{i}\\
 0&0&1&0& \matindex{1}\\
  C_3^1 & C_4 & c & C_3^2& \matindex{1}\\
   A_1^3 & A_2^2 & C_1^2 & A_1^4& \matindex{i}\\
  A_3^1 & A_4 & C_2 & A_3^2& \matindex{n-2i}\\
 0 &0 & 0&  I_{i}  & \matindex{i} \\ 
\end{block}
   \end{blockarray},    
\end{equation}
where 
\[
A_1=\left(\begin{array}{cc}
   A_1^1  & A_1^2 \\
   A_1^3  & A_1^4
\end{array}\right), \quad
A_2=\left(\begin{array}{c}
   A_2^1 \\
   A_2^2  
\end{array}\right),\quad
A_3=\left(\begin{array}{cc}
   A_3^1  & A_3^2 
\end{array}\right),
\]
\[
C_1=\left(\begin{array}{c}
   C_1^1   \\
   C_1^2  
\end{array}\right),\quad 
C_3=\left(\begin{array}{cc}
   C_3^1  & C_3^2 
\end{array}\right).
\]
Thus, the matrix $X$ in Proposition \ref{simplifynaive} will be rewritten as 
\[
X:=\left(\begin{array}{ccc}
   A_1^1  &C_1^1& A_1^2 \\
   C_3^1 &c & C_3^2\\
   A_1^3  & C_1^2&A_1^4
\end{array}\right),
\]
under the new basis (\ref{new basis}), which satisfies $X^tH_{2i+1}X=-\pi_0 H_{2i+1}, XH_{2i+1}X^t=-\pi_0 H_{2i+1}$.
    
\end{proof}

\begin{prop} \label{moduli-sv}
    Suppose $0\leq 2i\leq n$. For $0\leq \ell\leq i $, the moduli functor $S_i(\ell)$ in Definition \ref{defn-Mell} is represented by a Schubert variety in $\RM^\naive_{\cbra{i},k}$. 
\end{prop}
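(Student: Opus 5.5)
The plan is to show two things: first, that $S_i(\ell)$ and the Schubert variety $\overline{\RM^\naive_{\cbra{i}}(\ell)}$ have the same underlying topological space, and second, that $S_i(\ell)$ is reduced. Together these give $S_i(\ell)=\overline{\RM^\naive_{\cbra{i}}(\ell)}$ with its reduced structure.

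\emph{Topological step.} Since $\wedge^{\ell+1}\iota=0$ says exactly that $\operatorname{rank}\iota(\CF_i)\leq \ell$ at every point, we get
\[
|S_i(\ell)|=\coprod_{\ell'\leq \ell}\RM^\naive_{\cbra{i}}(\ell')
\]
by the stratification \eqref{stratification} of Proposition \ref{prop-stratificationMpm}. It then suffices to show $\RM^\naive_{\cbra{i}}(\ell')\subset \overline{\RM^\naive_{\cbra{i}}(\ell)}$ for $\ell'<\ell$.
\begin{itemize}
\item One direction is automatic: rank is lower semicontinuous, so $S_i(\ell)$ is closed and contains $\overline{\RM^\naive_{\cbra{i}}(\ell)}$.
\item For the reverse inclusion I would exhibit an explicit one-parameter family over $k[s]$ degenerating the representative $(\CF^{\ell}_i,\CF^{\ell}_{-i})$ of \eqref{CF1234} to a point of rank $\ell-1$. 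This would mimic the lifts in Proposition \ref{prop-liftm4}, with $\pi$ replaced by a parameter $s$ on one pair of basis vectors, say $f_{n+i-\ell+1}+s f_{i-\ell+1}$-type vectors.
\item Alternatively, I would deduce it from irreducibility of the local equations obtained in the next step.
\end{itemize}

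\emph{Local equations.} Both $S_i(\ell)$ and the strata are stable under $\sG^\circ_I\otimes k$, and the special-fiber analogue of Lemma \ref{lem-Gtrans} holds: every orbit closure contains the closed cell through $*$. So it is enough to work on the chart $\CU^\naive_{i,k}$. By Proposition \ref{prop-simplifynaive}, reduced mod $\pi_0$, this chart is
\[
\mathbb{A}_k^{\frac{(n-2i)(n+2i+1)}{2}}\times \Spec k[X]/(X^tHX,\,XHX^t).
\]
I would then read off the map $\iota=\mu\colon \CF_i\to\pi_0^{-1}\CF_i^\perp$ in the coordinates \eqref{F_i matrix}. Modulo $\pi_0$, the matrix $\mu$ kills everything except the $2i$ block and the $f_1$ coordinate. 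Hence the image of $\iota$ is spanned by the columns of the $(2i+1)\times(2i+1)$ matrix built from $B_1,D_1,D_3,d$ (and the $B_2,D_4$ columns). Via \eqref{eq 516} and the relations \eqref{eq 4110}, these are expressed through $X$, and I expect to find that $\operatorname{rank}\iota=\operatorname{rank}X$. This is the key matrix check: the $B_2=-A_3^\ad$ contribution must lie in the span of the $X$-columns, which should follow from $A_3=-A_2^\ad A_1-HC_4^tC_3$. The upshot is
\[
S_i(\ell)\cap\CU^\naive_{i,k}\;\cong\;\mathbb{A}_k^{\frac{(n-2i)(n+2i+1)}{2}}\times \Spec k[X]/(X^tHX,\,XHX^t,\,\wedge^{\ell+1}X).
\]

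\emph{Reducedness (main obstacle).} The main difficulty is proving that $k[X]/(X^tHX,XHX^t,\wedge^{\ell+1}X)$ is reduced and irreducible.
\begin{itemize}
\item For $\ell=i$ this is $\CR_{i,k}$, and I would use the same strategy as in the outline: $\CR_i$ is flat and reduced by \cite[Theorem~1.11]{Yang25}, $\CR_{i,k}$ is irreducible (by Proposition \ref{prop-stratificationMpm}) and generically smooth, and Hironaka's lemma then gives reducedness.
\item For $\ell<i$, I would first try to identify this ring with a (rank-bounded) orbit closure ring of the type treated in \cite{Yang25}, namely the special fiber of a split-type spin local model, where reducedness is known.
\item Failing that, I would prove directly that these determinantal-type schemes are Cohen--Macaulay via a standard-monomial or Gröbner degeneration, and that they are generically reduced on their unique component. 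Generic reducedness would come from the open stratum $\RM^\naive_{\cbra{i}}(\ell)$, which is a single smooth Schubert cell.
\end{itemize}
Irreducibility of this local ring also gives the closure relation from the topological step.
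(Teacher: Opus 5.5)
Your plan is essentially the paper's proof. Both arguments take the underlying space from the stratification of Proposition~\ref{prop-stratificationMpm}; the paper uses it directly rather than building a degeneration. Both then reduce to the chart $\mathcal{U}^{\rm naive}_i$ and identify $\wedge^{\ell+1}\iota=0$ with $\wedge^{\ell+1}X=0$. Your column-span check via $A_3=-A_2^{\mathrm{ad}}A_1-HC_4^tC_3$ is exactly what the paper leaves implicit in ``by the proof of Proposition~\ref{simplifynaive}''.

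The step you flag as the main obstacle is closed in the paper by a single citation. \cite[Theorem~1.11(1)]{Yang25} gives reducedness of $k[X]/(X^tHX,\,XHX^t,\,\wedge^{\ell+1}X)$ directly for every $0\le\ell\le i$, including $\ell=i$. So neither the Hironaka route nor a Gr\"obner degeneration is needed.

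If you did pursue your fallback, two steps would fail as written. First, smoothness of the reduced open cell says nothing about nilpotents in the scheme $S_i(\ell)$ cut out by $\wedge^{\ell+1}\iota=0$. Generic reducedness would instead need an explicit localization as in Proposition~\ref{prop-genred}, inverting an $\ell\times\ell$ minor of $X$. Second, Proposition~\ref{prop-Hiron} also requires normality of the reduced special fiber, which your $\ell=i$ sketch omits.
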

\begin{proof}
    By Proposition \ref{prop-stratificationMpm}, the Schubert cells in $\RM^\naive_{\cbra{i},k}$ are described set-theoretically by the rank condition in Definition \ref{defn-Mell}. We are reduced to show that $S_i(\ell)$ is reduced. It is enough to prove that $\CU^\naive_i\cap S_i(\ell)$ is reduced. By the proof of Proposition \ref{simplifynaive}, the condition $\wedge^{\ell+1}(\iota:\CF_i\ra \pi_0\inverse\CF_{-i})=0$ in Definition \ref{defn-Mell} amounts to $\wedge^{\ell+1}X=0$ over $\CU^\naive_i\cap S_i(\ell)$. Hence, the affine scheme $\CU^\naive_i\cap S_i(\ell)$ is isomorphic to \begin{flalign*}
        \BA_k^{\frac{(n-2i)(n+2i+1)}{2}}\times \Spec \rR_\ell, \text{\ where\ }\rR_\ell\coloneqq  \frac{k[X]}{(X^tHX, XHX^t, \wedge^{\ell+1}X)}.
    \end{flalign*}
    By \cite[Theorem 1.11(1)]{Yang25}, the ring $\rR_\ell$ is reduced. Thus, $\CU^\naive_i\cap S_i(\ell)$ is reduced. 
\end{proof}

\subsection{Affine chart for the spin local model}\label{affine chart for spin}
We now turn our attention to the spin local model $\Mpm_{I}$. We continue with $I=\{i\}$ for $0\leq 2i\leq n$ as before. To evaluate the spin condition within the affine chart $\calU^\pm_I\subset \U^{\rm naive}_I\otimes_{\CO_{F_0}}\CO_{F}$, we first fix a split orthogonal basis in $V_F$, i.e., 
\begin{equation}
\begin{array}{ll}
 V_F  &=F\bb  e_1', \dots, e_{2n+2}'\pp\\
     &={F}\bb
	e_{1},\dots, e_n, f_1+\pi f_2, \frac{f_1-\pi f_2}{2\pi_0}, e_{n+1},\dots, e_{2n}\pp,
\end{array}
\end{equation}
where $\phi(e_i', e_{2n+3-j}')=\delta_{i,j}$.

In order to describe the basis of $\wedge^{n+1} V_F$ and $W_\pm$, we introduce the following notation. Recall that $[1,n]:=\{1,\dots, n\}$. Let $\calB=\{S\subset [1,2n+2] \mid \# S=n+1 \}$. For any integer $i\in S$, we define $i^*:= 2n+3-i$, and 
\[
S^*:=\{i^*\mid i\in S\},\quad S^\bot:=[1,2n+2] \setminus S^*.
\]
We also define $\sum S:= \sum_{i\in S}i$. For $S=\{i_1<\dots < i_{n+1}\}\subset [1,2n+2]$, let
\[
e_S':=e_{i_1}'\wedge \dots \wedge e_{i_{n+1}}'\in \wedge^{n+1} V_F,
\]
and let $\sigma_S$ be the permutation on $[1,2n+2]$ sending $[1,n+1]$ to $S$ in increasing order and $[n+2,2n+2]$ to $[1,2n+2]\setminus S$ in increasing order. The set $\{e_S'\mid S\in \calB\}$ forms a basis of $\wedge^{n+1} V_F$. The involution operator $a$ on $\wedge^{n+1} V_F$ is defined by
\[
a(e_S'):=\sgn(\sigma_S)e_{S^\perp}',
\]
where $\sgn(\sigma_S)$ is denoted by the sign of $\sigma_S$. By \cite[Lemma 2.8]{Sm3}, we have
\[
\sgn(\sigma_S)=(-1)^{\sum S+\lceil \frac {n+1}{2}\rceil}.
\]
It is straightforward to verify that $\sgn(\sigma_S)=\sgn(\sigma_{S^\bot})$. For the decomposition $\wedge^{n+1}V_F=W_+\oplus W_-$, we have
\begin{equation}
W_\pm=F\bb e_S'\pm \sgn(\sigma_S)e_{S^\bot}'\mid S\in \calB\pp.
\end{equation}
Furthermore, set $g_S=g_{i_1}\wedge \dots \wedge g_{i_{n+1}}$ where $\{g_j\}$ is the basis of $\Lambda_i^\vee$ defined in (\ref{new basis}). Note that
$\{g_S\mid S\in\calB\}$ forms a basis of $\wedge^{n+1}\Lambda_i^\vee$.

We now formulate the spin condition. Consider the intersections $(\wedge^{n+1}\Lambda_i^\vee)_\pm=\wedge^{n+1}\Lambda_i^\vee \cap W_\pm$. To describe the basis of $(\wedge^{n+1}\Lambda_i^\vee)_\pm$, we need to select exactly one representative from each pair of the form $\{e_S'\pm \sgn(\sigma_S)e_{S^\bot}', e_{S^\bot}'\pm \sgn(\sigma_{S^\bot})e_{S}'\}$. For any $S\subset [1,2n+2]$, define
\begin{equation}
d_S:=\#(S\cap [2n+3-i, 2n+2]).
\end{equation}
It follows that $d_{S^\bot}=\#(S^\bot\cap [2n+3-i, 2n+2])=i-\#(S\cap [1,  i])$. We write $S\succcurlyeq S^\bot$ if 
\begin{itemize}
    \item $S\neq S^\bot$, and $S$ is greater than $S^\bot$ in lexicographic order, or
    \item $S= S^\bot$, and $\sgn(\sigma_S)=1$ for $W_+$; $\sgn(\sigma_S)=-1$ for $W_-$.
\end{itemize}
The set $\{e_S'\pm \sgn(\sigma_S)e_{S^\bot}'\mid S\in \calB, S\succcurlyeq S^\bot\}$ forms a basis of $W_\pm$. To analyze the relationship between the basis elements $e_S'$ and $g_S$, we define the following index sets:
\begin{itemize}
    \item If $n+1\in S$, $n+2\notin S$, let $S_1:= (S\setminus \{n+1\})\cup \{n+2\}$ and $S_1^\bot:= (S^\bot\setminus \{n+1\})\cup \{n+2\}$.
    \item If $n+1\notin S$, $n+2\in S$, let $S_2:= (S\setminus \{n+2\})\cup \{n+1\}$ and $S_2^\bot:= (S^\bot\setminus \{n+2\})\cup \{n+1\}$
\end{itemize}
We then distinguish the following two cases:

{\it Case 1}: For $d_S> d_{S^\bot}$, we get 4 types of $S$:
\begin{itemize}
    \item Type I: Assume $n+1, n+2\in S$ (equivalently, $n+1, n+2\notin S^\bot$). We have
\begin{equation*}
  \alpha_1(S)=e_S'\pm \sgn(\sigma_S)e_{S^\bot}'=\frac{1}{\pi\cdot\pi_0^{d_S}} g_S\pm \sgn(\sigma_S)\frac{1}{\pi_0^{d_{S^\bot}}}g_{S^\bot}.
\end{equation*}
 \item Type II: Assume $n+1\in S, n+2\notin S$ (equivalently,  $n+1\in S^\bot, n+2\notin S^\bot$). We have
\begin{equation*}
  \alpha_2(S)=e_S'\pm \sgn(\sigma_S)e_{S^\bot}'=(\frac{1}{\pi_0^{d_S}} g_S+\frac{\pi}{\pi_0^{d_S}}g_{S_1})\pm \sgn(\sigma_S)(\frac{1}{\pi_0^{d_{S^\bot}}}g_{S^\bot}+\frac{\pi}{\pi_0^{d_{S^\bot}}}g_{S_1^\bot}).
\end{equation*}
 \item Type III: Assume $n+1\notin S, n+2\in S$ (equivalently,  $n+1\notin S^\bot, n+2\in S^\bot$). We have
 \begin{flalign*}
\alpha_3(S)&= e_S'\pm \sgn(\sigma_S)e_{S^\bot}'\\
&=\frac{1}{2}[(\frac{1}{\pi_0^{d_S+1}} g_{S_2}-\frac{\pi}{\pi_0^{d_S+1}}g_{S})\pm \sgn(\sigma_S)(\frac{1}{\pi_0^{d_{S^\bot}+1}}g_{S_2^\bot}-\frac{\pi}{\pi_0^{d_{S^\bot}+1}}g_{S^\bot})].
 \end{flalign*}
\quash{
\begin{equation*}
\alpha_3(S)= e_S'\pm \sgn(\sigma_S)e_{S^\bot}' 
 =\frac{1}{2}[(\frac{1}{\pi_0^{d_S+1}} g_{S_2}-\frac{\pi}{\pi_0^{d_S+1}}g_{S})\pm \sgn(\sigma_S)(\frac{1}{\pi_0^{d_{S^\bot}+1}}g_{S_2^\bot}-\frac{\pi}{\pi_0^{d_{S^\bot}+1}}g_{S^\bot})].
\end{equation*}
}
 \item Type IV: Assume $n+1, n+2\notin S$ (equivalently, $n+1, n+2\in S^\bot$). We have
\begin{equation}
\alpha_4(S)=  e_S'\pm \sgn(\sigma_S)e_{S^\bot}'=\frac{1}{\pi_0^{d_S}} g_S\pm \sgn(\sigma_S)\frac{1}{\pi\cdot\pi_0^{d_{S^\bot}}}g_{S^\bot}.
\end{equation}
\end{itemize}
   
Note that the above elements are not linearly independent in $W_\pm$. The basis elements $h_S$ for $(\wedge^{n+1}\Lambda_i^\vee)_\pm$
 are obtained by scaling these forms by a suitable power of $\pi$ for Type I (resp. Type IV). More precisely, we define $h_S=\pi\cdot\pi_0^{d_S}\alpha_1(S)$ (resp. $h_S=\pi_0^{d_S}\alpha_4(S)$). For S of Type II (resp. Type III), we define 
 \[
 h_S=\frac{\pi_0^{d_S}\alpha_2(S)+2\pi_0^{d_S+1}\alpha_3(S)}{2},\quad (\text{resp.}~ h_S=\frac{\pi_0^{d_S}\alpha_2(S)-2\pi_0^{d_S+1}\alpha_3(S)}{2\pi}).
\]
In summary, we define the elements $h_S$ for the case $d_S> d_{S^\bot}$ as follows:
\begin{equation}\label{h_S 1}
h_S=\begin{cases}
g_S\pm \sgn(\sigma_S) \pi\cdot \pi_0^{d_S-d_{S^\bot}}g_{S^\bot}, & \text{if}\ n+1, n+2\in S \ \text{(Type I)}\\
g_S\pm \sgn(\sigma_S) \pi\cdot \pi_0^{d_S-d_{S^\bot}}g_{S_1^\bot}, & \text{if}\ n+1\in S, n+2\notin S \ \text{(Type II)}\\
g_S\mp \sgn(\sigma_S) \frac{\pi_0^{d_S-d_{S^\bot}}}{\pi}g_{S_2^\bot}, & \text{if}\ n+1\notin S, n+2\in S \ \text{(Type III)}\\
g_S\pm \sgn(\sigma_S) \frac{\pi_0^{d_S-d_{S^\bot}}}{\pi}g_{S^\bot}, & \text{if}\ n+1, n+2\notin S \ \text{(Type IV)}
\end{cases}
\end{equation}
Recall that $\pi_0=-\pi^2$ and $d_S> d_{S^\bot}$, so that $\frac{\pi_0^{d_S-d_{S^\bot}}}{\pi}\in \CO_{F}$.

{\it Case 2}: For $d_S= d_{S^\bot}$, we get 2 types of $S$:
\begin{equation}\label{h_S 2}
h_S=\begin{cases}
g_S\pm \sgn(\sigma_S) \pi g_{S^\bot}, & \text{if}\ n+1, n+2\in S \ \text{(Type V)}\\
g_S\pm \sgn(\sigma_S) \pi g_{S_1^\bot}, & \text{if}\ n+1\in S, n+2\notin S \ \text{(Type VI)}
\end{cases}.
\end{equation}
We want to mention that when $d_S= d_{S^\bot}$ and $n+1\notin S, n+2\in S$, the element $h_S=g_{S_2^\bot}\mp \sgn(\sigma_S) \pi g_{S}$, is of the same form as those categorized as Type VI. Similarly, for $n+1, n+2\notin S$, we obtain $h_S=g_{S^\bot}\pm \sgn(\sigma_S) \pi g_{S}$, which is the same as Type V. By combining (\ref{h_S 1}) and (\ref{h_S 2}), we obtain

\begin{prop}
Set $\calB_0:=\{S\in \calB\mid S ~\text{of {\rm Type I--VI}}~\}$. We have
\[
\{h_S\mid S\in \calB_0\}
\]
forms an $\CO_{F}$-basis of $(\wedge^{n+1}\Lambda_i^\vee)_\pm$.
\end{prop}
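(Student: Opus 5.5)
The plan is the standard two-step argument for lattices in a vector space. First, each $h_S$ lies in $(\wedge^{n+1}\Lambda_i^\vee)_\pm=\wedge^{n+1}\Lambda_i^\vee\cap W_\pm$. Second, the $h_S$ span this intersection and are linearly independent over $\CO_F$.

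\textbf{Step 1: membership.} By construction each $h_S$ is an $F$-multiple, or an $F$-linear combination, of the vectors $\alpha_1(S),\dots,\alpha_4(S)$, and these lie in $W_\pm$. The formulas \eqref{h_S 1} and \eqref{h_S 2} show that the coefficients of $h_S$ in the basis $\{g_T\}$ are $1$, $\pm\pi\pi_0^{d_S-d_{S^\bot}}$ or $\pm\pi_0^{d_S-d_{S^\bot}}/\pi$. Since $d_S\geq d_{S^\bot}$ and $\pi_0=-\pi^2$, all of these lie in $\CO_F$. For Types II and III one must check that the stated combinations of $\alpha_2(S)$ and $\alpha_3(S)$ collapse to the displayed two-term expressions. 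This is a short computation: the terms $g_{S_1}$ and $g_{S_2}$ cancel, using $S_2=S$ after relabelling.

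\textbf{Step 2: index bookkeeping.} The map $S\mapsto S^\bot$ is an involution on $\calB$, so the pairs $\{S,S^\bot\}$ (with possible fixed points) index a basis of $W_\pm$. I will show that $\calB_0$ picks exactly one representative per pair: the one with larger $d$, or the designated one when the $d$'s are equal. I will also check that the map $S\mapsto$ ``leading index'' $S\in\calB_0$ is injective into $\calB$. The auxiliary index sets $S_1^\bot$ and $S_2^\bot$ differ from $S^\bot$ only by swapping $n+1$ and $n+2$, which does not change $d$. Hence $h_S=g_S+c_S\,g_{T(S)}$ with $T(S)\neq S$ and $T(S)$ belonging to the complementary class. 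Therefore the $h_S$ are unitriangular with respect to the $g_S$, $S\in\calB_0$, and in particular linearly independent. Their number is $\dim W_\pm=\tfrac12\binom{2n+2}{n+1}$.

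\textbf{Step 3: spanning (saturation).} This is the main obstacle. Let $x\in\wedge^{n+1}\Lambda_i^\vee\cap W_\pm$. Since the $h_S$ form an $F$-basis of $W_\pm$, we can write $x=\sum_{S\in\calB_0}c_Sh_S$ with $c_S\in F$. Extract the coefficient of $g_S$ in $x$ for $S\in\calB_0$. By the unitriangular form, this coefficient equals $c_S$ plus contributions $c_{S'}c_{S'}'$ from those $S'\in\calB_0$ with $T(S')=S$. So I must arrange that $T(S')\notin\calB_0$ whenever $S'\in\calB_0$; then the coefficient of $g_S$ is exactly $c_S$, which lies in $\CO_F$ because $x\in\wedge^{n+1}\Lambda_i^\vee$. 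For the pair structure this holds except at the Type V and VI boundary, where $d_S=d_{S^\bot}$. There, $T(S)$ could again be of Type V or VI, which is exactly why the convention choosing the representative $g_S$ rather than $g_{S_2^\bot}$ was fixed. In that case I would instead use a $2\times2$ block argument. The matrix $\begin{pmatrix}1&\pm\pi\\ \mp\pi&1\end{pmatrix}$, or its variant, has determinant $1+\pi^2$ or $1-\pi^2$, which is a unit in $\CO_F$. So integrality of the $g$-coefficients forces $c_S\in\CO_F$.

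\textbf{Alternative route.} If the block structure turns out to be messier, I would compare discriminants instead. Compute the index $[\wedge^{n+1}\Lambda_i^\vee\cap W_\pm : \bigoplus\CO_Fh_S]$ by projecting to the coordinates $g_S$, $S\in\calB_0$. The unitriangularity shows that this projection sends $\bigoplus\CO_Fh_S$ isomorphically onto $\bigoplus_{S\in\calB_0}\CO_Fg_S$. The projection is also injective on $W_\pm$, and its image of $\wedge^{n+1}\Lambda_i^\vee\cap W_\pm$ is contained in $\bigoplus_{S\in\calB_0}\CO_F g_S$. This gives the spanning statement directly.
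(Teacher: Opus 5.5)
Your overall strategy is the right one: membership, then reading off the coefficients at the indices in $\mathcal{B}_0$ to get independence and saturation. It is an explicit version of what the paper leaves implicit when it builds the $h_S$ blockwise from the $\alpha_j(S)$ and simply asserts the proposition. But Step 2 contains a claim that fails as stated: $\mathcal{B}_0$ does \emph{not} pick one element from each $\perp$-orbit.

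Put $Q=[1,2n+2]\setminus\{n+1,n+2\}$, take $A\subset Q$ with $\#A=n$, and set $B=Q\setminus A^*$. Then $(A\cup\{n+1\})^\perp=B\cup\{n+1\}$ and $(A\cup\{n+2\})^\perp=B\cup\{n+2\}$. Also $d_{A\cup\{n+1\}}=d_{A\cup\{n+2\}}=:d_A$.
\begin{itemize}
\item If $A\neq B$ and $d_A=d_B$, both $A\cup\{n+1\}$ and $B\cup\{n+1\}$ are of Type VI. So a whole $\perp$-orbit lies in $\mathcal{B}_0$, while the orbit $\{A\cup\{n+2\},B\cup\{n+2\}\}$ contributes nothing. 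This already happens for $n=2$, $i=0$, $A=\{1,6\}$, $B=\{2,5\}$.
\item For fixed points $A=B$, only $A\cup\{n+1\}$ is selected. Moreover, only half of the fixed points contribute to $\dim W_\pm$, so ``$\perp$-orbits index a basis of $W_\pm$'' is also inaccurate.
\end{itemize}
The underlying reason is that $a$ is monomial in the $e'$-basis, whereas the $g$-basis mixes $e'_{n+1}=g_{n+1}+\pi g_{n+2}$ and $e'_{n+2}=(g_{n+1}-\pi g_{n+2})/2\pi_0$. The correct bookkeeping unit is the block spanned by $g_{A\cup\{n+1\}},g_{A\cup\{n+2\}},g_{B\cup\{n+1\}},g_{B\cup\{n+2\}}$ (two-dimensional if $A=B$), together with the Type I/IV pairs $\{S,S^\perp\}$ with $\{n+1,n+2\}\subset S$. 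On each such block, $\mathcal{B}_0$ has exactly as many elements as the dimension of $W_\pm$ there (namely $2$, $1$, $1$). This gives $\#\mathcal{B}_0=\binom{2n}{n-1}+\binom{2n}{n}=\tfrac12\binom{2n+2}{n+1}$, and you need this count for the $h_S$ to span $W_\pm$ over $F$.

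Conversely, the difficulty you anticipate in Step 3 does not arise: $T(S)\notin\mathcal{B}_0$ for every $S\in\mathcal{B}_0$.
\begin{itemize}
\item For Types I--IV, $T(S)$ has strictly smaller $d$ than its $\perp$-partner.
\item For Type V, $T(S)=S^\perp$ contains neither $n+1$ nor $n+2$.
\item For Type VI, $T(A\cup\{n+1\})=B\cup\{n+2\}$ does not contain $n+1$.
\end{itemize}
Hence, for $S,S'\in\mathcal{B}_0$, the coefficient of $g_{S'}$ in $h_S$ is $\delta_{S,S'}$, and your ``alternative route'' finishes the proof directly. Once the count is fixed, the $\mathcal{B}_0$-coordinate projection is injective on $W_\pm=\bigoplus_{S\in\mathcal{B}_0}Fh_S$ and sends $h_S\mapsto g_S$. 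So any $x\in\wedge^{n+1}\Lambda_i^\vee\cap W_\pm$ has $\mathcal{O}_F$-coefficients. The $2\times2$ block with determinant $1\pm\pi^2$ describes a configuration that never occurs and can be dropped.

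Finally, in Step 1 the integrality of $\pi_0^{d_S-d_{S^\perp}}/\pi$ needs $d_S>d_{S^\perp}$, not just $\geq$. This holds because Types III and IV occur only in Case 1.
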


The spin condition requires that $\wedge^{n+1}\calF_i^\bot=\sum_{S\in \calB} a_S g_S\in (\wedge^{n+1}\Lambda_i^\vee)_\pm$, where $a_S$ represents the determinants of the $(n+1)\times (n+1)$ submatrix formed by selecting the rows indexed by $S$ in increasing order in (\ref{F_i matrix}). We can express this sum as
\[
\sum_{S\in \calB} a_S g_S=\sum_{S\in \calB_0} c_S h_S,
\]
for $c_S\in \CO_{F}$. Consequently, for index sets $S$ belonging to Type I--VI, the above relation is equivalent to
\begin{equation}\label{spin-relation}
\begin{array}{ll}
  a_{S^\bot}=\pm \sgn(\sigma_S) \pi\cdot \pi_0^{d_S-d_{S^\bot}} a_S,   &\text{for}~ S ~\text{of type I},~  \\
   a_{S_1^\bot}=\pm \sgn(\sigma_S) \pi\cdot \pi_0^{d_S-d_{S^\bot}} a_S,   &\text{for}~ S ~\text{of type II},~  \\
   a_{S_2^\bot}=\mp \sgn(\sigma_S) \frac{\pi_0^{d_S-d_{S^\bot}}}{\pi} a_S,   &\text{for}~ S ~\text{of type III},~  \\
   a_{S^\bot}=\pm \sgn(\sigma_S) \frac{\pi_0^{d_S-d_{S^\bot}}}{\pi} a_S,   &\text{for}~ S ~\text{of type IV},~  \\
   a_{S^\bot}=\pm \sgn(\sigma_S) \pi a_S,   &\text{for}~ S ~\text{of type V},~  \\
   a_{S_1^\bot}=\pm \sgn(\sigma_S) \pi a_S,   &\text{for}~ S ~\text{of type VI}.  
\end{array}    
\end{equation}

To simplify the aforementioned relation, we establish a connection between the $(n+1)\times (n+1)$ minors of $\wedge^{n+1}\calF_i^\bot$ and the  $i\times i$ minors of the matrix $X$. For any subset $U\subset [1,  2i+1]$ with cardinality $\# U=i$, we adopt notation analogous to that introduced for $S\in \calB$. Specifically, we define $U^*=\{2i+2-i\mid i\in U\}$ and let $U^\bot=[1,  2i+1] \setminus U^*$ denote its complement. Furthermore, let $\sigma_U$ be the permutation on $[1,  2i+1]$ sending $[1,  i]$ to $U$ in increasing order and $[i+1,  2i+1]$ to $[1,  2i+1] \setminus U$ in increasing order.

\begin{lemma}
We have 
\[
\sgn(\sigma_U)=(-1)^{\sum U+\lceil \frac {i}{2}\rceil}.
\]
\end{lemma}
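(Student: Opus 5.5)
The plan is to compute $\sgn(\sigma_U)$ directly by counting inversions, exactly as for $\sgn(\sigma_S)$ in \cite[Lemma 2.8]{Sm3}. Write $U=\{u_1<\dots<u_i\}\subset[1,2i+1]$. By definition, $\sigma_U$ sends $[1,i]$ to $U$ in increasing order and $[i+1,2i+1]$ to the complement in increasing order. So the only inversions of $\sigma_U$ are pairs $(a,b)$ with $a\le i<b$ and $\sigma_U(a)>\sigma_U(b)$. For the $k$-th element $u_k$, the number of complement elements smaller than $u_k$ is $u_k-k$. Hence the number of inversions is
\[
\sum_{k=1}^{i}(u_k-k)=\textstyle\sum U-\frac{i(i+1)}{2},
\qquad\text{so}\qquad
\sgn(\sigma_U)=(-1)^{\sum U-i(i+1)/2}.
\]

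It then remains to check the congruence $i(i+1)/2\equiv\lceil i/2\rceil \pmod 2$, which replaces the $-i(i+1)/2$ in the exponent by $+\lceil i/2\rceil$. I will verify this by checking $i\bmod 4$:
\begin{itemize}
\item $i\equiv0$: both sides are even.
\item $i\equiv1$: $i(i+1)/2$ is odd and $\lceil i/2\rceil=(i+1)/2$ is odd.
\item $i\equiv2$: $i(i+1)/2$ is odd and $\lceil i/2\rceil=i/2$ is odd.
\item $i\equiv3$: $i(i+1)/2$ is even and $(i+1)/2$ is even.
\end{itemize}
Since signs only depend on the exponent mod $2$, this gives the stated formula.

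There is no real obstacle here; the only point needing care is the convention that $\sigma_U$ lists $U$ first. If one counts inversions of $\sigma_U^{-1}$ instead, the sign is unchanged, so the convention does not matter. Note also that, unlike the case of $S$, the ambient size $2i+1$ plays no role, since only the positions of the elements of $U$ enter the count.
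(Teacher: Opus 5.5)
Your proof is correct and is essentially the paper's argument. The paper computes the determinant of the permutation matrix of $\sigma_U$ by Laplace expansion to get $(-1)^{\sum U+(1+\dots+i)}$, which is exactly your inversion count $\sum U - i(i+1)/2$ read mod $2$; your explicit check that $i(i+1)/2\equiv\lceil i/2\rceil \pmod 2$ supplies the parity step the paper leaves implicit.
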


\begin{proof}
The proof is similar to \cite[Lemma 2.8]{Sm3}. 
Let $P$ denote the permutation matrix attached to $\sigma_U$. Using the Laplace expansion, we obtain that
\[
\det(P)=(-1)^{\sum U+(1+\dots +i)}=(-1)^{\sum U+\lceil \frac {i}{2}\rceil}.
\]
\end{proof}
We now simplify the relations in (\ref{spin-relation}) for index sets $S$ of type III-VI.
When $S$ is of type III, we have $d_{S}> d_{S^\bot}$, $n+1\notin S, n+2\in S$, and $ n+1\in S_2^\bot, n+2\notin S_2^\bot$. Let $[i+1,  n-i]\subset S$ and let $d_S-d_{S^\bot}=1$, i.e., 
\[
\#(S\cap ([1, i]\cup [2n+3-i, 2n+2]))=i+1.
\]
The spin condition corresponds to choosing $i$ rows in the matrix $X$, i.e., $\#(S\cap( [n-i+1,  n]\cup\{ n+2\}\cup [n+3, n+2+i]))=i$. Set
\[
\begin{array}{ll}
\calS_1:=S\cap [1,i],     & \calS_2:=S\cap [n-i+1, n], \\
\calS_3:=S\cap [n+3,  n+2+i],     & \calS_4:=S\cap [2n-i+3,  2n+2],
\end{array}
\]
Denote by $r_i:=\#\calS_i$ for $1\leq i\leq 4$. By assumption, these cardinalities satisfy $r_1+r_4=i+1$, $r_2+r_3=i-1$. Utilizing the Laplace expansion, the minor $a_S$ is given by
\[
a_S=(-1)^N\cdot \det(I_{n-i+1})\cdot [U:U'],
\]
where $[U:U']$ denotes the determinant of the $i\times i$ submatrix of 
$X$ with row indices $U=((\calS_2-(n-i))\cup\{i+1\}\cup (\calS_3-(n+1-i))$ and column indices $U'=([1, i]\setminus \calS_1)\cup\{i+1\}\cup (([2n-i+3,  2n+2]\setminus \calS_4)-(n+1))$. The exponent $N$ is determined by 
\begin{flalign*}
N=& \frac{r_1(r_1+1)}{2}+\sum \calS_1+\frac{(2r_1+n+1-2i)(n-2i)}{2}+\frac{(n+1)(n-2i)}{2}\\
&+\frac{r_4(2n+3-r_4)}{2}+\sum \calS_4-r_4(n+1).
\end{flalign*}
Similarly, for the dual set $S_2^\bot$, we have $\#\calS_1^\bot=i-r_4=r_1-1$, $\#\calS_2^\bot=i-r_3=r_2+1$, $\#\calS_3^\bot=i-r_2=r_3+1$, $\#\calS_4^\bot=i-r_1=r_4-1$. Consequently,
\[
a_{S_2^\bot}=(-1)^{N'} \cdot \det(I_{n-i})\cdot [U^\bot:{U'}^{\bot}],
\]
where $[U^\bot:{U'}^\bot]$ is the determinant of a size $(i+1)$ submatrix of $X$. Here
\begin{flalign*}
N'=& \frac{r_1(r_1-1)}{2}+\sum \calS_1^\bot+\frac{(2r_1+n-1-2i)(n-2i)}{2}+\frac{(n+1)(n-2i)}{2}\\
&+\frac{(r_4-1)(2n+4-r_4)}{2}+\sum \calS_4^\bot-(r_4-1)(n+1)\\
&+(r_1+r_2+n-2i+1)+(n-i+1).
\end{flalign*}
Note that $\sum \calS_1^\bot+\sum \calS_4^\bot=\sum \calS_1+\sum \calS_4-(2n+3).$
Therefore, $(-1)^{N'-N}=(-1)^{r_1+r_2+n+1}$. We obtain
\begin{equation}\label{eq III}
[U^\bot:{U'}^\bot]=\pm (-1)^{r_1+r_2+n+1}\sgn(\sigma_S)\pi [U:U'],
\end{equation}
when $S$ is of type III.

For $S$ of type IV, we have $n+1\notin S, n+2\notin S$. Under the assumptions $[i+1,  n-i]\subset S$ and $d_S-d_{S^\bot}=1$. We choose $i$ rows in the matrix $X$ such that $r_1+r_4=i+1$, $r_2+r_3=i$. By using the same method, we get
\begin{equation}\label{eq IV}
[U^\bot:{U'}^\bot]=\pm (-1)^{r_1+r_2+n+1}\sgn(\sigma_S)\pi [U:U'].
\end{equation}

If $S$ is of type V, then $n+1\in S, n+2\in S$. Assuming $[i+1,  n-i]\subset S$ and $d_S=d_{S^\bot}$, we select $i$ rows in the matrix $X$ such that $r_1+r_4=i$, $r_2+r_3=i-1$. Note that $\sum \calS_1^\bot+\sum \calS_4^\bot=\sum \calS_1+\sum \calS_4$ by $r_1+r_4=i$, which yields:
\begin{equation}\label{eq V}
[U^\bot:{U'}^\bot]=\pm (-1)^{i+r_1+r_2}\sgn(\sigma_S)\pi [U:U'].
\end{equation}

Finally, for $S$ of type VI, we have $n+1\in S, n+2\notin S$. Let $[i+1,  n-i]\subset S$ and let $d_S=d_{S^\bot}$. We choose $i$ rows in the matrix $X$ such that $r_1+r_4=i$, $r_2+r_3=i$.  We get
\begin{equation}\label{eq VI}
[U^\bot:{U'}^\bot]=\pm (-1)^{i+r_1+r_2}\sgn(\sigma_S)\pi [U:U'].
\end{equation}

The relations in (\ref{eq III})--(\ref{eq VI}) can be uniformly represented as follows:

\begin{thm}\label{SpinConditions}
The spin condition in $\Mpm_{I}$ gives
\begin{equation}\label{eq-spincondition}
[U^\bot:{U'}^\bot]=\pm (-1)^{i}\sgn(\sigma_U)\sgn(\sigma_{U'})\pi [U:U']. 
\end{equation}
Here $[U:U']$ denotes the minor of $X$ whose rows (resp. columns) are given by $U$ (resp. $U'$) in increasing order.
\end{thm}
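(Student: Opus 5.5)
The plan is to deduce \eqref{eq-spincondition} from the four type-by-type relations \eqref{eq III}--\eqref{eq VI}. These are already written in the shape $[U^\bot:{U'}^\bot]=\pm\epsilon\,\sgn(\sigma_S)\pi[U:U']$, with $\epsilon=(-1)^{r_1+r_2+n+1}$ for Types III and IV and $\epsilon=(-1)^{i+r_1+r_2}$ for Types V and VI. So the only task is to rewrite the sign $\epsilon\,\sgn(\sigma_S)$ intrinsically as $(-1)^i\sgn(\sigma_U)\sgn(\sigma_{U'})$. A second, smaller point is to check that every pair $(U,U')$ of $i$-subsets of $[1,2i+1]$ actually arises from some $S$ of Types III--VI with $[i+1,n-i]\subset S$ and $d_S-d_{S^\bot}\in\{0,1\}$. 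Only these configurations matter: the remaining minors of $\wedge^{n+1}\CF_i^\bot$ either vanish identically or are multiples of these (by the block shape of \eqref{F_i matrix} and the Laplace expansion), so the corresponding relations in \eqref{spin-relation} are either automatic or consequences of the ones listed.

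For the sign computation I will use $\sgn(\sigma_S)=(-1)^{\sum S+\lceil\frac{n+1}{2}\rceil}$ from \cite[Lemma 2.8]{Sm3} and $\sgn(\sigma_U)=(-1)^{\sum U+\lceil\frac i2\rceil}$ from the preceding lemma, and likewise for $U'$. The exponents then need to be written in terms of $\calS_1,\dots,\calS_4$ and $r_1,\dots,r_4$.

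First, $U=(\calS_2-(n-i))\cup\{i+1\}\cup(\calS_3-(n+1-i))$ in Type III. This gives $\sum U=\sum\calS_2+\sum\calS_3-r_2(n-i)-r_3(n+1-i)+(i+1)$, with the term $\{i+1\}$ present or absent according to whether $n+2\in S$. For Types V and VI the index $n+1$ plays the analogous role.

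Second, $U'=([1,i]\setminus\calS_1)\cup\{i+1\}\cup(([2n-i+3,2n+2]\setminus\calS_4)-(n+1))$. Its sum is $\frac{i(i+1)}2-\sum\calS_1+\bigl(\sum_{j=2n-i+3}^{2n+2}j-\sum\calS_4\bigr)-(i-r_4)(n+1)$, again up to the middle column.

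Third, $\sum S=\sum\calS_1+\sum\calS_2+\sum\calS_3+\sum\calS_4+\sum_{j=i+1}^{n-i}j$ plus the contribution of $n+1$ or $n+2$.

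Adding these, the $\calS$-sums cancel modulo $2$ between $\sum S$ and $\sum U+\sum U'$, since the $\calS_1$ and $\calS_4$ terms enter with opposite signs, which is harmless mod $2$. What remains is a polynomial in $n,i,r_1,\dots,r_4$. Substituting the constraints $r_1+r_4=i+1$ or $i$, and $r_2+r_3=i-1$ or $i$, type by type, I will verify
\[
\epsilon\,\sgn(\sigma_S)\sgn(\sigma_U)\sgn(\sigma_{U'})=(-1)^i .
\]
I expect the ceilings to combine via $\lceil\frac{n+1}2\rceil+2\lceil\frac i2\rceil\equiv\lceil\frac{n+1}2\rceil$ together with parity identities such as $\sum_{j=i+1}^{n-i}j\equiv\frac{(n-2i)(n+1)}2$. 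The residual $n$-dependence should cancel against the $n+1$ appearing in $\epsilon$ for Types III and IV.

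The main obstacle is purely bookkeeping. The four types differ in whether the middle row and column index $i+1$ (coming from $f_1,f_2$) lies in $U$ and $U'$, and each type must produce exactly $(-1)^i$. To organize this I will check the identity for one convenient representative $S$ of each type, for instance the one with $\calS_1=[1,r_1]$ and $\calS_4$ an initial segment. I will then observe that moving one element within $\calS_1$, $\calS_2$, $\calS_3$ or $\calS_4$ changes both sides by the same sign, since it shifts $\sum S$ and $\sum U+\sum U'$ by the same amount mod $2$. If a parity discrepancy appears for some type, the likely culprit is the ambiguous $\pm$ convention for $W_\pm$. That sign is global, so it can be absorbed into the $\pm$ of \eqref{eq-spincondition}.
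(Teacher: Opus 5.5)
Your route is the same as the paper's. You take the type-by-type relations (\ref{eq III})--(\ref{eq VI}), use that $i+1\in U$ iff $n+2\in S$ and $i+1\in U'$ iff $n+1\notin S$ to see that Types III--VI exhaust all pairs $(U,U')$, and check $\epsilon\,\sgn(\sigma_S)\sgn(\sigma_U)\sgn(\sigma_{U'})=(-1)^i$ by writing all three signs in terms of $\calS_1,\dots,\calS_4$ and $r_1,\dots,r_4$. The paper packages this last step in the exponent $l(S)$.

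\textbf{The gap.} The explicit formula you use for $U'$ would make this check fail. The shift $k\mapsto k-(n+1)$ on $[2n-i+3,2n+2]\setminus\calS_4$ gives column indices of the $(2n+2)\times(n+1)$ matrix (\ref{F_i matrix}). That is the right index for the Laplace sign of $a_S$, but not a column index of $X$, whose last $i$ columns are $[i+2,2i+1]$. For $\sgn(\sigma_{U'})$ you need $k\mapsto k-(2n+1-2i)$. With $-(n+1)$, $U'$ is in general not even contained in $[1,2i+1]$; it is only when $n=2i$.

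Carrying $(i-r_4)(n+1)$ instead of $(i-r_4)(2n+1-2i)$ in $\sum U'$ introduces a spurious factor $(-1)^{(i-r_4)n}$. For odd $n$ this varies with $r_4$ inside a single type, already for $i=1$ in Types V and VI, where $r_4\in\{0,1\}$. Your fallback of absorbing a discrepancy into the $\pm$ does not work. That $\pm$ is one sign fixed by the choice of $W_\pm$, and a sign depending on $r_4$ or on the type cannot be absorbed into it.

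\textbf{The fix.} With the correct shift, $\sum U'\equiv\sum\calS_1+\sum\calS_4+r_4+(i+1)\delta \pmod 2$, where $\delta=1$ if $n+1\notin S$ and $\delta=0$ otherwise; this uses $\sum_{k=2n+3-i}^{2n+2}k+\binom{i+1}{2}\equiv i$. Together with $\binom{n+1}{2}+\lceil\frac{n+1}{2}\rceil\equiv n+1$, you get $\sgn(\sigma_S)\sgn(\sigma_U)\sgn(\sigma_{U'})$ equal to:
\begin{itemize}
\item Type III: $(-1)^{n+i+1+r_3+r_4}$ (this is the paper's value);
\item Type IV: $(-1)^{n+i+r_3+r_4}$;
\item Type V: $(-1)^{1+r_3+r_4}$;
\item Type VI: $(-1)^{r_3+r_4}$.
\end{itemize}
Multiplying by your $\epsilon$ gives $(-1)^i$ in each case, because $r_1+r_2+r_3+r_4$ equals $2i$, $2i+1$, $2i-1$, $2i$ respectively.

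\textbf{Two smaller points.}
\begin{itemize}
\item The ``one representative per type'' shortcut only moves elements within a block, which keeps $(r_1,\dots,r_4)$ fixed. You therefore need one representative per admissible tuple, not per type; your symbolic computation in the $r_j$ already covers this.
\item Your claim that the remaining relations in (\ref{spin-relation}) are automatic or consequences of these is not needed, since the theorem only asserts that the spin condition implies (\ref{eq-spincondition}). It is also not justified at this stage: the paper obtains $\calU_I=\calU^\pm_I$ only afterwards, via flatness (Corollary \ref{coro-Uspin}).
\end{itemize}
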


\begin{proof}
The correspondence between index sets $S$ of type III-VI and the structure of the minors $[U:U']$ is summarized in the following table:
\[
\begin{array}{|c|c|}
\hline
  S ~\text{of type III}   &  i+1\in U, i+1\in U' \\ \hline
  S ~\text{of type IV}   &  i+1\notin U, i+1\in U' \\ \hline
  S ~\text{of type V}   &  i+1\in U, i+1\notin U' \\ \hline
  S ~\text{of type VI}   &  i+1\notin U, i+1\notin U' \\ \hline
\end{array}
\]
As $S$ ranges over types III-VI, the associated minors $[U:U']$ exhaust all possible $i\times i$ minors of the matrix $X$. Let the parity exponent $l(S)$ be defined by: 
\[
l(S)=\frac{(n+1)(n-2i)}{2}+\frac{(n+2)(n+1)}{2}+(n-i)r_2+(n-i+1)r_3-(2n+3)i+(2n-2i+1)(i-r_4).
\]
Direct calculation of the permutation signs yields the following ratios:
\[
\frac{\sgn(\sigma_S)}{\sgn(\sigma_{U})\sgn(\sigma_{U'})}=
\begin{cases}
(-1)^{l(S)+(n+2)-2(i+1)}, & \text{if $S$ of type III}, \\
(-1)^{l(S)-(i+1)}, & \text{if $S$ of type IV}, \\
(-1)^{l(S)+(n+1)+(n+2)-(i+1)}, & \text{if $S$ of type V}, \\
(-1)^{l(S)+(n+1)}, & \text{if $S$ of type VI}. \\
\end{cases}
\]
Note that the exponent simplifies to $(-1)^{l(S)}=(-1)^{(n+ni+i+1)+(n-i)(r_2+r_3)+r_3+r_4}$. In particular, for $S$ of type III, we have $\frac{\sgn(\sigma_S)}{\sgn(\sigma_{U})\sgn(\sigma_{U'})}=(-1)^{n+i+1+r_3+r_4}$. Substituting this into (\ref{eq III}), the relation reduces to:
\begin{equation}\label{eq 4218}
[U^\bot:{U'}^\bot]=\pm (-1)^{i}\sgn(\sigma_U)\sgn(\sigma_{U'})\pi [U:U'].    
\end{equation}
A straightforward verification shows that the relations for the remaining cases (\ref{eq IV})--(\ref{eq VI}) all specialize to the same unified equation (\ref{eq 4218}). This finishes the proof of the theorem.
\end{proof}

We introduce the refinement of  $\U^{\rm naive}_I$ by adding the relation (\ref{eq-spincondition}) in Theorem \ref{SpinConditions}. This defines a subscheme $\U_I$ that fits into a diagram of closed immersions
\begin{equation}\label{eq-closedimm}
\U_I^\pm\subset \calU_I\subset\calU_I^{\rm naive}\otimes_{\CO_{F_0}}\CO_{F}.
\end{equation}
\begin{prop}\label{AffineSplitChart}
We have 
\[
\calU_I=\mathbb{A}_{\CO_{F}}^{\frac{(n-2i)(n+2i+1)}{2}}\times  \Spec \CO_{F}[X]/\calI.
\]
Here $\CI=\pair{X^tHX+\pi_0 H, XH X^t+\pi_0 H }+\CI^\pm = \CI^\naive + \CI^\pm$, and $\CI^\pm$ denotes the ideal generated by $[U^\bot:{U'}^\bot]=\pm (-1)^{i}\sgn(\sigma_U)\sgn(\sigma_{U'})\pi [U:U']$, where $U,U'$ run through all subsets in $[1,2i+1]$ of cardinality $i$.
\end{prop}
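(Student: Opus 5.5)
We combine the explicit description of $\calU^\naive_I$ in Proposition~\ref{prop-simplifynaive} with the reduction of the spin condition to minors of $X$ in Theorem~\ref{SpinConditions}. By definition, $\calU_I$ is the closed subscheme of $\calU^\naive_I\otimes_{\CO_{F_0}}\CO_F$ cut out by the equations \eqref{eq-spincondition}. Proposition~\ref{prop-simplifynaive} gives an isomorphism
\[
\calU^\naive_I\otimes_{\CO_{F_0}}\CO_F\simeq \mathbb{A}_{\CO_F}^{\frac{(n-2i)(n+2i+1)}{2}}\times \Spec \CO_F[X]/\CI^\naive .
\]
In this isomorphism the affine factor has coordinates given by the free variables $a_{j,k}$ ($j+k<n-2i+1$), $A_2$ and $C_4$, while $X$ is assembled from the blocks $A_1,C_1,C_3,c$ in the basis \eqref{new basis}. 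It therefore suffices to check that every generator of the extra ideal $\CI^\pm$ is a polynomial in the entries of $X$ alone. Then $\CI^\pm$ is pulled back from the second factor, and the product decomposition passes to the quotient.

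The key step is therefore to revisit the derivation of Theorem~\ref{SpinConditions}. There, for each $S$ of Types III--VI with $[i+1,n-i]\subset S$ and $d_S-d_{S^\bot}\in\{0,1\}$, the Laplace expansion of the $(n+1)\times(n+1)$ minor $a_S$ of the matrix \eqref{F_i matrix} has a single nonzero block term, $\pm\det(I)\cdot[U:U']$, which is a minor of $X$. So these relations involve only $X$. Two further points need checking.
\begin{enumerate}
\item The remaining relations in \eqref{spin-relation}, namely those for Types I, II and for $S$ not containing $[i+1,n-i]$ or with $d_S-d_{S^\bot}\ge 2$, lie in $\CI^\naive+\CI^\pm$.
\item Conversely, the relations \eqref{eq-spincondition}, as $U,U'$ vary, are all obtained from admissible $S$. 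This is the exhaustion statement recorded in the table in the proof of Theorem~\ref{SpinConditions}.
\end{enumerate}

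For the first point, I would use the block structure of \eqref{F_i matrix}. Rows indexed by $[1,n-i]$ and the row of $f_1$ form an identity block, and the rows of $A_3,A_4,C_2$ are expressed through $X$, $A_2$, $C_4$ by \eqref{tran eq1}. Expanding a general minor $a_S$ along these rows writes it as a combination of minors of $X$ with coefficients polynomial in the free variables. The relations with larger $d_S-d_{S^\bot}$ carry a factor $\pi_0^{\ge 1}$ or $\pi^{\ge 3}$. I would show these follow from the basic ones by multiplying \eqref{eq-spincondition} by the matrix identities $X^tHX=-\pi_0H$ and $XHX^t=-\pi_0H$: the Cauchy--Binet expansions of $\wedge^{j}$ of these identities relate complementary minors of sizes differing by one, up to powers of $\pi_0$. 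A cleaner alternative is to invoke the split-case computation of \cite{Yang25}, where the same reduction of the spin condition to minors of $X$ is carried out.

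The main obstacle is exactly this bookkeeping: the Types I/II relations and those with $d_S-d_{S^\bot}\ge 2$. My approach would be a descending induction on $d_S-d_{S^\bot}$. For the base cases $d_S-d_{S^\bot}\in\{0,1\}$, with $S$ not containing $[i+1,n-i]$, I would remove the rows of the identity block via the Laplace expansion, so that $a_S$ and $a_{S^\bot}$ become $\CO_F[\text{free vars}]$-combinations of the same family of pairs $([U:U'],[U^\bot:U'^\bot])$. This reduces them to \eqref{eq-spincondition}.
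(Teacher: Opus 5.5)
Your first paragraph already proves the proposition, and it is the same essentially definitional argument the paper relies on (the paper gives no separate proof). By definition, $\calU_I$ is obtained by adjoining the relations \eqref{eq-spincondition} to $\calU_I^{\rm naive}\otimes_{\CO_{F_0}}\CO_F$. These generators are stated directly in terms of minors of $X$, so there is nothing to "revisit" in Theorem~\ref{SpinConditions}. Hence the product decomposition of Proposition~\ref{prop-simplifynaive} passes to the quotient.

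Everything after that paragraph should be dropped, because it is not part of this statement and is partly misdirected.

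\begin{itemize}
\item Your point (2) is the content of Theorem~\ref{SpinConditions}. It only matters for the inclusion $\calU^\pm_I\subset\calU_I$ in \eqref{eq-closedimm}.
\item Your point (1) is that all remaining relations of \eqref{spin-relation} lie in $\CI^{\rm naive}+\CI^\pm$. These are the Type I and II relations, those with $[i+1,n-i]\not\subset S$, and those with $d_S-d_{S^\bot}\geq 2$. This claim is equivalent to the equality $\calU^\pm_I=\calU_I$, which is a substantive result.
\end{itemize}

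The paper never proves (1) by ideal bookkeeping. It obtains it only afterwards, in Corollary~\ref{coro-Uspin}, as a by-product of the flatness argument. That argument uses irreducibility and generic smoothness of $\Spec\CR_{i,k}$, Hironaka's lemma, and normality of the Pappas--Zhu local model.

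Your descending induction via Cauchy--Binet and complementary-minor identities is only a heuristic sketch. If the proposition really depended on it, that would be a genuine gap. As written, you have attached to this proposition a harder claim that it does not make, and then labelled that unneeded claim the main obstacle.
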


\begin{remark}
{\rm
(1). In the next section, we will prove that $\CU_I$ equals $\CU^\pm_I$ and is $\CO_F$-flat. Consequently, it suffices to consider the relations indexed by $S$ in (\ref{eq-spincondition}). The relations corresponding to $S$ of type I or II, as well as the remaining relations of type III–IV, are automatically satisfied.

(2). We consider the spin condition (LM4$\pm$) only for $\calF_i^\bot$-part. In fact, one can prove that the spin condition for $\wedge^{n+1}\calF_i\in (\wedge^{n+1}\Lambda_i)_\pm$ is equivalent to $\wedge^{n+1}\calF_i^\bot\in (\wedge^{n+1}\Lambda_i^\vee)_\pm$, see \cite[Proposition 2.4.3]{Luo}.
}    
\end{remark}

\section{Flatness of the spin local model}\label{FlatSec}

\subsection{Maximal Parahoric Case}
In this subsection, we prove  that the spin local model $\RM^\pm_{\cbra{i}}$ is flat over $\CO_F$. 

\begin{prop}\label{prop-CRflat}
	The ring
\[
\CR_i\coloneqq \frac{\CO_F[X]}{(XHX^t,X^tHX,\wedge^{i+1}X)}
\] 
	is flat over $\CO_F$ and reduced; here the matrix $X$ is of size $(2i+1)\times (2i+1)$.
\end{prop}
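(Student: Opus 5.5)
The plan is to reduce the statement to the corresponding result for the split-type ``$X^tHX=0$'' varieties established in \cite{Yang25}, using that no uniformizer appears in the defining equations. The ideal $(XHX^t,\,X^tHX,\,\wedge^{i+1}X)$ has integer coefficients and is homogeneous in the entries of $X$. Write $A_i:=\BZ[\tfrac12][X]/(XHX^t,X^tHX,\wedge^{i+1}X)$. Then
\[
\CR_i\simeq A_i\otimes_{\BZ[1/2]}\CO_F,\qquad \CR_{i,k}\simeq A_i\otimes k,\qquad \CR_i\otimes_{\CO_F}F\simeq A_i\otimes F,
\]
and each of these is a graded algebra, finitely generated in degree one over its base.

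\textbf{Step 1: flatness.} First I will check that the ring $\CR_i$ is exactly the ring treated in \cite[Theorem~1.11]{Yang25}: a $(2i+1)\times(2i+1)$ matrix with both $X^tHX=0$ and $XHX^t=0$ and rank $\le i$. That theorem gives $\CO_F$-flatness directly, and also reducedness of $\CR_{i,k}$; the latter was already used in the proof of Proposition \ref{moduli-sv}, in the form $\rR_\ell$ with $\ell=i$. If one wants an argument that does not quote flatness, I will compare Hilbert functions. Because the ideal is homogeneous, each graded piece $(\CR_i)_d$ is a finitely generated $\CO_F$-module. Flatness is equivalent to $\dim_k(\CR_{i,k})_d=\dim_F(\CR_{i,F})_d$ for all $d$. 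Both sides can be computed by a characteristic-free standard monomial (Gr\"obner) basis for these orthogonal rank varieties, valid for $p>2$, as in \cite{Yang25}.

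\textbf{Step 2: reducedness.} The generic fiber $\CR_i\otimes F$ is reduced, since it is again covered by \cite[Theorem~1.11]{Yang25} over a field of characteristic $0$. Now let $f\in\CR_i$ be nilpotent. Its image in $\CR_{i,k}$ is nilpotent, hence zero, so $f=\pi g$. By flatness $\pi$ is a nonzerodivisor, so $g$ is also nilpotent. Iterating gives $f\in\bigcap_m\pi^m\CR_i$. Working degreewise, each graded piece $(\CR_i)_d$ is a finitely generated free $\CO_F$-module, so $\bigcap_m\pi^m(\CR_i)_d=0$ and therefore $f=0$. (In fact reducedness of the generic fiber is not even needed for this argument.)

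The main obstacle is Step 1: the flatness itself, that is, that no $\pi$-torsion appears in the presence of the minor condition $\wedge^{i+1}X$. I plan to rely on \cite[Theorem~1.11]{Yang25} for this, after verifying carefully that its hypotheses (the size $2i+1$, the rank bound $i$, the anti-diagonal form $H$, and $p>2$) match ours. Step 2 is then formal.
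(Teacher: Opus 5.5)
Your proposal is correct and matches the paper, whose proof is simply the citation of \cite[Theorem~1.11(1)]{Yang25} for both the flatness and the reducedness of $\CR_i$. Your Step~2 deduction of reducedness from flatness plus reducedness of $\CR_{i,k}$ is valid, and avoids circularity because you take the reducedness of $\CR_{i,k}$ directly from \cite{Yang25}, as in Proposition~\ref{moduli-sv}, rather than from Theorem~\ref{thm-maxflat}, which itself relies on this proposition. Alternatively, torsion-freeness gives an injection $\CR_i\hookrightarrow\CR_i\otimes_{\CO_F}F$, so reducedness of the generic fiber alone would also suffice.
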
 
\begin{proof}
	This immediately follows from \cite[Theorem 1.11(1)]{Yang25}.
\end{proof}

\begin{lemma}\label{lem-irre}
	Set $\CR_{i,k}\coloneqq \CR_i\otimes_{\CO_F}k$. The scheme $\Spec \CR_{i,k}$ is irreducible.
\end{lemma}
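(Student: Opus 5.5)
The plan is to realize $\Spec \CR_{i,k}$, up to an affine-space factor, as an open subscheme of the Schubert variety $S_i(i)$, and then to use the irreducibility of the special fiber of the naive local model. Throughout, $0\leq 2i\leq n$ as in \S\ref{affine chart for spin}.

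First, consider the closed subscheme $S_i(i)\subset \RM^\naive_{\cbra{i},k}$ from Definition~\ref{defn-Mell}, taking $\ell=i$. In the proof of Proposition~\ref{moduli-sv} the chart $\CU^\naive_i$ was identified with $\BA^{N}\times\Spec \CO_{F_0}[X]/\CI^\naive$, where
\[
N=\tfrac{(n-2i)(n+2i+1)}{2},
\]
and under this identification the condition $\wedge^{i+1}(\iota)=0$ becomes $\wedge^{i+1}X=0$. Since $\pi_0=0$ in $k$, the special fiber of the ideal $\CI^\naive$ is generated by the entries of $X^tHX$ and $XHX^t$. Hence, as already recorded in the proof of Proposition~\ref{moduli-sv},
\[
\CU^\naive_i\cap S_i(i)\;\cong\;\BA_k^{N}\times \Spec \rR_i ,
\]
and $\rR_i$ is exactly $\CR_{i,k}$. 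The identification $\rR_i=\CR_{i,k}$ is a matter of comparing generators: both rings are $k[X]$ modulo the entries of $X^tHX$, $XHX^t$ and the $(i+1)\times(i+1)$ minors of $X$.

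Second, I will show that $S_i(i)$ has the same underlying topological space as $\RM^\naive_{\cbra{i},k}$. By Proposition~\ref{prop-stratificationMpm}, every point of the reduced special fiber lies in some stratum $\RM^\naive_{\cbra{i}}(\ell)$ with $\ell\leq i$. On such a stratum $\iota(\CF_i)$ has rank $\ell\leq i$, so $\wedge^{i+1}\iota=0$ holds at every point. Therefore $S_i(i)\to\RM^\naive_{\cbra{i},k}$ is a closed immersion that is a homeomorphism. Proposition~\ref{prop-stratificationMpm} also says $\RM^\naive_{\cbra{i},k}$ is irreducible, so $S_i(i)$ is irreducible. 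Equivalently, $S_i(i)$ is the top Schubert variety $S_i$ of Theorem~\ref{intro.str}.

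Third, $\CU^\naive_i\cap S_i(i)$ is an open subset of the irreducible space $S_i(i)$, and it is nonempty because it contains the point $*$ of \eqref{worst point}. A nonempty open subset of an irreducible space is irreducible, so $\BA_k^{N}\times\Spec\CR_{i,k}$ is irreducible. The projection $\BA^N_k\times\Spec\CR_{i,k}\to\Spec\CR_{i,k}$ is continuous and surjective, and the continuous image of an irreducible space is irreducible. Hence $\Spec\CR_{i,k}$ is irreducible.

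The only delicate point is the first step: I must check carefully that, in the chart coordinates, the map $\iota\colon\CF_i\to\pi_0^{-1}\CF_{-i}$ is represented exactly by the block $X$, with all other blocks contributing trivially. The proof of Proposition~\ref{moduli-sv} asserts this, and I will take that identification as given.
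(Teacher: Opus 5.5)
Your argument is correct and is essentially the paper's: both identify $\BA_k^{N}\times\Spec\CR_{i,k}$ with a closed subscheme of $\CU^\naive_{i,k}$ that has the same underlying space, and then deduce irreducibility from Proposition~\ref{prop-stratificationMpm}, passing to a nonempty open subset and then projecting. The only difference is how you get the homeomorphism. The paper sandwiches this subscheme between $\CU^\pm_{i,k}$ and $\CU^\naive_{i,k}$ and invokes topological flatness (Theorem~\ref{coro-topoflat}), whereas you identify it with $\CU^\naive_i\cap S_i(i)$ and use the rank bound $\ell\le i$ on every stratum, which avoids the spin condition altogether.
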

\begin{proof}
	By (\ref{eq-closedimm}), we have \begin{flalign*}
		\CU^\pm_{i,k}\sset \rbra{\Spec\CR_{i,k}\times_k(\text{affine space})} \sset  \CU^\naive_{i,k}.
	\end{flalign*}
	By Theorem \ref{coro-topoflat}, the two inclusions above induce homeomorphisms on the underlying topological spaces. By Proposition \ref{prop-stratificationMpm}, $\rbra{\Spec\CR_{i,k}\times_k(\text{affine space})}$ is irreducible; hence so is $\Spec\CR_{i,k}$.
\end{proof}

\begin{prop}\label{prop-genred}
	 The $k$-scheme $\Spec \CR_{i,k}$ is generically smooth, i.e., there exists an open dense subscheme of $\Spec \CR_{i,k}$ smooth over $k$.
\end{prop}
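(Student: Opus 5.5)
Since $\Spec\CR_{i,k}$ is irreducible by Lemma~\ref{lem-irre}, it suffices to produce one nonempty open subset of $\Spec\CR_{i,k}$ that is smooth over $k$; it is then automatically dense. The plan is to get this open subset from the geometry of Schubert cells rather than by computing Jacobians. The Jacobian computation is kept as a fallback.

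\emph{Step 1: identify $\Spec\CR_{i,k}$ with a Schubert variety chart.} Take $\ell=i$ in Proposition~\ref{moduli-sv}. The ring $\rR_i=k[X]/(X^tHX,XHX^t,\wedge^{i+1}X)$ appearing in its proof is exactly $\CR_{i,k}$, since $\pi_0=0$ in $k$. That proof gives
\[
\CU^\naive_i\cap S_i(i)\;\simeq\;\BA_k^{\frac{(n-2i)(n+2i+1)}{2}}\times\Spec\CR_{i,k}.
\]
It also shows that $S_i(i)$ is reduced and is the Schubert variety $\ol{\RM^\naive_{\cbra{i}}(i)}$. By Proposition~\ref{prop-stratificationMpm}, this Schubert variety is the whole reduced special fiber.

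\emph{Step 2: use the open cell.} The Schubert cell $C=\RM^\naive_{\cbra{i}}(i)$ is an orbit of the smooth group $L^+\CP^\circ_I$ acting on the reduced scheme $S_i(i)$. It is therefore a smooth $k$-scheme and open dense in $S_i(i)$. Since $S_i(i)$ is reduced, the open subscheme structure on $C$ coincides with its reduced structure. Hence $C\cap\CU^\naive_i$ is a smooth open subscheme of $\CU^\naive_i\cap S_i(i)$. It is nonempty because $S_i(i)$ is irreducible and $\CU^\naive_i\cap S_i(i)$ is a nonempty open subset.

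\emph{Step 3: descend along the projection.} Let $p:\BA^N_k\times\Spec\CR_{i,k}\to\Spec\CR_{i,k}$ be the projection. It is smooth and surjective, hence open and faithfully flat. The image $V=p(C\cap\CU^\naive_i)$ is then a nonempty open subset of $\Spec\CR_{i,k}$. For $y\in V$, choose $x\in C\cap\CU^\naive_i$ over $y$. The local ring of $\BA^N\times\Spec\CR_{i,k}$ at $x$ is flat over the local ring at $y$, so geometric regularity descends (EGA IV$_2$ 6.5.3 / 6.8). Thus $\Spec\CR_{i,k}$ is smooth at $y$, and $V$ is the required dense smooth open subscheme.

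The main obstacle I expect is justifying Step 2 rigorously: one must check that the smoothness of the cell holds for the scheme structure induced from $S_i(i)$, and not merely for the reduced structure. This is exactly where the reducedness statement of Proposition~\ref{moduli-sv} enters, via \cite[Theorem~1.11(1)]{Yang25}.

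\emph{Fallback: direct Jacobian computation.} If one prefers to avoid this input, compute directly at $X_0=\diag(1^{(i)},0^{(i+1)})$. This matrix has rank $i$ and satisfies $X_0HX_0^t=X_0^tHX_0=0$, because $H_{jk}=0$ whenever $j,k\le i$. The target tangent dimension is
\[
\dim\Spec\CR_{i,k}=\tfrac{n(n+1)}2-\tfrac{(n-2i)(n+2i+1)}2=i(2i+1),
\]
using topological flatness (Theorem~\ref{coro-topoflat}) to compute the dimension of the special fiber. The tangent space at $X_0$ consists of $Y$ satisfying the linearizations $YHX_0^t+X_0HY^t=0$ and $Y^tHX_0+X_0^tHY=0$, together with the condition that $Y$ maps $\ker X_0$ into $\operatorname{im}X_0$. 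A block count in the decomposition $i+(i+1)$ should give exactly $i(2i+1)$, which proves smoothness at $X_0$.
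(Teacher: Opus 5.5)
Your proposal is correct, but your main argument (Steps 1--3) takes a different route from the paper's proof.

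\textbf{The paper's route.} It argues by explicit elimination. With $X$ in blocks of sizes $(i,1,i)$ and upper-left block $A$, it inverts $\det(A)$. It then multiplies $X$ on the left and right by the unipotent matrices $P_1,P_2$ to reach the form $\diag(A,M)$. Since $A$ is invertible, $\wedge^{i+1}X=0$ forces $M=0$. The surviving quadratic relations only fix the symmetric parts of $B'H_i$ and $C'H_i$. So $\Spec\CR_{i,k}[\det(A)^{-1}]$ is an open subscheme of $\BA^{i(2i+1)}_k$, and it is dense by Lemma~\ref{lem-irre}. That proof is elementary and self-contained, and it gives coordinates and the dimension at once.

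\textbf{Your route.} You replace the computation by structure: reducedness of $S_i(i)$ from Proposition~\ref{moduli-sv}, smoothness of the open Schubert cell, and descent of smoothness along $\BA^N_k\times\Spec\CR_{i,k}\to\Spec\CR_{i,k}$. These steps are sound. You also correctly isolate the one delicate point: the scheme structure that the reduced $S_i(i)$ induces on the cell is the smooth reduced orbit.

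The price is the input. For $\ell=i$, Proposition~\ref{moduli-sv} is exactly the reducedness of $\rR_i=\CR_{i,k}$, quoted from \cite[Theorem~1.11(1)]{Yang25}. This is not circular, since that proposition is proved earlier and independently. But it is precisely the fact the paper later \emph{derives} in Theorem~\ref{thm-maxflat} from the present proposition via Hironaka's lemma. With your proof, that step becomes redundant. Your argument also depends on the external citation in its special-fiber form, over $k$, rather than only the $\CO_F$-statement used in Proposition~\ref{prop-CRflat}. The paper's argument needs nothing of this kind.

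\textbf{Your fallback.} This is essentially the paper's computation done infinitesimally. The point $X_0=\diag(1^{(i)},0^{(i+1)})$ lies in the paper's open set where $\det A$ is invertible, and your count does come out right. In blocks of sizes $(i,i+1)$, the linearized equations are $Y_{22}=0$ together with skew-symmetry of $Y_{12}H_{21}$ and of $Y_{21}^tH_{21}$. Here $H_{21}$ is the lower-left $(i+1)\times i$ block of $H_{2i+1}$. This leaves $(2i+1)^2-(i+1)^2-i(i+1)=i(2i+1)$ free parameters. Unlike the paper's elimination, it needs the separate input that $\Spec\CR_{i,k}$ has local dimension $i(2i+1)$ at $X_0$. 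You correctly obtain this from topological flatness, Proposition~\ref{propiso} and Lemma~\ref{lem-irre}.
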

\begin{proof}
	Write $X$ as a block matrix \begin{flalign*}
		\begin{array}{c@{}c@{}c@{}c}
          X= &\left(\begin{array}{ccc}
               A &E_1 &B  \\
               E_2 &e &E_3 \\
               C &E_4 &D
          \end{array}\right) &\begin{array}{c}
               \raisebox{-0.5ex}{\mbox{\tiny $i$}}\vspace{5pt}  \\
               \raisebox{0.5ex}{\mbox{\tiny $1$}} \\
               \raisebox{0.5ex}{\mbox{\tiny $i$}}
          \end{array}  \\
          &\begin{array}{ccc}
              \raisebox{0.5ex}{\mbox{\ \tiny $i$}} &\raisebox{0.5ex}{\mbox{\quad \tiny $1$}} &\raisebox{0.5ex}{\ \mbox{\tiny $i$}}  
          \end{array} 
     \end{array}.
	\end{flalign*}
	We claim that the open subscheme $\Spec \CR_{i,k}[\det(A)\inverse]$ is smooth over $k$.
	Set \begin{flalign*}
		P_1\coloneqq \begin{pmatrix}
			I_i \\ -E_2A\inverse &1 \\ -CA\inverse & &I_i
		\end{pmatrix} \text{\ and\ } P_2\coloneqq \begin{pmatrix}
			I_i &-A\inverse E_1 &-A\inverse B\\  &1 \\ & &I_i
		\end{pmatrix}.
	\end{flalign*}
	Then we have \[ P_1XP_2=\begin{pmatrix}
		A \\ &e-E_2A\inverse E_1 &E_3-E_2A\inverse B\\ &E_4-CA\inverse E_1 &D-CA\inverse B
	\end{pmatrix}.\]
	Under the transformation \begin{flalign*}
		B'\coloneqq A\inverse B, C'\coloneqq CA\inverse, D'\coloneqq D-CA\inverse B, E_1'\coloneqq A\inverse E_1,\\ E_2'\coloneqq E_2A\inverse, E_3'\coloneqq E_3-E_2A\inverse B, E_4'\coloneqq E_4-CA\inverse E_1, e'\coloneqq e-E_2A\inverse E_1,
	\end{flalign*}
	the ring $\CR_{i,k}[\det(A)\inverse]$ is isomorphic to \begin{flalign*}
	   \frac{k[A,B',C',D',E_1',E_2',E_3',E_4',e'][\det(A)\inverse]}{\CI'},
	\end{flalign*}
	where $\CI'$ is generated by (entries in) \begin{flalign*}
		 &B'H_i+H_iB^{\prime t}+E_1'E_1^{\prime t},\ H_iE_3^{\prime t}+E_1'e',\ e^{\prime 2},\ H_iD^{\prime t}+E_1'E_4^{\prime t},\ H_iE_4'+E_2^{\prime t}e'\\ &C'H_i+H_iC^{\prime t}+E_2'E_2^{\prime t},\ \wedge^{i+1}\begin{pmatrix}
		 	A\\ &e' &E_3'\\ &E_4' &D'
		 \end{pmatrix}.
	\end{flalign*}
	Since $A$ is invertible, the relation $$\wedge^{i+1}\begin{pmatrix}
		 	A\\ &e' &E_3'\\ &E_4' &D'
		 \end{pmatrix}=0$$ amounts to \begin{flalign*}
		 	\begin{pmatrix}
		 		e' &E_3'\\ E_4' &D'
		 	\end{pmatrix}=0.
		 \end{flalign*}
    Hence, $\CR_{i,k}[\det(A)\inverse]$ is isomorphic to \begin{flalign*}
    	\frac{k[A,B',C',E_1',E_2'][\det(A)\inverse]}{(B'H_i+H_iB^{\prime t}+E_1'E_1^{\prime t}, C'H_i+H_iC^{\prime t}+E_2'E_2^{\prime t})},
    \end{flalign*}
    whose spectrum is isomorphic to an open subscheme of $\BA^{i(2i+1)}_k$. In particular, the scheme $\Spec \CR_{i,k}[\det(A)\inverse]$ is smooth over $k$. By Lemma \ref{lem-irre}, $\Spec \CR_{i,k}[\det(A)\inverse]$ is an open dense subscheme of $\Spec \CR_{i,k}$. We finish the proof. 
\end{proof}

Let us recall the following version of Hironaka's lemma.
\begin{prop}[{\cite[Corollary 3]{kollar1995flatness}}]\label{prop-Hiron}
	Let $Z\ra \Spec\CO_F$ be a flat morphism of schemes. Assume that $Z$ has no embedded primes, the special fiber $Z_k$ is generically reduced, and the reduction $\red~Z_k$ is normal. Then $Z_k$ is reduced. 
\end{prop}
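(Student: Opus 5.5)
This statement is Koll\'ar's version of Hironaka's lemma, which the paper cites from \cite[Corollary 3]{kollar1995flatness}. If I had to prove it directly, I would argue locally and reduce to a depth statement at a single bad prime.

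\textbf{Localization.} The question is local on $Z$ at points of $Z_k$, so let $A=\CO_{Z,z}$ for $z\in Z_k$. Fix a uniformizer $\pi$ of $\CO_F$. Flatness means $\pi$ is a nonzerodivisor on $A$.

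\textbf{Setup by contradiction.} Suppose $A/\pi A$ is not reduced, and let $N\subset A/\pi A$ be its nilradical. Generic reducedness says $N_{\mathfrak q}=0$ for every minimal prime $\mathfrak q$ of $A/\pi A$. Hence every associated prime of $A/\pi A$ in the support of $N$ is embedded. Choose one such prime $\mathfrak p$ that is minimal, and replace $A$ by $A_{\mathfrak p}$. Then:
\begin{itemize}
\item $N$ is a nonzero module of finite length, supported at the closed point $\mathfrak m$;
\item $A/\pi A$ is reduced on the punctured spectrum;
\item $\operatorname{depth}(A/\pi A)=0$, hence $\operatorname{depth}A=1$, while $\dim A/\pi A\ge 1$.
\end{itemize}
After localization the reduction $B:=(A/\pi A)_{\red}=(A/\pi A)/N$ is still normal.

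\textbf{Divisibility at height-one primes.} Take $f\in A$ whose image $\bar f$ is a nonzero element of $N$. For each minimal prime $\mathfrak q$ of $A/\pi A$, the local ring $A_{\mathfrak q}$ is one-dimensional. Since $\pi$ is a nonzerodivisor and $A_{\mathfrak q}/\pi$ is reduced and zero-dimensional, hence a field, $A_{\mathfrak q}$ is a DVR with uniformizer $\pi$. As $\bar f$ vanishes there, $g:=f/\pi$ lies in $A_{\mathfrak q}$ for all such $\mathfrak q$. Because $A$ has no embedded primes, $g$ is a well-defined element of the total ring of fractions of $A$. Showing $g\in A$ would give $\bar f=0$, a contradiction. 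So the statement becomes an extension statement for $A$ across the closed locus $V(\mathfrak m)$.

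\textbf{Local cohomology.} From $0\to A\xrightarrow{\pi}A\to A/\pi\to 0$ one gets
\[
N=H^0_{\mathfrak m}(A/\pi)\cong\ker\bigl(\pi:H^1_{\mathfrak m}(A)\to H^1_{\mathfrak m}(A)\bigr).
\]
So it suffices to show that $\pi$ acts injectively on $H^1_{\mathfrak m}(A)$. From $0\to N\to A/\pi\to B\to0$ and $\dim B\ge1$, normality of $B$ gives:
\begin{itemize}
\item $H^0_{\mathfrak m}(B)=0$ always;
\item if $\dim B\ge 2$, then $B$ is $S_2$, so $H^1_{\mathfrak m}(B)=0$ and $H^1_{\mathfrak m}(A/\pi)\cong H^1_{\mathfrak m}(N)=0$.
\end{itemize}
In the case $\dim B\ge2$ I would then run an induction on the $\pi$-adic filtration: show that every $\pi$-torsion class in $H^1_{\mathfrak m}(A)$ is infinitely $\pi$-divisible, then use that $H^1_{\mathfrak m}(A)$ is $\mathfrak m$-torsion to conclude it vanishes. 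The case $\dim B=1$, where $B$ is a DVR, needs a separate direct argument. There I would compare $A$ with its partial normalization along $V(\pi)$ and use that $B$ regular forces the reduced special fiber of that normalization to map isomorphically onto $B$.

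\textbf{Main obstacle.} The hard step is the last one: turning normality of $(Z_k)_{\red}$, which says nothing about the nilpotents directly, into $\pi$-torsion-freeness of $H^1_{\mathfrak m}(A)$. This is exactly where Koll\'ar's argument carries the weight. In the paper itself I would simply cite it.
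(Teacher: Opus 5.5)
Your reductions are fine as far as they go (localizing at a minimal point of $\mathrm{Supp}\,N$, showing $A_{\mathfrak q}$ is a DVR, and $N\cong\ker(\pi\colon H^1_{\mathfrak m}(A)\to H^1_{\mathfrak m}(A))$). However, the decisive step is left open, and the way you propose to close it does not work.

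For $\dim B\ge 2$, the vanishing of $H^1_{\mathfrak m}(A/\pi A)$ only says that $\pi$ is \emph{surjective} on $H^1_{\mathfrak m}(A)$. So infinite $\pi$-divisibility comes for free. An $\mathfrak m$-torsion module can be divisible and nonzero (think of $k((t))/k[[t]]$), so nothing forces $N=0$. Note also that your sketch never really uses that $Z$ has no embedded primes: $f/\pi$ lies in the total ring of fractions simply because $\pi$ is a nonzerodivisor.

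In fact the step cannot be closed, because the statement as quoted is false. Let $A=\CO_F[x,y]/(xy,\,y^2-\pi y)$. Since $(xy,y^2-\pi y)=(y)\cap(x,y-\pi)$, $A$ is reduced and $\pi$ is a nonzerodivisor. The special fiber $Z_k=\Spec k[x,y]/(xy,y^2)$ is generically reduced, and $(Z_k)_{\mathrm{red}}\cong\mathbb{A}^1_k$ is normal, yet $y\neq 0=y^2$ in $Z_k$. Adding variables $x_j$ with $x_jy=0$ gives examples with $\dim B$ arbitrary.

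In your language, $g=y/\pi$ is the idempotent of the $\CO_F$-flat component $\{x=0,\ y=\pi\}$, which meets $V(\pi)$ only at the origin. Thus $g$ lies in $A_{\mathfrak p}$ for every non-maximal $\mathfrak p$ of the local ring at the origin, but not in $A$, and it is a nonzero $\pi$-torsion class in $H^1_{\mathfrak m}(A)$. The same component defeats your partial normalization in the case $\dim B=1$. The normalization is $\CO_F[x]\times\CO_F$, whose reduced special fiber $k[x]\times k$ does not map isomorphically onto $B$.

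What is missing is a hypothesis excluding such components. For example, require that $Z$ be locally equidimensional at points of $Z_k$, and excellent (automatic for $Z$ of finite type over $\CO_F$). The proof is then short and uses normalization instead of local cohomology.

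First, equidimensionality, flatness and $V(\pi A)=V(\mathfrak q)$ force every minimal prime $\mathfrak P$ of $A$ into $\mathfrak q$ (compare the dimensions of $V(\mathfrak P+\pi A)\subseteq V(\mathfrak q)$). Since $A_{\mathfrak q}$ is a DVR and $A$ has no embedded primes, $A\hookrightarrow A_{\mathfrak q}$ is a domain; this is where that hypothesis genuinely enters.

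Next, the normalization $\tilde A$ is finite over $A$ and $S_2$, so $\tilde A/\pi\tilde A$ is $S_1$. By the dimension formula its minimal primes lie over $\mathfrak q$. Over $\mathfrak q$ there is exactly one prime $\tilde{\mathfrak q}$, with $\tilde A_{\tilde{\mathfrak q}}=A_{\mathfrak q}$, so $(\tilde A/\pi\tilde A)_{\tilde{\mathfrak q}}=A_{\mathfrak q}/\pi$ is a field. Hence $\tilde A/\pi\tilde A$ is a domain, finite and birational over the normal ring $B=A/\mathfrak q$, so it equals $B$. Thus $\tilde A=A+\pi\tilde A$, Nakayama gives $\tilde A=A$, and $A/\pi A=B$ is reduced; indeed $Z$ is normal along $Z_k$.

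Citing Kollár, as the paper does, is fine only after checking that the hypotheses actually required hold for $Z=\Spec\CR_i$. For that application one can also take the reducedness of $\CR_{i,k}$ directly from \cite[Theorem~1.11(1)]{Yang25}, as in the proof of Proposition~\ref{moduli-sv}.
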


\begin{thm}\label{thm-maxflat}
	The spin local model $\RM^\pm_{\{i\}}$ is flat over $\CO_F$.
\end{thm}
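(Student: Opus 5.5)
By Corollary~\ref{coro-ifreduced} it suffices to show that the special fiber $\RM^\pm_{\{i\},k}$ is reduced. By Remark~\ref{in-i} we may assume $0\leq 2i\leq n$. By Lemma~\ref{lem-Gtrans} the $\sG^\circ_I$-translates of $\CU^\pm_i$ cover $\RM^\pm_{\{i\}}$, so it is enough to show that $\CU^\pm_{i,k}$ is reduced. The plan is to identify $\CU^\pm_{i,k}$ with $\mathbb{A}_k^{(n-2i)(n+2i+1)/2}\times\Spec\CR_{i,k}$ and prove that $\CR_{i,k}$ is reduced.

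\textbf{Step 1 (comparison of special fibers).} Reducing the equations of Proposition~\ref{AffineSplitChart} modulo $\pi$ gives the following. The naive relations become $X^tHX=XHX^t=0$, since $\pi_0\equiv 0$. The spin relations \eqref{eq-spincondition} become $[U^\bot:{U'}^\bot]=0$ for all $i\times i$ index pairs $U,U'$. As $U,U'$ range over all $i$-subsets, $U^\bot,{U'}^\bot$ range over all $(i+1)$-subsets of $[1,2i+1]$. Hence these relations say exactly $\wedge^{i+1}X=0$, and so
\[
\CU_{I}\otimes k\;\cong\;\mathbb{A}_k^{\frac{(n-2i)(n+2i+1)}{2}}\times\Spec\CR_{i,k}.
\]
Since $\CU^\pm_i\subset\CU_I$ is a closed immersion, $\CU^\pm_{i,k}$ is a closed subscheme of this. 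Both have the same underlying space by the proof of Lemma~\ref{lem-irre}, which rests on Theorem~\ref{coro-topoflat}.

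\textbf{Step 2 (reducedness of $\CR_{i,k}$ via Hironaka).} Apply Proposition~\ref{prop-Hiron} to $Z=\Spec\CR_i$.
\begin{itemize}
\item $Z$ is $\CO_F$-flat and reduced by Proposition~\ref{prop-CRflat}. Being reduced, it has no embedded primes.
\item $Z_k=\Spec\CR_{i,k}$ is irreducible by Lemma~\ref{lem-irre}. It is generically smooth by Proposition~\ref{prop-genred}, hence generically reduced.
\item It remains to check that $(Z_k)_\red$ is normal.
\end{itemize}
This last point is the main obstacle. I would obtain it by identifying $(\CU^\pm_{i,k})_\red$ with an open piece of $(\RM^{\pm\loc}_{\{i\},k})_\red$. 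By Theorem~\ref{coro-topoflat}, $\RM^{\pm\loc}_{\{i\}}\subset\RM^\pm_{\{i\}}$ is a homeomorphism. By Proposition~\ref{propiso}, the special fiber $\RM^{\pm\loc}_{\{i\},k}$ is reduced and, being a union of Schubert varieties for a parahoric of a tame group, normal. Here its special fiber is a single Schubert variety, by the irreducibility in Proposition~\ref{prop-stratificationMpm}, and Pappas--Zhu give normality of Schubert varieties in the affine flag variety for $p\nmid|\pi_1(G_\der)|$, which holds since $p>2$. So $(\RM^\pm_{\{i\},k})_\red=\RM^{\pm\loc}_{\{i\},k}$ is normal. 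Restricting to the chart, $(\CU^\pm_{i,k})_\red\cong\mathbb{A}^N_k\times(\Spec\CR_{i,k})_\red$ is normal, and therefore so is $(\Spec\CR_{i,k})_\red$. Proposition~\ref{prop-Hiron} then shows that $\CR_{i,k}$ is reduced.

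\textbf{Step 3 (conclusion).} By Step 2, $\CU_I\otimes k$ is reduced. The closed subscheme $\CU^\pm_{i,k}$ has the same underlying space, so it is a closed subscheme of a reduced scheme containing its whole support, and hence equals it. In particular $\CU^\pm_{i,k}$ is reduced. By the covering of Lemma~\ref{lem-Gtrans}, $\RM^\pm_{\{i\},k}$ is reduced. Corollary~\ref{coro-ifreduced} then gives that $\RM^\pm_{\{i\}}$ is flat over $\CO_F$.

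As a by-product, $\CU_I=\CU^\pm_i$ and it is $\CO_F$-flat.
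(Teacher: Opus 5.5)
Your proposal is correct and follows the paper's proof essentially step for step. You reduce via Corollary~\ref{coro-ifreduced} and Lemma~\ref{lem-Gtrans} to the chart $\CU^\pm_{i,k}$, then apply Proposition~\ref{prop-Hiron} to $Z=\Spec\CR_i$ using Propositions~\ref{prop-CRflat} and \ref{prop-genred}, with normality of $(Z_k)_\red$ coming from Proposition~\ref{propiso}, Pappas--Zhu and irreducibility; you conclude from $\CU^\pm_{i,k}\subset \mathbb{A}^N_k\times Z_k\subset \CU^\naive_{i,k}$ having equal underlying spaces. Your explicit check that the relations \eqref{eq-spincondition} reduce modulo $\pi$ to exactly $\wedge^{i+1}X=0$ is correct, and is a detail the paper leaves implicit in \eqref{eq-closedimm}.
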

\begin{proof}
	By Corollary \ref{coro-ifreduced}, it suffices to prove that $\RM^\pm_{i,k}$ is reduced. In turn, it is enough to prove that $\CU^\pm_{i,k}$ is reduced. Set $Z\coloneqq \Spec\CR_i$. By Proposition \ref{prop-CRflat}, the ring $\CR_i$ is flat over $\CO_F$ and has no embedded primes (see \cite[031Q, 031R]{stacks-project}). By Proposition \ref{prop-genred}, $\CR_{i,k}$ is generically reduced. As in the proof of Lemma \ref{lem-irre}, we obtain that $(\red~Z_k)\times_k(\text{affine space})$ is an open scheme of $\RM^{\pm\loc}_{i,k}$. Since $\RM^{\pm\loc}_{i,k}$ is irreducible, it is normal (and Cohen-Macaulay) by Proposition \ref{propiso} and \cite[Theorem 1.1]{PZ}. Therefore, $\red~Z_k$ is also normal. Applying Proposition \ref{prop-Hiron} to $Z=\Spec\CR_i$, we obtain that $Z_k$ is reduced. Since $\CU^\pm_{i,k}$ and $\CU^\naive_{i,k}$ have the same topological space, and \begin{flalign*}
		\CU^\pm_{i,k}\sset \rbra{Z_k\times_k(\text{affine space})} \sset  \CU^\naive_{i,k},
		\end{flalign*}
		we conclude that $\CU^\pm_{i,k}$ is reduced. 
\end{proof}

By the proof of the above theorem, we obtain the following.
\begin{corollary} \label{coro-Uspin}
    For $I=\cbra{i}$, the inclusion $\CU^\pm_I\sset \CU_I$ in \eqref{eq-closedimm} is an equality. In particular, $\CU^\pm_{i,k}$ is isomorphic to (the reduced scheme) \[
\mathbb{A}_k^{\frac{(n-2i)(n+2i+1)}{2}}\times \Spec \CR_{i,k}.
\] 
\end{corollary}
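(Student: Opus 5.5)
The plan is to extract the corollary directly from the chain of closed immersions \eqref{eq-closedimm} and the proof of Theorem~\ref{thm-maxflat}. By Proposition~\ref{AffineSplitChart}, $\CU_I\cong\mathbb{A}^{\frac{(n-2i)(n+2i+1)}{2}}_{\CO_F}\times\Spec(\CO_F[X]/\CI)$. As a first step I will show that $\CI$ contains $\wedge^{i+1}X$, and that modulo $\pi$ the ideal $\CI$ reduces to $(X^tHX,XHX^t,\wedge^{i+1}X)$. Indeed, each relation $[U^\bot:U'^\bot]=\pm(-1)^i\sgn(\sigma_U)\sgn(\sigma_{U'})\pi[U:U']$ expresses an arbitrary $(i+1)\times(i+1)$ minor (as $U,U'$ vary, $U^\bot,U'^\bot$ run over all $(i+1)$-subsets) as $\pi$ times an $i\times i$ minor. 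So over $\CO_F$ we get a surjection $\CR_i\to\CO_F[X]/\CI$ after checking that the remaining $\pi$-multiples are compatible. On special fibers this gives $\CU_{I,k}\cong\mathbb{A}^N_k\times\Spec\CR_{i,k}$, using $\pi_0\equiv0$.

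Next I will use the sandwich $\CU^\pm_{i,k}\subset\CU_{i,k}=Z_k\times\mathbb{A}^N\subset\CU^\naive_{i,k}$. By Theorem~\ref{coro-topoflat}, all three have the same underlying space. The proof of Theorem~\ref{thm-maxflat} shows that $Z_k=\Spec\CR_{i,k}$ is reduced. A closed subscheme of a reduced scheme with the same underlying space is the whole scheme, so $\CU^\pm_{i,k}=\CU_{I,k}$.

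To upgrade this to the equality $\CU^\pm_I=\CU_I$ over $\CO_F$, I will argue via flatness. The scheme $\CU^\pm_I$ is flat by Theorem~\ref{thm-maxflat}, hence equals the closure of its generic fiber. On generic fibers the spin condition in the chart is exactly the relations of Proposition~\ref{AffineSplitChart} (the other relations become redundant once $\pi$ is invertible), so $\CU^\pm_{I,F}=\CU_{I,F}$. It therefore suffices to show that $\CU_I$ is $\CO_F$-flat. For this I would compare $\CO_F[X]/\CI$ with $\CR_i$: both have the same generic fiber dimension, and both have special fiber $\CR_{i,k}$.

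The main obstacle is this flatness of $\CU_I$. My first approach is a direct deduction: the ideal $\ker(\CU_I\to\text{flat closure})$ is $\pi$-torsion, and its reduction dies because the special fibers of $\CU_I$ and of the closure $\CU^\pm_I$ already agree, as shown above. Nakayama applied to the torsion ideal in the $\pi$-adically separated finitely generated ring then kills it, which gives $\CU^\pm_I=\CU_I$. The explicit description of $\CU^\pm_{i,k}$ then follows from the special-fiber identification in the first step.
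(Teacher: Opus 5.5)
Your special-fiber argument is the paper's own. The paper's proof of this corollary is just the sandwich $\CU^\pm_{i,k}\subset Z_k\times\mathbb{A}^N_k\subset\CU^\naive_{i,k}$ (with $N=\tfrac{(n-2i)(n+2i+1)}{2}$) taken from the proof of Theorem~\ref{thm-maxflat}: the three spaces are homeomorphic and $Z_k$ is reduced by Hironaka's lemma. The passage to the equality $\CU^\pm_I=\CU_I$ over $\CO_F$ is left implicit there.

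Your first step contains a false claim: $\CI$ does \emph{not} contain $\wedge^{i+1}X$, and there is no surjection $\CR_i\to\CO_F[X]/\CI$ with $X\mapsto X$. On the generic fiber, $X^tHX=-\pi_0H$ makes $X$ invertible. Such a containment would therefore force $\CU_{I,F}=\emptyset$, contradicting $\emptyset\neq\CU^\pm_{I,F}\subset\CU_{I,F}$. Only $\CI+(\pi)=(X^tHX,XHX^t,\wedge^{i+1}X)+(\pi)$ holds. The schemes $\Spec\CR_i$ and $\CU_I$ share nothing but their special fibers, which is exactly why Hironaka's lemma is applied to $\CR_i$ rather than to $\CU_I$. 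Since you only use the mod-$\pi$ statement, just delete the integral claim.

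The step that is asserted rather than proved is the upgrade to $\CO_F$. Your Nakayama argument needs $\CU_{I,F}=\CU^\pm_{I,F}$. The remark that ``the other relations become redundant once $\pi$ is invertible'' is exactly what has to be shown, because $\CU_I$ only imposes the relations \eqref{eq-spincondition} extracted from types III--VI. One way to supply it:
\begin{itemize}
\item Since $\RM^\naive_{I,F_0}\otimes_{F_0}F=\OGr(n+1,V)_+\sqcup\OGr(n+1,V)_-$, we get $\CU^\naive_{I,F}=\CU^+_{I,F}\sqcup\CU^-_{I,F}$ with both pieces open and closed.
\item The derivation of \eqref{eq-spincondition} shows that on $\CU^\mp_{I,F}$ these relations hold with the opposite sign.
\item Hence on the open and closed subscheme $\CU_{I,F}\cap\CU^\mp_{I,F}$ of $\CU_{I,F}$ one gets $2\pi[U:U']=0$ up to a sign. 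So all $i\times i$ minors of $X$ vanish there.
\item But $\wedge^iX$ is invertible on the generic fiber, so these minors generate the unit ideal and that piece is empty.
\item Hence $\CU_{I,F}\subset\CU^\pm_{I,F}$, and equality follows from \eqref{eq-closedimm}.
\end{itemize}

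Finally, do not invoke $\pi$-adic separatedness: a finitely generated $\CO_F$-algebra need not be separated, e.g.\ $\CO_F[x]/(1+\pi x)\cong F$. Instead, let $A$ be the coordinate ring of $\CU_I$ and $J$ the ideal of $\CU^\pm_I$. Flatness of $A/J$ gives $J\cap\pi A=\pi J$, and equality of special fibers gives $J\subset\pi A$, hence $J=\pi J$. Since $J$ is finitely generated with $J[1/\pi]=0$, we have $\pi^mJ=0$ for some $m$, and so $J=\pi^mJ=0$.
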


\begin{remark}\label{ExoticGoodReduction}
By Theorem \ref{thm-maxflat}, the spin local model $\Mpm_{\{i\}}$ is flat over $\CO_{F}$. In particular, when $I= \{0\}$, we have 
\[
\calU^{\pm}_{0}\simeq \mathbb{A}_{\CO_{F}}^{\frac{n(n+1)}{2}}\times  \Spec \CO_{F}[x]/\CI,
\]
where $\CI=\bb x^2-\pi^2, x\mp\pi\pp$ by Proposition \ref{AffineSplitChart}. Therefore, the affine chart $\calU^{\pm}_{0}\simeq \mathbb{A}_{\CO_{F}}^{\frac{n(n+1)}{2}}$ and hence $\Mpm_{\{0\}}$ is smooth over $\CO_{F}$. This is a case of ``exotic" good reduction (see also \cite[Theorem 1.2]{HPR}).    
\end{remark}

\subsection{General Parahoric Case} Let $I \subset [0,n]$ be non-empty. Then for each $i \in I$, the special fiber $\RM_{\{i\},k}^{\pm}$  of $\RM_{\{i\}}^{\pm}$ is reduced by the proof of Theorem \ref{thm-maxflat}. Recall that by construction, for each $i \in I$, we have a natural
$L^{+}\mathcal{G}^{\circ}_I$-equivariant commutative diagram
\[
\begin{tikzcd}
\RM_{I,k}^{\pm} \arrow[r, hook] \arrow[d] &
LG^{\circ}/L^{+}\mathcal{G}^{\circ}_I \arrow[d, "\rho_i"] \\
\RM_{\{i\},k}^{\pm} \arrow[r, hook] &
LG^{\circ}/L^{+}\mathcal{G}^{\circ}_{\{i\}}
\end{tikzcd}
\]
and we have
\begin{equation}
 \RM_{I,k}^{\pm}=\bigcap_{i\in I}\rho_i^{-1}\bigl(\RM_{\{i\},k}^{\pm}\bigr)
\end{equation}
as a schematic intersection. (Note that we denote by $ \RM_{I,k}^{\pm}$ the special fiber of $ \RM_{I}^{\pm}$.) The pullback $\rho_i^{-1}\bigl(\RM_{\{i\},k}^{\pm}\bigr)$ of
$\RM_{\{i\},k}^{\pm}$ along $\rho_i$ is reduced since it is a $L^{+}\mathcal{G}^{\circ}_i/L^{+}\mathcal{G}^{\circ}_I$-bundle. Also, $\rho_i^{-1}\bigl(\RM_{\{i\},k}^{\pm}\bigr)$
is a union of Schubert varieties in $LG^{\circ}/L^{+}\mathcal{G}^{\circ}_I$ since $\rho_i$ is $L^{+}\mathcal{G}^{\circ}_I$-equivariant.

\begin{thm} \label{thmflatspin}
$\Mpm_{I}$ is flat over $ \CO_{F} $.
\end{thm}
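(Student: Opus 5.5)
By Corollary~\ref{coro-ifreduced}, it suffices to show that the special fiber $\RM^\pm_{I,k}$ is reduced. The idea is to follow the argument of \cite[\S 4.5]{Go}: realize $\RM^\pm_{I,k}$ as a schematic intersection of reduced unions of Schubert varieties in the partial affine flag variety $LG^\circ/L^+\sG^\circ_I$, and then use that such intersections are reduced.

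The steps are as follows.

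\emph{Step 1.} For each $i\in I$, the special fiber $\RM^\pm_{\{i\},k}$ is reduced (proof of Theorem~\ref{thm-maxflat}). It is also a closed $L^+\sG^\circ_{\{i\}}$-stable subscheme of $LG^\circ/L^+\sG^\circ_{\{i\}}$. Hence it is a reduced union of Schubert varieties there.

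\emph{Step 2.} Pull back along $\rho_i$. The preimage $\rho_i^{-1}(\RM^\pm_{\{i\},k})$ is a bundle with fiber $L^+\sG^\circ_{\{i\}}/L^+\sG^\circ_I$, which is a smooth partial flag variety. So it is again reduced. By equivariance it is a union of Schubert varieties $S_w$ in $LG^\circ/L^+\sG^\circ_I$.

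\emph{Step 3.} Apply the general fact that a scheme-theoretic intersection of reduced unions of Schubert varieties in a partial affine flag variety is reduced. This holds because the Schubert varieties are compatibly Frobenius split: all Schubert varieties are simultaneously split by one Frobenius splitting. Compatibly split subschemes are reduced, and unions and scheme-theoretic intersections of compatibly split subschemes are again compatibly split. This uses $p>2$ together with the Pappas--Rapoport and Faltings results on Frobenius splitting of Schubert varieties in twisted affine flag varieties, as invoked in \cite{Go} and \cite{PZ}. Combined with the identity $\RM^\pm_{I,k}=\bigcap_{i\in I}\rho_i^{-1}(\RM^\pm_{\{i\},k})$ recorded just before the statement, this gives that $\RM^\pm_{I,k}$ is reduced.

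The underlying set is also consistent: by Theorem~\ref{coro-mainresults} and the vertexwise criterion, $\RM^\pm_{I,k}$ is the union of $S_w$ for $w$ in $W_I\backslash \Adm(\mu_\pm)/W_I$, which is exactly $\RM^{\pm\loc}_{I,k}$. Flatness then follows from Corollary~\ref{coro-ifreduced}. In addition, $\RM^\pm_I=\RM^{\pm\loc}_I$, so normality, the Cohen--Macaulay property and reducedness of the special fiber come from Proposition~\ref{propiso}.

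\emph{Main obstacle.} The hardest step is justifying the Frobenius splitting input in this setting. The group $G^\circ$ is non-split and splits only over a ramified extension, and $\sFl_I$ is defined via $\CP_I$ rather than $\CP_I^\circ$. I would first use \eqref{sFI} to reduce to $LG^\circ/L^+\CP^\circ_I$, which is the standard parahoric affine flag variety of a tamely ramified group. I would then cite the Frobenius splitting of its Schubert varieties, compatibly for all of them, from Pappas--Rapoport's twisted loop group results (used in \cite{PZ}). The scheme-theoretic intersection identity itself comes directly from the moduli description, as in Lemma~\ref{lem-intersection}.
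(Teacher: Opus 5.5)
Your proposal is correct and is essentially the paper's proof. The steps match: each $\RM^\pm_{\{i\},k}$ is reduced, its pullback along $\rho_i$ (a bundle with smooth fibers) is a reduced union of Schubert varieties, the schematic intersection $\RM^\pm_{I,k}=\bigcap_{i\in I}\rho_i^{-1}(\RM^\pm_{\{i\},k})$ is reduced by compatible Frobenius splitting, and flatness then follows from topological flatness. The only difference is the reference for the splitting input: the paper cites \cite[Theorem 4.1]{FHLR} using $p\nmid|\pi_1(\SO(V,\phi))|=2$, which is the same hypothesis (together with tameness, automatic since $p>2$) under which the Pappas--Rapoport results you invoke apply.
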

\begin{proof}
To prove flatness for $\Mpm_{I}$ is enough to show that it is topologically flat and its special fiber is reduced. From \S \ref{sec-topoflat}, we obtain topological flatness for $\Mpm_{I}$. Since $|\pi_1(G^{\circ\der})|=|\pi_1(\SO(V,\phi))|=2$ is prime to $p$ we conclude that any Schubert variety is normal \cite[Theorem 4.25]{FHLR} and compatibly Frobenius-split \cite[Theorem 4.1]{FHLR}. Therefore, arbitrary intersections of unions of Schubert varieties are Frobenius-split and so reduced by \cite[Corollary 2.7]{Go}. 
\end{proof}

\section{A resolution for a spin local model}\label{ResolSpin} 

Throughout Section~\ref{ResolSpin}, we assume that $I=\{1\}$. Our goal is to construct a resolution of the spin local model $\Mpm_{I}$ by blowing up $\Mpm_I$ along the unique closed Schubert cell in the special fiber
(see Definition~\ref{defn-Mell}). 

Let us introduce some notation that will be used in the proof of the theorem below: Recall from Proposition \ref{simplifynaive} the affine open subscheme $\U_{1}^{\rm naive}\otimes \CO_{F} \subset {\rm M}^{\rm naive}_I\otimes \CO_{F}$, containing the point $*=(\calF_{1, 0}^\bot, \calF_{1, 0})\in\Mnaive_1(0)$ defined in (\ref{worst point}), which is isomorphic to 
\[
\mathbb{A}_{\CO_{F}}^{\frac{(n-2)(n+3)}{2}}\times  U^{\rm naive}_1 \]
where $U^{\rm naive}_1 = \Spec R  $ and $ R:= \CO_{F}[X]/\CI^\naive$ with $ X := (x_{ij})_{1\leq i,j \leq 3}$ and 
$\CI^\naive=\bb X^tH_3X-\pi^2 H_3, XH_3 X^t-\pi^2 H_3\pp$. Recall that $H_3$ is the unit antidiagonal matrix of size $3 \times 3$. By adding the spin conditions (see Proposition~\ref{AffineSplitChart}), we obtain the closed subschemes $\calU^\pm_1\subset \U_{1}^{\rm naive}\otimes \CO_{F} $ and $ U^{\pm}_{1} \subset U^{\rm naive}_1$ where $ U^{\pm}_{1}  =\Spec \CO_{F}[X]/(\CI^\naive + \CI^{\pm}) $ and  
\[
\CI^{\pm} := (m^{1, 1}_{2,2} \pm\pi x_{11},\, m^{1, 1}_{2,3} \pm\pi x_{12},\,m^{1, 2}_{2,3} \pm \pi x_{13},\,m^{1, 1}_{3,2} \pm \pi x_{21},\, m^{1, 1}_{3,3} \pm \pi x_{22},
\]
\[
m^{1, 2}_{3,3} \pm\pi x_{23},\,m^{2, 1}_{3,2} \pm \pi x_{31},\,m^{2, 1}_{3,3} \pm \pi x_{32},\,m^{2, 2}_{3,3} \pm \pi x_{33}).
\]
Here, with $m^{i, j}_{t,s}$ we denote the minor 
 \[
m^{i, j}_{t,s}= \begin{pmatrix} x_{i, j} & x_{i, s}\\ 
x_{t, j}& x_{t, s}\\
\end{pmatrix}=
x_{i,j}x_{t,s} -x_{i,s}x_{t,j} .
\]



\begin{defn}\label{def.semistable}
{\rm Let $\CO$ be a discrete valuation ring and let $\CX$ be a locally noetherian $\CO$-scheme. We say that $\CX$ has \emph{semi-stable reduction} over $\CO$ if its special fiber $\CX_k$ is reduced and is a \emph{strict normal crossings divisor} on $\CX$ (in the sense of \cite[2.4]{Jong}).}
\end{defn}
\begin{remark}\label{rmk:sr}
If moreover $\CX_\eta$ is smooth, then $\CX$ is regular; see \cite[Rem.~1.1.1]{Ha}.
\end{remark}
 
\begin{thm}\label{ResolutionTheorem}
Assume $ I = \{1\}$. The blow-up $\Mbl_I$ of $\Mpm_{I}$ along the unique closed Schubert cell in the special fiber has semi-stable reduction. In particular, it is regular and its special fiber is a reduced divisor with two smooth irreducible components intersecting transversely. 
\end{thm}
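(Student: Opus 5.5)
The plan is to reduce to the local ring $U^\pm_1=\Spec \CO_F[X]/(\CI^\naive+\CI^\pm)$ and compute the blow-up explicitly. By Lemma~\ref{lem-Gtrans}, the $\sG^\circ_I$-translates of $\CU^\pm_1$ cover $\Mpm_I$. The closed Schubert cell $\Mnaive_{\{1\}}(0)$ is $\sG^\circ_I$-stable, and blowing up commutes with flat (here open) base change. Hence it suffices to show that the blow-up of $\CU^\pm_1\simeq\mathbb{A}^{(n-2)(n+3)/2}_{\CO_F}\times U^\pm_1$ along $\Mnaive_{\{1\}}(0)\cap\CU^\pm_1$ is semi-stable. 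By Definition~\ref{defn-Mell} and the proof of Proposition~\ref{moduli-sv}, this center is cut out in the chart by $\wedge^1X=0$ in the special fiber. So it is the product of the affine space with the closed subscheme $V(\pi,x_{ij})$ of $U^\pm_1$, namely the ideal $(\pi, X)$. The affine factor plays no role, so we are reduced to blowing up $U^\pm_1$ along $J=(\pi,x_{11},\dots,x_{33})$.

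First I would simplify $U^\pm_1$. By Corollary~\ref{coro-Uspin} and Theorem~\ref{thm-maxflat}, $U^\pm_1$ is flat of relative dimension $3$ with reduced special fiber $\Spec\CR_{1,k}$. The spin relations $m\pm\pi x=0$ say that $\pi X$ equals, up to sign, the adjugate-type matrix of $2\times2$ minors twisted by $H_3$. Combined with $X^tH_3X=\pi^2H_3$, this should identify $U^\pm_1$ with the cone over a quadric. Concretely, I expect that one can choose $4$ linear coordinates $y_1,\dots,y_4$ (for instance $x_{11},x_{13},x_{31},x_{33}$ together with a combination of $x_{22},x_{12},\dots$) in which $U^\pm_1\cong\Spec\CO_F[y_1,\dots,y_4]/(y_1y_4-y_2y_3-\pi^2)$, or a similar quadric of rank $4$ with $\pi^2$ on the right. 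This matches dimension $3$ and a special fiber that is an irreducible normal quadric cone (Theorem~\ref{intro.str} gives irreducibility), with the vertex being the closed cell. I would verify this using the fact, from Proposition~\ref{prop-genred}, that $\CR_{1,k}$ is the rank-$\le1$ locus cut out by $XHX^t=X^tHX=0$. That locus is the cone over a smooth conic times a line, i.e. a $3$-dimensional quadric cone. I would then lift the equations over $\CO_F$ using the spin relations.

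With the model $y_1y_4-y_2y_3=\pi^2$ and center $(\pi,y_1,\dots,y_4)$, the blow-up is the standard computation of \cite[semi-stable models of quadric cones]{PaZa}-type. It has charts indexed by the generator made principal. In the $\pi$-chart, write $y_j=\pi y_j'$. The total transform becomes $\pi^2(y_1'y_4'-y_2'y_3'-1)$, and the strict transform is $y_1'y_4'-y_2'y_3'=1$. This is smooth over $\CO_F$, and its special fiber is one component, namely the smooth affine quadric. In the $y_1$-chart, write $\pi=y_1 t$ and $y_j=y_1y_j'$. The strict transform is $y_4'-y_2'y_3'=t^2$, so $y_4'$ is eliminated, giving a polynomial ring in $y_1,y_2',y_3',t$ with $\pi=y_1t$. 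This is regular, and its special fiber $y_1t=0$ is two smooth components meeting transversely: the exceptional divisor $y_1=0$ and the strict transform $t=0$. The other $y_j$-charts are symmetric. Hence the special fiber is a reduced strict normal crossings divisor with two smooth components, the exceptional divisor (a smooth quadric threefold, times affine space) and the strict transform of $\Mpm_{I,k}$. Regularity then follows from Remark~\ref{rmk:sr}, since the generic fiber, an open piece of $\OGr(n+1,V)_\pm$, is smooth and unaffected.

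The main obstacle is the first step: finding the explicit coordinate change that turns $\CI^\naive+\CI^\pm$ into a single rank-$4$ quadric $Q(y)=\pi^2$ in $\CO_F[y_1,\dots,y_4]$. This is needed to confirm that the nine coordinates $x_{ij}$ and all the relations collapse correctly. I would first try to solve the spin relations for $X$ using the sign choice. For example, for the $+$ sign, $X$ should be $\pi$ times an $H$-orthogonal-type matrix plus a rank-one correction. Then I would express $X$ through the $2\times2$ block $\begin{pmatrix}x_{11}&x_{13}\\x_{31}&x_{33}\end{pmatrix}$ and check flatness by comparing Hilbert functions with $\CR_{1}$. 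A second, smaller point is to check that the center $(\pi,X)$ agrees scheme-theoretically with the reduced closed Schubert cell. This holds because $V(\pi,X)$ is a reduced point times affine space.
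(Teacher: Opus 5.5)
The first reduction is fine and matches the paper. The group translates of $\CU^\pm_1$ cover the spin local model, and the closed Schubert cell is $X=0$ in the special fiber of the chart. So it suffices to blow up $U^\pm_1=\Spec \CO_F[X]/(\CI^{\rm naive}+\CI^\pm)$ along $(\pi,X)$.

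\textbf{The gap.} The step everything else rests on is false. $U^\pm_1$ is not isomorphic to $\Spec\CO_F[y_1,\dots,y_4]/(y_1y_4-y_2y_3-\pi^2)$, nor to any hypersurface in a smooth $\CO_F$-scheme of relative dimension $4$. Every generator of $\CI^{\rm naive}+\CI^\pm$ lies in $(\pi,X)^2$: these are the entries of $X^tH_3X-\pi^2H_3$ and $XH_3X^t-\pi^2H_3$, and the relations $m^{i,j}_{t,s}\pm\pi x_{ab}$. Hence at the point $*$ the cotangent space $\mathfrak m/\mathfrak m^2$ of $U^\pm_1$ is $10$-dimensional, and that of the special fiber $\Spec\CR_{1,k}$ is $9$-dimensional. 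For your hypersurface these are at most $5$ and $4$. Embedding dimension is intrinsic, so no coordinate change can produce your normal form.

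Geometrically, $\CR_{1,k}$ parametrizes $X=uv^t$ with $u^tH_3u=v^tH_3v=0$. It is therefore the affine cone over (conic)$\times$(conic)$\,=\mathbb P^1\times\mathbb P^1$ in its $\mathcal O(2,2)$-embedding spanning $\mathbb P^8$. It is not ``a conic times a line'', and not a quadric cone. Likewise, the generic fiber of $U^\pm_1$ is a component of $\pi\cdot O(3)_F$, hence isomorphic to $SO(3)_F\cong PGL_{2,F}$, whereas yours is isomorphic to $SL_{2,F}$.

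So your chart computation describes the blow-up of a different scheme. In the true blow-up, the $\pi$-chart is a component of $O(3)$ (the paper's Case~4). The exceptional divisor is a Veronese image of $\mathbb P^3$ compactifying $PGL_2$, not a smooth quadric threefold.

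\textbf{What the paper does instead.} It computes directly on the actual equations, without a global normal form. It covers the blow-up of $U^{\rm naive}_1$ along $(X,\pi)$ by the ten charts $D_+(y_{ab})$ and $D_+(\alpha)$, four up to symmetry. In each chart it eliminates variables using the off-antidiagonal entries of $Y^tH_3Y=\alpha^2H_3$ and $YH_3Y^t=\alpha^2H_3$. It then uses a single spin generator, for example $m^{1,1}_{2,2}\pm\pi x_{11}=x_{11}^2(y_{22}-y_{12}y_{21}\pm\alpha)$, to arrive at $x_{11}(y_{22}-y_{12}y_{21})\pm\pi=0$. The $y_{22}$-chart needs a further cover by $D(y_{11})$, $D(y_{12})$, $D(y_{21})$, $D(y_{13})$.

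\textbf{A possible repair of your route.} If you want a conceptual ``quadric cone'' argument, the right model is a $\mu_2$-quotient rather than a hypersurface. One can check that $g\mapsto\bigl(Z\mapsto gZ\,\mathrm{adj}(g)\bigr)$, acting on trace-zero $2\times2$ matrices, identifies $U^\pm_1$ with the spectrum of the $\{\pm1\}$-invariants of the regular ring $\CO_F[a,b,c,d]/(ad-bc-\varepsilon\pi)$, for a suitable sign $\varepsilon$. Note that $\pi$ appears here, not $\pi^2$. Blowing up the origin gives $a^2(d'-b'c')=\varepsilon\pi$ on the $a$-chart. Passing to the invariant $s=a^2$ gives the semi-stable equation $s(d'-b'c')=\varepsilon\pi$. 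You would still have to prove both the identification and that blowing up commutes with this quotient.
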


\begin{proof}
From the above discussion and by Lemma \ref{lem-Gtrans}, it suffices to compute the blow-up of $\U_{1}^{\rm naive}\otimes \CO_{F}$ along
$\Mnaive_1(0)\cap (\U_{1}^{\rm naive}\otimes \CO_{F})$; equivalently, it suffices to blow up the affine chart $U^{\rm naive}_1$ along the ideal $J:=(X,\pi)$. We have the blow-up morphism $\rho: V^{\rm naive}_1\rightarrow U^{\rm naive}_1 $, where $V^{\rm naive}_1:={\rm Proj (\tilde{R})}.$ Here $\tilde{R}$ is the graded $R$-algebra $\oplus_{d\geq 0} J^d$ with $J^0=R$. Denote by $V^{\pm}_{1}$ the strict transform of $U^{\pm}_{1} $ under $ \rho$. We are then reduced to showing that $V_1^\pm$ is semi-stable. 
Moreover, by \S \ref{sec-naivespinmod} the generic fiber of $V^{\pm}_{1}$ is smooth. Hence, by Remark \ref{rmk:sr}, it is enough to show that the special fiber of $V^{\pm}_{1}$ is reduced and has strict normal crossings. 

There are $10$ affine charts in the blow-up $V^{\rm naive}_1$ of $\Spec R$ along the ideal $J$, where $R=\CO_F[X]/\CI^\naive$. Let $ y_{11},\dots y_{33},\alpha$ be the represented elements of $x_{11},\dots x_{33},\pi $ in the graded algebra $\tilde{R}$, considered as
homogeneous elements of degree 1. Denote by $ Y := (y_{ij})_{1\leq i,j \leq 3}$. By symmetry, it is enough to consider the following 4 affine charts: 

\textit{Case 1}: Assume $y_{11}=1$ (similarly for $y_{13}=1$, $y_{31}=1$ and $y_{33}=1$). Then, we have $X= x_{11} Y $ and $\pi = x_{11}\alpha $. The affine chart $D_{+}(y_{11}) \subset V^{\rm naive}_1$ is isomorphic to 

\[\frac{\CO_{F}[x_{11},Y,\alpha]}{(Y^tH_3Y-\alpha^2H_3, \, YH_3 Y^t-\alpha^2 H_3,\, y_{11} -1, \, \pi-x_{11}\alpha)}.
\]
By direct calculations the off anti-diagonal entries give 
\[
\begin{pmatrix}
y_{13}\\[2pt] y_{31}\\[2pt] y_{23}\\[2pt] y_{32}
\end{pmatrix}
=
\frac12\begin{pmatrix}
-\,y_{12}^2\\[2pt]
-\,y_{21}^2\\[2pt]
y_{12}\,(y_{12}y_{21}-2y_{22})\\[2pt]
y_{21}\,(y_{12}y_{21}-2y_{22})
\end{pmatrix}, \quad \frac{\Delta}{4}
\begin{pmatrix}
-\,y_{21}\\[2pt]
\ \,y_{21}^{2}\\[2pt]
-\,y_{12}\\[2pt]
\ \,y_{12}^{2}
\end{pmatrix}= \begin{pmatrix}
0\\[2pt]
 0\\[2pt]
0\\[2pt]
0
\end{pmatrix}
\]
where $\Delta:=(y_{12}y_{21}-2y_{22})^{2}-4y_{33}$. From the antidiagonal entries we obtain 
\[
\alpha^{2}=(y_{12}y_{21}-y_{22})^{2}, \quad \alpha^{2}= (y_{12}y_{21}-y_{22})^{2}-\frac{1}{4}\Delta.
\]
Combining these two equations we get $\Delta =0$ and thus the equations that we have from all the above are 
\begin{equation}\label{eq.prop.61}
\begin{pmatrix}
y_{13}\\[2pt] y_{31}\\[2pt] y_{23}\\[2pt] y_{32} \\[2pt] y_{33} 
\end{pmatrix}
=
\frac12\begin{pmatrix}
-\,y_{12}^2\\[2pt]
-\,y_{21}^2\\[2pt]
y_{12}\,(y_{12}y_{21}-2y_{22})\\[2pt]
y_{21}\,(y_{12}y_{21}-2y_{22})\\[2pt]
(y_{12}y_{21}-2y_{22})^{2}/2
\end{pmatrix}, \quad \alpha^{2}=(y_{12}y_{21}-y_{22})^{2}, \quad \pi=x_{11}\alpha.
\end{equation}
Now, the intersection $D_{+}(y_{11}) \cap V^{\pm}_{1}$ is obtained by combining the relations from (\ref{eq.prop.61}) and substituting  $X= x_{11} Y $ and $\pi = x_{11}\alpha $ in the relations of $\CI^{\pm}$. In particular, observe that $(m^{1, 1}_{2,2} \pm\pi x_{11})\in \CI^{\pm}$ and 
\[
m^{1, 1}_{2,2} \pm\pi x_{11}=x_{11}x_{22} - x_{12} x_{21} \pm\pi x_{11}= x^2_{11}(y_{22} -y_{12}y_{21} \pm \alpha) 
\]
and so in $D_{+}(y_{11}) \cap V^{\pm}_{1}$  we have $y_{22} -y_{12}y_{21}= \pm \alpha $. All the other relations from $\CI^{\pm}$ are satisfied from the above and thus $D_{+}(y_{11}) \cap V^{\pm}_{1}$ is isomorphic to 
\[
\Spec  \frac{\CO_{F}[x_{11},y_{12},y_{21},y_{22}]}{(x_{11}(y_{22} -y_{12}y_{21}) \pm \pi)}.
\]
We can easily see that the special fiber of this affine scheme is reduced and has strict normal crossings.

\textit{Case 2}: Assume $y_{12}=1$ (similarly for $y_{21}=1$, $y_{32}=1$ and $y_{23}=1$). Then $X=x_{12}Y$ and $\pi=x_{12}\alpha$. By direct calculation, the off anti-diagonal entries are equivalent to
\begin{equation}\label{rel602}
\begin{aligned}
2y_{11}y_{13}+1&=0,\\
y_{21}&=2y_{11}(y_{11}y_{23}+y_{22}),\\
y_{31}&=-2y_{11}(y_{11}y_{23}+y_{22})^{2},\\
y_{32}&=2y_{11}y_{23}(y_{11}y_{23}+y_{22}),\\
y_{33}&=y_{11}y_{23}^{2},
\end{aligned}
\end{equation}
and the remaining off anti-diagonal relations follow from these. From the anti-diagonal entries we obtain
\[
(\alpha\,y_{13})^{2}=(y_{13}y_{22}-y_{23})^{2}.
\]
Now intersect with $V^{\pm}_{1}$. Since $(m^{2,3}_{1,1}\pm \pi x_{12})\in \CI^{\pm}$ and
$m^{2,3}_{1,1}=x_{11}x_{23}-x_{13}x_{21}$, we get
\[
0=x_{11}x_{23}-x_{13}x_{21}\pm \pi x_{12}
= x_{12}^{2}(y_{11}y_{23}-y_{13}y_{21}\pm \alpha),
\]
hence $y_{11}y_{23}-y_{13}y_{21}=\mp \alpha$. Multiplying by $y_{13}$ and using $2y_{11}y_{13}+1=0$ we have
\[
y_{13}(y_{11}y_{23}-y_{13}y_{21})
= y_{13}(y_{22}+2y_{11}y_{23})
= y_{13}y_{22}-(2y_{11}y_{13})y_{23}
= y_{13}y_{22}-y_{23},
\]
so the spin condition forces $ \alpha\,y_{13}=\pm (y_{13}y_{22}-y_{23})$. Using $\pi=x_{12}\alpha$ we obtain
\[
x_{12}(y_{13}y_{22}-y_{23})\pm \pi y_{13}=0.
\]
Using $\alpha\,y_{13}=\pm (y_{13}y_{22}-y_{23})$ and (\ref{rel602}) the remaining generators of $\CI^{\pm}$ are automatically satisfied. Therefore,
\[
D_{+}(y_{12})\cap V^{\pm}_{1}\ \simeq\
\Spec \CO_{F}\bigl[x_{12},\,y_{13}^{\pm 1},\,y_{22},\,y_{23}\bigr]\Big/\bigl(x_{12}(y_{13}y_{22}-y_{23})\pm \pi y_{13}\bigr)
\]
and this has a reduced special fiber with strict normal crossings.

\textit{Case 3}: Assume $y_{22}=1$. Then $X=x_{22}Y$ and $\pi=x_{22}\alpha$.
The affine chart $D_{+}(y_{22})\subset V^{\rm naive}_1$ is isomorphic to
\[
\Spec \frac{\CO_{F}[x_{22},Y,\alpha]}{(Y^tH_3Y-\alpha^2H_3,\; YH_3Y^t-\alpha^2H_3,\; y_{22}-1,\; \pi-x_{22}\alpha)}.
\]
By direct calculation, the off anti-diagonal entries give the eight relations
\begin{equation}\label{eq:y22-off}
\begin{array}{ll}
(1)\ 2y_{11}y_{13}+y_{12}^{2}=0, &
(5)\ 2y_{11}y_{31}+y_{21}^{2}=0,\\[2pt]
(2)\ y_{11}y_{23}+y_{12}+y_{13}y_{21}=0, &
(6)\ y_{11}y_{32}+y_{12}y_{31}+y_{21}=0,\\[2pt]
(3)\ y_{21}y_{33}+y_{23}y_{31}+y_{32}=0, &
(7)\ y_{12}y_{33}+y_{13}y_{32}+y_{23}=0,\\[2pt]
(4)\ 2y_{31}y_{33}+y_{32}^{2}=0, &
(8)\ 2y_{13}y_{33}+y_{23}^{2}=0.
\end{array}
\end{equation}

On the anti-diagonal one obtains the four relations
\begin{equation}\label{eq:y22-anti}
\begin{array}{ll}
(9)\ \alpha^{2}=1+2y_{21}y_{23}, &
(11)\ \alpha^{2}=y_{11}y_{33}+y_{12}y_{32}+y_{13}y_{31},\\[2pt]
(10)\ \alpha^{2}=1+2y_{12}y_{32}, &
(12)\ \alpha^{2}=y_{11}y_{33}+y_{21}y_{23}+y_{13}y_{31}.
\end{array}
\end{equation}
Combining (9) with (11) gives
\[
y_{13}y_{31}+y_{12}y_{32}+y_{11}y_{33}=2y_{23}y_{21}+1,
\]
hence this chart is covered by $D(y_{11})\cup D(y_{12})\cup D(y_{21})\cup D(y_{13})$. (In contrast to Cases~1--2, on the $y_{22}$-chart we do not obtain a single global
elimination of the remaining variables from \eqref{eq:y22-off}--\eqref{eq:y22-anti}. Thus, we work on the cover $D(y_{11})\cup D(y_{12})\cup D(y_{21})\cup D(y_{13})$ to obtain the desired simplifications.)

We treat $D(y_{11})$ (the cases $D(y_{12})$ and $D(y_{21})$ are analogous).
On $D(y_{11})$, using \eqref{eq:y22-off}--\eqref{eq:y22-anti} one checks that the chart
simplifies to
\[
D_{+}(y_{22})\cap D(y_{11})\ \simeq\
\Spec \frac{\CO_{F}[y_{11},y_{12},y_{21},x_{22},\alpha][y_{11}^{-1}]}
{\bigl((y_{21}y_{12}y_{11}^{-1}-1)^2-\alpha^2,\ \pi-x_{22}\alpha\bigr)}.
\]
Now intersect with $V^{\pm}_{1}$. As in Case~1, use the generator
$(m^{1,1}_{2,2}\pm \pi x_{11})\in \CI^{\pm}$, where $m^{1,1}_{2,2}=x_{11}x_{22}-x_{12}x_{21}$.
Substituting $x_{ij}=x_{22}y_{ij}$ and $\pi=x_{22}\alpha$ we obtain
\[
0=m^{1,1}_{2,2}\pm \pi x_{11}
= x_{22}^2(y_{11}-y_{12}y_{21}\pm \alpha\,y_{11}).
\]
Thus on $D(y_{11})$ we have $y_{12}y_{21}-y_{11}=\pm \alpha\,y_{11},$
equivalently 
\[y_{12}y_{21}y_{11}^{-1}-1=\pm \alpha.\] Using $\pi=x_{22}\alpha$ we obtain $\pi \pm x_{22}(y_{21}y_{12}y_{11}^{-1}-1)=0$ and hence
\[
D_{+}(y_{22})\cap D(y_{11})\cap V^{\pm}_{1}\ \simeq\
\Spec \frac{\CO_{F}[y_{11},y_{12},y_{21},x_{22}][y_{11}^{-1}]}
{\bigl(\pi \pm x_{22}(y_{21}y_{12}y_{11}^{-1}-1)\bigr)}
\]
which has a reduced special fiber with strict normal crossings.
On $D(y_{13})$ one obtains similarly
\[
D_{+}(y_{22})\cap D(y_{13})\ \simeq\
\Spec \frac{\CO_{F}[y_{12},y_{13},y_{23},x_{22},\alpha][y_{13}^{-1}]}
{\bigl((y_{12}y_{23}y_{13}^{-1}-1)^2-\alpha^2,\ \pi-x_{22}\alpha\bigr)},
\]
and intersecting with $V^{\pm}_{1}$ gives
$y_{12}y_{23}y_{13}^{-1}-1=\pm \alpha$. Indeed, on $D(y_{13})$ we use the generator $(m^{1,2}_{2,3}\pm \pi x_{13})\in \CI^{\pm}$,
where $m^{1,2}_{2,3}=x_{12}x_{23}-x_{13}x_{22}$. Substituting $x_{ij}=x_{22}y_{ij}$
and $\pi=x_{22}\alpha$ gives
\[
0=m^{1,2}_{2,3}\pm \pi x_{13}
= x_{22}^2\bigl(y_{12}y_{23}-y_{13}\pm \alpha\,y_{13}\bigr),
\]
so on $D(y_{13})$ we obtain $y_{12}y_{23}y_{13}^{-1}-1=\pm \alpha$. Therefore,
\begin{equation}\label{chart13}
D_{+}(y_{22})\cap D(y_{13})\cap V^{\pm}_{1}\ \simeq\
\Spec \frac{\CO_{F}[y_{12},y_{13},y_{23},x_{22}][y_{13}^{-1}]}
{\bigl(\pi \pm x_{22}(y_{12}y_{23}y_{13}^{-1}-1)\bigr)}
\end{equation}
and its special fiber is a reduced divisor with strict normal crossings.

\textit{Case 4}: Assume $\alpha=1$. We have $ X = \pi Y$ and thus the affine chart $D_{+}(\alpha) \subset V^{\rm naive}_1$ is isomorphic to 
\[
\Spec \frac{\CO_{F} [Y]}{(Y^tH_3Y-H_3, \,YH_3 Y^t-H_3)}
\]
which in turn is isomorphic to the coordinate ring of the orthogonal group $O(3)$ over $\CO_{F}$ and thus smooth (as $\Char(k)\neq 2$). The scheme $O(3)$ has two connected components and the spin condition cuts out one of them; hence $V^{\pm}_{1}\cap D_+(\alpha)$ is smooth.

\end{proof}

\section{Applications}\label{ShimuraVarSec}

\subsection{Orthogonal integral models}   \label{ss51}
Let $(\bB,*)$ be a definite quaternion algebra over $\BQ$ equipped with a positive involution $*$. Let $(\bV,\pair{\ ,\ })$ be a free $\bB$-module of rank $n+1$, endowed with a $\BQ$-valued perfect alternating bilinear form $\pair{\ ,\ }$ which is skew-hermitian, i.e., $\pair{bv,w}=\pair{v,b^*w}$ for all $v,w\in \bV$, $b\in \bB$. Let $\bG=\bG(\bB,*,\bV,\pair{\ ,\ })$ denote the reductive group over $\BQ$ whose $R$-points for any $\BQ$-algebra $R$ are given by \begin{flalign*}
    \bG(R)\coloneqq \cbra{g\in \GL_{\bB\otimes_\BQ R}(\bV\otimes_\BQ R)\ |\ \pair{gv,gw}=c(g)\pair{v,w} \text{\ for some\ } c(g)\in R\cross }.
\end{flalign*} 
We have $\bG_{\ol{\BQ}}\simeq \GO_{2n+2,\ol{\BQ}}$.
Let $\bh\colon\Res_{\BC/\BR}\BG_m\ra \bG_\BR$ be a homomorphism which defines on $\bV_\BR$ a Hodge structure of type $\cbra{(-1,0),(0,-1)}$. The pairing $\pair{-,\bh(i)-}$ is a symmetric positive definite bilinear form on $\bV_\BR$. 

Then we obtain a PEL-datum $(\bB,*,\bV,\pair{\ ,\ },\bh)$ of type $D_{n+1}$. Set \begin{flalign*}
	\bE\coloneqq \BQ\rbra{\Tr(b\ |\ V^{-1,0})\ |\ b\in \bB }.
\end{flalign*}
As in the proof of \cite[Lemma 6.1]{Yang25}, we obtain $\bE=\BQ$.

Let $p$ be an odd prime. Assume that \begin{enumerate}
	\item [(A1)] $B\coloneqq \bB\otimes_\BQ\BQ_p\simeq M_2(\BQ_p)$.
\end{enumerate}  
Let $V\coloneqq \bV_{\BQ_p}=V_1\oplus V_2$ denote the Morita decomposition as in \cite[Definition 6.1]{Yang25}. Then $V_1$ is a $2n+2$-dimensional vector space over $\BQ_p$. Let $C\coloneqq \begin{psmallmatrix}
    0 &1\\ -1 &0
\end{psmallmatrix}$ be the Weil element in $B$ (identified with $M_2(\BQ_p)$) so that the positive involution $*$ on $B$ is given by $A^*=CA^tC\inverse$. Define \begin{flalign*}
    (x ,y)\coloneqq \pair{x, Cy}
\end{flalign*}
for $x,y\in V$. By \cite[Lemma 6.5]{Yang25}, we have \begin{flalign*}
	\bG_{\BQ_p}\simeq \GO(V_1,(\ ,\ )).
\end{flalign*}
Assume that \begin{enumerate}
	\item [(A2)] \label{condA2} there exists a $\BQ_p$-basis of $V_1$ such that the induced matrix corresponding to the pairing $(\ ,\ )$ is  \begin{flalign*}
	  \begin{pmatrix}
	  	H_{2n} \\ &\begin{pmatrix} p \\ &1 \end{pmatrix}
	  \end{pmatrix}.
\end{flalign*}
\end{enumerate} 
Denote $\CO_B=M_2(\BZ_p)$. Let $\sL$ be the self-dual (with respect to $\pair{\ ,\ }$) $\CO_B$-lattice chain in $V$ whose first component in the Morita decomposition is given by a self-dual (with respect to $(\ ,\ )$) $\BZ_p$-lattice chain $\CL$.
Let $\sG$ denote the (affine smooth) group scheme of similitude automorphisms of $\CL$. Set \begin{equation*}
    \bK_p\coloneqq \sG(\BZ_p)\sset \bG(\BZ_p).
\end{equation*}

Applying the results of the previous sections to the case $F_0\coloneqq \BQ_p$ and $F\coloneqq \BQ_p(\sqrt{-p})$, we obtain the spin local model $$\RM^\pm_\CL\ra \Spec\CO_F,$$ which is flat with reduced special fiber. 
\begin{defn}\label{Mpmm}
    Denote by $\RM_\sL^\pm$ the projective scheme over $\CO_F$ representing the functor that sends an $\CO_F$-scheme $S$ to the set of $\CO_S$-modules $(\CF_\Lambda)_{\Lambda\in\sL}$, where $\CF_{\Lambda}$ is a locally free $\CO_S$-submodule of  $\Lambda_S\coloneqq \Lambda\otimes_{\BZ_p}\CO_S$  of rank $2n+2$ such that \begin{enumerate}[label=(\roman*)]
        \item for any $\Lambda\in\sL$, the $\CO_S$-submodule $\CF_\Lambda\sset \Lambda_S$ is Zariski locally direct summand;
        \item for any $\Lambda\in\sL$, $\CF_\Lambda$ is invariant under the $\CO_B$-action on $\Lambda_S$;
        \item for any inclusion $\Lambda\sset \Lambda'$ in $\sL$,  the natural maps $\Lambda_S\ra \Lambda'_S$ induced by $\Lambda\hookrightarrow\Lambda'$ sends $\CF_\Lambda$ to $\CF_{\Lambda'}$; and the isomorphism $\Lambda_S\simto(p\Lambda)_S$ induced by $\Lambda\xrightarrow{p}p\Lambda$ identifies $\CF_\Lambda$ with $\CF_{p\Lambda}$;
        \item the perfect skew-Hermitian pairing $\varphi\colon \Lambda_S\times \Lambda^\#_S\ra \CO_S$ 
        induced by $\pair{\ ,\ }$ satisfies $\varphi(\CF_\Lambda,\CF_{\Lambda^\#})=0$, where $\Lambda^\#$ denotes the dual $\CO_B$-lattice of $\Lambda$ with respect to $\pair{\ ,\ }$;
        \item For any $\Lambda\in\CL$, denote by $(\CF_{\Lambda})_1$ (resp. $(\Lambda_S)_1$) the first component in the Morita decomposition of $\CF_\Lambda$ (resp. $\Lambda_S$). Then $(\CF_{\Lambda})_1$ is locally a direct summand of $(\Lambda_S)_1$ of $\CO_S$-rank $n+1$. We require that $\CF_\Lambda$ satisfies (LM4$\pm$) in Definition \ref{defn-spin}. 
    \end{enumerate}
\end{defn}

As discussed in \cite[\S 6]{Yang25}, we have an isomorphism \[\RM^\pm_{\sL} \simto \RM^\pm_\CL\]
by Morita equivalence.
We recall the following definition of Rapoport and Zink; cf. \cite[\S 6.8-6.9]{RZbook} and \cite[Definition 6.7]{Yang25}.
\begin{defn} \label{defn-naiveS}
     Let $\bK^p$ be a sufficiently small open compact subgroup in $\bG(\BA_f^p)$. Set $\bK\coloneqq \bK_p\bK^p$.
	Denote by $\CA^\pm_{\bK}$ the quasi-projective $\CO_F$-scheme representing the functor $$\CA^\pm_{\bK}\colon (\Sch/\CO_F)^\op\lra \Sets$$ which sends any $\CO_F$-scheme $S$ to the set of isomorphism classes of the following data.
	\begin{enumerate}[label=(\alph*)]
		\item An $\sL$-set of abelian $\CO_B$-schemes $(A_\Lambda,\iota_\Lambda)_{\Lambda\in\sL}$ in $AV(S)$ such that $A_\Lambda$ is a $(2n+2)$-dimensional abelian scheme over $S$.
		\item A $\BQ$-homogeneous polarization $\ol{\lambda}$ of the $\sL$-set $(A_\Lambda,\iota_\Lambda)_{\Lambda\in\sL}$.
		\item A $\bK^p$-level structure $$\ol{\eta}\colon H_1(A,\BA^p_f)\simeq \bV\otimes_\BQ\BA_f^p\mod \bK^p$$ that respects the bilinear forms on both sides up to a constant in $(\BA_f^p)\cross$.
		\item We further impose the following condition on $(A_\Lambda,\iota_\Lambda)_{\Lambda\in\sL}$: Choose an \etale cover $T\ra S$ such that there exists an isomorphism \begin{flalign*}
        \phi\colon M((A_{\Lambda})_T)\simeq \Lambda_T
    \end{flalign*} of $\CO_B\otimes_{\BZ_p}\CO_T$-modules, where $M((A_{\Lambda})_T)$ denotes the $\CO_T$-linear dual of the first de Rham cohomology of $(A_\Lambda)_T$.  We require that the image of the covariant Hodge filtration $\phi(\Fil^1((A_\Lambda)_T))$ in $\Lambda_T$ satisfies the spin condition in the sense of condition (v) in Definition \ref{Mpmm}.
	\end{enumerate}
\end{defn}

We will sometimes say that $\CA_\bK^\pm$ is a moduli space of type $D$. As in \cite[\S 6]{Yang25}, we have the local model diagram 
\[
\CA_\bK^\pm\xleftarrow{\alpha'_\bK}\wt{\CA}_\bK^\pm\xrightarrow {\beta'_\bK}\RM^\pm_\CL,
\]
where $\alpha'_\bK$ is a $\sG$-torsor, and $\beta'_\bK$ is $\sG$-equivariant and smooth of relative dimension $\dim G$. Equivalently, there is a relatively representable smooth morphism of algebraic
stacks
 \[
\phi: \CA_\bK^\pm \to [ \RM^\pm_\CL/ \sG\otimes_{\mathbb{Z}_p}\CO_F]
 \]
which is smooth of relative dimension $\text{dim}\,G$. 

\begin{corollary}\label{coro-AKflat}
    The moduli space $\CA_{\bK}^\pm$ of type $D$ is a normal, Cohen-Macaulay, flat $\CO_F$-scheme with reduced special fiber.
\end{corollary}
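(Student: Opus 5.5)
The plan is to transfer the local properties of $\RM^\pm_\CL$ to $\CA^\pm_\bK$ through the local model diagram
\[
\CA_\bK^\pm\xleftarrow{\alpha'_\bK}\wt{\CA}_\bK^\pm\xrightarrow {\beta'_\bK}\RM^\pm_\CL .
\]
The properties in question (flatness over $\CO_F$, normality, Cohen--Macaulayness, reducedness of the special fiber) are all local in the smooth topology. They therefore pass both down and up along smooth surjective morphisms.

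\textbf{Step 1: the local model.} By Morita equivalence we have $\RM^\pm_\sL\simeq\RM^\pm_\CL$. Here $\CL$ is $G^\circ(\BQ_p)$-conjugate to a standard chain $\Lambda_I$ by the corollary to Theorem~\ref{prop-paraIndex}, and conjugation induces an isomorphism of local models. Theorem~\ref{thmflatspin} gives that $\RM^\pm_I$ is $\CO_F$-flat. By Proposition~\ref{propiso} it then equals $\RM^{\pm\loc}_I$, which is normal and Cohen--Macaulay with reduced special fiber.

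\textbf{Step 2: pulling back along $\beta'_\bK$.} The morphism $\beta'_\bK$ is smooth of relative dimension $\dim G$. Its composite with $\RM^\pm_\CL\to\Spec\CO_F$ is therefore flat, so $\wt\CA^\pm_\bK$ is $\CO_F$-flat. A scheme smooth over a normal (resp. Cohen--Macaulay) locally noetherian scheme is again normal (resp. Cohen--Macaulay). The special fiber of $\wt\CA^\pm_\bK$ is smooth over the reduced special fiber $\RM^\pm_{\CL,k}$, hence reduced.

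\textbf{Step 3: descending along $\alpha'_\bK$.} The morphism $\alpha'_\bK$ is a torsor under the smooth affine group scheme $\sG\otimes\CO_F$, so it is smooth and surjective, hence faithfully flat. Flatness over $\CO_F$ descends along faithfully flat maps. Normality, the Cohen--Macaulay property and reducedness descend along smooth surjections, since they can be checked on local rings and are preserved and reflected by faithfully flat maps with regular fibers. The special fiber of $\CA^\pm_\bK$ is descended in the same way from that of $\wt\CA^\pm_\bK$. Equivalently, one can argue once with the smooth morphism $\phi$ to the quotient stack $[\RM^\pm_\CL/\sG\otimes\CO_F]$.

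\textbf{Expected obstacle.} The main point requiring care is not in the descent steps but in checking that the local model diagram is actually compatible with the spin condition (iv) of Definition~\ref{defn-naiveS}. One must verify that the \'etale-local trivializations $\phi$ carry the Hodge filtration into $\RM^\pm_\CL$. I would take this from the construction in \cite[\S 6]{Yang25}, since the spin condition is $\sG^\circ$-stable by \cite[Lemma 3.4]{yang25TopFlat}. If $\sG\neq\sG^\circ$, one should either use the $\sG^\circ$-torsor or note that $\tau$ exchanges $\RM^+$ and $\RM^-$; in either case the smoothness and surjectivity claims survive.
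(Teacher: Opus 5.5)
Your Steps 1--3 are exactly the paper's argument: the corollary is read off from the local model diagram $\CA^\pm_\bK\leftarrow\wt{\CA}^\pm_\bK\rightarrow\RM^\pm_\CL$ (equivalently, the smooth morphism $\phi$ to the quotient stack), combined with Theorem~\ref{thmflatspin} and Proposition~\ref{propiso} and the fact that flatness, normality, the Cohen--Macaulay property and reducedness of the special fiber ascend along smooth morphisms and descend along smooth surjections.

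One correction to your final paragraph. First, $\sG\neq\sG^\circ$ always holds here (Lemma~\ref{lem-Itype}), and $\tau$ interchanges $\RM^+_\CL$ and $\RM^-_\CL$. Merely noting this does not rescue a version with a genuine $\sG$-torsor: such a diagram forces a $\tau$-stable local model, and that can never have generic fiber $\OGr(n+1,V)_\pm$ (for $I=\{0\}$ the smallest flat one is locally $\CO_F[x]/(x^2-\pi^2)$, whose special fiber is non-reduced). So only your first alternative works: the diagram must be read with a $\sG^\circ$-torsor, which amounts to fixing an orientation of the de Rham homology in condition (d) of Definition~\ref{defn-naiveS}. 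The paper leaves this point implicit.
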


\begin{remark}
    As in \cite[Remark 6.8]{Yang25}, the determinant condition (Kottwitz condition) in \cite[Definition 6.9]{RZbook} is automatically satisfied; and due to the failure of the Hasse principle for $\bG$, the Shimura variety in our setting is just an open and closed subscheme of the generic fiber $\CA^\pm_{\bK,F}$.
\end{remark}

Now assume $\bK_1\coloneqq \bK'_1\bK^p$  where $\bK'_1 = \sG_1(\BZ_p)$ and $\sG_1(\BZ_p)$ is the stabilizer of the lattice chain $\CL = \Lambda_{\{1\}}$ (see \S \ref{ParahoricSubsection} for notation). Let us form the cartesian product of $\phi$ with the projective morphism $ \rho: \Mbl_{\{1\}} \rightarrow \Mpm_{\{1\}}$
\[
\begin{matrix}
\CA_{\bK_1}^{\rm bl} &\longrightarrow & [\Mbl_{\{1\}} / \sG\otimes_{\mathbb{Z}_p}\CO_F] \\
\Big\downarrow && \Big\downarrow \\
\CA_{\bK_1}^\pm &\longrightarrow& [  \RM^\pm_{\Lambda_1}/ \sG\otimes_{\mathbb{Z}_p}\CO_F]
\end{matrix}
\]
where $\Mbl_{\{1\}} $ is the blow-up of $\Mpm_{\{1\}}$ along the unique closed Schubert stratum in the special fiber; see \S \ref{ResolSpin} for the blow-up construction. The scheme $\CA_{\bK_1}^{\rm bl}$ is a linear modification of $\CA_{\bK_1}^\pm$ in the sense of \cite[\S 2]{P}. Thus, from Theorem \ref{ResolutionTheorem} we deduce the following:
\begin{corollary}\label{coro-AKregular}
    The moduli space $\CA_{\bK_1}^{\rm bl}$ of type $D$ has semi-stable reduction and its special fiber is reduced.
\end{corollary}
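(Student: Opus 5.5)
The plan is to deduce the statement from the local model diagram, treating everything as local on $\CA^{\rm bl}_{\bK_1}$. The top row of the cartesian square, $\CA^{\rm bl}_{\bK_1}\to[\Mbl_{\{1\}}/\sG\otimes_{\BZ_p}\CO_F]$, is the base change of $\phi$. Since $\phi$ is relatively representable and smooth of relative dimension $\dim G$, so is its base change. Equivalently, after pulling back the $\sG$-torsor $\wt{\CA}^\pm_{\bK_1}\to\CA^\pm_{\bK_1}$ along $\CA^{\rm bl}_{\bK_1}\to\CA^\pm_{\bK_1}$, we get a diagram
\[
\CA^{\rm bl}_{\bK_1}\longleftarrow \wt{\CA}^{\rm bl}_{\bK_1}\longrightarrow \Mbl_{\{1\}}.
\]
Here the left arrow is a $\sG$-torsor, hence smooth and surjective because $\sG$ is smooth. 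The right arrow is smooth, being the base change of $\beta'_{\bK_1}$ along the blow-up $\rho$. This requires checking that $\sG$ acts on $\Mbl_{\{1\}}$ compatibly with $\rho$. That holds because the center of the blow-up, the unique closed Schubert stratum, is $\sG^\circ$-stable. One also needs that $\sG$ preserves $\Mpm_{\{1\}}$ (rather than swapping it with the other spin component) on the relevant locus, or else one works with $\sG^\circ$ as in \S\ref{sec-naivespinmod}.

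Next I transport properties along smooth maps. By Theorem~\ref{ResolutionTheorem}, $\Mbl_{\{1\}}$ is flat and regular over $\CO_F$, and its special fiber is a reduced strict normal crossings divisor. Each of the following properties is étale- or smooth-local and descends along smooth surjections:
\begin{itemize}
\item flatness over $\CO_F$;
\item regularity;
\item reducedness of the special fiber;
\item the property that the special fiber is a strict normal crossings divisor, in the sense that étale locally the scheme is smooth over $\CO_F[x_1,\dots,x_m]/(x_1\cdots x_r-\pi)$.
\end{itemize}
A smooth morphism to $\Mbl_{\{1\}}$ preserves all of these, so $\wt{\CA}^{\rm bl}_{\bK_1}$ has semi-stable reduction. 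Descending along the smooth surjection $\wt{\CA}^{\rm bl}_{\bK_1}\to\CA^{\rm bl}_{\bK_1}$ then gives the same conclusion for $\CA^{\rm bl}_{\bK_1}$. Reducedness of the special fiber is contained in the semi-stable conclusion, and it also follows directly from the smooth-local nature of reducedness.

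The main subtlety is the word \emph{strict}. Descent along smooth or étale maps naturally gives a normal crossings divisor. Strictness asks that the irreducible components of the special fiber be smooth, and this need not descend, since components upstairs could glue together downstairs. I will first try to use the explicit charts from the proof of Theorem~\ref{ResolutionTheorem}. These show the special fiber of $\Mbl_{\{1\}}$ has exactly two components: the strict transform of the special fiber of $\Mpm_{\{1\}}$, which is irreducible by Proposition~\ref{prop-stratificationMpm}, and the exceptional divisor. Both are $\sG$-stable. Their preimages in $\CA^{\rm bl}_{\bK_1}$ therefore give a global decomposition of the special fiber into two closed subschemes, each smooth over $k$ by smooth descent. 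This yields strict normal crossings.

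If that argument fails, I fall back on interpreting ``semi-stable'' in the Zariski-local sense of \cite[2.4]{Jong}, which the definition in \S\ref{ResolSpin} already references. The remaining claims would then follow formally from the argument in the second paragraph.
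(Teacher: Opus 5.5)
Your argument is correct and follows the paper's route. $\CA^{\rm bl}_{\bK_1}$ is the base change of the blow-up along the smooth morphism $\CA^\pm_{\bK_1}\to[\Mpm_{\{1\}}/\sG]$, i.e.\ a linear modification in the sense of \cite[\S 2]{P}, so the semi-stability in Theorem~\ref{ResolutionTheorem} transfers through the local model diagram.

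The paper leaves strictness implicit. Your argument via the two $\sG$-stable smooth components (the strict transform and the exceptional divisor) correctly supplies it. Your fallback is unnecessary and would not help anyway, since \cite[2.4]{Jong} is already the strict, Zariski-local notion.
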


\subsection{Orthogonal Rapoport-Zink spaces}
Fix an \dfn{integral RZ datum} (cf. \cite[\S 4.1]{RV14})  \begin{flalign*}
	\CD\coloneqq (\BQ_p,B,*,V,\pair{\ ,\ },  \cbra{\mu},[b],\sL)
\end{flalign*} of PEL-type D, where \begin{itemize}
	\item $B=M_2(\BQ_p)$ is a quaternion algebra over $\BQ_p$ with the positive involution $*$ given by $A^*\coloneqq CA^tC$, where $C=\begin{psmallmatrix}
		0 &1\\ -1 &0
	\end{psmallmatrix}$;
	\item $(V,\pair{\ ,\ })$ is a free $B$-module of rank $n+1$, endowed with a perfect alternating $\BQ_p$-bilinear form $\pair{\ ,\ }$ satisfying $\pair{xv,w}=\pair{v,x^*w}$ for $v,w\in V$, $x\in B$;
	\item $\cbra{\mu}$ is a geometric conjugacy class of minuscule cocharacters of $G$, where $G$ is the reductive group over $\BQ_p$ whose $R$-points for any $\BQ_p$-algebra $R$ are given by \begin{flalign*}
    G(R)\coloneqq \cbra{g\in \GL_{B\otimes_{\BQ_p} R}(V\otimes_{\BQ_p} R)\ |\ \pair{gv,gw}=c(g)\pair{v,w} \text{\ for some\ } c(g)\in R\cross }.
        \end{flalign*} 
    \item $[b]\in B(G,\cbra{\mu})$ (see \cite[\S 2.4]{RV14});
    \item $\sL$ is a self-dual $\CO_B$-lattice chain in $V$, where $\CO_B=M_2(\BZ_p)$.
\end{itemize}  

As in \S \ref{ss51}, $G$ is isomorphic to the orthogonal group $\GO(V_1,(\ ,\ ))$. By the classification of PEL-data, there are two possibilities for the (geometric) conjugacy class $\cbra{\mu}$, corresponding to the minuscule coweights $\mu_+$ and $\mu_-$ in \eqref{cochar}.  As in \S \ref{ss51}, one can associate the naive local model $\RM^\naive_\sL$ and spin local model $\RM^\pm_\sL$ to the self-dual lattice chain $\sL$.

Let $\breve \BQ_p$ denote the completion of the maximal unramified extension of $\BQ_p$. Let $\breve\BZ_p$ be the ring of integers of $\breve \BQ_p$. Let $Nilp\coloneqq Nilp_{\breve\BZ_p}$ denote the category of $\breve\BZ_p$-scheme $S$ on which $p$ is locally nilpotent. For $S\in \Nilp$, we denote by $\ol{S}$ the closed subscheme defined by $p\CO_S$. The datum $[b]$ determines a $p$-divisible group $\BX$, which is called a \dfn{framing object}, with $\CO_B$-action $\iota_\BX: \CO_B\ra \End(\BX)$, see \cite[\S 3.20]{RZbook}. 
\begin{defn}
	Let $\CM_\CD^\naive$ denote the functor \begin{flalign*}
		Nilp\ra \Sets
	\end{flalign*}
	sending $S\in Nilp$ to the set of isomorphism classes of the following data. 
	\begin{enumerate}
		\item For each lattice $\Lambda\in \sL$ a $p$-divisible group $X_\Lambda$ over $S$ with an $\CO_B$-action.
		\item For each lattice $\Lambda\in\sL$ a quasi-isogeny \begin{flalign*}
		\rho_\Lambda: \BX\times_{\ol{\BF}_p}\ol{S}\ra X_\Lambda\times_S\ol{S}
		\end{flalign*}
		which commutes with the $\CO_B$-action. 
		\item We require $(X_\Lambda,\rho_\Lambda)_{\Lambda\in\sL}$ satisfies the conditions in \cite[Definition 3.21]{RZbook}.
	\end{enumerate} 
\end{defn}

By \cite[Theorem 3.25]{RZbook}, $\CM_\CD^\naive$ is representable by a formal scheme, which is formally locally of finite type over $\Spf\breve\BZ_p$. This is the naive Rapoport--Zink space. Let $\sG$ denote the group scheme of similitude automorphisms of $\sL$. By results in \cite[\S 3]{RZbook}, there exists a local model diagram of formal schemes over $\Spf\breve\BZ_p$ \begin{flalign}\label{lmd1}
	 \CM^\naive_\CD\xleftarrow{\alpha}\wt{\CM}^\naive_\CD\xrightarrow {\beta}(\RM^\naive_{\sL,\breve\BZ_p})^\wedge,
\end{flalign}
where $\alpha$ is a $\sG_{\breve\BZ_p}$-torsor, $\beta$ is $\sG_{\breve\BZ_p}$-equivariant and formally smooth of relative dimension $\dim G$, and $(\RM^\naive_{\sL,\breve\BZ_p})^\wedge$ denotes the $p$-adic completion of $\RM^\naive_{\sL,\breve\BZ_p}=\RM^\naive_\sL\otimes_{\BZ_p}\breve\BZ_p$.

\begin{defn}
	Let $\CM^\pm_\CD$ denote the functor \begin{flalign*}
		Nilp_{\CO_{\breve F}}\ra \Sets
	\end{flalign*}
	sending $S\in Nilp_{\CO_{\breve F}}$ to the set of isomorphism classes $(X_\Lambda,\rho_\Lambda)_{\Lambda\in \sL}\in \CM^\naive_\CD(S)$ such that the following condition holds. Denote by $M_\Lambda$ the Lie algebra of the universal extension of $X_\Lambda$. Let $\Fil^1_\Lambda\sset M_\Lambda$ denote the Hodge filtration.  Choose an \etale cover $T\ra S$ such that there exists an isomorphism \[\phi: M_\Lambda\otimes_{\CO_S}\CO_T \simeq \Lambda_T\] of $\CO_B\otimes_{\BZ_p}\CO_T$-modules. We require that $\phi(\Fil^1_{\Lambda}\otimes_{\CO_S}\CO_T)$ in $\Lambda_T$ satisfies the spin condition in the sense of condition (v) in Definition \ref{Mpmm}. 
\end{defn}

By the moduli interpretation, the diagram \eqref{lmd1} induces the local model diagram 
\begin{flalign}\label{lmd2}
	 \CM^\pm_\CD\xleftarrow{\hat{\alpha}'}\wt{\CM}^\pm_\CD\xrightarrow {\hat{\beta}'}(\RM^\pm_{\sL,\CO_{\breve F}})^\wedge,
\end{flalign}
where $\hat{\alpha}'$ and $\hat{\beta}'$ satisfy the same properties as in \eqref{lmd1}. In particular, we obtain the following.

\begin{corollary}
	The formal scheme $\CM^\pm_\CD$ is flat over $\CO_{\breve F}$.
\end{corollary}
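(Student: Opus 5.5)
The plan is to deduce flatness of $\CM^\pm_\CD$ from flatness of the spin local model, using the local model diagram \eqref{lmd2}. Flatness over the discrete valuation ring $\CO_{\breve F}$ just means that the structure sheaf has no $\pi$-torsion. Since the formal scheme is locally noetherian, this can be checked on completed local rings at closed points. I therefore aim to identify each such completed local ring, up to adjoining formal power series variables, with a completed local ring of $\RM^\pm_{\sL}\otimes_{\CO_F}\CO_{\breve F}$.

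\emph{Step 1: flatness of the local model.} By Morita equivalence we have $\RM^\pm_\sL\simeq\RM^\pm_\CL$. By the Corollary of Theorem~\ref{prop-paraIndex}, the lattice chain $\CL$ is $G^\circ$-conjugate to a standard chain $\Lambda_I$, so Theorem~\ref{thmflatspin} shows that $\RM^\pm_\sL$ is $\CO_F$-flat. Base change to $\CO_{\breve F}$ preserves flatness. Moreover, the $\pi$-adic completion of a flat scheme of finite type over $\CO_{\breve F}$ has flat (that is, $\pi$-torsion-free) completed local rings, because completion of noetherian local rings is flat. Hence all completed local rings of $(\RM^\pm_{\sL,\CO_{\breve F}})^\wedge$ are $\CO_{\breve F}$-flat.

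\emph{Step 2: transfer through \eqref{lmd2}.} Let $x$ be a closed point of $\CM^\pm_\CD$. Because $\hat\alpha'$ is a torsor under the smooth affine group $\sG_{\breve\BZ_p}$ and the residue field is algebraically closed, I can lift $x$ to a point $y$ of $\wt{\CM}^\pm_\CD$. Smoothness of $\sG$ then gives
\[
\widehat{\CO}_{\wt{\CM}^\pm,y}\simeq\widehat{\CO}_{\CM^\pm,x}[[t_1,\dots,t_d]],
\]
while formal smoothness of $\hat\beta'$ gives
\[
\widehat{\CO}_{\wt{\CM}^\pm,y}\simeq\widehat{\CO}_{\RM^\pm,\hat\beta'(y)}[[s_1,\dots,s_{d'}]].
\]
By Step 1 the right-hand ring of the second isomorphism is $\pi$-torsion-free, since adjoining power series variables preserves $\pi$-torsion-freeness. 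It follows that $\widehat{\CO}_{\CM^\pm,x}[[t_1,\dots,t_d]]$ is $\pi$-torsion-free. This ring is faithfully flat over $\widehat{\CO}_{\CM^\pm,x}$, so $\widehat{\CO}_{\CM^\pm,x}$ is $\pi$-torsion-free as well. Alternatively, one can take the constant-coefficient subring.

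\emph{Step 3: conclusion.} Flatness of the completed local rings at all closed points implies flatness of the local rings, by faithful flatness of completion. Hence $\CM^\pm_\CD$ is flat over $\CO_{\breve F}$.

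The main obstacle is Step 2: one must check carefully that the properties of $\hat\alpha'$ and $\hat\beta'$ in \eqref{lmd2} really survive after imposing the spin condition. Concretely, $\wt{\CM}^\pm_\CD$ should be the pullback of $\wt{\CM}^\naive_\CD$ along the closed immersion $\RM^\pm\hookrightarrow\RM^\naive\otimes\CO_F$, and the spin condition should be an open-and-closed condition compatible with the $\sG^\circ$-action (cf.\ \cite[Lemma 3.4]{yang25TopFlat}). Granting that pullback description, the completed-local-ring comparison above goes through verbatim.
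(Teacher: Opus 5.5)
Your proposal is correct and follows the paper's route: the paper deduces the corollary directly from Theorem~\ref{thmflatspin} and the local model diagram \eqref{lmd2} (asserted there ``by the moduli interpretation''), and your Steps 1--3 make explicit how flatness passes through $\hat\alpha'$ and $\hat\beta'$ via completed local rings. One small correction to your closing remark: $\RM^\pm$ is a closed, $\sG^\circ$-stable, but not open subscheme of $\RM^\naive\otimes\CO_F$ (its special fiber has the same underlying space as that of $\RM^\mp$), and closedness plus $\sG^\circ$-stability is all Step~2 needs, since a $\sG^\circ$-torsor is still smooth and surjective.
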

Therefore, we obtain an explicit moduli interpretation of the orthogonal Rapoport--Zink space in our setting. Lastly, by specializing the lattice chain to $\Lambda_{\{1\}}$ and applying Theorem~\ref{ResolutionTheorem} to the local model diagram (\ref{lmd2}), we obtain a semi-stable formal model of the corresponding Rapoport–Zink space at this level. This is the standard linear modification construction; cf. \cite[\S 7.4]{PaZa}.

\Addresses
\end{document}